\newtheorem{theorem}{Theorem}[section]
\newtheorem{lemma}[theorem]{Lemma}
\newtheorem{corollary}[theorem]{Corollary}
\newtheorem{proposition}[theorem]{Proposition}
\newtheorem{remark}[theorem]{Remark}
\newcommand{\1}{\mathbbm1}
\newcommand{\R}{{\mathord{\mathbb R}}}
\newcommand{\N}{{\mathord{\mathbb N}}}
\newcommand{\Sp}{{\mathord{\mathbb S}}}
\newcommand{\Sph}{\mathbb{S}}
\newcommand{\bb}{g}
\renewcommand{\(}{\left(}
\renewcommand{\)}{\right)}
\newcommand{\ird}[1]{\int_{\R^d}{#1}\,dx}
\newcommand{\isd}[1]{\int_{\Sph^d}{#1}\,d\mu}
\newcommand{\nrm}[2]{\left\|{#1}\right\|_{\mathrm L^{#2}(\R^d)}}
\newcommand{\nrmS}[2]{\left\|{#1}\right\|_{\mathrm L^{#2}(\Sp^d)}}
\newcommand{\Deficit}{\boldsymbol{\delta}_{\rm Sob}}
\newcommand{\deficit}{\mathop{\mbox{\sc d}}}
\newcommand{\Imu}{\mathscr I}
\newcommand{\be}[1]{\begin{equation}\label{#1}}
\newcommand{\ee}{\end{equation}}
\newcommand{\irdmu}[1]{\int_{\R^d}{#1}\,d\mu_d}
\newcommand{\irNg}[1]{\int_{\R^N}{#1}\,d\gamma}
\begin{document}

\begin{frontmatter}

\title[Sharp stability for Sobolev and log-Sobolev inequalities]{Sharp stability for Sobolev and log-Sobolev inequalities, with optimal dimensional dependence}

\begin{aug}
\author{\fnms{Jean} \snm{Dolbeault\thanks{Partial support through the French National Research Agency (ANR) project EFI (ANR-17-CE40-0030, J.D.).}}\ead[label=eDo]{dolbeaul@ceremade.dauphine.fr}},
\address{CEREMADE (CNRS UMR No.~7534), PSL University, Universit\'e Paris-Dauphine, Place de Lattre de Tassigny, 75775 Paris 16\\
France\\
\printead{eDo}}
\author{\fnms{Maria J.} \snm{Esteban\thanks{Partial support through the French National Research Agency (ANR) project molQED (ANR-17-CE29-0004, M.J.E.).}}\ead[label=eEs]{esteban@ceremade.dauphine.fr}},
\address{CEREMADE (CNRS UMR No.~7534), PSL University, Universit\'e Paris-Dauphine, Place de Lattre de Tassigny, 75775 Paris 16\\
France\\
\printead{eEs}}
\author{\fnms{Alessio} \snm{Figalli\thanks{Partial support through the European Research Council under the Grant Agreement No. 721675 (RSPDE) {\it Regularity and Stability in Partial Differential Equations.}}}\ead[label=eFi]{alessio.figalli@math.ethz.ch}},
\address{Mathematics Department, ETH Z\"urich, Ramistrasse 101, 8092 Z\"urich\\
Switzerland\\
\printead{eFi}}
\author{\fnms{Rupert L.} \snm{Frank\thanks{Partial support through the US National Science Foundation grant DMS-1954995 and the Deutsche Forschungsgemeinschaft (DFG, German Research Foundation) Germany's Excellence Strategy EXC-2111-390814868.}}\ead[label=eFr]{r.frank@lmu.de}},
\address{Department of Mathematics, LMU Munich, Theresienstr. 39, 80333 M\"unchen, Germany, and Munich Center for Quantum Science and Technology, Schellingstr.~4, 80799 M\"unchen\\
Germany\\
and\\
Mathematics 253-37, Caltech, Pasadena, CA 91125\\
United States of America\\
\printead{eFr}}
\and
\author{\fnms{Michael} \snm{Loss\thanks{Partial support through US National Science Foundation grant DMS-2154340.}}\ead[label=eLo]{loss@math.gatech.edu}},
\address{School of Mathematics, Georgia Institute of Technology Atlanta, GA 30332\\
United States of America\\
\printead{eLo}}
\end{aug}


\begin{abstract}
We prove a sharp quantitative version for the stability of the Sobolev inequality with explicit constants. Moreover, the constants have the correct behavior in the limit of large dimensions, which allows us to deduce an optimal quantitative stability estimate for the Gaussian log-Sobolev inequality with an explicit dimension-free constant. Our proofs rely on several ingredients such as competing symmetries, a flow based on continuous Steiner symmetrization that interpolates continuously between a function and its symmetric decreasing rearrangement, and refined estimates on the Sobolev functional in the neighborhood of the optimal Aubin--Talenti functions.
\end{abstract}

\begin{keyword}[class=MSC]
\kwd{\\}
\kwd[Primary ]{49J40}
\kwd[; secondary ]{26D10, 35A23}
\medskip
\end{keyword}

\begin{keyword}
\kwd{Sobolev inequality}
\kwd{Logarithmic Sobolev inequality}
\kwd{Stability}
\kwd{Rearrangement}
\kwd{Steiner symmetrization}
\end{keyword}

\vspace*{2cm}
\tableofcontents
\end{frontmatter}
\newpage

\section{Introduction and main results}\label{sec:intro}

The classical Sobolev inequality on $\R^d$, $d\ge3$, states that
$$
\Vert \nabla f \Vert^2_{\mathrm L^2(\R^d)} \geq S_d\,\Vert f \Vert^2_{\mathrm L^{2^*}(\R^d)} \quad\forall\,f\in\dot{\mathrm H}^1(\R^d)\,,
$$
where $2^*=\frac{2\,d}{d-2}$ is the Sobolev exponent, $S_d = \tfrac14\,d\,(d-2)\,|\Sp^d|^{2/d}$ is the sharp Sobolev constant, and $|\Sp^d|$ denotes the $d$-dimensional volume of the unit sphere in $\mathbb S^d\subset\R^{d+1}$.
Here $\dot{\mathrm H}^1(\R^d)$ is the closure of $C^\infty_c(\R^d)$ with respect to the seminorm $\Vert f\Vert_{\dot{\mathrm H}^1(\R^d)}:=\Vert \nabla f \Vert^2_{\mathrm L^2(\R^d)}$.
In addition, equality holds if and only if $f$ belongs to the $(d+2)$-dimensional manifold
\begin{align}
\mathcal M:=&\left\{g_{a,b,c}\,:\,(a,b,c)\in(0,+\infty)\times\R^d\times\R\right\}\nonumber\\
&\quad\mbox{where}\quad g_{a,b,c}(x) =c\,\bar g\Big(\frac{x-b}a\Big)\quad\mbox{and}\quad \bar g(x)=\Big(\frac2{1 + |x|^2}\Big)^\frac{d-2}2\,.\label{eq:optimizers}
\end{align}
In~\cite{BrezisLieb} Brezis and Lieb asked the following question:
\begin{center}
\parbox{12.6cm}{\it Do there exist constants $\kappa$, $\alpha>0$ such that\\[4pt]
\centerline{$\displaystyle
\Deficit(f):=\frac{\Vert \nabla f \Vert^2_{\mathrm L^2(\R^d)}}{\Vert f \Vert^2_{\mathrm L^{2^*}(\R^d)}} - S_d
\geq \kappa\,{\rm dist}(f,\mathcal M)^\alpha
$}\\[4pt]
where ${\rm dist}(\cdot ,\mathcal M)$ denotes some `natural distance' from the set of optimizers?}
\end{center}
In the modern terminology, $\Deficit(f)$ is usually called the {\it Sobolev deficit}.
In this kind of stability questions, one can try to obtain `the best possible result' by finding the strongest possible topology to define the distance and the best possible constant $\kappa$ and exponent $\alpha$.
A beautiful answer to Brezis and Lieb's question has been given
by Bianchi and Egnell in~\cite{BianchiEgnell}: for any $d\geq 3$ there is a dimensional constant $\mathbf{\mathscr C}_{d,\rm BE}>0$ such that
\begin{equation}\label{eq:bianchi-egnell0}
\Deficit(f)\ge\mathbf{\mathscr C}_{d,\rm BE}\,\inf_{\bb\in\mathcal M}\Vert \nabla f - \nabla\bb \Vert^2_{\mathrm L^2(\R^d)}
\end{equation}
for any $f \in \dot{\mathrm H}^1(\R^d)$ such that $\Vert f \Vert_{\mathrm L^{2^*}(\R^d)}=1$. It is worth observing that this result is optimal both in terms of the distance used (the $\dot{\mathrm H}^1$ norm) and in terms of the exponent $2$.
Its proof is based on two principles:
\begin{itemize}
\item[(i)] {\it Local-to-global:\/} it suffices to prove the inequality in a neighborhood of $\mathcal M$;
\item[(ii)] {\it Local analysis:\/}~\eqref{eq:bianchi-egnell0} holds near $\mathcal M$.
\end{itemize}
As shown in~\cite{BianchiEgnell}, these two steps are achieved as follows:
\begin{itemize}
\item[(i)] By Lions's concentration-compactness theorem, if $\Deficit(f)$ is small, then $f$ is close in $\dot{\mathrm H}^1$ to~$\mathcal M$.
\item[(ii)] Given $f$ close to $\mathcal M$, one can assume that $\bar g\in \mathcal M$ is the closest point to $f$. Then, if one writes $f=\bar g+\epsilon\, \varphi$
with $\epsilon:=\Vert \nabla f -\nabla \bar g\Vert_{\mathrm L^2(\R^d)}$ and $\Vert \nabla\varphi \Vert_{\mathrm L^2(\R^d)}=1$, a Taylor expansion gives
$$
\Deficit(\bar g+\epsilon\, \varphi)\ge\epsilon^2\,Q_{\bar g}[\varphi]-\frac2{2^*}\,\epsilon^{2^*} \,,
$$
where $Q_{\bar g}[\,\cdot\,]$ is a quadratic form depending on $\bar g$ (see Section~\ref{sec:firstbound} below for more details). In addition, spectral analysis shows that $Q_{\bar g}[\varphi]\geq \frac{4}{d+4}$ and this inequality is sharp, proving that
\begin{equation}
\label{eq:Taylor}
\Deficit(\bar g+\epsilon\, \varphi)\geq \frac{4}{d+4}\,\epsilon^2-\frac2{2^*}\,\epsilon^{2^*} \,.
\end{equation}
In particular, if $\epsilon$ is sufficiently small then~\eqref{eq:bianchi-egnell0} follows.
\end{itemize}

Although Bianchi and Egnell's result gives a very satisfactory answer to the question raised by Brezis and Lieb, their method gives no information about the constant $\mathbf{\mathscr C}_{d,\rm BE}$. More precisely:
\begin{itemize}
\item[(i)] Since the local-to-global argument is based on compactness, there is no information about the size of $\mathbf{\mathscr C}_{d,\rm BE}$ outside a small $\dot{\mathrm H}^1$-neighborhood of $\mathcal M$.
\item[(ii)] Even if we restrict to functions close to $\mathcal M$, the bound provided by Bianchi and Egnell is very unsatisfactory for large dimensions. Indeed,~\eqref{eq:Taylor} implies that
$\Deficit(g+\epsilon\, \varphi)\gtrsim \frac1{d}\,\epsilon^2$ provided $\epsilon^{2^*-2}\lesssim \frac1{d}$, or equivalently $\epsilon \lesssim d^{-d/4}$.
In other words, for large dimensions, the neighborhood of $\mathcal M$ where the Taylor expansion of Bianchi and Egnell provides a lower bound is super-exponentially small with respect to $d$.\end{itemize}

\medskip The goal of this paper is to provide a new proof of the Bianchi-Egnell estimate that leads to a completely sharp result.
More precisely, by a series of new ideas and techniques, we shall provide:
\begin{itemize}
\item[(i)] a quantitative local-to-global principle, based on competing symmetries and continuous Stei\-ner symmetrization, that allows us to reduce the global estimate to a local estimate;
\item[(ii)] a refined local analysis that provides a bound on the form $\Deficit(g+\epsilon\, \varphi)\geq \frac{c_0}{d}\,\epsilon^2$ for $\epsilon \leq \epsilon_0$, where $c_0$ and $\epsilon_0$ are {\it independent\/} of the dimension.
\end{itemize}
These techniques allow us to prove the following explicit stability constant estimate.
\begin{theorem}\label{main} There is an explicit constant $\beta>0$ such that, for all $d\ge3$ and all $f\in\dot{\mathrm H}^1(\R^d)$,
\[
\Vert \nabla f \Vert^2_{\mathrm L^2(\R^d)} - S_d\,\Vert f \Vert^2_{\mathrm L^{2^*}(\R^d)}\ge\frac{\beta}{d}\,{\inf_{\bb\in\mathcal M} \Vert \nabla f - \nabla\bb \Vert^2_{\mathrm L^2(\R^d)}}\,.
\]
\end{theorem}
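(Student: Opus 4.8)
The plan is to carry out the two-step program advertised in the introduction, turning each of Bianchi--Egnell's compactness-based steps into a quantitative one.

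\emph{Step 1: reduction to functions close to $\mathcal M$ via a symmetrization flow.}
First I would observe that, since both sides of the claimed inequality are invariant under the conformal group acting on $\dot{\mathrm H}^1(\R^d)$ (dilations, translations, and the inversion that exchanges $0$ and $\infty$), together with scalar multiplication, one may normalize $f$ freely within its orbit. The key new ingredient is a flow that decreases (or at least does not increase) the quotient $\Vert\nabla f\Vert_{\mathrm L^2}^2/\Vert f\Vert_{\mathrm L^{2^*}}^2$ while driving $f$ toward the manifold $\mathcal M$: one combines \emph{competing symmetries} (alternating the conformal inversion with the symmetric decreasing rearrangement, which converges to an Aubin--Talenti bubble as in the work of Carlen--Loss) with a \emph{continuous Steiner symmetrization} flow that interpolates between $f$ and $f^*$. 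Along this flow the deficit $\Deficit$ is monotone nonincreasing, and simultaneously $\inf_{g\in\mathcal M}\Vert\nabla f-\nabla g\Vert_{\mathrm L^2}^2$ is controlled; the upshot is a constant $c_1>0$ (independent of $d$, or with the correct $1/d$ behavior already factored out) such that either $f$ already satisfies the inequality with a good constant, or else $f$ can be replaced by a function lying in a fixed $\dot{\mathrm H}^1$-neighborhood of $\mathcal M$ whose size does not shrink with $d$. This is exactly the ``quantitative local-to-global'' principle; it replaces concentration-compactness by a constructive, quantitative rearrangement argument.

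\emph{Step 2: dimension-uniform local analysis near $\mathcal M$.}
Once $f$ is within a fixed neighborhood of $\mathcal M$, write $f=\bar g+\epsilon\,\varphi$ with $\bar g$ the nearest bubble, $\epsilon=\Vert\nabla f-\nabla\bar g\Vert_{\mathrm L^2}$, and $\varphi$ orthogonal (in $\dot{\mathrm H}^1$) to the tangent space $T_{\bar g}\mathcal M$. The crude Taylor bound \eqref{eq:Taylor} is useless for large $d$ because the cubic-and-higher error term $\tfrac{2}{2^*}\epsilon^{2^*}$ kicks in only for $\epsilon\lesssim d^{-d/4}$. The fix is to \emph{not} expand all the way to the quadratic form: instead I would keep the full nonlinear functional and estimate the remainder $\Vert f\Vert_{\mathrm L^{2^*}}^{2^*}$ more carefully, exploiting that $2^*=\tfrac{2d}{d-2}\to2$ as $d\to\infty$, so the $\mathrm L^{2^*}$ norm is a gentle perturbation of the $\mathrm L^2$ theory. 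Concretely one proves a bound of the form $\Deficit(\bar g+\epsilon\varphi)\ge \tfrac{c_0}{d}\,\epsilon^2-C\,\epsilon^{2^*}\cdot(\text{something polynomial in }d)$ where the interplay of the spectral gap $\tfrac{4}{d+4}$ of $Q_{\bar g}$ with a sharp (dimension-tracked) control of the higher-order terms — e.g.\ via a second-order Taylor expansion with integral remainder and careful pointwise/Hölder estimates on powers of $\bar g+\epsilon\varphi$ — yields that the quadratic term dominates for all $\epsilon\le\epsilon_0$ with $\epsilon_0$ independent of $d$. Combining with Step 1 gives the theorem with $\beta=c_0\wedge(\text{the Step-1 constant})$, explicit.

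\emph{Main obstacle.}
I expect the crux to be the dimension-uniform remainder estimate in Step 2: controlling $\big|\Vert\bar g+\epsilon\varphi\Vert_{\mathrm L^{2^*}}^{2^*}-\Vert\bar g\Vert_{\mathrm L^{2^*}}^{2^*}-2^*\epsilon\!\int\bar g^{2^*-1}\varphi-\binom{2^*}{2}\epsilon^2\!\int\bar g^{2^*-2}\varphi^2\big|$ by $C\,\epsilon^{2^*}$ \emph{with $C$ not blowing up with $d$} is delicate, because $\bar g^{2^*-2}$ behaves badly and $2^*-2=\tfrac{4}{d-2}$ is small, making the naive convexity inequalities lossy. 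One needs the right splitting of $\R^d$ into the region where $\epsilon|\varphi|\lesssim\bar g$ and its complement, together with a Sobolev-type bound on $\varphi$ in the bad region whose constant is tracked in $d$. A secondary difficulty is verifying that the competing-symmetries / continuous-Steiner-symmetrization flow in Step 1 genuinely decreases the deficit at a quantitative rate (not merely weakly), and that the associated ``distance to $\mathcal M$'' is not destroyed by the flow — this requires the monotonicity of the Dirichlet energy under continuous Steiner symmetrization (a Pólya--Szegő-type statement along the flow) together with a quantitative lower bound on how fast the flow moves a function that is far from all bubbles.
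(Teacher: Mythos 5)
Your proposal correctly identifies the two-phase architecture advertised in the introduction---a quantitative local-to-global reduction by competing symmetries plus continuous Steiner symmetrization, followed by a dimension-tracked local expansion near $\mathcal M$---and it correctly diagnoses \emph{why} the naive Taylor bound fails for large $d$. But there are several genuine gaps, two of them structural.

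\emph{Missing reduction to nonnegative functions.} Your Step 1 relies on symmetric decreasing rearrangement and Steiner symmetrization, which are only defined (or only behave as needed) for nonnegative functions. For sign-changing $f$, replacing $f$ by $|f|$ leaves the Sobolev deficit unchanged but can strictly decrease $\inf_{g\in\mathcal M}\|\nabla f-\nabla g\|_2^2$, so the estimate you would prove for $|f|$ does not transfer to $f$. The paper handles this by a separate argument (Proposition~\ref{Prop:BE}): decompose $f=f_+-f_-$, use superadditivity of the deficit, exploit concavity of $p\mapsto p^{2/2^*}+(1-p)^{2/2^*}-1$ to extract a term $\gtrsim(1-2^{-2/d})\|\nabla f_-\|_2^2$, and then run the nonnegative-case bound on $f_+$ alone. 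This step is not optional, and your outline does not contain any version of it.

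\emph{The local analysis is not a refined Taylor remainder bound.} You propose estimating the quartic/higher remainder of $\|\bar g+\epsilon\varphi\|_{2^*}^{2^*}$ uniformly in $d$, splitting $\R^d$ into the region $\epsilon|\varphi|\lesssim\bar g$ and its complement. But the obstruction is that even the exact quadratic term in the Taylor expansion comes with a spectral gap of only $4/(d+4)$, while the cubic correction has coefficient of order $1$ times $q-2\sim 4/d$; so the whole expansion is degenerate, and one must exploit that $q-2$ multiplies \emph{all} the dangerous pieces. The paper does this after passing to the sphere (where $\bar g\equiv 1$), writing $r=f/\bar g-1$ and cutting $r$ \emph{by its values} into $r_1=\min\{r,\gamma\}$, $r_2=\min\{(r-\gamma)_+,M-\gamma\}$, $r_3=(r-M)_+$. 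Each piece is handled with a separate mechanism: the bounded piece $r_1$ with the spectral gap; the tail $r_3$ with the Sobolev inequality on the sphere, where the extra factor $2/q<1$ absorbs the errors; and the intermediate piece $r_2$ with an \emph{improved} spectral gap coming from the fact that $\{r_2>0\}$ has small measure, made quantitative via the reverse H\"older inequality $\|Y\|_p\le(p-1)^{k/2}$ for spherical harmonics. None of this is the integral-remainder-plus-spatial-splitting strategy you describe, and I do not see how that strategy would succeed: a single pointwise Taylor remainder with integral form will not see the improved spectral gap that is essential for the $r_2$ piece.

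\emph{The flow argument does not need a quantitative rate.} You flag as a difficulty that one must show the competing-symmetries/Steiner flow decreases the deficit at a quantitative rate for functions far from $\mathcal M$. In fact the paper avoids this entirely: it only uses that $\tau\mapsto\|\nabla f_\tau\|_2$ is nonincreasing and right-continuous, that $\tau\mapsto\sup_{g\in\mathcal M_1}(f_\tau,g^{2^*-1})^2$ is continuous, and that the discrete competing-symmetries iteration converges to a bubble in $\mathrm L^{2^*}$. The argument then proceeds by dichotomy: either the ratio $\inf\|\nabla f_n-\nabla g\|_2^2/\|\nabla f_n\|_2^2$ stays $\ge\delta$ along the whole discrete iteration (in which case one passes to the limit and gets $\mathcal E(f)\ge\delta$ directly), or it crosses $\delta$ at some step, and one then interpolates that single step with a continuous Steiner flow and stops at the crossing time $\tau_0$. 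Handling the possible discontinuity of $\tau\mapsto\|\nabla f_\tau\|_2$ at $\tau_0$ is the only real subtlety, resolved by a monotonicity-of-quotients trick; no rate is needed.

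In short: your high-level outline matches the paper's advertised strategy, but the three concrete mechanisms that make it work---the positive/negative split with the concavity lemma, the value-cutting into $r_1,r_2,r_3$ with the reverse H\"older bound for the middle piece, and the stopping-time dichotomy in place of a rate estimate---are all absent or replaced by sketches that would not close.
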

To our knowledge, this is the first estimate where one obtains a complete dimensionally sharp result for the deficit of a Sobolev inequality. If $\mathbf{\mathscr C}_{d,\rm BE}$ denotes the sharp constant in~\eqref{eq:bianchi-egnell0}, which we shall assume from now on, then Theorem~\ref{main} can be succinctly written
\[
\mathbf{\mathscr C}_{d,\rm BE} \geq \frac\beta d\,,
\]
where $\beta$ is independent of $d$. To emphasize the robustness of our result we can prove, as a direct consequence of Theorem~\ref{main} when $d\to\infty$, a new stability result for the Gaussian log-Sobolev inequality. More precisely, on $\R^N$ with $N\geq 1$, we consider the Gaussian~measure
$$
d\gamma(x) = e^{-\,\pi\,|x|^2}\,dx\,.
$$
We abbreviate $\mathrm L^2(\gamma)=\mathrm L^2(\R^N,d\gamma)$ and denote by $\mathrm H^1(\gamma)$ the space of all $u\in \mathrm L^2(\gamma)$ with distributional gradient in $\mathrm L^2(\gamma)$.
\begin{corollary}\label{logsob} With $\beta>0$ as in Theorem~\ref{main}, we have that, for all $N\in\N$ and all $u\in\mathrm H^1(\gamma)$,
\[
\int_{\R^N} |\nabla u|^2\,d\gamma - \pi \int_{\R^N} u^2 \ln\( \frac{u^2}{\| u \|_{\mathrm L^2(\gamma)}^2}\)\,d\gamma \geq \frac{\beta\,\pi}2 \inf_{b\in\R^N\!,\,c\in\R} \int_{\R^N} \big(u - c\,e^{b\cdot x}\big)^2\,d\gamma\,.
\]
\end{corollary}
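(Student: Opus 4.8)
The plan is to obtain the log-Sobolev inequality as a dimensional limit of the Sobolev inequality on $\R^d$ with $d=N+n$ and $n\to\infty$, applied to functions that depend on the extra $n$ variables only through their radial coordinate in a Gaussian-like profile. Concretely, I would fix $u\in\mathrm H^1(\gamma)$ on $\R^N$, smooth and bounded away from $0$ after an approximation argument, and for large $n$ set $d=N+n$ and test Theorem~\ref{main} with
\[
f_n(x,y)=u(x)\,\bar g_n(|y|)\,,\qquad x\in\R^N,\ y\in\R^n,
\]
where $\bar g_n$ is (a suitably rescaled copy of) the $n$-dimensional Aubin--Talenti profile, chosen so that $\bar g_n(|y|)^2\,dy$ concentrates, after the standard change of variables, onto the Gaussian weight $e^{-\pi|x|^2}$ in the limit. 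This is exactly the mechanism by which the Sobolev inequality on $\R^{N+n}$ degenerates to the Gaussian log-Sobolev inequality on $\R^N$ (the classical Beckner/Bakry--Ledoux-type argument): the ratio $\Vert\nabla f_n\Vert_{\mathrm L^2}^2/\Vert f_n\Vert_{\mathrm L^{2^*}}^2 - S_d$, after multiplying by an $n$-dependent normalization, converges to the Gaussian log-Sobolev deficit
\[
\int_{\R^N}|\nabla u|^2\,d\gamma-\pi\int_{\R^N}u^2\ln\!\big(|u|^2/\Vert u\Vert_{\mathrm L^2(\gamma)}^2\big)\,d\gamma\,,
\]
up to the factor $\pi$ coming from our normalization of $\gamma$.

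The second ingredient is to track what the right-hand side of Theorem~\ref{main} becomes under this limit. The manifold $\mathcal M$ of Aubin--Talenti functions on $\R^{d}$ has $d+2=N+n+2$ parameters; when restricted to the product ansatz and passed to the limit, the translations in the $y$-variable and the dilation parameter combine to produce precisely the family $c\,e^{b\cdot x}$, $b\in\R^N$, $c\in\R$, which is the set of extremizers of the Gaussian log-Sobolev inequality. Thus I would show that
\[
\frac1d\inf_{\bb\in\mathcal M}\Vert\nabla f_n-\nabla\bb\Vert_{\mathrm L^2(\R^d)}^2
\]
(with the same $n$-dependent normalization as above, and using $d\sim n$) converges to $\tfrac12\,\pi\inf_{b,c}\int_{\R^N}(u-c\,e^{b\cdot x})^2\,d\gamma$. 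Here the crucial point is the factor $1/d$ in Theorem~\ref{main}: since $d\to\infty$, a naive application of Bianchi--Egnell with a dimension-blind constant would give nothing in the limit, whereas the $\beta/d$ behavior produces a nontrivial, dimension-\emph{free} constant $\beta$ on the level of the Gaussian inequality. The factor $\tfrac{\pi}2$ in the statement is then just bookkeeping: $\tfrac12$ from the second-order Taylor expansion relating $\Vert\nabla f_n-\nabla\bb\Vert^2$ to the $\mathrm L^2(\gamma)$-distance in the limit, and $\pi$ from the chosen normalization of $d\gamma$.

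I expect the main obstacle to be the lower-semicontinuity/compactness bookkeeping in the limit of the distance term: one must make sure that the infimum over the finite-dimensional manifold $\mathcal M$ on $\R^{d}$ does not "escape" under the product ansatz, i.e.\ that the optimal $\bb\in\mathcal M$ can be taken asymptotically of the same product form, so that the limiting infimum is genuinely over $\{c\,e^{b\cdot x}\}$ and not over some larger or smaller set. Concretely, this means controlling the dilation and $y$-translation parameters of the near-optimal $\bb$ uniformly in $n$ and showing that their rescaled limits are exactly the log-Sobolev parameters $(b,c)$; a compactness argument together with the explicit form of $\bar g$ should handle this, but it requires care. The remaining steps — the Taylor expansion identifying the quadratic distances, the convergence of the deficits, and the density argument reducing to nice $u$ — are routine, and I would carry them out in that order after establishing the product-ansatz computation and the parameter-compactness claim.
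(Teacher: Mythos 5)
Your overall strategy — apply Theorem~\ref{main} to a product ansatz on $\R^d$ with $d=N+n$ and let $n\to\infty$, tracking the fact that the $\beta/d$ dependence cancels the degeneration of the Sobolev inequality — is the right idea and is indeed the mechanism used in the paper. However, there are two substantive gaps.

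First, the ansatz is wrong. You write $f_n(x,y)=u(x)\,\bar g_n(|y|)$ with $\bar g_n$ an Aubin--Talenti profile in the \emph{extra} $n$ variables only. With this choice, when $u$ is constant on its support, $f_n$ is constant in $x$ and is \emph{not} close to a $d$-dimensional Aubin--Talenti function, so the Sobolev deficit does not vanish and the distance to $\mathcal M$ is not small; the deficit of the Gaussian inequality with respect to $d\gamma$, by contrast, does vanish at the constant $u\equiv 1$. The paper's ansatz is instead $f(x)=u(x_1,\dots,x_N)\,f_*(x)$ where $f_*$ is the \emph{full} $d$-dimensional Aubin--Talenti function $f_*(x)=Z_d^{(2-d)/(2d)}\bigl(1+|x|^2/r_d^2\bigr)^{(2-d)/2}$. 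The Gaussian weight $e^{-\pi|y|^2}$ on the retained variables emerges precisely from the $y$-dependence of $f_*$ (since $(1+(|y|^2+|z|^2)/r_d^2)^{-(d+N)/2}\to e^{-\pi|y|^2}$ after integrating out $z$), not from any concentration of $\bar g_n(|y|)^2\,dy$ — that phrase in your sketch does not parse, since $\bar g_n(|y|)^2\,dy$ is a measure on $\R^n$ and cannot ``concentrate onto'' a weight on $\R^N$. Moreover the paper's ansatz makes the gradient computation close cleanly via the Sobolev equation $-\Delta f_*=S_d f_*^{2^*-1}$ and integration by parts, which your product ansatz does not enjoy. It is conceivable that your ansatz, if corrected with an appropriate $x$-dependent weight, would prove the (equivalent) Euclidean log-Sobolev inequality, but as written it cannot give the Gaussian statement of the corollary, and you would need to specify and carry out a nontrivial change of variables to convert.

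Second, you do not address the extension from nonnegative to sign-changing $u$. The limiting computation (Lemmas~4.1 and~4.2 in the paper) genuinely uses $u\geq 0$, for instance when bounding $\bigl(f,h_{a,b}^{(d+2)/(d-2)}\bigr)$ by $\|u\|_\infty\int f_*h^{(d+2)/(d-2)}$ and in establishing the lower bound $\liminf_d\bigl(f,h^{(d+2)/(d-2)}\bigr)\ge\int u\,d\gamma>0$. So the ansatz-and-limit argument only yields Proposition~4.3 for $u\geq 0$; the paper then passes to sign-changing $u$ via the decomposition $u=u_+-u_-$ together with the strict concavity of $p\mapsto -(p\ln p+(1-p)\ln(1-p))$ on $[0,1/2]$, an argument parallel to Proposition~\ref{Prop:BE}. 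This step, which can halve the constant and produces the final $\beta=\tfrac12\min\{\beta_\star,4\ln 2\}$, is absent from your proposal. Your ``density argument reducing to nice $u$'' seems to refer only to smoothness/compact support, not to sign, and would not supply this.

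Your instinct about the hardest step — uniform control, as $d\to\infty$, of the near-optimal dilation $a_d$ and translation $b_d$ in the infimum over $\mathcal M$, so that the limiting infimum is exactly over $\{c\,e^{b\cdot x}\}$ — is correct and is what Lemma~4.2 in the paper delivers, via quantitative overlap estimates rather than a soft compactness argument. Finally, the bookkeeping is a bit off: the factor $\tfrac{\pi e}{2}$ on both sides arises from $S_d/d\to\pi e/2$ and $Z_d^{2/d}\to e/4$, with the factors of $e$ cancelling between the two sides, not from a ``second-order Taylor expansion relating $\|\nabla f_n-\nabla\bb\|^2$ to the $L^2(\gamma)$-distance''.
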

As we shall discuss later, also this corollary is optimal, in terms of the power that we control.
\medskip

\subsection*{Some references}\label{sec:history}

The question of optimality in the Sobolev inequality has a long history. Rodemich~\cite{Rodemich}, Aubin~\cite{Aubin} and Talenti~\cite{Talenti} (see also~\cite{Rosen}) proved that the Sobolev deficit is nonnegative.
Moreover, it was shown by Lieb~\cite{Lieb}, Gidas, Ni and Nirenberg~\cite{GidasNiNirenberg} and Caffarelli, Gidas and Spruck~\cite{CaffarelliGidasSpruck} that the deficit vanishes if and only if the function $f$ is in the $(d+2)$-dimensional manifold~$\mathcal M$ of the `Aubin--Talenti functions' of the form~\eqref{eq:optimizers}. Lions~\cite{MR834360} has shown that if the Sobolev deficit is small for some function $f$, then $f$ has to be close to the set $\mathcal M$ of Sobolev optimizers, as a consequence of the concentration-compactness method (see also~\cite{MR2431434} for a textbook presentation). In that case, the closeness is measured in the strongest possible sense, namely with respect to the norm in $\dot{\mathrm H}^1(\R^d)$. The Bianchi--Egnell inequality~\eqref{eq:bianchi-egnell0} makes the qualitative result of Lions quantitative. In particular, it shows that the distance to the manifold vanishes at least like the square root of the Sobolev deficit. Such `stability' estimates have been established in other contexts as well, {\it e.g.}, for the isoperimetric inequality or for classical inequalities in real and harmonic analysis. In fact, stability has attracted a lot of attention in recent years. We refer to~\cite{FigalliMaggiPratelli,Figallietal} and references therein for quantitative stability results for isoperimetric inequalities based on a generalization of the Fraenkel asymmetry and to~\cite{FuscoMaggiPratelli,CianchiFuscoMaggiPratelli} for related results for functional inequalities of Sobolev type; to~\cite{CicaleseLeonardi} for a selection principle which provides an alternative proof of these results; to~\cite{ChenFrankWeth} for stability in the fractional Sobolev inequality; to~\cite{FigalliZhang} for a sharp quantitative stability result corresponding to the embedding of $\dot{\mathrm W}^{1,p}$ into the critical $\mathrm L^q(\R^d)$ space (and references therein for earlier papers); to~\cite{MR3227280} for stability results based on the duality between Sobolev and Hardy–Littlewood–Sobolev inequalities and to~\cite{CarlenFrankLieb} for the question of the stability of the lowest eigenvalue of the Schr\"odinger operator under the constraint of a given $\mathrm L^p$ norm of the potential based on Gagliardo--Nirenberg inequalities, by a Keller duality; to~\cite{Christ_HY,Christ} for respectively sharpened Hausdorff--Young and Riesz rearrangement inequalities, with applications in~\cite{FrankLieb2,FrankLieb}; to~\cite{MR3024094} and~\cite{MR3695890,MR3989143} for an extension of the Bianchi-Egnell method to a Sobolev inequality for continuous dimensions (using weights) motivated by Gagliardo--Nirenberg inequalities, whose stability (under tail restriction) is also obtained by entropy estimates and regularizing properties of fast diffusion flows in~\cite{BonforteDolbeaultNazaretSimonov}, with constructive estimates; to~\cite{Frank_2022,arxiv.2211.13180} for related results on the unit sphere for subcritical interpolation inequalities. In several of these papers the strategy of Bianchi and Egnell or its generalizations play an important role.

An interesting point about~\eqref{eq:bianchi-egnell0} and other inequalities obtained by this method is that nothing seems to be known about the value of the constant $\mathbf{\mathscr C}_{d,\rm BE}$ except for the fact that it is strictly positive and bounded from above by
\begin{equation}
\label{eq:upperbound}
\mathbf{\mathscr C}_{d,\rm BE} \leq \frac{4}{d+4}\,,
\end{equation}
as a consequence of the sharpness of the leading order term in~\eqref{eq:Taylor} (see also the proof of~\cite[Lemma 1]{BianchiEgnell} or~\cite[Introduction]{ChenFrankWeth}). As mentioned before, the proof of~\eqref{eq:bianchi-egnell0} in~\cite{BianchiEgnell} proceeds by a spectral estimate combined with a compactness argument and hence cannot give any information about $\mathbf{\mathscr C}_{d,\rm BE}$. In~\cite{arXiv:2210.08482} K\"onig shows that the upper bound in~\eqref{eq:upperbound} is strict and in~\cite{arXiv:2211.14185} that the infimum defining $\mathbf{\mathscr C}_{d,\rm BE}$ is attained\footnote{In fact, the results of K\"onig in~\cite{arXiv:2210.08482,arXiv:2211.14185} provide affirmative answers to questions that we had asked in a first version of this paper.}. This is reminiscent of the planar isoperimetric inequality, where the constant in the quantitative isoperimetric inequality with Frankel asymmetry is strictly smaller than the constant in the corresponding spectral gap inequality and where one can prove the existence of an optimizing domain; see~\cite{BianchiniCroceHenrot}. For further studies under an additional convexity assumption, see~\cite{Campi,AlvinoFeroneNitsch,CicaleseLeonardi2}. Explicit lower estimates are known only for distances to $\mathcal M$ measured by weaker norms than in~\eqref{eq:bianchi-egnell0} and for functions satisfying additional constraints, while much more is known for subcritical interpolation inequalities than for Sobolev-type inequalities. We refer to~\cite{BDGV,MR3103175,BonforteDolbeaultNazaretSimonov} for Euclidean Gagliardo--Nirenberg--Sobolev inequalities, to~\cite{MR2375056} for improvements for Gaussian weights, and to~\cite{MR3640894,Frank_2022,chen2022sharp,arxiv.2211.13180} for interpolation inequalities on the sphere. After the completion of this paper, an extension of our method to fractional Sobolev inequalities has been obtained in~\cite{MR4758206} with interesting consequences for Hardy--Littlewood--Sobolev inequalities.

The {\it logarithmic Sobolev inequality\/} on a finite dimensional Euclidean space (with either Gaussian or Lebesgue measures) can be seen as a {\it large dimensional limit\/} of the Sobolev inequality, for instance by considering Sobolev's inequality on a sphere of radius $\sqrt d$ applied to a function depending only on $N$ real variables as in~\cite[p.~4818]{MR1164616} and~\cite{MR353471}. Also see~\cite[Remark~4, p.~254]{Vershik:2007nr} for some historical comments. The classical versions of the logarithmic Sobolev inequality are usually attributed to Stam~\cite{MR0109101}, Federbush~\cite{Federbush}, Gross~\cite{Gross75}, and also Weissler~\cite{MR479373} for a scale-invariant form. There is a huge literature on logarithmic Sobolev inequalities and we refer to~\cite{MR2325763} for a survey on many early results. Equality cases in the logarithmic Sobolev inequality have been characterized by Carlen in~\cite[Theorem~5]{MR1132315}, even with a remainder term, see~\cite[Theorem~6]{MR1132315}. Other remainder terms are given in~\cite{Bobkov_2014,MR3567822,MR3493423,brigati2023stability,indrei2023sharp} and, using weaker notions of distances, in~\cite{Bobkov_2014,MR3271181,MR3567822,MR3666798,MR4305006}, while some obstructions to stability results involving strong notions of distance are given in~\cite{MR4455233,MR4116725}. Under some restrictions, improved forms of the inequality are known for instance from~\cite{MR3567822,MR3493423,MR4116725,arxiv.2211.13180}. Also see~\cite{MR4475270} for a connection between deficit estimates for the logarithmic Sobolev inequality and the Mahler conjecture in convex geometry, and~\cite{MR4305006} for a detailed list of earlier related results. However, as far as we know, the Bianchi--Egnell strategy has so far not been applied to the logarithmic Sobolev inequality, probably because $u\mapsto u^2\, \ln(u^2)$ is not twice differentiable at the origin. Here we overcome this issue as a consequence of the optimal $d^{-1}$ decay of $\mathbf{\mathscr C}_{d,\rm BE}$.\medskip

\subsection*{Comments}\label{sec:remarks}

Stability of functional inequalities is a natural question in the Calculus of Variations. As soon as the set~$\mathcal M$ of all minimizers of a given functional is known, the next question is: if for some function $f$ the functional takes a value above the minimum, can we control a distance ${\rm dist}(f,\mathcal M)$ of $f$ to~$\mathcal M$ in terms of the value of the functional? This is precisely what we do with the deficit of the Sobolev inequality $\Deficit$. Applications range from the justification of the use of Taylor expansions and spectral estimates, which is essential in many areas of physics, to the computation of \emph{a posteriori} errors in numerical analysis. Stability in the Sobolev inequality is of particular interest because a whole range of stability estimates in subcritical inequalities can be deduced by interpolation. This stability also applies to inequalities based on duality, like the Hardy--Littlewood--Sobolev inequalities, with applications to mean-field models and nonlinear equations involving fractional operators. The knowledge of an explicit stability constant is also an invitation to revisit various problems of analysis, like blow-up phenomena in which Aubin--Talenti functions play a key role, or rates of convergence in nonlinear parabolic equations, for instance fast diffusion equations, in which Barenblatt profiles are nothing more than Aubin--Talenti functions in a different setting.

An important point when discussing stability of functional inequalities is the notion of distance that is employed. In the setting of the Sobolev inequality, the distance ${\rm dist}(f,g)=\|\nabla f-\nabla g\|_2$ used by Bianchi and Egnell is the strongest possible notion of distance. The situation is less clear in the setting of the log-Sobolev inequality. It is well known that the entropy, that is, $\int_{\R^N}f\ln f\,d\gamma$, controls $\|f-1\|_{\mathrm L^1(\gamma)}$, if $\int_{\R^N}f\,d\gamma=1$, by the standard Pinsker--Csisz\'ar--Kullback inequality, and the Wasserstein distance by the Talagrand inequality (see for instance~\cite[Notes and references of Chapters 5 and 9]{MR3155209}). Here $f$ plays the role of $u^2$ in our Corollary~\ref{logsob}. Stability of the log-Sobolev inequality in Wasserstein distance is by now classical, with results that go back to~\cite{MR3271181,Bobkov_2014}, but is weaker than in $\mathrm L^2(\R^d,d\gamma)$ distance as in Corollary~\ref{logsob}. One may wonder whether one can prove stability with respect to the $\mathrm H^1(\R^N,d\gamma)$-distance. However, the corresponding bound does not hold according to~\cite{indrei2023sharp}; see~\cite{brigati2023stability,Indrei2024} for recent advances on this issue. In addition, within the $\mathrm L^p(\R^N,d\gamma)$ spaces, $p=2$ is the largest natural exponent for which such a stability estimate can hold; see~\cite{MR4455233} for a result in this direction.

\subsection*{Strategy of the proofs and outline}\label{sec:strategy}

Let us start with the proof of Theorem~\ref{main}, concerning the stability of the Sobolev inequality. It consists of three main parts. The first and second parts deal with nonnegative functions, while in the third part we deduce the inequality for arbitrary functions from that for nonnegative functions. The latter argument uses a concavity property of the problem. Potentially this argument comes with a loss in the constant, but we show that it does not destroy the $d^{-1}$ behavior that we need to prove Corollary~\ref{logsob}.

We now discuss the first and the second parts in more detail. These two parts correspond to the two ingredients mentioned at the beginning of the introduction, namely to the {\it local analysis\/} (ii) and the {\it local-to-global\/} principle (i), respectively. The region where the local analysis applies is where the quantity $\inf_{\bb\in\mathcal M} \Vert\nabla f-\nabla\bb\Vert^2_{\mathrm L^2(\R^d)}/\Vert\nabla f\Vert^2_{\mathrm L^2(\R^d)}\leq\delta$, while the remaining region will be treated using the local-to-global principle. Here $\delta\in(0,1/2)$ is a free parameter that will be chosen appropriately at the end. The crucial point is that $\delta$ can be chosen independently of the dimension $d$.

The first part of the proof (see Theorem~\ref{unifboundclose} in Section~\ref{sec:close}) is concerned with a nonnegative function~$f$ that is close to the set of optimizers. The basic strategy is to expand the quantity $\|f\|_{\mathrm L^q(\R^d)}^2$, with the main term given by this quantity when $f$ is replaced by the closest optimizer~$g$. By this choice there will be no linear term in the expansion, and for the quadratic term one uses a spectral gap inequality (Section~\ref{sec:spectralGap}). A first version of this argument appears in the proof of Proposition~\ref{firstbound} in Section~\ref{sec:firstbound}. Such a naive expansion, however, is not good enough to reproduce the correct $d^{-1}$ behavior of the constant $\mathbf{\mathscr C}_{d,\rm BE}$. Instead, a refined argument (Sections~\ref{Sec:Pieces} and~\ref{Sec:Detailed}) is needed where we cut the function $f/g$ in various parts of its range and treat the different parts by \it ad hoc \rm arguments. Three different ranges of the function are treated and, while each of these arguments individually is not sufficient, by carefully combining them we obtain the final result. We mention that the spectral gap inequality is only used for an $\mathrm L^\infty$-bounded part of the perturbation.

Parenthetically we point out that we actually prove something stronger. Namely, we assume a decomposition $f=\bb+r$ with $\bb\in\mathcal M$ and a perturbation $r$ satisfying certain orthogonality conditions. These orthogonality conditions for $r$ are guaranteed when $g$ realizes the infimum $\inf_{g'\in\mathcal M} \|\nabla f - \nabla g'\|_{\mathrm L^2(\R^d)}^2$, but our argument does not make use of this minimality of $g$.

In the second part of the proof of Theorem~\ref{main}, described in Section~\ref{sec:flow}, we obtain a lower bound on
\be{Def:E}
\mathcal E(f) :=\frac{ \Vert \nabla f \Vert^2_{\mathrm L^2(\R^d)} - S_d\,\Vert f \Vert^2_{\mathrm L^{2^*}(\R^d)}} {\inf_{\bb\in\mathcal M} \Vert \nabla f - \nabla\bb \Vert^2_{\mathrm L^2(\R^d)}}\quad\forall\,f\in\dot{\mathrm H}^1(\R^d)\setminus\mathcal M
\ee
for nonnegative functions $f$ satisfying $\inf_{\bb\in\mathcal M}\! \Vert \nabla f - \nabla\bb \Vert^2_{\mathrm L^2(\R^d)}>\delta\,\Vert \nabla f \Vert_{\mathrm L^2(\R^d)}^2$; see Theorem~\ref{Cor:Summary} for a detailed statement. Bianchi and Egnell~\cite{BianchiEgnell} handle this part by a compactness argument and this is the reason why up to now there did not exist an explicit lower bound on $\mathbf{\mathscr C}_{d,\rm BE}$. Our solution is to replace this argument by a constructive procedure using an idea taken from a paper by Christ~\cite{Christ}, in which he establishes a quantitative error term for the Riesz rearrangement inequality. To implement
this strategy in our context we construct, using competing symmetries~\cite{CarlenLoss} and continuous rearrangement~\cite{Brock}, a family of functions $f_\tau, 0\le \tau < \infty$, such that $f_0=f$, $\Vert f_\tau \Vert_{2^*} = \Vert f \Vert_{2^*}$, $\tau \mapsto \Vert \nabla f_\tau \Vert_2$ is nonincreasing and $\inf_{g\in\mathcal M} \Vert \nabla(f_\tau-g) \Vert_2\to 0$ as $\tau \to \infty$. Clearly,
\begin{multline*}
\mathcal E(f) \ge \frac{ \Vert \nabla f \Vert^2_{\mathrm L^2(\R^d)} - S_d\,\Vert f \Vert^2_{\mathrm L^{2^*}(\R^d)}} {\Vert \nabla f \Vert^2_{\mathrm L^2(\R^d)}} = 1-S_d\,\frac{ \Vert f \Vert^2_{\mathrm L^{2^*}(\R^d)}} {\Vert \nabla f \Vert^2_{\mathrm L^2(\R^d)}} \\
\ge \frac{ \Vert \nabla f_\tau \Vert^2_{\mathrm L^2(\R^d)} - S_d\,\Vert f_\tau \Vert^2_{\mathrm L^{2^*}(\R^d)}} {\Vert \nabla f_\tau \Vert^2_{\mathrm L^2(\R^d)}}\,.
\end{multline*}
Starting with $\inf_{\bb\in\mathcal M} \Vert \nabla f-\nabla\bb \Vert_{\mathrm L^2(\R^d)}^2 > \delta\,\Vert \nabla f \Vert_{\mathrm L^2(\R^d)}^2$, one would like to run the flow until at a certain point $\tau_0$ one has
\begin{equation}\label{eq:equality}
\inf_{\bb\in\mathcal M} \Vert \nabla(f_{\tau_0}-\bb) \Vert_{\mathrm L^2(\R^d)}^2 = \delta\,\Vert \nabla f_{\tau_0} \Vert_{\mathrm L^2(\R^d)}^2 \,,
\end{equation}
so that
$$
\mathcal E(f) \ge \frac{ \Vert \nabla f_{\tau_0} \Vert^2_{\mathrm L^2(\R^d)} - S_d\,\Vert f_{\tau_0} \Vert^2_{\mathrm L^{2^*}(\R^d)}} {\Vert \nabla f_{\tau_0} \Vert^2_{\mathrm L^2(\R^d)}}
=\delta\,\frac{ \Vert \nabla f_{\tau_0} \Vert^2_{\mathrm L^2(\R^d)} - S_d\,\Vert f_{\tau_0} \Vert^2_{\mathrm L^{2^*}(\R^d)}} {\inf_{\bb\in\mathcal M} \Vert \nabla(f_{\tau_0}-\bb) \Vert_{\mathrm L^2(\R^d)}^2}\,.
$$
This would allow us to apply the first part of the proof to the function $f_{\tau_0}$ and obtain the desired bound. The details of this argument are more involved than presented here, mostly because the function $\tau\mapsto\|\nabla f_\tau\|_{\mathrm L^2(\R^d)}$ need not be continuous, so the existence of a $\tau_0$ as in~\eqref{eq:equality} is not guaranteed.

Continuous rearrangement flows in the setting of Steiner symmetrizations have been used by P\'olya--Szeg\H o~\cite[Note~B]{MR0043486}, Brock~\cite{Brock,Brock2} and others. For a recent application see~\cite{MR4022083}. In the setting of symmetric decreasing rearrangements of sets they were used by Bucur--Henrot~\cite{BucurHenrot} and we will generalize them to functions. Additional results on this flow, which might be useful in other contexts as well, are given in Appendix~\ref{contrearr}.

\medskip The proof of Corollary~\ref{logsob}, concerning stability for the log-Sobolev inequality, is given in Section~\ref{sec:logsob}. The underlying idea is that this inequality on $\R^N$ can be obtained by taking an appropriate limit in the Sobolev inequalities in dimension $d$, in the limiting regime as $d\to+\infty$, and that the same property should also be true for the corresponding stability inequalities. This large dimensional limit is accompanied by a rescaling argument and it is for scaling reasons that the $\dot{\mathrm H}^1(\R^d)$ distance in the stability term for the Sobolev inequality gives rise only to a stability estimate in $\mathrm L^2(\R^N,d\gamma)$ for the logarithmic Sobolev inequality. This $\mathrm L^2(\R^N,d\gamma)$-stability is not an artifact of our method of proof since, as we have already mentioned, the stability for the log-Sobolev inequality does not hold in $\mathrm H^1(\R^N,d\gamma)$ according to~\cite{indrei2023sharp}.

We also note that in~\cite{DEFFL2024} we give an alternative, direct proof of the stability for the log-Sobolev inequality, which runs through rearrangements on Gauss space, but otherwise the strategy is essentially the same as in the proof of our Theorem~\ref{main}.

\medskip Throughout this paper we deal with real-valued functions. With minor additional effort our arguments can be extended to the case of complex-valued functions. In order to make notations lighter, we will write $\|\cdot\|_q=\|\cdot\|_{\mathrm L^q(\R^d)}$ whenever the space is~$\R^d$ with Lebesgue measure.

\section{Local stability for nonnegative functions}\label{sec:2}

Our goal in this section is to prove a quantitative stability inequality for nonnegative functions close to the manifold of optimizers. In order to simplify the notation, we write in this section
$$
q = 2^* = 2\,d/(d-2) \,,
\quad
\theta=q-2=4/(d-2)
$$
and
\be{A}
\mathsf A= \tfrac14\,d\,(d-2)\,.
\ee

\subsection{The Sobolev inequality on the sphere}\label{sec:preliminary}

It is well known that the Sobolev inequality on $\R^d$ has an equivalent formulation on $\Sph^d$, the unit sphere in $\R^{d+1}$. It will be convenient for us at several steps of our proof to carry out the arguments in the setting of $\Sph^d$. Let us give some details.

We denote by $\omega = (\omega_1, \omega_2, \dots, \omega_{d+1})$ the coordinates in $\R^{d+1}$. Then the unit sphere $\Sp^d \subset \R^{d+1}$ can be parametrized in terms of stereographic coordinates by
$$
\omega_j = \frac{2\,x_j}{1+|x|^2}\,,\quad j=1, \dots, d\,,\quad \omega_{d+1} = \frac{1-|x|^2}{1+|x|^2}\,.
$$
To a function $f$ on $\R^d$ we associate a function $F$ on $\Sph^d$ via
\begin{equation} \label{eq:euclidtosphere}
F(\omega) = \left( \frac{1+|x|^2}2\right)^{\frac{d-2}2} f(x)\quad\forall\,x\in\R^d\,.
\end{equation}
Then, since $\big(2/(1+|x|^2)\big)^d$ is the Jacobian of the inverse stereographic projection $x\mapsto \omega$,
$$
|\Sph^d| \int_{\Sph^d} |F(\omega)|^{2^*}\,d\mu(\omega) = \int_{\R^d} |f(x)|^{2^*}\,dx\,,
$$
where $\mu$ denotes the {\it uniform probability measure\/} on $\Sph^d$. Moreover, $F\in \mathrm H^1(\Sph^d)$ if and only if $f\in\dot{\mathrm H}^1(\R^d)$, and in this case
$$
|\Sph^d| \int_{\Sph^d} \left( |\nabla F|^2 + \mathsf A\,|F|^2\right)d\mu(\omega) = \int_{\R^d} |\nabla f|^2\,dx\,.
$$
Therefore, with $\mathsf A$ given by~\eqref{A}, the sharp Sobolev inequality on $\R^d$ is equivalent to the following sharp Sobolev inequality on $\Sph^d$,
$$
\int_{\Sph^d} \left( |\nabla F|^2 + \mathsf A\,|F|^2\right)d\mu \geq \mathsf A \left( \int_{\Sph^d} |F|^{2^*}\,d\mu \right)^{2/2^*}\quad\forall\,F\in\mathrm H^1(\Sp^d,d\mu)\,,
$$
with equality exactly for the functions
$$
G(\omega) = c\,\big(a+b\cdot \omega\big)^{-\frac{d-2}2}\,,
$$
where $a>0$, $b\in \R^d$ and $c \in \R$ are constants with $|b|<a$. We denote the corresponding set of functions by~$\mathscr M$. Then the above equivalence shows that
$$
\mathcal E(f) = \frac{ \Vert \nabla f \Vert^2_2 - S_d\,\Vert f \Vert^2_{2^*}} {\inf_{\bb\in\mathcal M}\kern-2pt \Vert \nabla f - \nabla\bb \Vert^2_2} \!=\!
\frac{ \Vert \nabla F \Vert^2_{\mathrm L^2(\Sp^d)} + \mathsf A\,\Vert F\Vert_{\mathrm L^2(\Sp^d)}^2 - S_d\,\Vert F \Vert^2_{\mathrm L^{2^*}(\Sp^d)}} {\inf_{G \in \mathscr M}\kern-2pt\left\{\Vert \nabla F- \nabla G \Vert^2_{\mathrm L^2(\Sp^d)} \!+\! \mathsf A\,\Vert F\!\!-\!\!G\Vert_{\mathrm L^2(\Sp^d)}^2\right\} }\,.
$$

\subsection{A stability result for functions close to the manifold of optimizers}\label{sec:close}

\begin{theorem}\label{unifboundclose}
Let $q = 2^* = 2\,d/(d-2)$ and $\theta=q-2=4/(d-2)$. There are explicit constants $\epsilon_0>0$ and $\tilde\delta\in(0,1/2)$ such that for all $d\geq 3$ and for all $-\,1 \leq r\in \mathrm H^1(\Sph^d)$ satisfying
\begin{equation}
\label{eq:smallsphere}
\left( \isd{|r|^q} \right)^{2/q} \leq\tilde\delta
\end{equation}
and
\begin{equation}
\label{eq:orthosphere}
\isd r = 0 = \isd{\omega_j\,r}\,,
\quad j=1,\ldots,d+1\,,
\end{equation}
one has
\begin{multline*}
\isd{\left( |\nabla r|^2 +\mathsf A\,(1+r)^2 \right)} -\mathsf A\left( \isd{(1+r)^q}\right)^{2/q} \\ \geq \theta\,\epsilon_0 \isd{\left( |\nabla r|^2 +\mathsf A\,r^2\right)}\,.
\end{multline*}
\end{theorem}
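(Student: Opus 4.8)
\emph{Plan.} Via the equivalence recalled in Section~\ref{sec:preliminary} I would work directly on $\Sph^d$, writing $F=1+r$, so that the left-hand side is the Sobolev deficit on $\Sph^d$ of $F$ and the optimizer to which $F$ is close is the constant $1$. The first step is to remove the nonlinearity. Since $\isd r=0$ we have $\isd{(1+r)^2}=1+\isd{r^2}$; and since $q\ge2$, the concavity of $x\mapsto x^{2/q}$ gives
\[
\Big(\isd{(1+r)^q}\Big)^{2/q}\le 1+\tfrac2q\Big(\isd{(1+r)^q}-1\Big)=1+\tfrac2q\isd{\phi(r)},\qquad \phi(t):=(1+t)^q-1-qt ,
\]
again using $\isd r=0$. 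Here $\phi\ge0$ on $[-1,\infty)$ with $\phi(0)=\phi'(0)=0$ and $\phi''(s)=q(q-1)(1+s)^\theta$. Substituting, the assertion reduces to
\[
\tfrac{2\mathsf A}{q}\isd{\phi(r)}\;\le\;(1-\theta\epsilon_0)\isd{\big(|\nabla r|^2+\mathsf A\,r^2\big)} .
\]

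The bulk of the right-hand side should come from the spectral gap. The orthogonality~\eqref{eq:orthosphere} says $r$ is $\mathrm L^2(\Sph^d)$-orthogonal to the eigenspaces of the Laplacian on $\Sph^d$ for the eigenvalues $0$ and $d$, hence $\isd{|\nabla r|^2}\ge2(d+1)\isd{r^2}$. A second-order Taylor estimate gives $\phi(t)\le\tfrac{q(q-1)}2(1+\kappa)^\theta t^2$ for $t\le\kappa$ (for $t\le0$ the factor $(1+\kappa)^\theta$ may be dropped), where $\kappa>0$ is a small parameter. If $r\le\kappa$ everywhere this already closes the argument: using $\tfrac{2\mathsf A}q\cdot\tfrac{q(q-1)}2=\mathsf A(q-1)=\tfrac{d(d+2)}4$ and $2(d+1)+\mathsf A=\tfrac{(d+2)(d+4)}4$ one gets $\tfrac{2\mathsf A}q\isd{\phi(r)}\le\tfrac{d}{d+4}(1+\kappa)^\theta\isd{(|\nabla r|^2+\mathsf A r^2)}$, and the residual $\approx\tfrac4{d+4}$ of the unit budget absorbs $\theta\epsilon_0$ and the $(1+\kappa)^\theta$-overshoot once $\kappa$ and $\epsilon_0$ lie below a universal threshold. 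This residual is essentially sharp: testing on $F=1+sY$ with $Y$ a degree-two spherical harmonic and $s\to0$ forces $\theta\epsilon_0<\tfrac4{d+4}$, i.e.\ $\epsilon_0<\tfrac{d-2}{d+4}$ for all $d\ge3$, hence $\epsilon_0<\tfrac17$.

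The genuinely delicate part is the region where $r$ is large and positive: there $\phi$ is not comparable to $t^2$ and $r$ is not $\mathrm L^\infty$-bounded, so the spectral gap is unavailable. I would cut the range of $1+r=F/G$ into three pieces, schematically $\{r\le\kappa\}$, $\{\kappa<r\le T\}$ and $\{r>T\}$ for a second, large threshold $T$, using throughout two further ingredients: the Sobolev inequality on $\Sph^d$ applied to $r$ itself, $\mathsf A\|r\|_{\mathrm L^q(\Sph^d)}^2\le\isd{(|\nabla r|^2+\mathsf A r^2)}$, together with the smallness hypothesis~\eqref{eq:smallsphere} in the form $\|r\|_{\mathrm L^q(\Sph^d)}^{q-2}\le\tilde\delta^{\theta/2}$, so that $\isd{|r|^q}\le\tfrac{\tilde\delta^{\theta/2}}{\mathsf A}\isd{(|\nabla r|^2+\mathsf A r^2)}$. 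On $\{r>T\}$ one bounds $\phi(t)\le(1+t)^q\le(1+T^{-1})^q t^q$; on $\{\kappa<r\le T\}$ one uses $\phi(t)\le\tfrac{q(q-1)}2(1+T)^\theta t^2$, merged with the $\{r\le\kappa\}$ piece so that the spectral gap applies to all of $\isd{r^2}$ rather than only to $\int_{\{r\le T\}}r^2$. One then picks $T$ large enough that $(1+T^{-1})^q$ is close to $1$ yet small enough ($\log T=o(1/\theta)$, which is ample since $q\le6$) that $(1+T)^\theta$ stays controlled, and fixes $\kappa$, $\tilde\delta$, $\epsilon_0$ in that order.

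The main obstacle is that a naive summation over the three ranges is not good enough in large dimension. For localized, tall perturbations all of $\isd{r^2}$ sits where $r$ is large, so the bulk term contributes essentially nothing there, while the $\{r>T\}$ estimate — bounding $\tfrac{2\mathsf A}q\int_{\{r>T\}}\phi$ by $\tfrac2q(1+T^{-1})^q\tilde\delta^{\theta/2}\isd{(|\nabla r|^2+\mathsf A r^2)}$, a coefficient that tends to $1$ as $d\to\infty$ — charges the \emph{entire} Dirichlet-plus-mass budget, roughly doubling the bookkeeping. What rescues the argument is that such perturbations necessarily carry a correspondingly large $\isd{|\nabla r|^2}$, so one must charge the $\{r>T\}$ contribution only against $\isd{|\nabla r|^2}+\mathsf A\isd{r^2}$ restricted to where $r$ is large (for instance via a Sobolev estimate for $(r-T)_+$) and avoid double-counting the $\mathrm H^1$-mass shared with the bulk region. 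Arranging this accounting so that, summed over the three ranges, every coefficient stays strictly below $1$ uniformly in $d\ge3$ — this is exactly the ``three ranges, carefully combined'' of the paper — is the crux, and I expect the interface between the two positive-$r$ ranges to be where most care is required.
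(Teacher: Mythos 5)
Your overall plan matches the paper's: reduce via $(1+x)^{2/q}\le 1+\tfrac2q x$, cut the range of $r$ into three pieces, control the bulk by the spectral gap, control the tail by Sobolev, and worry about the middle. You have also correctly identified the structural difficulty and the correct constraint $\epsilon_0<\tfrac{d-2}{d+4}$. However, there is a genuine gap precisely at what you call the crux. After you isolate a truly small bulk threshold $\gamma$ (you call it $\kappa$), the middle piece $r_2:=\min\{(r-\gamma)_+,M-\gamma\}$ inevitably inherits a coefficient of the form $\mathsf A\cdot C\,\theta = C\,d$ with $C$ large and $d$-independent, coming from the cross terms $r_1r_2$ (via AM--GM, cost $\sim\gamma^{-1}$) and from the pointwise comparison of $(1+r)^q$ on $\{r>M\}$. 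The plain spectral gap $\isd{|\nabla r_2|^2}\ge 2(d+1)\isd{r_2^2}$ only beats $d\cdot(\text{something}\le 2)$, so it cannot absorb $C\,d$. The paper's essential ingredient, which your proposal does not contain, is an \emph{improved} spectral gap for $r_2$: since $r_2$ is supported on $\{r>\gamma\}$, whose measure is $\lesssim\tilde\delta^{q/2}\gamma^{-q}$, the projections of $r_2$ onto spherical harmonics of degree $<K$ are controlled by a reverse H\"older inequality, giving $\isd{|\nabla r_2|^2}\ge K(K+d-1)(1-o_{\tilde\delta}(1))\isd{r_2^2}$ for any fixed $K$; choosing $K$ large (independently of $d$) wins. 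Without this device the middle region cannot be closed, and merging it with the bulk, as you suggest, does not help because the coefficient $(1+T)^\theta$ hides the real loss, which is multiplicative of size $C$ rather than $1+o(1)$.

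Two secondary issues you also leave unaddressed. First, the truncation $r_1=\min\{r,\gamma\}$ is \emph{not} orthogonal to spherical harmonics of degree $\le1$, so the spectral gap does not apply to it directly; the paper spends a full step estimating the rank-one defects $\left(\isd{r_1}\right)^2$ and $\left|\isd{\omega\,r_1}\right|^2$ via the smallness of $\mu(\{r>\gamma\})$, and this estimation drives the choice of $\tilde\delta$. Second, to make a region-based accounting rigorous one must in fact pass to the functional decomposition $r=r_1+r_2+r_3$ (so $\isd{|\nabla r|^2}$ splits additively and each $r_k$ is an $\mathrm H^1$ function to which Sobolev or the spectral gap can be applied); your parenthetical ``$(r-T)_+$'' hint is the right instinct, but the cross-term bookkeeping between $r_1,r_2,r_3$ is where the constant $C$ above comes from and must be done explicitly, including the correct ``$\tfrac2q$'' in front of $\isd{r_3^q}$ (keeping this factor, rather than bounding it by $1$, is what leaves room for Sobolev on the tail). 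Finally, the pointwise estimates underlying all of this only work cleanly for $q\le3$, i.e.\ $d\ge6$; the paper covers $d=3,4,5$ by the simpler Proposition~\ref{firstbound}, a case distinction your plan does not make.
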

The key feature of this theorem is that the constant $\theta\,\epsilon_0$ behaves like $4\,\epsilon_0\,d^{-1}$ for large $d$. This $d^{-1}$ behavior leads to a corresponding lower bound on the behavior of $\mathbf{\mathscr C}_{d,\rm BE}$, which in view of~\eqref{eq:upperbound} is optimal.
\begin{remark}\label{unifboundcloserem}
In fact, we show that for every $0<\epsilon_0<\tfrac13$ there is a $\tilde\delta>0$ such that the assertion in the theorem holds for all $d\geq 6$. The same argument also gives that for every $0<\epsilon_0<\tfrac12$ there is a~$D$ and a $\tilde\delta>0$ such that the assertion of the theorem holds for all $d\geq D$. The explicit expression for $\tilde\delta>0$ can be found in the proofs of Theorem~\ref{unifboundclose}, Proposition~\ref{Prop:estI3} and in~\eqref{delta1} below.
\end{remark}
The proof of Theorem~\ref{unifboundclose} will take up the rest of this section.

\subsection{The spectral gap inequality}\label{sec:spectralGap}

Of crucial importance in our analysis, just like in that of Bianchi and Egnell~\cite{BianchiEgnell}, is the following spectral bound. It appears, for instance, in Rey's paper~\cite[Appendix~D]{Rey} slightly before the work of Bianchi and Egnell.
\begin{lemma}\label{gap}
Let $d\ge3$ and assume that $r\in \mathrm H^1(\Sph^d)$ satisfies~\eqref{eq:orthosphere}. Then
$$
\isd{\left( |\nabla r|^2 - d\,r^2 \right)} \geq \frac4{d+4} \isd{\left( |\nabla r|^2+\mathsf A\,r^2 \right)}\,.
$$
\end{lemma}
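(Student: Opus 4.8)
\textbf{Plan for proving Lemma~\ref{gap}.} The natural approach is to diagonalize the quadratic form using spherical harmonics. Expanding $r = \sum_{k\ge 0} r_k$ into its components $r_k$ in the eigenspace of $-\Delta_{\Sph^d}$ associated to the eigenvalue $\lambda_k = k(k+d-1)$, the orthogonality conditions \eqref{eq:orthosphere} say precisely that $r_0 = 0$ and $r_1 = 0$, so the expansion effectively starts at $k=2$. We have $\isd{|\nabla r|^2} = \sum_{k\ge 2} \lambda_k \isd{r_k^2}$ and $\isd{r^2} = \sum_{k\ge 2}\isd{r_k^2}$, so both sides of the claimed inequality are diagonal in this decomposition. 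It therefore suffices to prove the scalar inequality
\[
\lambda_k - d \;\ge\; \frac{4}{d+4}\,\bigl(\lambda_k + \mathsf A\bigr)
\qquad\text{for every }k\ge 2,
\]
where $\mathsf A = \tfrac14 d(d-2)$.

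\textbf{Reduction to the lowest relevant mode.} First I would check the inequality at $k=2$, where $\lambda_2 = 2(d+1) = 2d+2$. The left side is $2d+2-d = d+2$; the right side is $\frac{4}{d+4}\bigl(2d+2 + \tfrac14 d(d-2)\bigr) = \frac{4}{d+4}\cdot\frac{8d+8+d^2-2d}{4} = \frac{d^2+6d+8}{d+4} = \frac{(d+2)(d+4)}{d+4} = d+2$. So equality holds exactly at $k=2$, which is consistent with the sharpness remark after the lemma (the second spherical harmonics correspond to the conformal/dilation directions tangent to $\mathscr M$, which is why the orthogonality is imposed only up to $k=1$: the $k=2$ modes are the directions along which the deficit is quadratically small). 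Next, I would argue monotonicity: define $\phi(\lambda) = (\lambda - d) - \tfrac{4}{d+4}(\lambda + \mathsf A)$; this is affine in $\lambda$ with slope $1 - \tfrac{4}{d+4} = \tfrac{d}{d+4} > 0$, hence increasing in $\lambda$. Since $\lambda_k$ is strictly increasing in $k$, and $\phi(\lambda_2) = 0$, we get $\phi(\lambda_k) \ge 0$ for all $k\ge 2$, which is exactly what is needed. Summing over $k\ge 2$ against the nonnegative weights $\isd{r_k^2}$ gives the lemma.

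\textbf{Expected main obstacle.} There is essentially no deep obstacle here — the only things to be careful about are: (a) making sure the spherical-harmonic expansion is justified for $r\in\mathrm H^1(\Sph^d)$ and that term-by-term manipulation of the (possibly infinite) sums is legitimate, which follows from Parseval on $\mathrm L^2$ for the $r^2$ terms and from the fact that $\sum_k \lambda_k\isd{r_k^2} = \isd{|\nabla r|^2} < \infty$ for the gradient terms; and (b) the small algebraic verification that $\phi(\lambda_2) = 0$, i.e. that $d+2 = \tfrac{4}{d+4}\bigl(2d+2+\tfrac14 d(d-2)\bigr)$, which is the computation done above. One could alternatively avoid citing the spectral decomposition and instead invoke the known result directly from Rey~\cite[Appendix~D]{Rey} or Bianchi--Egnell~\cite{BianchiEgnell}, but since the computation is elementary it is cleaner to include it. The constant $\tfrac{4}{d+4}$ is forced: it is $\phi$-optimal precisely because equality is attained at $k=2$, matching the upper bound $\mathbf{\mathscr C}_{d,\mathrm{BE}}\le \tfrac{4}{d+4}$ recalled in \eqref{eq:upperbound}.
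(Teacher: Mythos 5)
Your proof is correct and follows essentially the same route as the paper: diagonalize via spherical harmonics and reduce to the scalar inequality $\ell(\ell+d-1)-d\ge\tfrac{4}{d+4}\bigl(\ell(\ell+d-1)+\mathsf A\bigr)$ for $\ell\ge 2$, which the paper simply declares "elementary to check." You have in addition supplied that elementary verification (equality at $\ell=2$ together with monotonicity of the affine function in $\lambda$), which is a welcome elaboration rather than a different argument.
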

\begin{proof}
We recall that the Laplace--Beltrami operator on $\Sph^d$ is diagonal in the basis of spherical harmonics and that its eigenvalue on spherical harmonics of degree $\ell$ is $\ell\,(\ell+d-1)$.

Conditions~\eqref{eq:orthosphere} mean that $r$ is orthogonal to spherical harmonics of degrees $\ell\leq 1$. Diagonalizing the Laplace--Beltrami operator, the claimed inequality becomes
$$
\ell\,(\ell+d-1) - d \geq \frac4{d+4} \big( \ell\,(\ell+d-1) + \mathsf A\big)
\quad\text{for all}\quad\ell\geq 2\,.
$$
This is elementary to check.
\end{proof}

\subsection{Warm-up: A bound with suboptimal dimension dependence}\label{sec:firstbound}

In this subsection we prove a preliminary version of Theorem~\ref{unifboundclose} where the constant $\theta\,\epsilon_0$ on the right side is replaced by some $d$-dependent constant, which decreases much faster than $d^{-1}$ as $d$ increases.

The motivation for proving this preliminary version is threefold. First, it explains the basic strategy of the proof without the additional difficulty of tracking the dependence on $d$. The latter will require some rather elaborate additional arguments. Second, this more involved proof works nicely when the exponent $q=2^*$ is $\leq 3$, which means $d\geq 6$. (It is, however, not difficult to adjust it to arbitrary $d$.) Therefore our chosen proof of Theorem~\ref{unifboundclose} will combine the inequality proved in this subsection for $d=3$, $4$, $5$ with the inequality proved in the next subsection for $d\geq 6$. Third, the simpler argument in this subsection gives simpler expressions for the relevant constants, which might be preferable in certain applications in low dimensions where the values of these constants play a role.
\begin{proposition}\label{firstbound}
For all $\tilde\delta>0$ and for all $-\,1 \leq r\in \mathrm H^1(\Sph^d)$ satisfying~\eqref{eq:smallsphere} and~\eqref{eq:orthosphere} one has
\begin{multline*}
\isd{\left( |\nabla r|^2 +\mathsf A\,(1+r)^2 \right)} -\mathsf A\,\left( \isd{(1+r)^q} \right)^{2/q} \\ \geq \mathsf m(\tilde\delta^{1/2}) \isd{\left( |\nabla r|^2 +\mathsf A\,r^2 \right)}\,,
\end{multline*}
where $d\mu$ is the uniform probability measure, with
\begin{equation} \label{eq:mudelta}
\begin{aligned}
\mathsf m(\nu):=\,&\tfrac4{d+4} - \tfrac2q\,\nu^{q-2} && \text{if}\quad d\geq 6\,,\\
\mathsf m(\nu):=\,&\tfrac4{d+4} -\tfrac13\,(q-1)\,(q-2)\,\nu - \tfrac2q\,\nu^{q-2} && \text{if}\quad d=4\,,\,5\,,\\
\mathsf m(\nu):=\,&\tfrac47 - \tfrac{20}3\,\nu - 5\,\nu^2- 2\,\nu^3 - \tfrac13\,\nu^4 && \text{if}\quad d=3\,.
\end{aligned}
\end{equation}
\end{proposition}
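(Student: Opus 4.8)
The plan is to expand the functional $\isd{( |\nabla r|^2 +\mathsf A\,(1+r)^2)} -\mathsf A( \isd{(1+r)^q})^{2/q}$ around $r=0$ and exploit the orthogonality conditions~\eqref{eq:orthosphere} together with the spectral gap inequality of Lemma~\ref{gap}. First I would compute the $\mathrm L^2$-type piece: expanding the square, $\isd{\mathsf A\,(1+r)^2} = \mathsf A + 2\mathsf A\isd r + \mathsf A\isd{r^2} = \mathsf A + \mathsf A\isd{r^2}$, since $\isd r=0$. Hence the left-hand side equals $\isd{(|\nabla r|^2 + \mathsf A r^2)} + \mathsf A - \mathsf A( \isd{(1+r)^q})^{2/q}$. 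So everything reduces to producing a sufficiently good \emph{upper} bound for the nonlinear term $\mathsf A( \isd{(1+r)^q})^{2/q}$, of the shape $\mathsf A + d\isd{r^2} + (\text{error controlled by } \tilde\delta)$; note $d = \mathsf A\cdot\frac{4}{d-2}\cdot\frac{1}{\cdots}$—more precisely one wants the quadratic term to come out as $d\isd{r^2}$ so that the combination $\isd{|\nabla r|^2} - d\isd{r^2} + (\text{remainder})$ can be fed into Lemma~\ref{gap}, which converts $\isd{(|\nabla r|^2 - d r^2)}$ into $\frac{4}{d+4}\isd{(|\nabla r|^2+\mathsf A r^2)}$.

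Next I would estimate $\isd{(1+r)^q}$ itself. Using the elementary inequality for $t\ge -1$ and $q\ge 2$, $(1+t)^q \le 1 + q\,t + \frac{q(q-1)}{2}t^2 + c_q\,|t|^{\min(3,q)}(\text{or similar})$—the precise form depending on whether $q\le 3$ (i.e. $d\ge 6$) or $q\in(3,4]$ (i.e. $d=4,5$) or $q\in(4,6]$ ($d=3$), which is exactly why \eqref{eq:mudelta} splits into three cases. Integrating and using $\isd r = 0$ gives $\isd{(1+r)^q} \le 1 + \frac{q(q-1)}{2}\isd{r^2} + (\text{cubic-or-higher terms})$. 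Then apply the concavity of $s\mapsto s^{2/q}$ (since $2/q<1$): $(1+s)^{2/q} \le 1 + \frac{2}{q}s$. Combining, $(\isd{(1+r)^q})^{2/q} \le 1 + (q-1)\isd{r^2} + \frac{2}{q}(\text{higher-order terms})$. Since $\mathsf A(q-1) = \frac14 d(d-2)\cdot\frac{d+2}{d-2} = \frac{d(d+2)}{4}$, hmm—I need to double check this gives precisely $d$; in fact $\mathsf A(q-1)$ is not $d$, so the bookkeeping must be done carefully, likely keeping $\mathsf A(q-1)$ and then writing $\mathsf A(q-1)\isd{r^2} = d\isd{r^2} + (\mathsf A(q-1)-d)\isd{r^2}$ and absorbing the discrepancy, OR more plausibly the expansion of $(1+r)^q$ should be arranged so the quadratic coefficient after the $s^{2/q}$ step lands on $d$. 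The cleanest route: write $\isd{(1+r)^q} = 1 + \isd{((1+r)^q - 1 - qr - \tfrac{q(q-1)}{2}r^2)} + \tfrac{q(q-1)}{2}\isd{r^2}$, bound the bracketed remainder pointwise by a power of $|r|$ times a constant, use \eqref{eq:smallsphere} to control $\isd{|r|^q}$ and interpolate the intermediate powers, then feed into $(1+s)^{2/q}\le 1+\tfrac2q s$.

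The higher-order remainder terms must then be controlled using only the smallness hypothesis~\eqref{eq:smallsphere}: $(\isd{|r|^q})^{2/q}\le\tilde\delta$. For the $d\ge6$ case, where $q\le3$, the pointwise remainder $(1+t)^q-1-qt-\tfrac{q(q-1)}{2}t^2$ is bounded by $C_q|t|^q$ (for $t\ge-1$), so $\isd{\text{remainder}}\lesssim \isd{|r|^q}$, and after the $s\mapsto s^{2/q}$ step this contributes $\lesssim \tfrac2q(\isd{|r|^q})^{2/q-1}\cdots$—actually one wants it in the form $\tfrac2q\nu^{q-2}\isd{(|\nabla r|^2 + \mathsf A r^2)}$ with $\nu=\tilde\delta^{1/2}$, which means bounding $\isd{|r|^q}$ by $(\isd{|r|^q})^{2/q}\cdot(\isd{|r|^q})^{1-2/q} \le \tilde\delta\cdot$ something, and then relating $\isd{|r|^q}$ to $\isd{(|\nabla r|^2+\mathsf A r^2)}$ via the Sobolev inequality on the sphere itself (which gives $\isd{|r|^q}^{2/q}\lesssim \frac1{\mathsf A}\isd{(|\nabla r|^2+\mathsf A r^2)}$, absorbing the deficit). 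This is the step I expect to be the main obstacle: carefully threading the subcritical powers of $|r|$ through Hölder interpolation between $\mathrm L^2$ and $\mathrm L^q$, keeping the constants explicit and dimension-tracked, and recognizing that the $\isd{r^2}$ factor can always be upgraded to $\isd{(|\nabla r|^2+\mathsf A r^2)}$ for free. For $d=4,5$ the extra cubic term $(q-1)(q-2)$ in $\mathsf m$ reflects that $q>3$ forces one more term in the Taylor expansion before the pure-power remainder; for $d=3$ one needs the expansion up to fourth order, producing the explicit polynomial $\tfrac47-\tfrac{20}3\nu-5\nu^2-2\nu^3-\tfrac13\nu^4$ with $q=6$, $\mathsf A=\tfrac34$, $\tfrac4{d+4}=\tfrac47$, and all intermediate powers $|r|^2,|r|^3,|r|^4,|r|^5,|r|^6$ interpolated against $\nu=\tilde\delta^{1/2}$. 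Finally, assembling: the left-hand side $\ge \isd{(|\nabla r|^2 - d r^2)} - (\text{remainders}) \ge \frac4{d+4}\isd{(|\nabla r|^2+\mathsf A r^2)} - (\text{remainders bounded by }(\mathsf m_{\text{correction terms}})\isd{(|\nabla r|^2+\mathsf A r^2)})$, yielding exactly $\mathsf m(\tilde\delta^{1/2})\isd{(|\nabla r|^2+\mathsf A r^2)}$.
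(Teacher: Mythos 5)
Your proposal follows the paper's proof essentially step for step: expand $\isd{\mathsf A(1+r)^2}$ using the mean-zero condition, derive pointwise Taylor-type upper bounds for $(1+t)^q$ depending on whether $q\le3$, $q\in(3,4]$, or $q=6$ (hence the three regimes $d\ge6$, $d\in\{4,5\}$, $d=3$), apply $(1+s)^{2/q}\le1+\tfrac2q s$, control the intermediate moments $\isd{|r|^t}$ for $2<t\le q$ by H\"older together with the smallness hypothesis~\eqref{eq:smallsphere} and Sobolev on $\Sp^d$, and finally invoke the spectral gap of Lemma~\ref{gap}. The one point you flagged as uncertain resolves cleanly and does not require any rearrangement of the expansion: after passing through $(1+s)^{2/q}\le1+\tfrac2q s$, the quadratic coefficient is $(q-1)$, so you subtract $\mathsf A(q-1)\isd{r^2}$; since $\mathsf A(q-1)-\mathsf A=\mathsf A(q-2)=d$, the $\mathsf A\isd{r^2}$ already present from expanding $\isd{\mathsf A(1+r)^2}$ cancels the excess exactly, leaving $\isd{|\nabla r|^2-d\,r^2}$ minus error terms, which is precisely what Lemma~\ref{gap} needs.
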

We note that for any $d\geq 3$ there is a $\nu_d$ such that $\mathsf m(\nu)>0$ for $\nu<\nu_d$. Thus, for $\tilde\delta<\nu_d^2$ we obtain a stability inequality.

\medskip We begin the proof of Proposition~\ref{firstbound} with some elementary inequalities.
\begin{lemma}\label{ineq2}
If $q\geq 2$, then, for all $t\geq 0$,
$$
(1+t)^{\frac 2q} \leq 1 + \tfrac2q\,t\,.
$$
\end{lemma}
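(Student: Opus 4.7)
The statement is the classical concavity (Bernoulli-type) inequality for the exponent $2/q \in (0,1]$. Since the lemma is elementary, I would give a short plan based on two standard viewpoints, either of which works.

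The plan is to exploit the fact that for $q \geq 2$ the exponent $\alpha := 2/q$ lies in $(0,1]$, so the map $s \mapsto s^\alpha$ is concave on $[0,\infty)$. The function $s \mapsto s^\alpha$ therefore lies below its tangent line at any point, and applying this at $s=1$ gives $s^\alpha \leq 1 + \alpha(s-1)$. Substituting $s = 1+t$ (which satisfies $s \geq 1$ when $t \geq 0$) produces exactly $(1+t)^{2/q} \leq 1 + \tfrac{2}{q}\,t$.

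Alternatively, and perhaps cleaner to write as a self-contained argument, I would set
\[
\varphi(t) := 1 + \tfrac{2}{q}\,t - (1+t)^{2/q}, \qquad t \geq 0,
\]
and show $\varphi \geq 0$. One has $\varphi(0) = 0$ and
\[
\varphi'(t) = \tfrac{2}{q}\bigl(1 - (1+t)^{\frac{2}{q}-1}\bigr).
\]
Because $q \geq 2$ forces $\tfrac{2}{q}-1 \leq 0$ and because $1+t \geq 1$, the factor $(1+t)^{\frac{2}{q}-1}$ is at most $1$, so $\varphi'(t) \geq 0$ on $[0,\infty)$. Together with $\varphi(0)=0$ this yields $\varphi(t) \geq 0$, which is the claim.

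There is essentially no obstacle here: the only thing to be careful about is that the inequality is stated for $q \geq 2$ (not $q > 2$) and the boundary case $q=2$ reduces to $1+t \leq 1+t$, while for $q>2$ strict concavity would give strict inequality for $t>0$. I would write up the second approach because it makes explicit why the monotonicity of the auxiliary function is driven precisely by the hypothesis $q \geq 2$, which is all that is really being used.
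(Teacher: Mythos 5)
Your proof is correct. The paper itself gives no proof of this lemma (it simply remarks that the inequality is well known and omits the argument), so there is no paper proof to compare against; both of your routes — the tangent-line estimate from concavity of $s\mapsto s^{2/q}$, and the monotonicity of the auxiliary function $\varphi(t)=1+\tfrac{2}{q}t-(1+t)^{2/q}$ via $\varphi(0)=0$ and $\varphi'\geq 0$ — are standard, complete, and use exactly the hypothesis $q\geq 2$ (i.e.\ $2/q\leq 1$).
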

This is well known and we omit its simple proof.
\begin{lemma}\label{ineq1}
We have the following bounds.
\begin{itemize}
\item If $2\leq q\leq 3$, then, for all $t\ge -\,1$,
$$
(1+t)^q \leq 1+ q\,t + \tfrac12\,q\,(q-1)\,t^2 + t_+^q\,.
$$
\item If $3\leq q\leq 4$, then, for all $t\ge -\,1$,
$$
(1+t)^q \leq 1+ q\,t + \tfrac12\,q\,(q-1)\,t^2 + \tfrac16\,q\,(q-1)\,(q-2)\,t^3 + |t|^q\,.
$$
\end{itemize}
\end{lemma}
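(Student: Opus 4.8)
The plan is to treat the two bullet points by reducing each to a one-variable inequality and then exploiting convexity/Taylor-with-remainder together with a case split on the sign and size of $t$. For the first bullet ($2\le q\le 3$), set $\phi(t):=(1+t)^q - 1 - q\,t - \tfrac12\,q\,(q-1)\,t^2 - t_+^q$ for $t\ge-1$; I want $\phi\le 0$. On $[-1,0]$ we have $t_+=0$, and $\phi(t)=(1+t)^q-1-q\,t-\tfrac12 q(q-1)t^2$; since the second derivative of $s\mapsto(1+s)^q$ is $q(q-1)(1+s)^{q-2}$, which for $q\le 3$ and $s\in[-1,0]$ satisfies $(1+s)^{q-2}\ge 1$ only when $q\ge 2$... so here one must be a little careful: for $2\le q\le 3$, $q-2\in[0,1]$, so $(1+s)^{q-2}\le 1$ on $[-1,0]$, hence the Taylor remainder in Lagrange form gives $(1+t)^q = 1+q t+\tfrac12 q(q-1)(1+\xi)^{q-2}t^2 \le 1+qt+\tfrac12 q(q-1)t^2$ for $t\in[-1,0]$, i.e. $\phi\le 0$ there. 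On $[0,\infty)$ we need $(1+t)^q\le 1+qt+\tfrac12 q(q-1)t^2+t^q$; equivalently, writing $\psi(t):=1+qt+\tfrac12 q(q-1)t^2+t^q-(1+t)^q$, check $\psi(0)=0$ and show $\psi'\ge 0$, i.e. $q+q(q-1)t+q t^{q-1}\ge q(1+t)^{q-1}$, i.e. $1+(q-1)t+t^{q-1}\ge(1+t)^{q-1}$. This last is the standard superadditivity-type bound $(1+t)^{p}\le 1+pt+t^{p}$ valid for $p=q-1\in[1,2]$ and $t\ge 0$ (itself proved by differentiating: $p(1+t)^{p-1}\le p+pt^{p-1}$ reduces to $(1+t)^{p-1}\le 1+t^{p-1}$, true since $p-1\in[0,1]$). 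So the first bullet follows.

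For the second bullet ($3\le q\le 4$) I would run the analogous argument with one more Taylor term and with $|t|^q$ in place of $t_+^q$. Define $\Phi(t):=(1+t)^q-1-q t-\tfrac12 q(q-1)t^2-\tfrac16 q(q-1)(q-2)t^3-|t|^q$. On $[0,\infty)$: by the third-order Taylor expansion with integral remainder, $(1+t)^q = 1+qt+\tfrac12 q(q-1)t^2+\tfrac16 q(q-1)(q-2)(1+\xi)^{q-3}t^3$ for some $\xi\in(0,t)$; since $q-3\in[0,1]$, $(1+\xi)^{q-3}\le(1+t)^{q-3}$, and then I need $\tfrac16 q(q-1)(q-2)\big((1+t)^{q-3}-1\big)t^3\le t^q$ — reduce to showing $(1+t)^{q-3}\le 1+ \frac{6}{q(q-1)(q-2)} t^{q-3}$, which again is of the form $(1+t)^{p}\le 1+c\,t^{p}$ with $p=q-3\in[0,1]$ (for $t\ge 1$ use $(1+t)^p\le(2t)^p=2^p t^p$ and check $2^p\le 1+ c$; for $t\le 1$ use $(1+t)^p\le 1+pt\le 1+p$ if $p t^p\ge$ ... ) — here I would simply verify the constant $c=6/(q(q-1)(q-2))$ is $\ge 2^{q-3}-$ type bounds over $q\in[3,4]$, a finite one-dimensional check. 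On $[-1,0]$: $|t|^q=(-t)^q$, and the fourth-derivative of $s\mapsto(1+s)^q$ is $q(q-1)(q-2)(q-3)(1+s)^{q-4}\ge 0$, so the degree-$3$ Taylor polynomial at $0$ \emph{underestimates} $(1+t)^q$; hence I instead expand to fourth order: $(1+t)^q = P_3(t)+\tfrac{1}{24}q(q-1)(q-2)(q-3)(1+\xi)^{q-4}t^4$ with $\xi\in(t,0)$, and must show this remainder is $\le (-t)^q=|t|^q$. Since $q-4\le 0$, $(1+\xi)^{q-4}$ can blow up near $t=-1$, but there $|t|^q\to 1$; I expect to handle $t\in[-1,-\tfrac12]$ and $t\in[-\tfrac12,0]$ separately, bounding $(1+\xi)^{q-4}\le 2^{4-q}$ on the latter and doing a crude direct estimate on the former where $(1+t)^q$ is small.

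The main obstacle is the endpoint behavior near $t=-1$ in the second bullet: the comparison polynomial's error term involves $(1+t)^{q-4}$ with a negative exponent, so the naive Lagrange-remainder bound degenerates, and one has to argue more directly (splitting the interval, or using that $(1+t)^q$ itself is tiny there while $|t|^q$ is close to $1$). Everything else is a routine, if slightly tedious, sequence of one-variable convexity estimates and monotonicity checks of the type "$(1+t)^p\le 1+pt$ for $p\in[0,1]$" and "$(1+t)^p\le 1+pt+t^p$ for $p\in[1,2]$", uniform in the stated ranges of $q$. I would present the argument by first isolating these elementary scalar sublemmas, then assembling them in the case splits above.
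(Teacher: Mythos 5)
Your argument for the first bullet is correct and, modulo presentation, is essentially equivalent to the paper's: the paper shows the function $\phi(t)=(1+t)^q-1-qt-\tfrac12q(q-1)t^2-t_+^q$ is concave and vanishes to first order at $0$, while you split $[-1,0]$ (Lagrange remainder with $(1+\xi)^{q-2}\le1$) from $[0,\infty)$ (reduce by differentiation to $(1+t)^{q-1}\le1+(q-1)t+t^{q-1}$, and then to subadditivity of $x\mapsto x^{q-2}$). Both routes rest on the same two scalar facts.

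For the second bullet there is a genuine gap in your plan on $[0,\infty)$. After bounding $(1+\xi)^{q-3}\le(1+t)^{q-3}$ in the Lagrange remainder, you are led to the target inequality
\[
(1+t)^{q-3}-1\ \le\ \frac{6}{q(q-1)(q-2)}\,t^{q-3},\qquad t\ge 0,
\]
and this is simply false for $q$ near $4$: at $q=4$ the claimed bound becomes $t\le\tfrac14 t$. (More generally, as $t\to\infty$ both sides are of order $t^{q-3}$ but the left coefficient is $1$ while the right coefficient is $6/(q(q-1)(q-2))<1$ whenever $q>3$.) The one-dimensional "constant check" you propose would in fact reveal this failure, since you would be asking $6/(q(q-1)(q-2))\ge 2^{q-3}$, which is false already at $q=4$. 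The underlying issue is that replacing $(1+\xi)^{q-3}$ by $(1+t)^{q-3}$ throws away too much; the original inequality holds with equality at $q=4$, so there is no slack to absorb this loss. Your sketch on $[-1,0]$ is also only partly carried out (the direct estimate on $[-1,-\tfrac12]$ is left open), but that part looks repairable; the $[0,\infty)$ step as written is not.

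The paper avoids this by differentiating twice rather than bounding a remainder: $\phi''(t)=q(q-1)\,\psi(t)$ with $\psi(t)=(1+t)^{q-2}-1-(q-2)t-|t|^{q-2}$, and one shows $\psi\le0$ on $[-1,\infty)$ by another round of differentiation (on $(0,\infty)$ this is the subadditivity inequality at exponent $q-3\in[0,1]$; on $(-1,0)$ a sign analysis of $\psi''$ together with $\psi'(-1)=\psi'(0)=0$ gives $\psi'\ge0$). Concavity of $\phi$ plus $\phi(0)=\phi'(0)=0$ then finishes. This keeps all the elementary comparisons at the level $p\in[0,1]$ where they are tight, and in particular degenerates gracefully to equality at $q=4$. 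If you want to rescue a Taylor-remainder argument you would need to keep the remainder in integral form and compare it directly to $\tfrac16 q(q-1)(q-2)t^3+t^q$ without the intermediate pointwise bound on $(1+\xi)^{q-3}$, which essentially rebuilds the concavity proof.
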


Similar bounds can also be derived for real $q\in(4,\infty)$. They become increasingly more complicated each time $q$ passes an integer. The only bound for $q>4$ that we shall need corresponds to the critical exponent $q=6$ when $d=3$. In that case, we rely on the binomial expansion $(1+t)^6=1+6\,t+15\,t^2+20\,t^3+15\,t^4+6\,t^5+t^6$.

\begin{proof}
The case $q=2$ is trivial. We begin with the case $2< q\leq 3$ and set
$$
\phi(t) := (1+t)^q - 1 - q\,t - \tfrac12\,q\,(q-1)\,t^2 - t_+^q\,.
$$
For any $t\ge -\,1$, we compute
\begin{align*}
\phi'(t) & = q \left( (1+t)^{q-1} - 1 - (q-1)\,t - t_+^{q-1} \right),\\
\phi''(t) & = q\,(q-1) \left( (1+t)^{q-2} -1 -t_+^{q-2} \right).
\end{align*}
For $-\,1\leq t\leq 0$ we clearly have $(1+t)^{q-2} -1 -t_+^{q-2} = (1-|t|)^{q-2} - 1 \leq 0$. For $t\geq 0$ we have, by a well-known elementary inequality, $(1+t)^{q-2} -1 -t_+^{q-2} = (1+t)^{q-2} - 1 -t^{q-2} \leq 0$. To summarize, $\phi$ is concave on $[-1,\infty)$. We conclude that, for all $t\ge -\,1$,
$$
\phi(t) \leq \phi(0) - \phi'(0)\,t\,.
$$
Since $\phi(0)=\phi'(0)=0$, this is the claimed inequality.

We now turn to the case $3\leq q\leq 4$ and set this time
$$
\phi(t) := (1+t)^q - 1 - q\,t - \tfrac12\,q\,(q-1)\,t^2 - \tfrac16\,q\,(q-1)\,(q-2)\,t^3 - |t|^q\,.
$$
Again, we compute
\begin{align*}
\phi'(t) & = q \left( (1+t)^{q-1} - 1 - (q-1)\,t - \tfrac12\,(q-1)\,(q-2)\,t^2 - |t|^{q-2}\,t \right),\\
\phi''(t) & = q\,(q-1)\,\Big( (1+t)^{q-2} - 1 - (q-2)\,t - |t|^{q-2} \Big)\,.
\end{align*}
Since again $\phi(0)=\phi'(0)=0$, the claimed inequality will follow if we can show concavity of $\phi$ on $[-1,\infty)$, that is, $\psi\leq 0$ on $[-1,\infty)$ where
$$
\psi(t):= (1+t)^{q-2} - 1 - (q-2)\,t - |t|^{q-2}\,.
$$
We compute
\begin{align*}
\psi'(t) & = (q-2) \left( (1+t)^{q-3} - 1 - |t|^{q-4}\,t \right),\\
\psi''(t) & = (q-2)\,(q-3) \left( (1+t)^{q-4} - |t|^{q-4} \right).
\end{align*}
We discuss $\psi$ separately on $[-1,0]$ and on $(0,\infty)$.
\begin{itemize}
\item[$\circ$] We begin with the second case. For $t>0$ we have, by the same elementary inequality as before, $(1+t)^{q-3}-1-t^{q-3}< 0$. Thus, $\psi'<0$ on $(0,\infty)$. Since $\psi(0)=0$, we deduce $\psi<0$ on $(0,\infty)$.
\item[$\circ$] Now let us consider the interval $[-1,0]$. We see that $\psi''>0$ on $(-1,-1/2)$ and $\psi''<0$ on $(-1/2,0)$. Therefore $\psi'$ is increasing on $(-1,-1/2)$ and decreasing on $(-1/2,0)$. Since $\psi'(-1)=\psi'(0)=0$, we conclude that $\psi'>0$ on $(-1,0)$ and therefore $\psi$ is increasing on $(-1,0)$. Since $\psi(0)=0$ we conclude that $\psi<0$ on $[-1,0)$, as claimed.
\end{itemize}
This completes the proof of the lemma.
\end{proof}

{}From Lemmas~\ref{ineq2} and~\ref{ineq1} we easily obtain the following inequalities.
\begin{proposition}\label{expand}
Let $(X,d\mu)$ be a measure space and $u$, $r\in \mathrm L^q(X,d\mu)$ for some $q\geq 2$ with $u\geq 0$ and $u+r\geq 0$. Assume also that $\int_X u^{q-1}\,r\,d\mu =0$.
\begin{itemize}
\item If $2\leq q\leq 3$, then
$$
\|u+r\|_q^2 \leq \|u\|_q^2 + \|u\|_q^{2-q} \left( (q-1) \!\int_X u^{q-2}\,r^2\,d\mu + \frac2q \!\int_X r_+^q\,d\mu \right).
$$
\item If $3\leq q\leq 4$, then
\begin{multline*}
\|u+r\|_q^2 \leq \,\|u\|_q^2+(q-1)\,\|u\|_q^{2-q}  \!\int_X u^{q-2}\,r^2\,d\mu\\
+ \|u\|_q^{2-q} \left(  \tfrac13\,(q-1)\,(q-2) \!\int_X u^{q-3}\,r^3\,d\mu + \tfrac2q \!\int_X |r|^q\,d\mu \right).
\end{multline*}
\item If $q=6$, then
\begin{multline*}
\|u+r\|_q^2 \leq\,\|u\|_q^2 + \|u\|_q^{2-q}\,\left( 5\!\int_X u^{q-2}\,r^2\,d\mu + \tfrac{20}3\!\int_X u^{q-3}\,r^3\,d\mu\right.\\
\left. +\,5\!\int_X u^{q-4}\,r^4\,d\mu + 2\!\int_X u^{q-5}\,r^5\,d\mu + \tfrac13\!\int_X r^6\,d\mu \right)\,.
\end{multline*}
\end{itemize}
\end{proposition}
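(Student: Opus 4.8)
The plan is to reduce Proposition~\ref{expand} to the pointwise inequalities established in Lemmas~\ref{ineq2} and~\ref{ineq1}. The starting observation is a homogeneity normalization: writing $u+r = u(1+t)$ with $t = r/u$ (on $\{u>0\}$; the set $\{u=0\}$ requires $r\geq 0$ there and contributes only through the $r_+^q$ or $|r|^q$ term), one has $\|u+r\|_q^q = \int_X u^q\,(1+t)^q\,d\mu$. Applying the relevant case of Lemma~\ref{ineq1} pointwise and integrating, the linear term $q\int_X u^q\,t\,d\mu = q\int_X u^{q-1}\,r\,d\mu$ vanishes by hypothesis, so
\[
\|u+r\|_q^q \leq \|u\|_q^q + \tfrac12\,q\,(q-1)\!\int_X u^{q-2}\,r^2\,d\mu + \cdots\,,
\]
where the dots are the cubic (and, for $q\in[3,4]$ or $q=6$, higher) terms plus the $\int_X |r|^q\,d\mu$ (or $\int_X r_+^q\,d\mu$) remainder. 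I would write this schematically as $\|u+r\|_q^q \leq \|u\|_q^q + R$, where $R$ collects everything beyond the constant term.

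Next I would pass from the $q$-th power to the square. Since $\|u\|_q>0$ (the degenerate case $u\equiv 0$ is trivial), factor out: $\|u+r\|_q^q \leq \|u\|_q^q\bigl(1 + R/\|u\|_q^q\bigr)$, hence
\[
\|u+r\|_q^2 \leq \|u\|_q^2\,\Bigl(1 + \tfrac{R}{\|u\|_q^q}\Bigr)^{2/q}\,.
\]
Now apply Lemma~\ref{ineq2} with $t = R/\|u\|_q^q$; this is legitimate because $R \geq -\|u\|_q^q$ (indeed $R = \|u+r\|_q^q - \|u\|_q^q \geq -\|u\|_q^q$, so $1+t\geq 0$, and in fact $t\geq 0$ is not needed — but one should check the hypothesis $t\ge 0$ of Lemma~\ref{ineq2}; if $R$ can be negative the bound $(1+t)^{2/q}\le 1+\tfrac 2q t$ still holds for all $t\ge -1$ by concavity, which is the form actually needed). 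This yields
\[
\|u+r\|_q^2 \leq \|u\|_q^2 + \tfrac2q\,\|u\|_q^{2-q}\,R\,,
\]
and substituting back the expression for $R$ from the appropriate case of Lemma~\ref{ineq1} — using $\tfrac2q\cdot\tfrac12\,q\,(q-1) = q-1$ for the coefficient of the quadratic term, $\tfrac2q\cdot\tfrac16\,q\,(q-1)\,(q-2) = \tfrac13\,(q-1)(q-2)$ for the cubic term, and similarly $\tfrac2q\cdot 15 = 5$, $\tfrac2q\cdot 20 = \tfrac{20}3$, $\tfrac2q\cdot 15 = 5$, $\tfrac2q\cdot 6 = 2$, $\tfrac2q\cdot 1 = \tfrac13$ in the $q=6$ case — gives exactly the three claimed inequalities.

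The only genuinely delicate point is the treatment of the set where $u$ vanishes and, more generally, making sure the pointwise substitution $t=r/u$ is harmless: on $\{u=0\}$ the hypotheses force $r\geq 0$, and there $(u+r)^q = r^q = r_+^q = |r|^q$, which is precisely what the remainder terms $\int_X r_+^q\,d\mu$ (resp.\ $\int_X |r|^q\,d\mu$) are designed to absorb, while $u^{q-2}r^2$, $u^{q-3}r^3$, etc.\ all vanish there; so one simply applies Lemma~\ref{ineq1} on $\{u>0\}$ and adds the contribution of $\{u=0\}$ separately. I expect the main obstacle to be purely bookkeeping: verifying that Lemma~\ref{ineq2} is applied in the correct regime (the argument $R/\|u\|_q^q$ lies in $[-1,\infty)$ but may be negative, so one uses the concavity form $(1+t)^{2/q}\le 1+\tfrac2q t$ valid for all $t\ge -1$ rather than the literal statement restricted to $t\ge 0$) and that the arithmetic of the coefficients matches the stated constants. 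There is no real analytic difficulty — all the work has already been done in the two preceding lemmas.
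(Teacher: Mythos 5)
Your proposal is correct and follows essentially the same route as the paper: multiply the pointwise inequality of Lemma~\ref{ineq1} through by $u^q$ (with the set $\{u=0\}$ handled separately, where $u+r\geq 0$ forces $r\geq 0$ and only the $r_+^q$ or $|r|^q$ term survives), integrate, kill the linear term by the orthogonality hypothesis, and then pass from $q$-th powers to squares via Lemma~\ref{ineq2}. Your added remark — that for $q\in[3,4]$ the remainder $R$ contains a cubic term of indefinite sign so one must observe $R/\|u\|_q^q\geq -1$ (since $\|u\|_q^q+R\geq\|u+r\|_q^q\geq 0$) and that $(1+t)^{2/q}\leq 1+\tfrac2q\,t$ extends by concavity to all $t\geq -1$ — is a genuine and worthwhile clarification of a point the paper leaves implicit when it says the $3<q\leq 4$ case is "similar."
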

\begin{proof}
For $2\leq q\leq 3$ we have, by Lemma~\ref{ineq1}, almost everywhere on $X$,
$$
(u+r)^q \leq u^q + q\,u^{q-1}\,r + \tfrac12\,q\,(q-1)\,u^{q-2}\,r^2 + r_+^q\,.
$$
Integrating this and using the assumed orthogonality condition, we obtain
$$
\int_X (u+r)^q\,d\mu \leq \int_X u^q\,d\mu + \tfrac12\,q\,(q-1) \!\int_X u^{q-2}\,r^2\,d\mu + \int_X r_+^q\,d\mu\,.
$$
Applying Lemma~\ref{ineq2}, we obtain
\begin{multline*}
\left( \int_X (u+r)^q\,d\mu \right)^\frac2q \leq \left( \int_X u^q\,d\mu \right)^\frac2q \\ + \left( \int_X u^q\,d\mu \right)^\frac{2-q}q \left( (q-1)\! \int_X u^{q-2}\,r^2\,d\mu + \tfrac2q\! \int_X r_+^q\,d\mu \right).
\end{multline*}
This is the claimed inequality for $2\leq q\leq 3$. The proof for $3<q\leq 4$ is similar and the inequality for $q=6$ follows from expanding the polynomial.
\end{proof}

\begin{proof}[Proof of Proposition~\ref{firstbound}]
Let $r$ be as in Theorem~\ref{unifboundclose}. Because of the mean-zero condition we can apply Proposition~\ref{expand} with $u=1$ on $X=\Sph^d$ and $d\mu$ the uniform probability measure. We simplify the resulting term using H\"older and Sobolev, which imply for $2<t\leq q$,
$$
\isd{|r|^t} \leq \left( \isd{|r|^q} \right)^{t/q} \leq \tilde\delta^\frac{t-2}2\,\mathsf A^{-1} \isd{\left( |\nabla r|^2 +\mathsf A\,r^2 \right)}\,.
$$
In this way, we obtain
\begin{multline*}
\left( \isd{(1+r)^q} \right)^{2/q} \leq 1 + (q-1)\isd{r^2} \\ + \mathsf n(\tilde\delta^{1/2})\,\mathsf A^{-1} \isd{\left( |\nabla r|^2 +\mathsf A\,r^2 \right)}\,,
\end{multline*}
where
\[
\begin{aligned}
\mathsf n(\nu):=\,& \tfrac2q\,\nu^{q-2} && \text{if}\quad d\geq 6\,,\\
\mathsf n(\nu):=\,& \tfrac13\,(q-1)\,(q-2)\,\nu + \tfrac2q\,\nu^{q-2} && \text{if}\quad d=4\,,\,5\,,\\
\mathsf n(\nu):=\,& \tfrac{20}3\,\nu + 5\,\nu^2 + 2\,\nu^3 + \tfrac13\,\nu^4 && \text{if}\quad d=3\,.
\end{aligned}
\]
Using $\mathsf A\,(q-2)=d$, we deduce that
\begin{multline*}
\isd{\left( |\nabla r|^2 +\mathsf A\,(1+r)^2 \right)} -\mathsf A\left( \isd{(1+r)^q }\right)^{2/q}\\
\geq \isd{\left( |\nabla r|^2 - d r^2 \right)} - \mathsf n(\tilde\delta^{1/2}) \isd{\left( |\nabla r|^2 +\mathsf A\,r^2 \right)}\,.
\end{multline*}
Using the spectral gap inequality in Lemma~\ref{gap} and noting that $\mathsf m(\nu) = \frac{4}{d+4}- \mathsf n(\nu)$, we obtain the claimed inequality.
\end{proof}
\begin{remark}\label{firstboundrem}

The estimates of Proposition~\ref{firstbound} are good enough for proving Theorem~\ref{unifboundclose} for~$d$ finite, but fail for proving that the stability constant is of the order of $\theta\,\epsilon_0$ in the large $d$ limit, for some positive~$\epsilon_0$ independent of $d$ and $\theta=q-2=4/(d-2)$. Indeed, if we write that $\mathsf m(\nu)\ge\theta\,\epsilon_0$, we obtain
\[
\nu^{q-2}\le\frac q2\(\frac4{d+4}-(q-2)\,\epsilon_0\)\le\frac q2\,\frac4{d+4}=\frac{4\,d}{(d-2)\,(d+4)}\le\frac4{d-2}\,,
\]
which means $\nu\le\big(\tfrac{d-2}4\big)^{-\tfrac{d-2}4}<\sqrt{\tilde\delta}$ for $d$ large enough, for any given $\tilde\delta>0$. Theorem~\ref{unifboundclose} cannot be deduced from Proposition~\ref{firstbound} as $d\to+\infty$ and this is why we need better estimates.
\end{remark}

\subsection{Cutting \texorpdfstring{$r$}{r} into pieces}\label{Sec:Pieces}

We turn now to the proof of Theorem~\ref{unifboundclose} with the optimal dependence of the constant on the dimension. Thus, until the end of Section~\ref{sec:close} we will assume that $r$ satisfies the assumptions of Theorem~\ref{unifboundclose}. The following proposition gives an upper bound on
$$
(1+r)^q-1-q\,r
$$
for real numbers $r$ in terms of three numbers
\be{r1r2r3}
r_1:=\min\{r,\gamma\}\,,\quad r_2:=\min\{(r-\gamma)_+,M-\gamma\}\quad\mbox{and}\quad r_3:=(r-M)_+
\ee
where $\gamma$ and $M$ are parameters such that $0<\gamma<M$. Notice that
\[
r=r_1+r_2+r_3\,.
\]
We will later apply this when $r$ is a function. Our goal is to obtain a bound in terms of
\be{theta}
\theta:=q-2\quad\mbox{where}\quad q=2^*=\frac{2\,d}{d-2}
\,.
\ee
We have in mind to let $d\to+\infty$ so that $\theta\to0_+$.
\begin{proposition}\label{Prop:ptw}
Given $M\in(0,+\infty)$ and $\overline M\in[\sqrt e,+\infty)$, there are two positive constants $C_M\,$ and $C_{M,\overline M}\,$ depending respectively only on $M$ and $\{M,\overline M\}$ such that, for any $\gamma\in(0,M]$, $q\in[2,3]$ and $r\in [-1,\infty)$, we have
\begin{multline}\label{eq:propinitial}
(1+r)^q - 1 - q\,r \leq \tfrac12\,q\,(q-1)\,(r_1+r_2)^2
\\ +2\,(r_1+r_2)\,r_3 + \left(1+C_M\,\theta\,\overline M^{-1}\ln\overline M\right) r_3^q\\
+ \left( \tfrac32\,\gamma\,\theta\,r_1^2 + C_{M,\overline M}\,\theta\,r_2^2 \right) \1_{\{r\leq M\}} + C_{M,\overline M}\,\theta\,M^2\,\1_{\{r>M\}}
\end{multline}
with $r_1$, $r_2$, $r_3$ and $\theta$ given by~\eqref{r1r2r3} and~\eqref{theta}.
\end{proposition}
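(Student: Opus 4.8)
The plan is to reduce the inequality to a pointwise estimate for the one-variable function $\phi(r) := (1+r)^q - 1 - q\,r$ by splitting the range of $r$ at the thresholds $\gamma$ and $M$, so that $r = r_1 + r_2 + r_3$ according to~\eqref{r1r2r3}. First I would record the basic facts about $\phi$: it is convex (since $q \geq 2$), $\phi(0) = \phi'(0) = 0$, and $\phi(r) \leq \tfrac12 q(q-1)(1+\xi)^{q-2}r^2$ for $\xi$ between $0$ and $r$; crucially, since $q - 2 = \theta$ is small, $(1+\xi)^{q-2} = (1+\xi)^\theta = 1 + \theta\ln(1+\xi) + O(\theta^2\ln^2(1+\xi))$, so on any bounded range $r \in [-1,M]$ one has $\phi(r) \leq \tfrac12 q(q-1)r^2 + C_M\,\theta\,r^2$ (with logarithmic corrections absorbed into $C_M$). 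This explains the structure of the claimed bound: the leading $\tfrac12 q(q-1)(r_1+r_2)^2$ term is the ``parabolic'' part on $\{r \leq M\}$, the correction $\tfrac32\gamma\theta r_1^2 + C_{M,\overline M}\theta r_2^2$ comes from the $O(\theta)$ term in the binomial expansion, and $r_3^q(1 + C_M\theta\overline M^{-1}\ln\overline M)$ is the ``power-law'' tail.

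The key step is the case analysis on the three regimes $r \leq \gamma$, $\gamma < r \leq M$, and $r > M$. In the first regime $r_2 = r_3 = 0$ and $r_1 = r$: I would Taylor-expand $\phi(r)$ to second order with Lagrange remainder and bound $(1+\xi)^\theta \leq e^{\theta\ln(1+\xi)} \leq e^{\theta\ln(1+\gamma)}$, then use $e^x \leq 1 + x + \ldots$ together with $\ln(1+\gamma) \leq \gamma$ to extract $\phi(r_1) \leq \tfrac12 q(q-1)r_1^2 + \tfrac32\gamma\theta r_1^2$ for $\theta$ small; the constant $\tfrac32$ should come out of a careful bound of the form $\tfrac12 q(q-1)(e^{\theta\gamma} - 1) \leq \tfrac32\gamma\theta$ valid on $[2,3]$. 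In the second regime ($\gamma < r \leq M$, so $r_3 = 0$, $r_1 = \gamma$, $r = r_1 + r_2$), the same expansion but with $\xi \leq M$ gives $\phi(r) \leq \tfrac12 q(q-1)(r_1+r_2)^2 + C_M\theta(r_1+r_2)^2$, and one splits $(r_1+r_2)^2 = r_1^2 + 2r_1 r_2 + r_2^2 \leq (1 + \text{stuff})(r_1^2 + r_2^2)$ to match the required form, folding cross terms into $C_{M,\overline M}\theta r_2^2$ as needed (here the $\gamma \leq M$ hypothesis keeps $r_1^2 \leq M\gamma$ controlled). The third regime $r > M$ is where the tail term $r_3^q$ must appear: writing $r = r_1 + r_2 + r_3$ with $r_1 + r_2 = M$, I would compare $(1+r)^q$ with $(r_1+r_2+r_3)^q$ plus lower-order pieces, using a Bernoulli/convexity inequality of the type $(a+b)^q \leq b^q + q(a+b)^{q-1}a + \ldots$ or a direct estimate $(1+r)^q \leq r_3^q(1+\tfrac{M+1}{r_3})^q$ and expanding $(1+s)^q \leq 1 + qs + C s^2$ for bounded $s$; the parameter $\overline M$ enters here as a threshold: for $r_3 \geq \overline M$ the relative correction $\tfrac{M+1}{r_3}$ is small and one gets the factor $1 + C_M\theta\overline M^{-1}\ln\overline M$, while for $M < r \leq M + \overline M$ (i.e.\ $r_3 < \overline M$) the whole left side is bounded by a constant times $\theta M^2$-type terms (plus the $\tfrac12 q(q-1)M^2$ parabolic part which is $\leq \tfrac12 q(q-1)(r_1+r_2)^2$), giving the $C_{M,\overline M}\theta M^2\1_{\{r>M\}}$ term; the $2(r_1+r_2)r_3$ term is precisely the leading cross-term $q M^{q-1}\cdot(\ldots)$ bounded crudely using $q \leq 3$ and $M^{q-1} \leq$ something, or more directly $2(r_1+r_2)r_3 = 2Mr_3$ absorbing the genuine interaction.

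The main obstacle I expect is bookkeeping the constants uniformly in $q \in [2,3]$ while keeping every $\theta$-dependent term genuinely proportional to $\theta = q-2$ (not just $O(1)$), since naively $(1+r)^\theta \to 1$ pointwise but the \emph{rate} must be captured: the inequality $(1+r)^\theta \leq 1 + \theta\ln(1+r)e^{\theta\ln(1+r)}$ (from $\tfrac{d}{d\theta}(1+r)^\theta = (1+r)^\theta\ln(1+r)$ and monotonicity) is the workhorse, and on $\{r \leq M\}$ it yields $\ln(1+r) \leq \ln(1+M)$ giving the clean $C_M\theta$ bound, whereas on the tail the logarithm grows and one must pay the $\overline M^{-1}\ln\overline M$ price to keep it small relative to $r_3^q \geq r_3^2 \geq \overline M r_3$. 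A secondary subtlety is that $r_3^q$ rather than $r_3^2$ is the natural tail power (this is what makes the later integration against the Sobolev norm work), so in the third regime one cannot simply use the quadratic Taylor bound but must genuinely isolate the $q$-th power, which is why the argument there is structurally different from the first two regimes. Once the three pointwise bounds are assembled and the indicator functions $\1_{\{r \leq M\}}$, $\1_{\{r > M\}}$ inserted, the proposition follows by noting the bounds are consistent at the breakpoints $r = \gamma$ and $r = M$.
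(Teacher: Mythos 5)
Your high-level structure (case split at $\gamma$ and $M$, threshold $\overline M$ in the tail) matches the paper, but the technical engine you propose — Taylor with Lagrange remainder, bounding $(1+\xi)^\theta$ by $e^{\theta\ln(1+\xi)}$ — breaks down precisely where the proposition requires a sharp constant, and the breakdown is not a bookkeeping issue but a structural one.

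In the first regime you need $\tfrac12 q(q-1)\bigl[(1+\gamma)^{\theta}-1\bigr]\le\tfrac32\gamma\theta$. Taking $q=3$ (so $\theta=1$) and $\gamma=1$, the left side is $3(2-1)=3$ (or with your exponential bound $3(e-1)\approx5.15$), while the right side is $\tfrac32$. The inequality is false throughout a large part of $[2,3]\times(0,M]$, not just for large $\gamma$. The second regime fails for a different reason: the Lagrange remainder produces an error $C_M\theta(r_1+r_2)^2$, which contains a piece $C_M\theta\gamma^2$ (from $r_1^2=\gamma^2$); to match the allowed $\tfrac32\gamma\theta\,r_1^2=\tfrac32\theta\gamma^3$ you would need $C_M\le\tfrac32\gamma$, i.e.\ $C_M\to0$ as $\gamma\to0$, contradicting that $C_M$ depends on $M$ alone. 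The point is that the error has to be genuinely \emph{cubic} in $r$ so that when $|r|\lesssim\gamma$ one gets a factor $\gamma$ for free. This is exactly what the paper's Lemma~\ref{firstlemma} supplies: $(1+r)^q\le 1+qr+\tfrac12 q(q-1)r^2+(q-2)r_+^3$ for $q\in[2,3]$, and then $(r_1)_+^3\le\gamma r_1^2$ (regime one), respectively $(r_1+r_2)^3=\gamma r_1^2+3\gamma r_1 r_2+3\gamma r_2^2+r_2^3\le\tfrac32\gamma r_1^2+(\tfrac{15}{2}\gamma+M)r_2^2$ (regime two). A quadratic remainder with an adjustable $q$-dependent prefactor cannot reproduce this.

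The third regime in your sketch has the right intuition ($\overline M$ as threshold, rescale by $r_3$, expand $(1+(1+M)/r_3)^q$), but it omits the two pieces that make the argument close. First, the paper rewrites $(1+r)^q-1-qr$ via the algebraic identity $(1+r)^q-1-qr = \tfrac12 q(q-1)M^2+2Mr_3+r_3^2+\bigl[(1+r)^q-(1+r)^2\bigr]+\text{(nonpositive terms)}$, which cleanly separates the allowed leading terms from the quantity $(1+r)^q-(1+r)^2$ that must be shown to be $O(\theta)$ in the relevant range. Second, and more essentially, after expanding one is left with $q\,r_3^{q-1}-2r_3$ and $\tfrac12q(q-1)r_3^{q-2}-1$, which must be dominated by $\theta\,\overline M^{-1}\ln\overline M\cdot r_3^q$. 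This is the content of the paper's Lemma~\ref{secondlemma} (proved by locating the maximum of $v\mapsto qv^{-1}-2v^{1-q}$ and using $\ln t\ge 1-t^{-1}$). Your proposal names the $\overline M^{-1}\ln\overline M$ factor as a ``price'' but does not produce the inequality that actually realizes it. Without these two ingredients the tail estimate does not go through.
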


\medskip For the proof of Proposition~\ref{Prop:ptw}, we need two elementary lemmas.
\begin{lemma}\label{firstlemma}
If $2\leq q\leq 3$, then for all $r\in[-1,\infty)$,
$$
(1+r)^q \leq 1+ q\,r + \tfrac12\,q\,(q-1)\,r^2 + (q-2)\,r_+^3\,.
$$
\end{lemma}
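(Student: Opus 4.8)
The statement is the elementary pointwise bound
\[
(1+r)^q \leq 1+ q\,r + \tfrac12\,q\,(q-1)\,r^2 + (q-2)\,r_+^3 \qquad (2\leq q\leq 3,\ r\geq -1),
\]
and I would prove it exactly in the style already used for Lemma~\ref{ineq1}: introduce the difference function and show it is concave, so that it lies below its tangent line at the origin, which vanishes. Concretely, set
\[
\phi(r) := (1+r)^q - 1 - q\,r - \tfrac12\,q\,(q-1)\,r^2 - (q-2)\,r_+^3 .
\]
Then $\phi(0)=0$ and $\phi'(0)=0$, so it suffices to prove $\phi'' \leq 0$ on $[-1,\infty)$, since a concave function with a horizontal tangent at an interior point lies below that tangent everywhere on the interval.

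\emph{Computing the second derivative.} One finds
\[
\phi'(r) = q\bigl((1+r)^{q-1} - 1 - (q-1)\,r - \tfrac{3(q-2)}{q}\,r_+^2\bigr),
\]
\[
\phi''(r) = q\,(q-1)\bigl((1+r)^{q-2} - 1\bigr) - 6\,(q-2)\,r_+ .
\]
(Here $\tfrac{d}{dr}r_+^3 = 3r_+^2$ and $\tfrac{d}{dr}r_+^2 = 2r_+$, both valid at $r=0$ as well, so $\phi\in C^2$.) On $[-1,0]$ we have $r_+=0$, and since $0\le q-2\le 1$ the map $s\mapsto s^{q-2}$ is concave and increasing on $(0,\infty)$, giving $(1+r)^{q-2}\le 1$ for $-1\le r\le 0$; hence $\phi''(r)\le 0$ there. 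On $[0,\infty)$ I must show
\[
q\,(q-1)\bigl((1+r)^{q-2} - 1\bigr) \leq 6\,(q-2)\,r \qquad (r\ge 0).
\]
Since $q-2\ge 0$ this is automatic when $q=2$; for $2<q\le3$ divide by $q-2>0$ and use that $(1+r)^{q-2}-1 \le (q-2)\,r$ by concavity of $s\mapsto s^{q-2}$ (tangent-line bound at $s=1$), so the left side is at most $q(q-1)(q-2)r/(q-2)\cdot\ldots$—more carefully, it suffices that $q(q-1)(q-2)r \le 6(q-2)r$, i.e. $q(q-1)\le 6$, which holds for $q\le 3$ (indeed $3\cdot2=6$). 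Combining the two subintervals gives $\phi''\le0$ on $[-1,\infty)$, hence $\phi\le0$, which is the claim.

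\emph{Main obstacle.} There is no serious obstacle: the only point requiring a little care is the matching of one-sided derivatives at $r=0$ to ensure $\phi$ is genuinely $C^2$ (or at least $C^1$ with an a.e.\ second derivative and $\phi'$ monotone), so that the ``concave with horizontal tangent'' argument applies across $r=0$; this is handled by the explicit formulas above, which are continuous at $0$. The inequality $q(q-1)\le 6$ for $q\in[2,3]$ and the concavity estimate $(1+r)^{q-2}\le 1+(q-2)r$ are both standard, so the proof is a short routine computation in the same spirit as the preceding lemmas.
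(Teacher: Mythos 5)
Your proof is correct, and it is a legitimate alternative to the paper's own argument. For $r\geq 0$ the paper writes the remainder $(1+r)^q-1-qr-\tfrac12 q(q-1)r^2$ as the triple integral $q(q-1)(q-2)\int_0^r\!\int_0^s\!\int_0^t (1+u)^{q-3}\,du\,dt\,ds$, bounds the integrand by $1$ (since $q\le 3$), and then uses $\tfrac{q}{3}\cdot\tfrac{q-1}{2}\le 1$; for $r\le 0$ it simply cites the earlier Lemma~\ref{ineq1}. You instead run the concavity argument (the same device the paper uses to prove Lemma~\ref{ineq1}) in a single sweep over all of $[-1,\infty)$: set $\phi(r)=(1+r)^q-1-qr-\tfrac12 q(q-1)r^2-(q-2)r_+^3$, note $\phi(0)=\phi'(0)=0$, and show $\phi''\le 0$ by the tangent-line bound $(1+r)^{q-2}-1\le (q-2)r$ for $r\ge0$. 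Both proofs ultimately hinge on the same arithmetic fact $q(q-1)\le 6$ for $q\in[2,3]$, so the two routes are of comparable length; the advantage of yours is that it gives a uniform treatment of the two sign ranges and makes the proof self-contained rather than leaning on Lemma~\ref{ineq1} for the negative half-line. One small caveat you already flagged: one should note that $\phi'$ is continuous at $r=0$ (both one-sided limits equal $0$), so the concavity-plus-tangent argument does apply across the junction; your formulas make this evident.
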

\begin{proof}
The inequality for $-\,1\leq r\leq 0$ follows from Lemma~\ref{ineq1}. Let now $r\geq 0$. Then
$$
(1+r)^q - 1- q\,r - \tfrac12\,q\,(q-1)\,r^2 \!=\! q\,(q-1)\,(q-2) \!\int_0^r \!\int_0^s \!\int_0^t (1+u)^{q-3}\,du\,dt\,ds\,.
$$
Since $q\leq 3$ we have $(1+u)^{q-3}\leq 1$ and therefore
\begin{multline*}
q\,(q-1)\,(q-2) \int_0^r \int_0^s \int_0^t (1+u)^{q-3}\,du\,dt\,ds\\ \qquad
\leq  q\,(q-1)\,(q-2) \int_0^r \int_0^s \int_0^t\,du\,dt\,ds
= \tfrac q3\,\tfrac{q-1}2\,(q-2)\,r^3 \leq (q-2)\,r^3\,,
\end{multline*}
as claimed.
\end{proof}
\begin{lemma}\label{secondlemma}
For all $q\geq 2$ and all $v\geq \overline M\geq \sqrt e$ we have
\begin{multline*}
q\,v^{q-1} - 2\,v \leq \frac{1+2\ln\overline M}{\overline M}\,(q-2)\,v^q\quad\text{and}\\
\tfrac12\,q\,(q-1)\,v^{q-2} - 1 \leq \frac{\tfrac{1+q}2 +\ln\overline M}{\overline M^2}\,(q-2)\,v^q\,.
\end{multline*}
\end{lemma}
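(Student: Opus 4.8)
The plan is to deduce both bounds from a single one‑sided convexity inequality together with the monotonicity of two explicit functions of one variable. Throughout one may assume $q>2$, since for $q=2$ each of the two asserted inequalities reduces to $0\le0$. The basic tool is the estimate $e^x-1\le x\,e^x$ for $x\ge0$, which is immediate from $e^x-1=\int_0^x e^t\,dt\le x\,e^x$. Applying it with $x=(q-2)\ln v\ge0$ (nonnegative because $v\ge\overline M\ge\sqrt e>1$) yields the key pointwise bound
$$
v^{q-2}-1\le(q-2)\,(\ln v)\,v^{q-2}\,,\qquad v\ge1\,,\ q\ge2\,.
$$

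For the first inequality I would write $q\,v^{q-1}-2\,v=2\,v\,(v^{q-2}-1)+(q-2)\,v^{q-1}$ and insert the bound above to get $q\,v^{q-1}-2\,v\le(q-2)\,v^{q-1}\,(1+2\ln v)$. Hence it remains to check that $g(v):=(1+2\ln v)/v$ satisfies $g(v)\le g(\overline M)$ for $v\ge\overline M$, since multiplying this by $(q-2)\,v^q>0$ gives exactly the claim. This monotonicity holds because $g'(v)=(1-2\ln v)/v^2\le0$ precisely for $v\ge\sqrt e$, so $g$ is nonincreasing on $[\sqrt e,\infty)\supseteq[\overline M,\infty)$.

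For the second inequality, using $\tfrac12\,q\,(q-1)=1+\tfrac{(q-2)(q+1)}2$ I would write $\tfrac12\,q\,(q-1)\,v^{q-2}-1=(v^{q-2}-1)+\tfrac{(q-2)(q+1)}2\,v^{q-2}$, and the same pointwise bound gives $\tfrac12\,q\,(q-1)\,v^{q-2}-1\le(q-2)\,v^{q-2}\bigl(\ln v+\tfrac{q+1}2\bigr)$. Thus it suffices that $h(v):=\bigl(\ln v+\tfrac{q+1}2\bigr)/v^2$ satisfies $h(v)\le h(\overline M)$ for $v\ge\overline M$, since multiplying by $(q-2)\,v^q>0$ yields the claim; and indeed $h'(v)=\bigl(1-2\ln v-(q+1)\bigr)/v^3<0$ for every $v\ge1$ because $q+1\ge3$, so $h$ is nonincreasing on $[1,\infty)$.

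All the computations are routine, so there is no real obstacle. The only slightly delicate points are to use the convexity inequality $e^x-1\le x\,e^x$ in the correct (upper) direction, and to observe that the hypothesis $\overline M\ge\sqrt e$ is exactly the range on which $g$ is nonincreasing; for the second bound the shift by $\tfrac{q+1}2\ge\tfrac32$ already forces monotonicity of $h$ on all of $[1,\infty)$, so the condition $\overline M\ge\sqrt e$ is genuinely needed only for the first estimate.
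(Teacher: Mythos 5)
Your argument is correct, and it takes a genuinely different route from the paper's. The paper divides each left-hand side by $v^q$, shows that the resulting function of $v$ (namely $q\,v^{-1}-2\,v^{1-q}$, respectively $\tfrac12\,q\,(q-1)\,v^{-2}-v^{-q}$) is unimodal with a $q$-dependent critical point $v_*^{(1)}$, resp.\ $v_*^{(2)}$; it then verifies $v_*^{(1)}\le\sqrt e$ using $\ln t\le t-1$, evaluates the function at $\overline M$, and only afterwards invokes $1-t^{-1}\le\ln t$ to replace powers of $\overline M$ by $\ln\overline M$. You instead split the left-hand side algebraically and apply the log bound to the factor $v^{q-2}-1$ immediately (your $e^x-1\le x\,e^x$, with $t=e^x$, is precisely $1-t^{-1}\le\ln t$ --- the same elementary inequality, but used at an earlier stage and with $t=v^{q-2}$ rather than $t=\overline M^{q-2}$), which reduces matters to the monotonicity of the $q$-independent functions $g(v)=(1+2\ln v)/v$ and $h(v)=(\ln v+\tfrac{q+1}{2})/v^2$. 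The gain is that the $q$-dependent critical-point computation disappears: the hypothesis $\overline M\ge\sqrt e$ appears transparently as the turning point of $g$, rather than as a uniform bound on the family $v_*^{(1)}$. Your closing observation is also accurate: for the second estimate $h$ is already nonincreasing on $[1,\infty)$, so $\overline M\ge\sqrt e$ is only essential for the first bound. Both proofs are short and rest on the same convexity fact; yours is a bit cleaner in isolating where $\sqrt e$ enters.
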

\begin{proof}
Let
$$
v_*^{(1)}:= \big(2\,\tfrac{q-1}q\big)^\frac1{q-2}
\quad\text{and}\quad
v_*^{(2)} := \big(\tfrac1{q-1}\big)^\frac1{q-2}\,.
$$
Then an elementary computation shows that $v\mapsto q\,v^{-1} - 2\,v^{1-q}$ is increasing on $\big(0,v_*^{(1)}\big]$ and decreasing on $\big[v_*^{(1)},\infty\big)$. Similarly $v\mapsto \tfrac12\,q\,(q-1)\,v^{-2} - v^{-q}$ is increasing on $\big(0,v_*^{(2)}\big]$ and decreasing on $\big[v_*^{(2)},\infty\big)$. Thus,
$$
q\,v^{q-1} - 2\,v \leq \left( q\,\overline M^{-1} - 2\,\overline M^{1-q} \right) v^q
\quad\text{for all}\quad v\geq \overline M \geq v_*^{(1)}
$$
and
$$
\tfrac12\,q\,(q-1)\,v^{q-2} - 1 \leq \left( \tfrac12\,q\,(q-1)\,\overline M^{-2} - \overline M^{-q} \right)_+\,v^q\;,
\quad\forall\, v\geq \overline M \geq v_*^{(2)}\,.
$$
One has $v_*^{(1)}\geq 1 \geq v_*^{(2)}$ and, using $\ln t\leq t-1$ for all $t>0$, we find
$$
\ln v_*^{(1)} \leq \tfrac1q \leq\tfrac12\,,
\quad\text{that is,}\quad\,v_*^{(1)} \leq \sqrt e\,.
$$
Thus, the above inequality hold, in particular, for $v\geq \overline M\geq\sqrt e$.

Moreover, using $1-t^{-1}\leq\ln t$ for $t>1$ we can bound
\begin{multline*}
q\,\overline M^{-1} - 2\,\overline M^{1-q} = (q-2)\,\overline M^{-1} + 2\(\overline M^{-1} - \overline M^{1-q}\) \\\leq (q-2)\,\overline M^{-1}\(1+ 2\,\ln\overline M\)
\end{multline*}
and
\begin{multline*}
\tfrac12\,q\,(q-1)\,\overline M^{-2} - \overline M^{-q} = \( \tfrac12\,q\,(q-1) - 1\)\overline M^{-2} + \(\overline M^{-2}-\overline M^{-q}\) \\ \leq (q-2)\,\overline M^{-2} \( \tfrac{1+q}2 +\ln\overline M\)\,.
\end{multline*}
This proves the assertion.
\end{proof}

\begin{proof}[Proof of Proposition~\ref{Prop:ptw}]
We now turn to the proof of~\eqref{eq:propinitial}. Assume first that $r\leq M$. We apply Lemma~\ref{firstlemma} and obtain
$$
(1+r)^q - 1 - q\,r \leq \tfrac12\,q\,(q-1)\,(r_1+r_2)^2 + \theta\,(r_1+r_2)_+^3\,.
$$
If $r\leq\gamma$, then $r_2=0$ and~\eqref{eq:propinitial} follows from $(r_1)_+^3\leq \gamma\,r_1^2 \leq \tfrac32\,\gamma\,r_1^2$. If $\gamma<r\leq M$, we have, since $r_1=\gamma$ and $3\,r_1\,r_2\le\tfrac12\,r_1^2+\tfrac92\,r_2^2$, we have
$$
(r_1+r_2)_+^3 = \gamma\,r_1^2 + 3\,\gamma\,r_1\,r_2 + 3\,\gamma\,r_2^2 + r_2^3 \leq \tfrac32\,\gamma\,r_1^2 + \(\tfrac{15}2\,\gamma+M\)r_2^2\,.
$$
Since $\gamma\leq M$ this proves~\eqref{eq:propinitial} with $C_{M,\overline M}\geq\tfrac{17}2\,M$.

\medskip From here on, let us consider the case $r>M$. Using $r=M+r_3$ we can write
\begin{multline*}
(1+r)^q -1-q\,r = (1+r)^q - (1+r)^2 +(1+M)^2 \\ -1-q\,M - (q-2)\,r_3 + r_3^2 + 2\,M\,r_3\,.
\end{multline*}
We use
\begin{multline*}
(1+M)^2 - 1 -q\,M - \tfrac12\,q\,(q-1)\,M^2 \\ = -\,\tfrac12\,(q-2)\,M\,\big(2+(q+1)\,M\big)\leq 0
\end{multline*}
as well as $-\,(q-2)\,r_3\leq 0$, to get
\begin{align}\label{eq:ptwlarge}
(1+r)^q -1-q\,r \leq \tfrac12\,q\,(q-1)\,M^2 + 2\,M\,r_3 + r_3^2 + (1+r)^q - (1+r)^2\,.
\end{align}
Note that the terms $\,2Mr_3=2\,(r_1+r_2)\,r_3\,$ and $\,\tfrac12\,q\,(q-1)\,M^2 = \tfrac12\,q\,(q-1)\,(r_1+r_2)^2$ are already of the form required in~\eqref{eq:propinitial}. In the following we bound the remaining terms $r_3^2 + (1+r)^q - (1+r)^2$. We do this separately in the cases $M<r\leq M+\overline M$ and $r>M+\overline M$, where $\overline M\geq 0$ is an additional parameter.

If $M<r\leq M+\overline M$, we have
$$
(1+r)^q - (1+r)^2 \leq C_{M,\overline M}^{(1)}\,\theta\quad\mbox{and}\quad r_3^2 - r_3^q \leq C_{\overline M}^{(1)}\,\theta\,.
$$
Inserting this into~\eqref{eq:ptwlarge}, we have for $M<r\leq M+ \overline M$
\begin{align*}
(1+r)^q -1-q\,r \leq 2\,M\,r_3 + r_3^q + \left( \tfrac12\,q\,(q-1) + C_{M,\overline M}\,\theta \right) M^2\,,
\end{align*}
provided
$$
C_{M,\overline M}\geq M^{-2}\(C_{M,\overline M}^{(1)} + C_{\overline M}^{(1)}\)\,.
$$
This is a bound of the form~\eqref{eq:propinitial}, since $r_1+r_2=M$ for $r>M$.

Next, we consider the case $r>M+\overline M$, that is $r_3=r-M>\overline M$. By Lemma~\ref{firstlemma} we have
\begin{multline*}
(1+r)^q  = (1+M + r_3)^q = r_3^q\,\big( 1 + \tfrac{1+M}{r_3} \big)^q\\
 \leq r_3^q + q\,r_3^{q-1}\,(1+M) + \tfrac12\,q\,(q-1)\,r_3^{q-2}\,(1+M)^2 + \theta\,r_3^{q-3}\,(1+M)^3\\
 \leq r_3^q + q\,r_3^{q-1}\,(1+M) + \tfrac12\,q\,(q-1)\,r_3^{q-2}\,(1+M)^2 + \theta\,\overline M^{q-3}\,(1+M)^3\\
\quad = r_3^q + q\,r_3^{q-1}\,(1+M) + \tfrac12\,q\,(q-1)\,r_3^{q-2}\,(1+M)^2 + C_{M,\overline M}^{(2)}\,\theta\,.
\end{multline*}
In the last inequality, we used $q\leq 3$ and $r_3>\overline M$. This, together with
$$
(1+r)^2 = (1+M+r_3)^2 = r_3^2 + 2\,r_3\,(1+M) + (1 + M)^2\,,
$$
gives
\begin{multline*}
\tfrac12\,q\,(q-1)\,M^2 + 2\,M\,r_3 + r_3^2 + (1+r)^q - (1+r)^2\\
\leq 2\,M\,r_3 + r_3^q + \left( q\,r_3^{q-1} - 2\,r_3 \right)(1+M)\\
+ \left( \tfrac12\,q\,(q-1)\,r_3^{q-2} - 1 \right)(1+M)^2
+ C_{M,\overline M}^{(2)}\,\theta + \tfrac12\,q\,(q-1)\,M^2\,.
\end{multline*}
We now assume that $\overline M\geq \sqrt e$. Then, by Lemma~\ref{secondlemma},
\begin{multline*}
q\,r_3^{q-1} - 2\,r_3 \leq \frac{1+2\,\ln\overline M}{\overline M}\,\theta\,r_3^q\quad\mbox{and}\\ \tfrac12\,q\,(q-1)\,r_3^{q-2} - 1 \leq \frac{2+\ln\overline M}{\overline M^2}\,\theta\,r_3^q\,.
\end{multline*}
Thus,
\begin{multline*}
\tfrac12\,q\,(q-1)\,M^2+2\,M\,r_3+r_3^2+(1+r)^q-(1+r)^2\\
\leq2\,M\,r_3+\left(1+\frac{C_M\,\ln\overline M}{\overline M}\,\theta\right)r_3^q+C_{M,\overline M}^{(2)}\,\theta+\tfrac12\,q\,(q-1)\,M^2\,,
\end{multline*}
where $C_M\,$ is a constant satisfying
$$
\frac{1+2\,\ln\overline M}{\overline M}\,(1+M) + \frac{2+\ln\overline M}{\overline M^2}\,(1+M)^2 \leq \frac{C_M\,\ln\overline M}{\overline M}
\quad\text{for all}\quad\overline M\geq \sqrt e\,.
$$
Combining this with~\eqref{eq:ptwlarge} we obtain a bound of the form~\eqref{eq:propinitial}, provided the constant $C_{M,\overline M}\,$ satisfies
$$
C_{M,\overline M}\geq M^{-2}\,C_{M,\overline M}^{(2)}\,.
$$
This concludes the proof with $$C_{M,\overline M}=M^{-2}\,\max\left\{C_{M,\overline M}^{(1)} + C_{\overline M}^{(1)},\,C_{M,\overline M}^{(2)}\right\}\,.$$
\end{proof}

\begin{corollary}\label{Cor:ptw}
Given $\epsilon>0$, $M>0$, and $\gamma\in(0,M/2)$, there is a constant $C_{\gamma,\epsilon,M}>0$ with the following property: if $2\leq q\leq 3$, $r\in [-1,\infty)$, then
\begin{multline}\label{Claim:CuttingEstim}
(1+r)^q - 1 - q\,r \leq \(\tfrac12\,q\,(q-1) + 2\,\gamma\,\theta\) r_1^2 +\(\tfrac12\,q\,(q-1) + C_{\gamma,\epsilon,M}\,\theta\)r_2^2\\
+ 2\,r_1\,r_2 + 2\,(r_1+r_2)\,r_3 + (1+\epsilon\,\theta)\,r_3^q
\end{multline}
with $r_1$, $r_2$, $r_3$ and $\theta$ given by~\eqref{r1r2r3} and~\eqref{theta}.
\end{corollary}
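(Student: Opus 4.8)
The plan is to obtain~\eqref{Claim:CuttingEstim} directly from Proposition~\ref{Prop:ptw} by two reductions: first choosing the free parameter $\overline M$ so that the factor $\overline M^{-1}\ln\overline M$ appearing there gets absorbed into $\epsilon\,\theta$, and then disposing of the two indicator functions $\1_{\{r\le M\}}$ and $\1_{\{r>M\}}$ by splitting into the cases $r\le M$ and $r>M$, on each of which $r_1,r_2,r_3$ have an explicit form. No new inequality is needed; everything is bookkeeping.

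First I would fix $\overline M=\overline M(\epsilon,M)\ge\sqrt e$ large enough that $C_M\,\overline M^{-1}\ln\overline M\le\epsilon$; this is possible because the constant $C_M$ produced by Proposition~\ref{Prop:ptw} depends only on $M$, while $\overline M^{-1}\ln\overline M\to 0$. Once $\overline M$ is frozen, $C_{M,\overline M}$ becomes a constant depending only on $(\epsilon,M)$. I would also record the two elementary facts used repeatedly: for $q\in[2,3]$ one has $q\,(q-1)-2=\theta\,(q+1)\le 4\,\theta$, and whenever $r_2>0$ one necessarily has $r>\gamma$, hence $r_1=\gamma>0$, so $r_1\,r_2\ge 0$ and AM--GM applies to the product $r_1\,r_2$.

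In the case $r\le M$ one has $r_3=0$, so Proposition~\ref{Prop:ptw} reduces to a bound by $\tfrac12\,q\,(q-1)\,(r_1+r_2)^2+\tfrac32\,\gamma\,\theta\,r_1^2+C_{M,\overline M}\,\theta\,r_2^2$; I would expand the square and write the cross term as $q\,(q-1)\,r_1\,r_2=2\,r_1\,r_2+\theta\,(q+1)\,r_1\,r_2$, then absorb $\theta\,(q+1)\,r_1\,r_2\le 4\,\theta\,r_1\,r_2$ into $\tfrac12\,\gamma\,\theta\,r_1^2$ plus a constant-times-$\gamma^{-1}$ multiple of $\theta\,r_2^2$ by AM--GM. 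In the case $r>M$ one has $r_1=\gamma$, $r_2=M-\gamma$, $r_1+r_2=M$, and $0<r_1<r_2$ because $\gamma<M/2$; rewriting $M^2=(r_1+r_2)^2$ and $2\,M\,r_3=2\,(r_1+r_2)\,r_3$, I would absorb both the cross term $\theta\,(q+1)\,r_1\,r_2$ and the stray term $C_{M,\overline M}\,\theta\,M^2$ into an $O(1)\,\theta\,r_2^2$ term, using $r_1\,r_2\le r_2^2$ and $M^2\le 4\,r_2^2$. Collecting the constants from the two cases yields an admissible $C_{\gamma,\epsilon,M}$.

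The only delicate point — and the step I would carry out most carefully — is the coefficient of $r_1^2$: Proposition~\ref{Prop:ptw} already spends $\tfrac32\,\gamma\,\theta$ of the $2\,\gamma\,\theta$ allowed in~\eqref{Claim:CuttingEstim}, leaving only $\tfrac12\,\gamma\,\theta$ of slack, so the weight placed on $r_1^2$ when absorbing the cross term must be kept $\le\tfrac12\,\gamma\,\theta$. This forces one to exploit that the cross-term excess is exactly $q\,(q-1)-2=\theta\,(q+1)=O(\theta)$ rather than $O(1)$; no analogous care is needed for $r_2^2$, whose coefficient is allowed to carry the unrestricted constant $C_{\gamma,\epsilon,M}$, nor for the $r_1\,r_2$ and $(r_1+r_2)\,r_3$ terms, which come out with exactly the coefficient $2$ demanded by~\eqref{Claim:CuttingEstim}.
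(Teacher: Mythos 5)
Your proposal is correct and follows essentially the same route as the paper: fix $\overline M$ so that $C_M\,\overline M^{-1}\ln\overline M\le\epsilon$, expand $\tfrac12\,q\,(q-1)\,(r_1+r_2)^2$, isolate the cross-term excess $q\,(q-1)-2=\theta\,(q+1)\le 4\,\theta$, and absorb everything with $\theta$ prefactors into $r_1^2$ and $r_2^2$ while respecting the tight $2\,\gamma\,\theta$ budget on $r_1^2$. The only divergence is cosmetic: the paper applies the single AM--GM bound $4\,\theta\,r_1\,r_2\le\tfrac\gamma2\,\theta\,r_1^2+\tfrac8\gamma\,\theta\,r_2^2$ uniformly in $r$ and then bounds the two indicator-weighted remainders from Proposition~\ref{Prop:ptw} directly (using $M^2\le 4\,(M-\gamma)^2=4\,r_2^2\,\1_{\{r>M\}}$), whereas you split into the cases $r\le M$ and $r>M$ and, in the second, use the additional fact $r_1<r_2$ to dump the cross-term excess entirely into $r_2^2$. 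Both treatments yield the same structure of admissible constant $C_{\gamma,\epsilon,M}$, and you correctly identify the one genuinely delicate point --- that the $r_1^2$ slack is only $\tfrac12\,\gamma\,\theta$, which forces exploiting that the cross-term overshoot is $O(\theta)$ rather than $O(1)$.
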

\begin{proof}
Since
\begin{multline*}
q\,(q-1)\,r_1\,r_2 = 2\,r_1\,r_2 + ( 3 + \theta )\,\theta\,r_1\,r_2 \leq 2\,r_1\,r_2 + 4\,\theta\,r_1\,r_2 \\ \leq 2\,r_1\,r_2 + \tfrac\gamma2\,\theta\,r_1^2 + \tfrac8\gamma\,\theta\,r_2^2
\end{multline*}
and
$$
C_{M,\overline M}\,M^2\,\1_{\{r>M\}} \leq 4\,C_{M,\overline M}\,(M-\gamma)^2\,\1_{\{r>M\}} \leq 4\,C_{M,\overline M}\,r_2^2\,,
$$
we deduce from~\eqref{eq:propinitial} that
\begin{multline*}
(1+r)^q - 1 - q\,r \leq\(\tfrac12\,q\,(q-1) + 2\,\gamma\,\theta\) r_1^2 + 2\,r_1\,r_2 + 2\,(r_1+r_2)\,r_3 \\ +\(\tfrac12\,q\,(q-1) + \tfrac8\gamma\,\theta+5\,C_{M,\overline M}\,\theta\)r_2^2
 + \left(1+C_M\,\theta\,\overline M^{-1}\ln\overline M\right) r_3^q\,.
\end{multline*}
Given any $M\geq2\,\gamma$, we choose $\overline M$ such that $\overline M\geq \sqrt e$ and $C_M\,\overline M^{-1}\ln\overline M\leq \epsilon$. Then~\eqref{Claim:CuttingEstim} follows with $C_{\gamma,\epsilon,M}=\tfrac8\gamma+5\,C_{M,\overline M}$.
\end{proof}

We will apply Corollary~\ref{Cor:ptw} for $q$ close to $2$ and the main point is how the constants depend on $q$. Apart from the `natural' terms $\tfrac12\,q\,(q-1)\,r_1^2$, $\tfrac12\,q\,(q-1)\,r_2^2$, $2\,r_1\,r_2$ and $2\,(r_1+r_2)\,r_3$, all other terms are multiplied by $\theta$, which is small in our application. Moreover, we have the freedom to choose~$\gamma$ and $\epsilon$ as small as we please (independent of $q$) and so the prefactors of the terms $r_1^2$ and $r_3^q$ are almost the natural ones. The price to be paid is a rather large constant in front of the error term involving~$r_2^2$. In order to have better estimates as $d\to+\infty$, more work is needed.

\subsection{A detailed estimate of the deficit}\label{Sec:Detailed}

We assume that $-\,1\leq r\in \mathrm H^1(\Sph^d)$ satisfies the orthogonality conditions~\eqref{eq:orthosphere} as well as the smallness condition~\eqref{eq:smallsphere} with some $\tilde\delta$, and we show that, if this $\tilde\delta$ is small enough, given $\epsilon_0\in(0,\tfrac13)$, we obtain the claimed inequality.

Given two parameters $\epsilon_1$, $\epsilon_2>0$ we apply Corollary~\ref{Cor:ptw} with
\be{epsilon1-epsilon2-C}
\gamma=\frac{\epsilon_1}2\,,\quad\epsilon=\epsilon_2\quad\mbox{and}\quad C_{\gamma,\epsilon,M}=C_{\epsilon_1,\epsilon_2}\,.
\ee
In terms of these parameters, we decompose $r=r_1+r_2+r_3$. We obtain
$$
\isd{|\nabla r|^2} = \isd{|\nabla r_1|^2} + \isd{|\nabla r_2|^2} + \isd{|\nabla r_3|^2}
$$
and, since $r$ has mean zero,
$$
\isd{(1+r)^2} = 1 + \isd{r^2}\,.
$$
Moreover,
\begin{multline*}
\isd{r^2} = \isd{r_1^2} + \isd{r_2^2} + \isd{r_3^2} \\ + 2 \isd{r_1\,r_2} + 2 \isd{(r_1+r_2)\,r_3}\,.
\end{multline*}
According to Corollary~\ref{Cor:ptw} and using again the fact that $r$ has mean zero, we have
\begin{multline*}
\isd{(1+r)^q}  \leq 1 +\(\tfrac12\,q\,(q-1) + \epsilon_1\,\theta\) \isd{r_1^2} \\ +\(\tfrac12\,q\,(q-1) + C_{\epsilon_1,\epsilon_2}\,\theta\) \isd{r_2^2}
 + 2 \isd{r_1\,r_2} \\ + 2 \isd{(r_1+r_2)\,r_3} + (1+\epsilon_2\,\theta) \isd{r_3^q}\,.
\end{multline*}
Using $(1+x)^{2/q}\leq 1 + \tfrac2q\,x$, we obtain
\begin{multline*}
\left( \isd{(1+r)^q} \right)^{2/q}  \leq 1 + (q-1 + \tfrac2q\,\epsilon_1\,\theta) \isd{r_1^2} + \\ (q-1 + \tfrac 2q\, C_{\epsilon_1,\epsilon_2}\,\theta) \isd{r_2^2}
+ \tfrac4q \isd{r_1\,r_2} \\+ \tfrac4q \isd{(r_1+r_2)\,r_3} + \tfrac 2q\,(1+\epsilon_2\,\theta) \isd{r_3^q}\\
 \quad\leq 1 + (q-1 + \epsilon_1\,\theta) \isd{r_1^2} + (q-1 + C_{\epsilon_1,\epsilon_2}\,\theta) \isd{r_2^2}\\
 \qquad + 2 \isd{r_1\,r_2} + 2 \isd{(r_1+r_2)\,r_3} + \tfrac 2q\,(1+\epsilon_2\,\theta) \isd{r_3^q}\,.
\end{multline*}
In the last inequality we used $\frac 2q\leq 1$. For the final term, however, it is vital that we keep $\tfrac2q$. We thus have, for any $0<\epsilon_0\leq\theta^{-1}$,
\begin{multline*}
\isd{\left( |\nabla r|^2 +\mathsf A\,(1+r)^2 \right)} -\mathsf A\,\left( \isd{(1+r)^q} \right)^{2/q}\\
\geq \theta\,\epsilon_0 \isd{\left( |\nabla r|^2 +\mathsf A\,r^2 \right)}\\
\quad + (1-\theta\,\epsilon_0) \isd{\left( |\nabla r_1|^2 +\mathsf A\,r_1^2 \right)} -\mathsf A\,(q-1 + \epsilon_1\,\theta) \isd{r_1^2}\\
\quad + (1-\theta\,\epsilon_0) \isd{\left( |\nabla r_2|^2 +\mathsf A\,r_2^2 \right)} -\mathsf A\,(q-1 + C_{\epsilon_1,\epsilon_2}\,\theta) \isd{r_2^2}\\
\quad + (1-\theta\,\epsilon_0) \isd{\left( |\nabla r_3|^2 +\mathsf A\,r_3^2 \right)} - \tfrac 2q\,\mathsf A\,(1+\epsilon_2\,\theta) \isd{r_3^q}\,.
\end{multline*}
With another parameter $\sigma_0>0$ we define
\begin{multline*}
I_1  := (1-\theta\,\epsilon_0) \isd{\left( |\nabla r_1|^2 +\mathsf A\,r_1^2 \right)} \\ -\mathsf A\,(q-1 + \epsilon_1\,\theta) \isd{r_1^2} +\mathsf A\,\sigma_0\,\theta \isd{(r_2^2+r_3^2)}\,,\\
I_2  := (1-\theta\,\epsilon_0) \isd{\left( |\nabla r_2|^2 +\mathsf A\,r_2^2 \right)} -\mathsf A\,\big(q-1 + (\sigma_0 + C_{\epsilon_1,\epsilon_2})\,\theta\big) \isd{r_2^2}\,,\\
I_3  := (1-\theta\,\epsilon_0) \isd{\left( |\nabla r_3|^2 +\mathsf A\,r_3^2 \right)} \\ - \tfrac 2q\,\mathsf A\,(1+\epsilon_2\,\theta) \isd{r_3^q} -\mathsf A\,\sigma_0\,\theta \isd{r_3^2}\,.
\end{multline*}
We recall that $\mathsf A=\frac14\,d\,(d-2)$. For later purposes, we note that $\mathsf A\,\theta =\mathsf A\,(q-2)=d$ and
\begin{multline*}
I_1  = (1-\theta\,\epsilon_0) \isd{|\nabla r_1|^2} - d\,(1 + \epsilon_0 + \epsilon_1 ) \isd{r_1^2} \\ + d\,\sigma_0 \isd{(r_2^2+r_3^2)}\,,\\
I_2  = (1-\theta\,\epsilon_0) \isd{|\nabla r_2|^2} - d\,(1 + \epsilon_0 + \sigma_0 + C_{\epsilon_1,\epsilon_2}) \isd{r_2^2}\,.
\end{multline*}
To summarize, we have
\begin{multline*}
\isd{\left( |\nabla r|^2 +\mathsf A\,(1+r)^2 \right)} -\mathsf A\left( \isd{(1+r)^q} \right)^{2/q} \\\geq \theta\,\epsilon_0 \isd{\left( |\nabla r|^2 +\mathsf A\,r^2 \right)} + \sum_{k=1}^3 I_k\,.
\end{multline*}
In the following we will show that $I_1$, $I_3$ and $I_2$ are nonnegative, in this order.

\subsubsection{Bound on \texorpdfstring{$I_1$}{I1}}\label{sec:i1}

The intuition here is the same as in the proof of the spectral gap inequality in Lemma~\ref{gap}. Namely, the lowest $\mathrm L^2$-eigenvalue of $\isd{|\nabla u|^2}$ on functions orthogonal to spherical harmonics of degree less or equal than $1$ is $2\,(d+1)$, while the term that we are subtracting corresponds to a component that is multiplied by a number only slightly larger than $d$. Therefore, there is space to accommodate the errors coming from $\epsilon_0$ and $\epsilon_1$. Another source of an error comes from the fact that, while $r$ is orthogonal to spherical harmonics of degree less or equal than $1$, $r_1$ need not be. However, as we will see, it nearly is. To control the corresponding error from orthogonality we need the positive terms involving $\sigma_0$.
\begin{proposition}\label{Prop:estI1}
For any $0<\epsilon_0<\tfrac13$, there is a constant $\overline\sigma_0(\gamma,\epsilon_0,\tilde\delta)>0$ depending explicitly on~$\gamma$, $\epsilon_0$ and $\tilde\delta$ such that for all $d\geq 6$ and all $r\in \mathrm H^1(\Sph^d)$ such that $r\ge-1$ and satisfying~\eqref{eq:smallsphere} and~\eqref{eq:orthosphere} as in Theorem~\ref{unifboundclose}, with $\theta$ given by~\eqref{theta},
\be{epsilon1}
\epsilon_1=\tfrac12\,(1-3\,\epsilon_0)
\ee
and $\sigma_0\ge\overline\sigma_0(\gamma,\epsilon_0,\tilde\delta)$, one has
$$
I_1\geq 0\,.
$$
\end{proposition}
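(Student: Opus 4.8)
The plan is to estimate $I_1$ from below by decomposing $r_1=\min\{r,\gamma\}$ into its low-frequency part (spherical harmonics of degree $\le1$) and its high-frequency part, using that $r$ itself has no low-frequency part so the low-frequency part of $r_1$ equals minus the low-frequency part of $r-r_1=r_2+r_3$, which is controlled by $\|r_2+r_3\|$. More precisely, write $r_1 = P r_1 + P^\perp r_1$, where $P$ is the $\mathrm L^2(\Sph^d)$-projection onto spherical harmonics of degree $\le1$ and $P^\perp = 1-P$. Since $Pr = 0$ by \eqref{eq:orthosphere}, we have $P r_1 = -P(r_2+r_3)$, and hence $\|Pr_1\|_{\mathrm L^2(\Sph^d)}^2 \le \|r_2+r_3\|_{\mathrm L^2(\Sph^d)}^2 \le 2\isd{(r_2^2+r_3^2)}$. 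On the orthogonal complement the gradient term is large: $\isd{|\nabla P^\perp r_1|^2} \ge 2(d+1)\isd{(P^\perp r_1)^2}$, because the smallest Laplace--Beltrami eigenvalue on functions orthogonal to harmonics of degree $\le1$ is $2(d+1)$. On the low-frequency part $\isd{|\nabla Pr_1|^2}\ge0$, which is all we need there.

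The key step is then purely algebraic bookkeeping: using $\isd{|\nabla r_1|^2}=\isd{|\nabla Pr_1|^2}+\isd{|\nabla P^\perp r_1|^2}\ge 2(d+1)\isd{(P^\perp r_1)^2}$ and $\isd{r_1^2}=\isd{(Pr_1)^2}+\isd{(P^\perp r_1)^2}$, one gets
\begin{align*}
I_1 &\ge (1-\theta\epsilon_0)\,2(d+1)\isd{(P^\perp r_1)^2} - d(1+\epsilon_0+\epsilon_1)\big(\isd{(P^\perp r_1)^2}+\isd{(Pr_1)^2}\big) + d\,\sigma_0\isd{(r_2^2+r_3^2)}\\
&\ge \Big((1-\theta\epsilon_0)\,2(d+1) - d(1+\epsilon_0+\epsilon_1)\Big)\isd{(P^\perp r_1)^2} + \Big(\tfrac{\sigma_0}{2} - d(1+\epsilon_0+\epsilon_1)\Big)\cdot 2\isd{(r_2^2+r_3^2)}\cdot\tfrac1{?}
\end{align*}
— here one simply chooses $\sigma_0$ large enough (depending on $d$, $\epsilon_0$, $\epsilon_1$, and via $\|Pr_1\|^2\le 2\isd{(r_2^2+r_3^2)}$) to absorb the negative $-d(1+\epsilon_0+\epsilon_1)\isd{(Pr_1)^2}$ term into the positive $d\sigma_0\isd{(r_2^2+r_3^2)}$ term; concretely $\sigma_0 \ge 2(1+\epsilon_0+\epsilon_1)$ suffices for that piece. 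For the $\isd{(P^\perp r_1)^2}$ coefficient, one checks that with $\epsilon_1 = \tfrac12(1-3\epsilon_0)$ and $\epsilon_0<\tfrac13$ one has $1+\epsilon_0+\epsilon_1 = \tfrac32-\tfrac12\epsilon_0 < \tfrac32$, so $2(d+1) - d(1+\epsilon_0+\epsilon_1) > 2(d+1)-\tfrac32 d = \tfrac12 d + 2 > 0$; the factor $(1-\theta\epsilon_0)$ is close to $1$ since $\theta=4/(d-2)\le1$ for $d\ge6$ and $\epsilon_0<\tfrac13$, and a short computation (using $d\ge6$) shows the coefficient stays positive. This yields $I_1\ge0$.

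The only genuinely delicate point — hence where I would spend the most care — is making the dependence of $\overline\sigma_0$ on $\gamma$ explicit and confirming it does not blow up as $d\to\infty$: the role of $\gamma$ enters because $r_1$ is bounded, $|r_1|\le\gamma$ pointwise, but in fact the bound $\|Pr_1\|_{\mathrm L^2}^2\le 2\isd{(r_2^2+r_3^2)}$ above does not use $\gamma$ at all, so one should double-check whether the paper's $\overline\sigma_0(\gamma,\epsilon_0,\tilde\delta)$ really needs $\gamma$ and $\tilde\delta$ or whether a cruder route (e.g. bounding $\isd{(Pr_1)^2}$ via $\isd{r_1^2}\le\gamma^{2-q}\isd{|r_1|^q}\le\gamma^{2-q}\tilde\delta^{q/2}$, which does bring in $\gamma$ and $\tilde\delta$) is what is intended. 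Either way the mechanism is the same: a large but dimension-controlled $\sigma_0$ trades the small orthogonality defect of $r_1$ against the positive $r_2,r_3$ reservoir, and the structural inequality $2(d+1) > \tfrac32 d$ provides the room. I would organize the write-up as: (1) projection decomposition and the orthogonality-defect bound; (2) the eigenvalue lower bound $2(d+1)$; (3) the choice \eqref{epsilon1} of $\epsilon_1$ and verification that the $P^\perp r_1$ coefficient is positive for $d\ge6$, $\epsilon_0<\tfrac13$; (4) the explicit choice of $\overline\sigma_0$ absorbing the remaining term.
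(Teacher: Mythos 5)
Your argument is formally correct as a proof of the \emph{literal} statement---it shows that $I_1\ge 0$ whenever $\sigma_0\ge 2(1+\epsilon_0+\epsilon_1)=3-\epsilon_0$, a threshold independent of both $\gamma$ and $\tilde\delta$. The algebra is sound: the coefficient of $\isd{(P^\perp r_1)^2}$ is $(1-\theta\epsilon_0)2(d+1)-d(\tfrac32-\tfrac12\epsilon_0)\ge \tfrac{d}2(1-3\epsilon_0)+2(1-\epsilon_0)>0$ under the stated hypotheses, and the bound $\|Pr_1\|_2^2=\|P(r_2+r_3)\|_2^2\le 2\isd{(r_2^2+r_3^2)}$ is correct.

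However, you have correctly identified in your own reflection the genuine gap, so let me confirm it: the threshold you obtain is \emph{too large} for the way this proposition is used. In Proposition~\ref{Prop:estI2} and in the proof of Theorem~\ref{unifboundclose}, the concrete choice is $\sigma_0=\tfrac{2}{q}\,\epsilon_2$ with $\epsilon_2=\tfrac14(1-3\epsilon_0)$, which is less than $\tfrac14$, far below your $3-\epsilon_0$. The entire point of the paper's proof is to make $\overline\sigma_0$ \emph{vanish} as $\tilde\delta\to 0$ (for fixed $\gamma,\epsilon_0$), so that one can then pick $\tilde\delta$ small enough in Corollary~\ref{Cor:estI1} to accommodate the small $\sigma_0$ dictated by the bound on $I_3$. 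Your bound $\|Pr_1\|_2^2\le 2\isd{(r_2^2+r_3^2)}$ throws away exactly the information needed to achieve this.

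What the paper does instead is exploit the \emph{small support} of $r_2+r_3$: since $\{r_2+r_3>0\}\subset\{r_1\ge\gamma\}$ has measure at most $\gamma^{-2}\isd{r_1^2}$ by Chebyshev, one applies H\"older with exponents $4,4,2$ to get
$$\left(\isd{Y\,r_1}\right)^2 = \left(\isd{Y\,(r_2+r_3)}\right)^2\le \nrmS Y4^2\,\mu(\{r_2+r_3>0\})^{1/2}\,\nrmS{r_2+r_3}2^2\,,$$
and then combines this with $\nrmS{r_2+r_3}2\le\nrmS rq\le\sqrt{\tilde\delta}$ to obtain the \emph{product} bound $\lesssim (\sqrt{\tilde\delta}/\gamma)\,\nrmS{r_1}2\,\big(\isd{(r_2^2+r_3^2)}\big)^{1/2}$. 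This product structure lets one run a discriminant argument between $\epsilon_1\nrmS{r_1}2^2$ and $\sigma_0\isd{(r_2^2+r_3^2)}$, yielding $\overline\sigma_0\sim\tilde\delta/\gamma^2$, which is what makes the downstream choices consistent. So while your proof ``works'' for the proposition read in isolation, it would break the proof of Theorem~\ref{unifboundclose}; the dependence of $\overline\sigma_0$ on $\tilde\delta$ and $\gamma$ is not cosmetic.
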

Notice that $\theta=q-2\le1$ with $q=2\,d/(d-2)$ means $d\ge6$. An expression of $\overline\sigma_0$ is given below in~\eqref{sigma0}.
\begin{proof} We split the proof in three simple steps.

\medskip\noindent{\it Step 1.\/} Let $\tilde r_1$ be the orthogonal projection of $r_1$ onto the space of spherical harmonics of degree $\geq 2$, that is,
$$
\tilde r_1 = r_1 - \isd{r_1} - (d+1)\,\omega \cdot \isd{\omega'\,r_1(\omega')}(\omega')
$$
as $\sqrt{d+1}\,\omega_j$ is $\mathrm L^2$-normalized with respect to the uniform probability measure on the sphere for any $j=1,\,2,\,\ldots,d+1$. Then
\begin{multline*}
I_1  = (1-\theta\,\epsilon_0) \isd{|\nabla \tilde r_1|^2} - d\,(1 + \epsilon_0 + \epsilon_1 ) \isd{\tilde r_1^2} \\ + d\,\sigma_0 \isd{(r_2^2+r_3^2)}
- d\,(1 + \epsilon_0 + \epsilon_1 ) \left( \isd{r_1} \right)^2
\\ - d\,(d+1)\,\big( (1+\theta)\,\epsilon_0 + \epsilon_1 \big) \left| \isd{\omega\,r_1}\right|^2\\
 \geq \big( 2\,(d+1)\,(1-\theta\,\epsilon_0) - d\,(1 + \epsilon_0 + \epsilon_1 ) \big) \isd{\tilde r_1^2} \\ + d\,\sigma_0 \isd{(r_2^2+r_3^2)}
 - d\,(1 + \epsilon_0 + \epsilon_1 ) \left( \isd{r_1} \right)^2
\\ - d\,(d+1)\,\big( (1+\theta)\,\epsilon_0 + \epsilon_1 \big) \left| \isd{\omega\,r_1} \right|^2\,.
\end{multline*}
In the equality, we used the fact that the $\omega_j$'s are eigenfunctions of the Laplace--Beltrami operator with eigenvalue $d$. In the inequality, we used the fact that the operator is bounded from below by $2\,(d+1)$ on the orthogonal complement of spherical harmonics of degree less or equal than $1$.

\medskip\noindent{\it Step 2.\/} With $\epsilon_1$ given by~\eqref{epsilon1}, it is easy to see that for any $\epsilon_0<\tfrac13$, using $\theta\le1$, we have
\begin{equation}
\label{eq:i1boundmain}
2\,(d+1)\,(1-\theta\,\epsilon_0) - d\,(1 + \epsilon_0 + \epsilon_1 ) \ge \tfrac d2\,(1 - 3\,\epsilon_0)+2\,(1-\epsilon_0) > d\,\epsilon_1>0\,.
\end{equation}
Using
$$
\isd{\tilde r_1^2}=\isd{r_1^2}-\left( \isd{r_1} \right)^2 -(d+1)\, \left| \isd{\omega\,r_1} \right|^2
$$
and $\theta\le1$, we obtain
\begin{multline*}
\frac1d\,I_1\ge \;\epsilon_1\isd{\tilde r_1^2}+\sigma_0 \isd{(r_2^2+r_3^2)}\\
 -\,(1 + \epsilon_0 + \epsilon_1 ) \left( \isd{r_1} \right)^2
-(d+1)\,\big( (1+\theta)\,\epsilon_0 + \epsilon_1 \big) \left| \isd{\omega\,r_1} \right|^2\\
\ge \;\epsilon_1\isd{r_1^2}+\sigma_0 \isd{(r_2^2+r_3^2)}\\
-\,(1 + \epsilon_0) \left( \isd{r_1} \right)^2
-2\,(d+1)\,\epsilon_0\,\left| \isd{\omega\,r_1} \right|^2\,.
\end{multline*}

\medskip\noindent{\it Step 3.\/} Let us take care of the rank one terms coming from the orthogonality conditions. We will show that $I_1\ge0$ for an appropriately chosen $\sigma_0$ as a consequence of
\begin{multline}\label{eq:i1boundortho}
(1 + \epsilon_0) \left( \isd{r_1} \right)^2
+2\,(d+1)\,\epsilon_0\,\left| \isd{\omega\,r_1} \right|^2 \\ \leq\epsilon_1 \isd{r_1^2} + \sigma_0 \isd{(r_2^2+r_3^2)}\,.
\end{multline}
Let~$Y$ be one of the functions $1$ and $a\cdot\omega$, $a\in\R^{d+1}$. Then, since $\isd{Y\,r} =0$ by~\eqref{eq:orthosphere},
\begin{multline*}
\left( \isd{Y\,r_1} \right)^2 = \left( \isd{Y (r_2+r_3)} \right)^2 \\ \leq \nrmS Y4^2 \,\mu\big(\{r_2+r_3>0\}\big)^{1/2}\,\nrmS{r_2+r_3}2^2\,.
\end{multline*}
Since $\{r_2+r_3>0\}\subset\{r_1\geq\gamma\}$, we have
$$
\mu(\{r_2+r_3>0\}) \leq \mu(\{r_1\geq \gamma\}) \leq \frac1{\gamma^2} \isd{r_1^2}=\frac1{\gamma^2}\,\nrmS{r_1}2^2\,.
$$
Thus we have
\be{Y}
\left( \isd{Y\,r_1} \right)^2 \leq \nrmS Y4^2 \,\frac{\sqrt{2\,\tilde\delta}}\gamma\,\nrmS{r_1}2\(\isd{\(r_2^2+r_3^2\)}\)^{1/2}
\ee
using $\nrmS{r_2+r_3}2^2\le\sqrt{2\,\tilde\delta}\(\isd{\(r_2^2+r_3^2\)}\)^{1/2}$ because $\nrmS{r_2+r_3}2^2\le2\isd{\(r_2^2+r_3^2\)}$ and
$$
\nrmS{r_2+r_3}2\le\nrmS r2\le\nrmS rq\le\sqrt{\tilde\delta}\,.
$$
If $Y=1$, then clearly $\nrmS Y4=1$ and~\eqref{Y} gives
$$
\left( \isd{r_1} \right)^2 \leq \frac{\sqrt{2\,\tilde\delta}}\gamma\,\nrmS{r_1}2\(\isd{\(r_2^2+r_3^2\)}\)^{1/2}\,.
$$
If $Y=a\cdot\omega$, then a quick computation gives
$$
\nrmS Y4^4 = \frac{\int_0^\pi \cos^4\theta\,\sin^{d-1}\theta\,d\theta}{\int_0^\pi \sin^{d-1}\theta\,d\theta}\,|a|^4 = \frac{3\,|a|^4}{(d+3)\,(d+1)} \leq \frac{3\,|a|^4}{(d+1)^2}\,.
$$
{}From~\eqref{Y} applied with $a=\isd{\omega\,r_1}$, we obtain
\begin{multline*}
(d+1)\,\left| \isd{\omega\,r_1} \right|^2=\frac{d+1}{|a|^2}\left(\isd{Y\,r_1}\right)^2 \\ \le\sqrt3\,\frac{\sqrt{2\,\tilde\delta}}\gamma\,\nrmS{r_1}2\(\isd{\(r_2^2+r_3^2\)}\)^{1/2}\,.
\end{multline*}
Summing up, we have
\begin{multline*}
\epsilon_1\,\nrmS{r_1}2^2 + \sigma_0\,\isd{\(r_2^2+r_3^2\)}-(1+\epsilon_0)\left( \isd{r_1} \right)^2 \\ -2\,(d+1)\,\epsilon_0\,\left| \isd{\omega\,r_1} \right|^2\\
\ge\epsilon_1\,\nrmS{r_1}2^2 + \sigma_0\isd{\(r_2^2+r_3^2\)} \\-\big(1+(2\,\sqrt3+1)\,\epsilon_0\big)\,\frac{\sqrt{2\,\tilde\delta}}\gamma\,\nrmS{r_1}2\(\isd{\(r_2^2+r_3^2\)}\)^{1/2}
\end{multline*}
and the right-hand side~is nonnegative under a nonpositive discriminant condition which is satisfied by $\sigma_0\ge\overline\sigma_0(\gamma,\epsilon_0,\tilde\delta)$ with
\be{sigma0}
\overline\sigma_0(\gamma,\epsilon_0,\delta):=\frac1{2\,\epsilon_1}\,\big(1+(2\,\sqrt3+1)\,\epsilon_0\big)^2\,\frac{\delta}{\gamma^2}\,.
\ee
This choice establishes~\eqref{eq:i1boundortho} and allows us to conclude that $I_1\ge0$.
\end{proof}

Let us define
\be{delta1}
\delta_1:=\frac{4\,\epsilon_1\,\epsilon_2\,\gamma^2}{q\,\big(1+(2\,\sqrt3+1)\,\epsilon_0\big)^2}\,.
\ee
The condition $\sigma_0\ge\overline\sigma_0(\gamma,\epsilon_0,\tilde\delta)$ of Proposition~\ref{Prop:estI1} can be inverted as follows.
\begin{corollary}\label{Cor:estI1}
For any $0<\epsilon_0<\tfrac13$ and $\sigma_0>0$, for all $d\geq 6$ and all $r\in \mathrm H^1(\Sph^d)$ such that $r\ge-1$ and satisfying~\eqref{eq:smallsphere} and~\eqref{eq:orthosphere} as in Theorem~\ref{unifboundclose}, with $\theta$, $\epsilon_1$, $\epsilon_2$ and $\delta_1$ respectively given by~\eqref{theta},~\eqref{epsilon1},~\eqref{epsilon1-epsilon2-C} and~\eqref{delta1}, if
$$
0<\tilde\delta\le\delta_1\,\frac{q\,\sigma_0}{2\,\epsilon_2}\,,
$$
then one has $I_1\geq 0$.
\end{corollary}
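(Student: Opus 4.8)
The plan is to observe that Corollary~\ref{Cor:estI1} is nothing but a rephrasing of Proposition~\ref{Prop:estI1}, obtained by solving the constraint $\sigma_0 \geq \overline\sigma_0(\gamma,\epsilon_0,\tilde\delta)$ for $\tilde\delta$ rather than for $\sigma_0$. So the proof is essentially one line of algebra, together with a check that the stated hypotheses on $r$, $d$, $\epsilon_0$, $\theta$, $\epsilon_1$, $\epsilon_2$ match those of Proposition~\ref{Prop:estI1} exactly.

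Concretely, from the definition~\eqref{sigma0} we have
\[
\overline\sigma_0(\gamma,\epsilon_0,\tilde\delta) = \frac{\big(1+(2\sqrt3+1)\epsilon_0\big)^2}{2\,\epsilon_1}\,\frac{\tilde\delta}{\gamma^2}\,,
\]
and from the definition~\eqref{delta1} of $\delta_1$ we can write $\big(1+(2\sqrt3+1)\epsilon_0\big)^2 = 4\,\epsilon_1\,\epsilon_2\,\gamma^2/(q\,\delta_1)$. Substituting this into the expression for $\overline\sigma_0$ gives
\[
\overline\sigma_0(\gamma,\epsilon_0,\tilde\delta) = \frac{1}{2\,\epsilon_1}\cdot\frac{4\,\epsilon_1\,\epsilon_2\,\gamma^2}{q\,\delta_1}\cdot\frac{\tilde\delta}{\gamma^2} = \frac{2\,\epsilon_2\,\tilde\delta}{q\,\delta_1}\,.
\]
Hence the condition $\sigma_0 \geq \overline\sigma_0(\gamma,\epsilon_0,\tilde\delta)$ is equivalent to $\sigma_0 \geq 2\,\epsilon_2\,\tilde\delta/(q\,\delta_1)$, i.e.\ to $\tilde\delta \leq \delta_1\,q\,\sigma_0/(2\,\epsilon_2)$, which is precisely the hypothesis of the corollary. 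Since $\epsilon_1$ is fixed to the value~\eqref{epsilon1} in both statements, and the assumptions $0<\epsilon_0<\tfrac13$, $d\geq 6$, $r\geq -1$, together with~\eqref{eq:smallsphere} and~\eqref{eq:orthosphere}, are common to both, Proposition~\ref{Prop:estI1} applies verbatim and yields $I_1 \geq 0$.

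I do not expect a genuine obstacle here: the only thing to be slightly careful about is the bookkeeping of which quantities are free parameters ($\epsilon_0$, $\epsilon_2$, $\gamma$, and $\sigma_0$) versus which are determined ($\epsilon_1$ via~\eqref{epsilon1}, $\delta_1$ via~\eqref{delta1}, $\theta$ via~\eqref{theta}), and to make sure that the chain of equalities above does not implicitly require $\tilde\delta>0$ in a way that conflicts with anything — it does not, since $\delta_1>0$ and $\sigma_0>0$ guarantee the right-hand bound is positive, so a valid $\tilde\delta$ exists. One writes the substitution, invokes Proposition~\ref{Prop:estI1}, and concludes.
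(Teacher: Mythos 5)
Your proposal is correct and matches the paper's intent exactly: the paper simply introduces the corollary with the remark that the condition $\sigma_0\ge\overline\sigma_0(\gamma,\epsilon_0,\tilde\delta)$ of Proposition~\ref{Prop:estI1} ``can be inverted,'' and your one-line algebraic manipulation using~\eqref{sigma0} and~\eqref{delta1} is precisely that inversion.
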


\begin{remark}
The assumption $\epsilon_0<\tfrac13$ is used in~\eqref{epsilon1} to guarantee that $\epsilon_1$ takes positive values. A less restrictive condition can be obtained by requesting that the left-hand side~in~\eqref{eq:i1boundmain} is actually $0$. We see that if $\epsilon_0<1$, then a similar bound as in~\eqref{eq:i1boundmain}, namely with $\frac12\,(1-\epsilon_0)$ on the right-hand side, holds for all sufficiently large $d$, depending on $\epsilon_0$.
\end{remark}

\subsubsection{Bound on \texorpdfstring{$I_3$}{I3}}

The idea for bounding this term is to use the Sobolev inequality. The extra coefficient $\frac2q<1$ gives us enough room to accommodate all error terms.
\begin{proposition}\label{Prop:estI2} Assume that $\tilde\delta\in(0,1)$ and $0<\epsilon_0<\frac13$. With
\be{epsilon2}
\epsilon_2 := \frac14\,(1-3\,\epsilon_0)
\ee
and $\sigma_0=\frac2q\,\epsilon_2$, for all $d\geq 6$, all $\tilde\delta\leq1$ and all $r$ as in Theorem~\ref{unifboundclose}, one has
$$
I_3\geq 0\,.
$$
\end{proposition}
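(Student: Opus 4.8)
The plan is to apply the Sobolev inequality on $\Sph^d$ to the nonnegative truncation $r_3=(r-M)_+\in\mathrm H^1(\Sph^d)$ and to exploit that the coefficient $\tfrac2q$ multiplying $\isd{r_3^q}$ in $I_3$ is strictly below $1$: the gap $1-\tfrac2q=\tfrac\theta q$ is exactly what absorbs the error terms, all of which carry a factor $\theta$.

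First I would move the $\isd{r_3^2}$-term into the Dirichlet form using $\mathsf A\,\sigma_0\,\theta\isd{r_3^2}\le\sigma_0\,\theta\isd{(|\nabla r_3|^2+\mathsf A\,r_3^2)}$, so that
\[
I_3\ge(1-\theta\,\epsilon_0-\sigma_0\,\theta)\isd{(|\nabla r_3|^2+\mathsf A\,r_3^2)}-\tfrac2q\,\mathsf A\,(1+\epsilon_2\,\theta)\isd{r_3^q}\,.
\]
For $d\ge6$ one has $\theta\le1$, and since $\epsilon_0<\tfrac13$ and $\sigma_0=\tfrac2q\,\epsilon_2\le\tfrac14$, the prefactor $1-\theta\,\epsilon_0-\sigma_0\,\theta$ is positive. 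Applying the Sobolev inequality on the sphere to $r_3$ gives $\isd{(|\nabla r_3|^2+\mathsf A\,r_3^2)}\ge\mathsf A\,(\isd{r_3^q})^{2/q}$, and since $\isd{r_3^q}\le\isd{|r|^q}\le\tilde\delta^{q/2}\le1$ by~\eqref{eq:smallsphere} while $\tfrac2q<1$, one has $(\isd{r_3^q})^{2/q}\ge\isd{r_3^q}$. Combining, $I_3\ge\mathsf A\big(1-\theta\,\epsilon_0-\sigma_0\,\theta-\tfrac2q\,(1+\epsilon_2\,\theta)\big)\isd{r_3^q}$.

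It remains to check that the bracket is $\ge 0$. Writing $1-\tfrac2q=\tfrac\theta q$ and factoring out $\theta$, it equals $\theta\big(\tfrac1q-\epsilon_0-\sigma_0-\tfrac2q\,\epsilon_2\big)$; inserting $\sigma_0=\tfrac2q\,\epsilon_2$ and then $\epsilon_2=\tfrac14(1-3\,\epsilon_0)$, it collapses to $\theta\,\epsilon_0\big(\tfrac3q-1\big)$, which is $\ge0$ precisely when $q\le3$, i.e. $d\ge6$; hence $I_3\ge0$. I anticipate no genuine obstacle here --- the step is essentially bookkeeping once the splitting is fixed, and the Sobolev inequality does the work essentially for free thanks to the $\tfrac2q$ prefactor. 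The only structural constraint is the identity $1-\tfrac2q=\tfrac\theta q$, which pins the threshold $q=3$ (that is, $d=6$) and is precisely why Proposition~\ref{firstbound} must be invoked separately in dimensions $d=3,4,5$; in contrast to the bounds on $I_1$ and $I_2$, no spectral or orthogonality input is needed for $I_3$.
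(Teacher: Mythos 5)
Your proof is correct and follows essentially the same strategy as the paper: absorb the $\sigma_0$-error term, invoke the Sobolev inequality on $\Sph^d$ for $r_3$, and exploit that the prefactor $\tfrac2q<1$ produces a gap $1-\tfrac2q=\tfrac\theta q$ which, after substituting $\sigma_0=\tfrac2q\,\epsilon_2$ and $\epsilon_2=\tfrac14(1-3\epsilon_0)$, reduces the final coefficient to a nonnegative multiple of $\theta\,\epsilon_0\,(3-q)$ for $q\le3$, i.e.\ $d\ge6$. The only cosmetic difference is that you absorb the $\isd{r_3^2}$ term directly via $\mathsf A\isd{r_3^2}\le\isd{(|\nabla r_3|^2+\mathsf A\,r_3^2)}$ whereas the paper first applies H\"older $\isd{r_3^2}\le(\isd{r_3^q})^{2/q}$ and then Sobolev; both routes yield the same coefficient and neither requires spectral or orthogonality input.
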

\begin{proof} Taking into account the choice for $\sigma_0$, we have
\begin{multline*}
I_3=(1-\theta\,\epsilon_0) \isd{\left( |\nabla r_3|^2 +\mathsf A\,r_3^2 \right)} \\ - \tfrac 2q\,\mathsf A\((1+\epsilon_2\,\theta) \isd{r_3^q} + \epsilon_2\,\theta \isd{r_3^2}\).
\end{multline*}
We have $\nrmS{r_3}q^q\le\nrmS{r_3}q^2$ because $\nrmS{r_3}q\le \nrmS rq\le1$ and $\nrmS{r_3}2\le\nrmS{r_3}q$ by H\"older's inequality. Thus, we obtain
\begin{align*}
I_3 & \geq (1-\theta\,\epsilon_0) \isd{ \(|\nabla r_3|^2 +\mathsf A\,r_3^2\)} -\mathsf A\,\tfrac2q (1+2\,\epsilon_2\,\theta) \left( \isd{r_3^q} \right)^{2/q}\\
& \geq \frac\theta q\,\(1-q\,\epsilon_0-4\,\epsilon_2\) \isd{ \(|\nabla r_3|^2 +\mathsf A\,r_3^2\)}\ge0\,,
\end{align*}
using $\theta=q-2\le1$ and Sobolev's inequality: $\nrmS{\nabla r_3}2^2+\mathsf A\,\nrmS{r_3}2^2\ge\mathsf A\,\nrmS{r_3}q^2$.
\end{proof}

\begin{remark}
The restriction $\epsilon_0<\tfrac13$ can be relaxed to $\epsilon_0<\tfrac12$ at the expense of having the inequality valid only in sufficiently high dimensions $d$, depending on $\epsilon_0$. Indeed, ignoring the influence of $\epsilon_2$ and $\sigma_0$ for the moment, the inequality at the end of the previous proof requires $1-\frac q2\,\epsilon_0>0$ and this is possible in all sufficiently high dimensions if and only if $\epsilon_0<\tfrac12$. Since this inequality is strict, the errors from $\epsilon_2$ and $\sigma_0$ can then be accommodated as well.
\end{remark}

\subsubsection{Bound on \texorpdfstring{$I_2$}{I2}}

At this point in the proof, for given $0<\epsilon_0<\tfrac13$, we have fixed the parameters $\epsilon_1$ and $\epsilon_2$ and we have found a $\delta_3$ such that $I_1$, $I_3\geq 0$ under the assumption $\tilde\delta\leq\delta_3$. Here we show that, by further decreasing $\tilde\delta$ if necessary, we can ensure that $I_3\geq 0$. The idea to achieve this is to use that $r_2$ satisfies an improved spectral gap inequality.
\begin{proposition}\label{Prop:estI3}
For any $0<\epsilon_0<\frac13$, let $\sigma_0=\frac2q\,\epsilon_2$. Then there is a $\delta_2\in(0,1)$ such that, for all $d\geq 6$, all $\tilde\delta\leq\delta_2$ and all~$r$ as in Theorem~\ref{unifboundclose}, one has
$$
I_2\geq 0\,.
$$
\end{proposition}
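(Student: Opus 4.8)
The plan is to exploit that $r_2=\min\{(r-\gamma)_+,M-\gamma\}$ is supported on a set of small $\mu$-measure, so that it obeys a Faber--Krahn type \emph{improved} spectral gap inequality strong enough to dominate the coefficient $\mathsf A\,\big(q-1+(\sigma_0+C_{\epsilon_1,\epsilon_2})\,\theta\big)$ appearing in $I_2$. The gain in this improved inequality is of size $\ln\!\big(1/\mu(\{r_2>0\})\big)$, which becomes arbitrarily large as $\tilde\delta\to0$; and, crucially, the dimensional factor $\theta$ will cancel, so the resulting threshold $\delta_2$ can be chosen independently of $d$.

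Concretely, I would first dispose of the trivial case $r_2\equiv0$, in which $I_2=0$, and otherwise set $\Omega:=\{r_2>0\}=\{r>\gamma\}$ and $m:=\mu(\Omega)$. Since $\mu$ is a probability measure and $q\ge2$, one has $\|r\|_{\mathrm L^2(\Sph^d)}\le\|r\|_{\mathrm L^q(\Sph^d)}$, so Chebyshev's inequality together with the smallness hypothesis~\eqref{eq:smallsphere} gives
\[
m\le\frac1{\gamma^2}\isd{r^2}\le\frac{\tilde\delta}{\gamma^2}\,.
\]
Applying the Sobolev inequality on $\Sph^d$ of Section~\ref{sec:preliminary} to $r_2\in\mathrm H^1(\Sph^d)$ yields $\isd{\big(|\nabla r_2|^2+\mathsf A\,r_2^2\big)}\ge\mathsf A\,\big(\isd{r_2^q}\big)^{2/q}$, while Hölder's inequality on the probability space, using that $r_2$ is supported on $\Omega$, gives $\isd{r_2^2}\le m^{\theta/q}\big(\isd{r_2^q}\big)^{2/q}$, i.e. $\big(\isd{r_2^q}\big)^{2/q}\ge m^{-\theta/q}\isd{r_2^2}$. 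Combining the two and using $q-1=1+\theta$, we obtain
\[
I_2\ge\mathsf A\,\Big((1-\theta\,\epsilon_0)\,m^{-\theta/q}-1-(1+\sigma_0+C_{\epsilon_1,\epsilon_2})\,\theta\Big)\isd{r_2^2}\,.
\]

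Thus it suffices to verify
\[
m^{-\theta/q}\ge\frac{1+(1+\sigma_0+C_{\epsilon_1,\epsilon_2})\,\theta}{1-\theta\,\epsilon_0}=1+\frac{(1+\epsilon_0+\sigma_0+C_{\epsilon_1,\epsilon_2})\,\theta}{1-\theta\,\epsilon_0}\,.
\]
Here I would use the elementary bound $m^{-\theta/q}-1\ge\tfrac{\theta}{q}\ln(1/m)$ (from $e^x\ge1+x$), which reduces the requirement to $\tfrac{1}{q}\ln(1/m)\ge\frac{1+\epsilon_0+\sigma_0+C_{\epsilon_1,\epsilon_2}}{1-\theta\,\epsilon_0}$, where the factor $\theta$ has now cancelled. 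Since $d\ge6$ forces $q\le3$ and $\theta\le1$ (so $1-\theta\,\epsilon_0\ge1-\epsilon_0$), and since $m\le\tilde\delta/\gamma^2$, it is enough to impose
\[
\tilde\delta\le\delta_2:=\gamma^2\,\exp\!\left(-\,\frac{3\,(1+\epsilon_0+\sigma_0+C_{\epsilon_1,\epsilon_2})}{1-\epsilon_0}\right)\,,
\]
which lies in $(0,1)$ because $\gamma=\tfrac12\,\epsilon_1<1$. This gives $I_2\ge0$. I do not anticipate a genuine obstacle here; the only delicate point is the bookkeeping confirming that $\delta_2$ is $d$-independent --- specifically, that $\mathsf A\,(q-1)=\mathsf A+d$ contributes exactly the $\mathsf A\isd{r_2^2}$ absorbed by the $\mathsf A$-term from Sobolev, leaving only a quantity of order $d$ with $d$-free coefficients $1+\epsilon_0+\sigma_0+C_{\epsilon_1,\epsilon_2}$, which the logarithmic gain $\tfrac{d}{q}\ln(1/m)$ comfortably beats.
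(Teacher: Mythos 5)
Your proof is correct, and it takes a genuinely different route from the paper's. The paper bounds the projection of $r_2$ onto spherical harmonics of degree $k<K$ via the reverse H\"older inequality $\nrmS Yp\leq(p-1)^{k/2}$ of Duoandikoetxea together with Chebyshev (exactly as you do) on the set $\{r>\gamma\}$, and then invokes the lower bound $K(K+d-1)\geq Kd$ for the Laplace--Beltrami operator on the orthogonal complement of the first $K$ eigenspaces; $K$ is chosen of size $\approx 2\,(1+\epsilon_0+\sigma_0+C_{\epsilon_1,\epsilon_2})/(1-\epsilon_0)$, giving $\delta_2=\tfrac14\,\gamma^2\,3^{-2K}$. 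You instead encode the small support of $r_2$ as a Faber--Krahn improvement of the Sobolev inequality on $\Sph^d$: H\"older on $\Omega=\{r_2>0\}$ gives $\big(\isd{r_2^q}\big)^{2/q}\geq m^{-\theta/q}\isd{r_2^2}$, and inserting this into $\isd{(|\nabla r_2|^2+\mathsf A\,r_2^2)}\geq\mathsf A\,\big(\isd{r_2^q}\big)^{2/q}$ yields the coefficient $(1-\theta\epsilon_0)m^{-\theta/q}-1-(1+\sigma_0+C_{\epsilon_1,\epsilon_2})\,\theta$ for $\mathsf A\isd{r_2^2}$. The key structural observation --- that the gain $m^{-\theta/q}-1\geq\tfrac{\theta}{q}\ln(1/m)$ already carries a factor $\theta$ which cancels the $\theta$ multiplying $1+\epsilon_0+\sigma_0+C_{\epsilon_1,\epsilon_2}$ --- is exactly why the resulting threshold $\delta_2=\gamma^2\exp\big(-\tfrac{3(1+\epsilon_0+\sigma_0+C_{\epsilon_1,\epsilon_2})}{1-\epsilon_0}\big)$ is $d$-independent. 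Both $\delta_2$'s are exponentially small in $1+\epsilon_0+\sigma_0+C_{\epsilon_1,\epsilon_2}$; what your variant buys is that it avoids the reverse H\"older inequality for spherical harmonics entirely, relying only on the already-stated sphere Sobolev inequality, which is somewhat more elementary and makes the bookkeeping more transparent, at the cost of depending on the full $\mathrm H^1$ norm of $r_2$ rather than merely the Dirichlet energy.
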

\begin{proof}
We first claim that for any $\mathrm L^2$-normalized spherical harmonic $Y$ of degree $k\in\N$, we have
\begin{equation}
\label{eq:projectionr2}
\left| \isd{ Y\,r_2} \right| \leq 3^\frac k2\,\gamma^{-\frac q4}\,\tilde\delta^{\frac q8}\,\nrmS{r_2}2\,.
\end{equation}
Indeed, according to~\cite[Theorem~1]{MR908654}, for any such spherical harmonic and any $p\in[2,\infty)$ we have
$$
\nrmS Yp \leq (p-1)^\frac k2\,.
$$
Thus, we can bound
\begin{multline*}
\left| \isd{ Y\,r_2} \right| \leq \nrmS Y4\,\mu\big(\{ r_2 >0\}\big)^\frac14\,\nrmS{r_2}2 \\ \leq 3^\frac k2\,\mu\big(\{ r_2 >0\}\big)^\frac14\,\nrmS{r_2}2\,.
\end{multline*}
Meanwhile,
$$
\mu\big(\{ r_2 >0\}\big) = \mu\big(\{r>\gamma\}\big) \leq \frac1{\gamma^q}\,\nrmS rq^q \leq \frac{\tilde\delta^{q/2}}{\gamma^q}\,.
$$
This leads to the claimed bound~\eqref{eq:projectionr2}.

If $\pi_k\,r_2$ denotes the projection of $r_2$ onto spherical harmonics of degree~$k$, from~\eqref{eq:projectionr2} to $Y=\pi_k\,r_2/\nrmS{\pi_k\,r_2}2$, it follows that
$$
\nrmS{ \Pi_k\,r_2}2 \leq 3^\frac k2\,\gamma^{-\frac q4}\,\tilde\delta^\frac q8\,\nrmS{r_2}2\,.
$$
Next, for any $K\in\N$, if $\Pi_K\,r_2 := \sum_{k<K} \pi_k\,r_2$ denotes the projection of $r_2$ onto spherical harmonics of degree less than $K$, then
\begin{multline*}
\nrmS{ \Pi_K\,r_2}2 = \Big( {\scriptstyle\sum_{k<K}}\nrmS{\pi_k\,r_2}2^2 \Big)^{\!1/2} \\ \leq \gamma^{-\frac q4}\,\tilde\delta^\frac q8\nrmS{r_2}2\,\sqrt{{\scriptstyle\sum_{k<K}} 3^k} \leq 3^\frac K2\,\gamma^{-\frac q4}\,\tilde\delta^\frac q8\,\nrmS{r_2}2\,.
\end{multline*}
{}From this we conclude that
\begin{align*}
\isd{ |\nabla r_2|^2} \geq& \isd{ |\nabla (1-\Pi_K)\,r_2|^2}\\
\geq&\; K\,(K+d-1) \isd{ |(1-\Pi_K)\,r_2|^2}\\
&= K\,(K+d-1) \left( \nrmS{r_2}2^2 - \nrmS{ \Pi_K\,r_2}2^2 \right)\\
&\quad\geq K\,(K+d-1) \left( 1- 3^K\,\gamma^{-\frac q2}\,\tilde\delta^\frac q4 \right) \nrmS{r_2}2^2\,.
\end{align*}
Consequently,
\begin{multline*}
I_2 \geq \left( (1-\theta\,\epsilon_0)\,K\,(K+d-1) \left( 1- 3^K\,\gamma^{-\frac q2}\,\tilde\delta^\frac q4 \right)\right)\nrmS{r_2}2^2\\  - d \left( 1+ \epsilon_0 + \sigma_0 + C_{\epsilon_1,\epsilon_2} \right)  \nrmS{r_2}2^2\,.
\end{multline*}
We choose $K\in\N$ and $\delta_2>0$ such that
\be{K-delta2}
K:=1+\left[2\,\frac{1+ \epsilon_0 + \sigma_0 + C_{\epsilon_1,\epsilon_2}}{1-\epsilon_0}\right]\quad\mbox{and}\quad\delta_2:=\frac14\,\frac{\gamma^2}{3^{2K}}
\ee
where $[x]$ denotes the integer part of $x\in\R$ and $\delta_3$ is given by~\eqref{epsilon2}. From the definition of $\delta_2$, if $\tilde\delta\le\delta_2$, we have $1- 3^K\,\gamma^{-\frac q2}\,\tilde\delta^\frac q4\ge\frac12$ and conclude that $I_2\ge0$ because $K+d-1\ge d$.
\end{proof}

\subsection{Proof of Theorem~\ref{unifboundclose}}

We assume that $d\ge6$ and fix some $\epsilon_0\in(0,1/3)$. With the choice
$$
\gamma=\epsilon_2=2\,\epsilon_1=\tfrac14\,(1-3\,\epsilon_0)\quad\mbox{and}\quad\sigma_0=\frac2q\,\epsilon_2
$$
according to~\eqref{epsilon1-epsilon2-C},~\eqref{epsilon1}, and~\eqref{epsilon2} on the one hand so that the assumptions of Corollary~\ref{Cor:estI1}, Proposition~\ref{Prop:estI2} and Proposition~\ref{Prop:estI3} are fulfilled, and an arbitrary choice of
$$
M\ge2\,\gamma\,,\quad\overline M\ge\sqrt e\quad\mbox{and}\quad\epsilon=\gamma
$$
which determines $C_{\epsilon_1,\epsilon_2}=C_{\gamma,\epsilon,M}$ according to~\eqref{epsilon1-epsilon2-C} on the other hand, and with the condition
$$
\tilde\delta=\min\big\{\delta_1,\delta_2\big\}
$$
with $\delta_1$ and $\delta_2$ given by~\eqref{delta1} and~\eqref{K-delta2}, we claim that $I_1$, $I_2$ and $I_3$ are nonnegative, which completes the proof of Theorem~\ref{unifboundclose} for $q\leq 3$, that is $d\geq 6$. The assertion for $d=3$, $4$, $5$ follows from the result proved in Subsection~\ref{sec:firstbound}.\qed

\section{From a local to a global stability result}\label{sec:3}

We work with nonnegative functions in Section~\ref{sec:flow} and extend the method to sign-changing functions in Section~\ref{sec:posneg}. Our goal is to prove Theorem~\ref{main}: see Section~\ref{sec:proof}.

\subsection{Nonnegative functions away from the manifold of optimizers}\label{sec:flow}

Here we prove a stability inequality for nonnegative functions that are `far' away from the manifold of optimizers. With $\mathcal E$ defined by~\eqref{Def:E}, let us introduce
\begin{equation}
\label{eq:mudelta2}
\Imu(\delta)\! :=\! \inf\!\left\{ \mathcal E(f) : 0\leq f\in\dot{\mathrm H}^1(\R^d)\!\setminus\!\mathcal M\,,\inf_{\bb\in\mathcal M} \Vert \nabla f - \nabla\bb\Vert^2_2 \leq \delta\,\Vert \nabla f \Vert^2_2 \right\}.
\end{equation}
\begin{theorem}\label{Cor:Summary}
Let $\delta\in(0,1/2)$ and assume that $0\leq f\in\dot{\mathrm H}^1(\R^d)\setminus\mathcal M$ satisfies
$$
\inf_{\bb\in\mathcal M} \Vert \nabla f - \nabla\bb\Vert_2^2 \ge \delta\,\Vert \nabla f \Vert_2^2\,.
$$
Then, with $\Imu(\delta)$ defined by~\eqref{eq:mudelta2}, we have
$$
\mathcal E(f) \ge \delta\,\Imu(\delta)\,.
$$
\end{theorem}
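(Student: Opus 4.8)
The plan is to reduce to the local regime by flowing $f$ toward the manifold along the rearrangement flow $(f_\tau)_{\tau\ge0}$ constructed in Section~\ref{sec:flow} (competing symmetries plus continuous Steiner symmetrization): it satisfies $f_0=f$, $0\le f_\tau$, $\|f_\tau\|_{2^*}=\|f\|_{2^*}$, $\tau\mapsto\|\nabla f_\tau\|_2$ is non-increasing, and $\inf_{g\in\mathcal M}\|\nabla f_\tau-\nabla g\|_2\to0$. Introduce the ``relative distance'' $\rho(v):=\inf_{g\in\mathcal M}\|\nabla v-\nabla g\|_2^2/\|\nabla v\|_2^2$ and $D(v):=1-S_d\|v\|_{2^*}^2/\|\nabla v\|_2^2$, so that $\mathcal E(v)=D(v)/\rho(v)$, while letting $c\to0$ in $g_{a,b,c}$ gives $\inf_{g}\|\nabla v-\nabla g\|_2^2\le\|\nabla v\|_2^2$ and hence $\mathcal E(v)\ge D(v)$. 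Since $\|f_\tau\|_{2^*}$ is constant and $\|\nabla f_\tau\|_2$ decreases along the flow, $\tau\mapsto D(f_\tau)$ is non-increasing, so $\mathcal E(f)\ge D(f_0)\ge D(f_\tau)$ for every $\tau$. Therefore, if there is a time $\tau_0$ with $\rho(f_{\tau_0})=\delta$, then $f_{\tau_0}$ lies in the admissible class of~\eqref{eq:mudelta2} and
\[
\mathcal E(f)\ \ge\ D(f_{\tau_0})\ =\ \rho(f_{\tau_0})\,\mathcal E(f_{\tau_0})\ =\ \delta\,\mathcal E(f_{\tau_0})\ \ge\ \delta\,\Imu(\delta)\,,
\]
which is the claim. (If $\rho(f)\le\delta$ to begin with, $f$ is itself admissible and $\mathcal E(f)\ge\Imu(\delta)\ge\delta\,\Imu(\delta)$, so we may assume $\rho(f)>\delta$.)

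The only genuine difficulty is the existence of such a $\tau_0$, because $\tau\mapsto\|\nabla f_\tau\|_2$, and hence $\tau\mapsto\rho(f_\tau)$, need not be continuous. The regularity I would use — partly from Appendix~\ref{contrearr}, partly standard — is: $\tau\mapsto f_\tau$ is continuous in $\mathrm L^{2^*}(\R^d)$, so $\tau\mapsto\|\nabla f_\tau\|_2^2$ is lower semicontinuous (weak $\dot{\mathrm H}^1$ limits) and, being monotone, right-continuous with at most countably many downward jumps; and $\tau\mapsto G(\tau):=\|\nabla f_\tau\|_2^2-\inf_{g\in\mathcal M}\|\nabla f_\tau-\nabla g\|_2^2=\sup_{g\in\mathcal M}\bigl(2\langle\nabla f_\tau,\nabla g\rangle-\|\nabla g\|_2^2\bigr)$ is continuous, since $v\mapsto\langle\nabla v,\nabla g\rangle=\int_{\R^d}v\,(-\Delta g)\,dx$ is $\mathrm L^{2^*}$-continuous (as $-\Delta g\in\mathrm L^{(2^*)'}(\R^d)$ for $g\in\mathcal M$) and, for $f\ge0$, the near-maximizers in $G$ stay in a fixed compact subset of $\mathcal M$ (they cannot drift to $0$, since $\langle\nabla f_\tau,\nabla\bar g\rangle>0$, nor degenerate by concentration, spreading or translation, along which the pairing vanishes). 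Consequently $\rho(f_\tau)=1-G(\tau)/\|\nabla f_\tau\|_2^2$ is right-continuous with only downward jumps, located exactly at the jumps of $\|\nabla f_\tau\|_2^2$.

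Now set $\tau_0:=\sup\{\tau\ge0:\rho(f_\tau)\ge\delta\}$, which is finite because $\rho(f_\tau)\to0$, and which by right-continuity satisfies $\rho(f_{\tau_0})\le\delta$. If $\rho(f_{\tau_0})=\delta$ we are done as above. If $\rho(f_{\tau_0})<\delta$, then $\rho$ jumps down strictly at $\tau_0$, hence so does $\|\nabla f_\tau\|_2^2$; write $b_0:=\|\nabla f_{\tau_0}\|_2^2<b_-:=\lim_{\tau\uparrow\tau_0}\|\nabla f_\tau\|_2^2$, $\sigma:=S_d\|f\|_{2^*}^2$, $G_0:=G(\tau_0)$. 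By monotonicity of $D$, $\mathcal E(f)\ge\lim_{\tau\uparrow\tau_0}D(f_\tau)=1-\sigma/b_-$, while $f_{\tau_0}$ is admissible, so $\Imu(\delta)\le\mathcal E(f_{\tau_0})=(b_0-\sigma)/(b_0-G_0)$. It then remains to check the elementary inequality $1-\sigma/b_-\ge\delta\,(b_0-\sigma)/(b_0-G_0)$. The relevant constraints are $G_0>(1-\delta)b_0$ (from $\rho(f_{\tau_0})<\delta$), $G_0\le(1-\delta)b_-$ (from $\limsup_{\tau\uparrow\tau_0}\rho(f_\tau)\ge\delta$, the left limit existing since $G$ is continuous and $\|\nabla f_\tau\|_2^2$ monotone), and $G_0=\|\nabla g^*\|_2^2\le S_d\|f_{\tau_0}\|_{2^*}^2=\sigma$ for the optimal $g^*$ — the last from the dilation stationarity $\langle\nabla f_{\tau_0},\nabla g^*\rangle=\|\nabla g^*\|_2^2$ combined with H\"older. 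On the region cut out by these constraints the difference of the two sides (a quadratic polynomial in $b_0,b_-,\sigma,G_0$) is nonnegative: after normalizing $b_0=1$ it is monotone in $G_0$, and one checks its value at $G_0=\min\{\sigma,(1-\delta)b_-\}$ in one line. This gives $\mathcal E(f)\ge\delta\,\mathcal E(f_{\tau_0})\ge\delta\,\Imu(\delta)$. (In the degenerate event $f_{\tau_0}\in\mathcal M$ one has $b_0=\sigma$, the same $\mathrm L^{2^*}$-closeness argument forces $b_-\ge\sigma/(1-\delta)$, and directly $\mathcal E(f)\ge1-\sigma/b_-\ge\delta\ge\delta\,\Imu(\delta)$ since $\Imu(\delta)<1$.)

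The step I expect to be the main obstacle is precisely this analysis at a jump of $\tau\mapsto\|\nabla f_\tau\|_2$: when $\rho(f_\tau)$ leaps over the threshold $\delta$ there is no time at which it equals $\delta$, and one must argue that the left-hand limit of $D$ along the flow is still a valid and sufficiently large lower bound for $\mathcal E(f)$. What makes it go through is structural — jumps of the flow can only decrease $\|\nabla f_\tau\|_2$, and the size of the jump is constrained because $\rho(f_\tau)\ge\delta$ persists right up to $\tau_0$ — but extracting this requires the semicontinuity and continuity statements above, together with the properties of the continuous rearrangement flow developed in Appendix~\ref{contrearr}.
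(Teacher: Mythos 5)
Your overall strategy closely tracks the paper's, but there is one genuine gap. The claim that the flow satisfies $\rho(f_\tau)\to 0$ (equivalently, that $\inf_{\bb\in\mathcal M}\|\nabla f_\tau-\nabla\bb\|_2\to0$) is \emph{not established}: the competing symmetries sequence of Theorem~\ref{thm:competingsymmetries} converges to $h_f\in\mathcal M$ only in $\mathrm L^{2^*}$, while $\|\nabla f_n\|_2^2$ merely decreases to some limit $L\ge S_d\|f\|_{2^*}^2$, and by Lemma~\ref{lm:monotone} the distance converges to $L-S_d\|f\|_{2^*}^2$, which need not vanish. (The flow converges weakly, not strongly, in $\dot{\mathrm H}^1$.) Consequently your $\tau_0:=\sup\{\tau:\rho(f_\tau)\ge\delta\}$ may be infinite, and your argument stops. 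The paper deals with exactly this by splitting into the alternatives of Lemma~\ref{alternatives}: if the relative distance never drops below $\delta$ (alternative (a)), Lemma~\ref{alternativea} uses the monotone sequence together with $\sup_{\bb\in\mathcal M_1}(f_n,\bb^{2^*-1})^2\to\|f\|_{2^*}^2$ to conclude $\mathcal E(f)\ge\delta\ge\delta\,\Imu(\delta)$ directly. You would need to add this case; fortunately, your framework supports it: if $\rho(f_\tau)\ge\delta$ for all $\tau$, then $G(\tau)\le(1-\delta)\|\nabla f_\tau\|_2^2$ for all $\tau$ and $G(\tau)\to S_d\|f\|_{2^*}^2$, forcing $\lim_\tau\|\nabla f_\tau\|_2^2\ge S_d\|f\|_{2^*}^2/(1-\delta)$ and hence $\mathcal E(f)\ge D(f_\tau)\to 1-(1-\delta)=\delta$.

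Apart from this, your analysis at the jump is correct and accomplishes the same thing as the paper's Lemma~\ref{alternativeb}, but by brute force. The paper obtains the analogous bound more economically: using Lemma~\ref{innerproduct}, it writes the relevant quotient as $\big(x-S_d\|\mathsf f_{\tau_0}\|_{2^*}^2\big)/\big(x-S_d\sup_{\bb\in\mathcal M_1}(\mathsf f_{\tau_0},\bb^{2^*-1})^2\big)$ with $x=\lim_{\tau\to\tau_0^-}\|\nabla\mathsf f_\tau\|_2^2$, observes that $t\mapsto(t-\alpha)/(t-\beta)$ is increasing when $\alpha>\beta$ (which holds by H\"older when $\mathsf f_{\tau_0}\notin\mathcal M$), and lowers $x$ to the right value $\|\nabla\mathsf f_{\tau_0}\|_2^2$ in one step. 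This avoids the explicit polynomial verification in $b_0,b_-,\sigma,G_0$ and the case split on $\min\{\sigma,(1-\delta)b_-\}$. Two smaller remarks: the continuity of $\tau\mapsto G(\tau)$ does not require the compactness argument you sketch --- the functional $\sup_{\bb\in\mathcal M_1}|(f_\tau,\bb^{2^*-1})|$ is directly $1$-Lipschitz in $\|\cdot\|_{2^*}$ by H\"older (as in Lemma~\ref{continuous}), so the more delicate ``no escape to the boundary of $\mathcal M$'' step can be dropped; and the paper keeps the discrete competing-symmetries flow and Brock's continuous flow separate (the continuous flow is only launched from $Uf_{n_0}$ in alternative (b)), whereas you chain them into a single $\tau$-indexed family --- legitimate since the rotation $U$ is an isometry for all functionals involved, but worth making explicit.
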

We will prove this theorem by symmetrization. First, we will use a discrete symmetrization procedure to get somewhat close to the manifold, then we will use a further continuous symmetrization procedure to fine tune the distance to the manifold.

\subsubsection{Competing symmetries}\label{sec:competing}

The functional $\mathcal E(f)$ is conformally invariant
in the sense that if $C:\R^d\cup\{\infty\} \to \R^d\cup\{\infty\}$ is a conformal map, the function
$$
f_C(x) = |{\rm det}\,DC(x)|^{1/2^*}f\big(C( x)\big)
$$
satisfies
$$
\mathcal E(f_C) = \mathcal E(f)\,.
$$
In order to verify this, we recall that any conformal map is a composition of scalings, translations, rotations and inversions. For scalings, translations and rotations in $\R^d$ the claimed invariance is easy to see. The additional map to consider is the inversion $I(x)= \frac{x}{|x|^2}$
and a straightforward change of variables shows that
$$
\Vert \nabla f_I \Vert_2^2 = \Vert \nabla f\Vert_2^2\,,
\quad \Vert f_I \Vert_{2^*}^2 = \Vert f \Vert_{2^*}^2\,.
$$
The equality
$$
\inf_{\bb\in\mathcal M} \Vert \nabla(f_I-\bb) \Vert_2^2 = \inf_{\bb\in\mathcal M} \Vert \nabla f-\nabla\bb \Vert_2^2
$$
follows from
$$
\inf_{\bb\in\mathcal M} \Vert \nabla(f_I-\bb) \Vert_2^2=\inf_{\bb\in\mathcal M} \Vert \nabla(f-\bb_I) \Vert_2^2 = \inf_{\bb\in\mathcal M} \Vert \nabla f-\nabla\bb \Vert_2^2
$$
since $I^2=I$ and $\bb \to \bb_I$ maps the set $\mathcal M$ to itself in a one-to-one and onto fashion.

Another and perhaps easier way to see the conformal invariance is to pull the problem up to the sphere via the stereographic projection, as discussed in Section~\ref{sec:preliminary}. On the sphere the inversion~$I$ takes the form of the reflection $(s_1, \dots, s_d, s_{d+1}) \to (s_1, \dots, s_d, -s_{d+1})$, which clearly leaves the functional on the sphere unchanged.

A second ingredient for the construction of the discrete symmetrization flow is the technique of `competing symmetries', invented in~\cite{CarlenLoss}. Consider any nonnegative function $f\in\dot{\mathrm H}^1(\R^d)$ and its counterpart $F \in \mathrm H^1(\Sp^d)$ given by~\eqref{eq:euclidtosphere}. Set
$$
(UF)(\omega) = F(\omega_1,\omega_2, \dots, \omega_{d+1}, -\omega_d)\,,
$$
which corresponds to a rotation by $\pi/2$ that maps the `north pole' axis $(0, 0, \dots, 1)$ to $(0,\dots,1,0)$.
Reversing~\eqref{eq:euclidtosphere} the function on $\R^d$ that corresponds to $UF$ is given by
\begin{equation}\label{eq:U}
(Uf)(x) = \left(\frac2{|x-e_d|^2}\right)^{\frac{d-2}2} f\left(\frac{x_1}{|x-e_d|^2}, \dots, \frac{x_{d-1}}{|x-e_d|^2}, \frac{|x|^2-1}{|x-e_d|^2}\right),
\end{equation}
where $e_d=(0,\dots, 0, 1)\in\R^d$. It follows that
$$
\mathcal E(Uf) = \mathcal E(f)\,.
$$
The operation $U$ is obviously linear, invertible and an isometry on $\mathrm L^{2^*}(\R^d)$.

We also consider the symmetric decreasing rearrangement
$$
\mathcal R f(x) = f^*(x)\,.
$$
The most important properties are that $f$ and $f^*$ are equimeasurable
and that $\Vert \nabla f^* \Vert_2 \le \Vert \nabla f\Vert_2$. For elementary properties of rearrangements the reader may consult~\cite{LiebLoss}.
Being equimeasurable, this map is also an isometry on $\mathrm L^{2^*}(\R^d)$. It is when using the decreasing rearrangement that
we use the fact that $f$ is a nonnegative function. For functions that change sign one conventionally defines their rearrangement as the rearrangement of their absolute value. Passing from a function to its absolute value does not alter the numerator of $\mathcal E(f)$ but may decrease the denominator so that other arguments are needed.

On $\R^d$, let
\begin{equation}
\label{gstar}
\bb_*(x):=|\Sp^d|^{-\frac{d-2}{2\,d}} \left( \frac2{1+|x|^2}\right)^\frac{d-2}2\,.
\end{equation}
Note that $\Vert \bb_*\Vert_{2^*} = 1$ because it is obtained as the stereographic projection of the constant function on $\Sp^d$ with $2^*$-norm equal to $1$. The following theorem was proved in~\cite{CarlenLoss}.
\begin{theorem}\label{thm:competingsymmetries}
Let $f \in \mathrm L^{2^*}(\R^d)$ be a nonnegative function. Consider the sequence $(f_n)_{n\in\N}$ of functions
\begin{equation}\label{fn}
f_n =(\mathcal RU)^n f\quad\forall\,n\in\N\,.
\end{equation}
Then
$$
\lim_{n \to \infty} \Vert f_n - h_f\Vert_{2^*} = 0
$$
where $h_f=\Vert f \Vert_{2^*}\,\bb_*\in \mathcal M$. Moreover, if $f\in\dot{\mathrm H}^1(\R^d)$, then $(\Vert \nabla f_n \Vert_2^2)_{n\in\N}$ is a nonincreasing sequence.
\end{theorem}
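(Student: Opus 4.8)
The statement to prove (Theorem~\ref{thm:competingsymmetries}) is exactly the competing-symmetries theorem of~\cite{CarlenLoss}, and I would reproduce its argument. The ``moreover'' part is immediate from properties that have essentially already been recorded above: the symmetric decreasing rearrangement $\mathcal R$ is an isometry of $\mathrm L^{2^*}(\R^d)$ that does not increase the Dirichlet energy (P\'olya--Szeg\H o), while $U$ — being induced by a rotation of $\Sp^d$ — is a linear isometry of $\mathrm L^{2^*}(\R^d)$ that preserves $\Vert\nabla\cdot\Vert_2$ by conformal invariance. Hence $\Vert\nabla f_{n+1}\Vert_2=\Vert\nabla\mathcal RUf_n\Vert_2\le\Vert\nabla Uf_n\Vert_2=\Vert\nabla f_n\Vert_2$, so $(\Vert\nabla f_n\Vert_2^2)_n$ is nonincreasing; in particular $\Vert f_n\Vert_{2^*}=\Vert f\Vert_{2^*}$ for every $n$ (and we may assume $f\not\equiv0$, the other case being trivial).

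The heart of the matter is the $\mathrm L^{2^*}$-convergence, and the plan is to track the linear functional $\Phi(h):=\int_{\R^d}h\,\bb_*^{2^*-1}\,dx$. Three facts drive the argument. First, $\Phi$ is monotone along the flow: since $\bb_*^{2^*-1}$ is radially symmetric decreasing, the Hardy--Littlewood rearrangement inequality gives $\Phi(\mathcal Rh)\ge\Phi(h)$; and a short computation — the conformal weights cancel, because the exponent $2^*-1$ is tuned exactly to this effect — shows that, pulled back to $\Sp^d$ through~\eqref{eq:euclidtosphere}, $\Phi$ is a constant multiple of $F\mapsto\int_{\Sp^d}F\,d\mu$, which is invariant under the rotation $U$; therefore $\Phi(f_{n+1})=\Phi(\mathcal RUf_n)\ge\Phi(Uf_n)=\Phi(f_n)$. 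Second, $\Phi$ is bounded: by H\"older and $\Vert\bb_*\Vert_{2^*}=1$ one has $\Phi(h)\le\Vert h\Vert_{2^*}$, with equality if and only if $h=\Vert h\Vert_{2^*}\,\bb_*$; since $\Vert f_n\Vert_{2^*}=\Vert f\Vert_{2^*}$, the nondecreasing sequence $\Phi(f_n)$ converges to some $L\le\Vert f\Vert_{2^*}$, so $\Phi(f_{n+1})-\Phi(f_n)\to0$. Third, by the equality case in H\"older, the only nonnegative maximizer of $\Phi$ on $\{\Vert\cdot\Vert_{2^*}=\Vert f\Vert_{2^*}\}$ is $h_f=\Vert f\Vert_{2^*}\,\bb_*$.

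It then remains to upgrade $\Phi(f_n)\to L$ to $f_n\to h_f$ in $\mathrm L^{2^*}$, and this compactness step is where I expect the real work to lie. The key point is that for $n\ge1$ the function $f_n=\mathcal R(Uf_{n-1})$ is radially symmetric decreasing with $\Vert f_n\Vert_{2^*}=\Vert f\Vert_{2^*}$; combining this with the uniform lower bound $\Phi(f_n)\ge\Phi(f_1)>0$ and with the facts that $\bb_*^{2^*-1}\in\mathrm L^{(2^*)'}\cap\mathrm L^\infty$ and $\bb_*^{2^*-1}(x)\to0$ as $|x|\to\infty$, one checks that no $\mathrm L^{2^*}$-mass of the $f_n$ can escape to infinity or concentrate at the origin. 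Helly's selection theorem then extracts, from any subsequence, a further subsequence $f_{n_k}\to f_\infty$ in $\mathrm L^{2^*}$ with $f_\infty\ge0$ radially symmetric decreasing, $\Vert f_\infty\Vert_{2^*}=\Vert f\Vert_{2^*}$ and $\Phi(f_\infty)=L$. Using continuity of $\mathcal R$, $U$ and $\Phi$ on $\mathrm L^{2^*}$, from $\Phi(f_{n_k+1})\to L$ we deduce $\Phi(\mathcal RUf_\infty)=\Phi(Uf_\infty)=L$; the equality case of the Hardy--Littlewood inequality with the \emph{strictly} decreasing weight $\bb_*^{2^*-1}$ forces $Uf_\infty$ to be radially symmetric decreasing as well. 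Finally, the rigidity statement from~\cite{CarlenLoss} — a nonnegative function which is symmetric decreasing and whose image under $U$ is also symmetric decreasing must be a multiple of $\bb_*$ — identifies $f_\infty=h_f$. Since every $\mathrm L^{2^*}$-convergent subsequence of $(f_n)$ has the same limit $h_f$ and the compactness bounds are uniform in $n$, a standard subsequence argument yields $\Vert f_n-h_f\Vert_{2^*}\to0$.
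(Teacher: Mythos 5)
The paper does not prove this theorem; it cites~\cite{CarlenLoss}, so your task is essentially to reconstruct that argument. Your skeleton --- monotonicity of the overlap functional $\Phi(h)=\int_{\R^d} h\,\bb_*^{2^*-1}\,dx$, a compactness step, and a rigidity lemma identifying the joint fixed point --- is the right one, and the observation that $\Phi$ is $U$-invariant because it pulls back (via~\eqref{eq:euclidtosphere}) to a constant multiple of $\int_{\Sph^d}F\,d\mu$ is a correct and clean way to obtain $\Phi(f_{n+1})\ge\Phi(f_n)$. The ``moreover'' part is complete as written, and the final rigidity step (simultaneously radial-decreasing and $U$-image radial-decreasing implies a multiple of $\bb_*$) is correctly invoked.

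However, the compactness paragraph is a genuine gap, not a routine detail. You need to pass from $\Phi(f_n)\uparrow L$ to $\|f_n-h_f\|_{2^*}\to0$, and the bottleneck is that $\mathrm L^{2^*}$ is the scale-invariant space: a concentration profile $\lambda_n^{(d-2)/2}g(\lambda_n\,\cdot)$ with $\lambda_n\to\infty$ keeps the $\mathrm L^{2^*}$-norm fixed, stays radial decreasing, and is not excluded by ``no escape to infinity.'' Ruling it out requires an actual estimate showing $\Phi\to0$ under such concentration, and then a Brezis--Lieb-type argument to upgrade Helly's pointwise convergence together with conservation of norm to strong $\mathrm L^{2^*}$ convergence; none of this appears in the sketch. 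A more efficient route --- closer, I believe, to Carlen--Loss's own --- is to track $\|f_n-h_f\|_{2^*}$ directly: since $h_f=\|f\|_{2^*}\,\bb_*$ is a fixed point of $U$ (it pulls back to a constant on $\Sph^d$) and of $\mathcal R$ (it is radial decreasing), the nonexpansivity of rearrangement in $\mathrm L^{2^*}$ gives $\|f_{n+1}-h_f\|_{2^*}=\|\mathcal RUf_n-\mathcal RUh_f\|_{2^*}\le\|Uf_n-Uh_f\|_{2^*}=\|f_n-h_f\|_{2^*}$, so the desired convergence becomes a monotone-limit statement and the compactness discussion is now about showing that limit is zero. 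Either way, the step you describe with ``one checks'' is precisely the hard part of the theorem and needs to be carried out.
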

It does not seem clear whether the functional $\mathcal E(f)$ decreases or increases under
rearrangement. The next lemma helps to explain this point. Define $\mathcal M_1$ to be the set of the elements in
$\mathcal M$ with $2^*$-norm equal to $1$.
\begin{lemma}\label{innerproduct}
For any $f\in\dot{\mathrm H}^1(\R^d)$, we have
$$
{\rm dist}(f,\mathcal M)^2=\inf_{\bb\in\mathcal M} \Vert \nabla f - \nabla\bb\Vert^2_2 = \Vert \nabla f \Vert_2^2 - S_d\,\sup_{\bb\in\mathcal M_1} \left(f, \bb^{2^*-1}\right)^2.
$$
\end{lemma}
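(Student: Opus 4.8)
The plan is to reduce the infimum over the $(d+2)$-dimensional manifold $\mathcal M$ to a one-parameter optimization, exploiting that $\mathcal M$ is a cone over $\mathcal M_1$ and that the Aubin--Talenti functions satisfy the Euler--Lagrange equation of the Sobolev problem.

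First I would observe that every $\bb\in\mathcal M$ can be written as $\bb=t\,h$ with $t\in\R$ and $h\in\mathcal M_1$, $h>0$ (absorb $|c|$ into $t$ and divide by the $\mathrm L^{2^*}$-norm). Hence
\[
\inf_{\bb\in\mathcal M}\Vert\nabla f-\nabla\bb\Vert_2^2=\inf_{h\in\mathcal M_1,\,h>0}\ \inf_{t\in\R}\Vert\nabla f-t\,\nabla h\Vert_2^2\,.
\]
For fixed $h$, the inner minimization is a quadratic in $t$, minimized at $t=(\nabla f,\nabla h)/\Vert\nabla h\Vert_2^2$, with value $\Vert\nabla f\Vert_2^2-(\nabla f,\nabla h)^2/\Vert\nabla h\Vert_2^2$.

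Next I would bring in the Euler--Lagrange equation: every positive $h\in\mathcal M_1$ solves $-\Delta h=S_d\,h^{2^*-1}$ weakly on $\R^d$, the constant being exactly $S_d$ precisely because $\Vert h\Vert_{2^*}=1$ (equivalently $\int_{\R^d}\bar g^{2^*}\,dx=|\Sph^d|$, which is one of the stereographic identities of Section~\ref{sec:preliminary}). Testing this equation against $h$ itself gives $\Vert\nabla h\Vert_2^2=S_d\,\Vert h\Vert_{2^*}^{2^*}=S_d$. Since $h^{2^*-1}$ decays like $|x|^{-(d+2)}$ at infinity and is bounded near the origin, it lies in the dual Lebesgue space $\mathrm L^{2d/(d+2)}(\R^d)$, so testing the equation against an arbitrary $f\in\dot{\mathrm H}^1(\R^d)\subset\mathrm L^{2^*}(\R^d)$ is legitimate and yields $(\nabla f,\nabla h)=S_d\,(f,h^{2^*-1})$. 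Substituting both identities, the minimum over $t$ becomes $\Vert\nabla f\Vert_2^2-S_d\,(f,h^{2^*-1})^2$; taking the infimum over positive $h\in\mathcal M_1$ — which coincides with the infimum over all of $\mathcal M_1$ since $(f,(-h)^{2^*-1})^2=(f,h^{2^*-1})^2$ — gives the asserted identity.

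The only point that is not pure bookkeeping is the Euler--Lagrange identity together with the integrability needed to integrate by parts against a general element of $\dot{\mathrm H}^1(\R^d)$, and both facts are classical for Aubin--Talenti functions; so I do not anticipate a substantive obstacle here, the rest being elementary algebra.
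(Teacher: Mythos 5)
Your proof is correct and follows essentially the same route as the paper's: reduce the infimum over $\mathcal M$ to a scalar minimization over the multiplicative factor, then use the Euler--Lagrange equation $-\Delta h = S_d\,h^{2^*-1}$ together with $\|\nabla h\|_2^2=S_d$ to convert $(\nabla f,\nabla h)$ into $S_d(f,h^{2^*-1})$. You supply somewhat more detail (the integrability justifying the pairing, and the sign-invariance of the squared inner product), but the argument is the same one.
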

Here and in the sequel, $(\cdot,\cdot)$ is the $\mathrm L^2(\R^d)$ inner product or, more precisely, the duality pairing between $\mathrm L^{2^*}(\R^d)$ and~$\mathrm L^{(2^*)'}(\R^d)$.

\begin{proof}
Let $\bb$ be any Aubin--Talenti function. The function $\bb$ is an optimizer of the Sobolev inequality, {\it i.e.}, $\Vert \nabla\bb \Vert_2^2 = S_d\,\Vert \bb \Vert_{2^*}^2=S_d$ and is a solution of the Sobolev equation
\be{Sobolev:equation}
-\Delta \bb = S_d\,\frac{\bb^{2^*-1}}{\Vert \bb \Vert_{2^*}^{2^*-2}} = S_d\,\bb^{2^*-1}\,.
\ee
Hence for any nonnegative constant $c$, if $\Vert \bb \Vert_{2^*}=1$, we find
\begin{multline*}
\Vert \nabla (f-c\,\bb)\Vert_2^2 = \Vert \nabla f \Vert_2^2 - 2\,c\,(\nabla f,\nabla\bb) + c^2\,\Vert \nabla\bb \Vert_2 \\ = \Vert \nabla f \Vert_2^2 - 2\,c\,S_d\,\left(f, \bb^{2^*-1}\right) + S_d\,c^2
\end{multline*}
and minimizing with respect to $c$ we find the lower bound $ \Vert \nabla f \Vert_2^2 - S_d\,\left(f,\bb^{2^*-1}\right)^2$, which proves the lemma.
\end{proof}

In the above proof, notice that the optimal value of $c$ is such that $c=\left(f,\bb^{2^*-1}\right)\le\nrm f{2^*}$ and elementary considerations on $\mathcal M_1$ show that $c>0$ if $f\neq0$ is nonnegative, so that ${\rm dist}(f,\mathcal M)^2<\Vert \nabla f \Vert_2^2$. With the notation of~\eqref{eq:optimizers} and $g_{a,b}(x):=\big(a\,|\Sph^d|^{1/d}\big)^{-(d-2)/2}\,\bar g\big((x-b)/a\big)$, we find that $\big(f,g_{a,b}^{2^*-1}\big)$ converges to $0$ as $a+a^{-1}+|b|\to+\infty$. A sequence $(a_n,b_n,c_n)$ such that
$$
\lim_{n\to+\infty}\nrm{\nabla(f-c_n\,g_{a_n,b_n})}2^2={\rm dist}(f,\mathcal M)^2
$$
is therefore relatively compact in $(0,+\infty)\times\R^d\times\R$, which proves that ${\rm dist}(f,\mathcal M)$ is attained at some Aubin--Talenti function of the form~\eqref{eq:optimizers}.

We note that, under the decreasing rearrangement, the term $\Vert \nabla f\Vert_2^2$ does not increase whereas the term $\sup_{\bb\in\mathcal M_1} \left(f, \bb^{2^*-1}\right)^2$ increases.
To see this, note that ${\rm dist}(f,\mathcal M)$ is attained at some Aubin--Talenti function which is a strictly symmetric decreasing
function about some point $b\in\R^d$. Replacing $f$ by its symmetric decreasing rearrangement about that point increases $\big(f, \bb^{2^*-1}\big)^2$, in
fact strictly unless $f$ is already symmetric decreasing about the point~$b$. Thus, while the numerator in $\mathcal E(f)$ decreases under rearrangements, so does the denominator and there are no direct conclusions to be drawn from this. The next lemma summarizes what we have shown.
\begin{lemma}\label{lm:monotone}
For the sequence $(f_n)_{n\in\N}$ in Theorem~\ref{thm:competingsymmetries} we have that $n\mapsto\sup_{\bb\in\mathcal M_1} \left(f_n, \bb^{2^*-1}\right)^2$ is strictly increasing, $n\mapsto\inf_{\bb\in\mathcal M} \Vert \nabla f_n - \nabla\bb\Vert_{2^*}^2$ is strictly decreasing and, with $h_f=\Vert f \Vert_{2^*}\,\bb_*$ as in Theorem~\ref{thm:competingsymmetries},
\begin{multline*}
\lim_{n \to \infty} \inf_{\bb\in\mathcal M} \Vert \nabla f_n - \nabla\bb\Vert_2^2 = \lim_{n \to \infty} \Vert \nabla f_n \Vert^2_2 - S_d\,\Vert h_f \Vert_{2^*} ^2\\ = \lim_{n \to \infty} \Vert \nabla f_n \Vert^2_2 - S_d\,\Vert f \Vert_{2^*} ^2\,.
\end{multline*}
\end{lemma}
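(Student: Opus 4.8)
The plan is to obtain all three assertions from Lemma~\ref{innerproduct} together with the monotonicity and convergence already supplied by Theorem~\ref{thm:competingsymmetries}, so that the only genuinely new input concerns the \emph{overlap} $\mathsf N(w):=\sup_{\bb\in\mathcal M_1}\bigl(w,\bb^{2^*-1}\bigr)^2$, for nonnegative $w$. Indeed, Lemma~\ref{innerproduct} reads
\[
\inf_{\bb\in\mathcal M}\Vert\nabla f_n-\nabla\bb\Vert_2^2=\Vert\nabla f_n\Vert_2^2-S_d\,\mathsf N(f_n)\qquad\text{for every }n,
\]
and since $n\mapsto\Vert\nabla f_n\Vert_2^2$ is non-increasing by Theorem~\ref{thm:competingsymmetries}, the claimed (strict) decrease of $n\mapsto\inf_{\bb\in\mathcal M}\Vert\nabla f_n-\nabla\bb\Vert_2^2$ follows immediately from the (strict) increase of $n\mapsto\mathsf N(f_n)$. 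Thus it remains to track $\mathsf N$ along the flow $f_{n+1}=\mathcal R(Uf_n)$ and to compute the limits.

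\emph{Behaviour of the overlap.} I would split this into two steps. First, $U$ leaves $\mathsf N$ unchanged: from $\Vert\nabla Uf_n\Vert_2=\Vert\nabla f_n\Vert_2$, $\Vert Uf_n\Vert_{2^*}=\Vert f_n\Vert_{2^*}$ and $\mathcal E(Uf_n)=\mathcal E(f_n)$ (plus the fact that the numerators in the definition of $\mathcal E$ then agree and are positive unless $f_n\in\mathcal M$) one gets $\inf_{\bb\in\mathcal M}\Vert\nabla Uf_n-\nabla\bb\Vert_2^2=\inf_{\bb\in\mathcal M}\Vert\nabla f_n-\nabla\bb\Vert_2^2$, hence $\mathsf N(Uf_n)=\mathsf N(f_n)$ by the identity above (the degenerate case $f_n\in\mathcal M$ being immediate since then $\mathsf N$ equals $\Vert\cdot\Vert_{2^*}^2$ on $\mathcal M$). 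Second, symmetric decreasing rearrangement does not decrease $\mathsf N$: for any nonnegative $\bb\in\mathcal M_1$ one has $\Vert\bb^{2^*-1}\Vert_{(2^*)'}=\Vert\bb\Vert_{2^*}^{2^*-1}=1$, and since an Aubin--Talenti function is strictly symmetric decreasing about its center, $\bb^{2^*-1}$ rearranges (after the translation carrying that center to the origin) to $\tilde\bb^{2^*-1}$ with $\tilde\bb\in\mathcal M_1$; the Hardy--Littlewood rearrangement inequality then gives $\bigl((Uf_n)^*,\tilde\bb^{2^*-1}\bigr)^2\ge\bigl(Uf_n,\bb^{2^*-1}\bigr)^2$, and taking the supremum over $\bb$ yields $\mathsf N(f_{n+1})=\mathsf N\bigl((Uf_n)^*\bigr)\ge\mathsf N(Uf_n)=\mathsf N(f_n)$. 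Combined with $\Vert\nabla f_{n+1}\Vert_2\le\Vert\nabla f_n\Vert_2$, this establishes the non-strict versions of both monotonicity statements. For \emph{strictness}, let $\bb$ attain $\mathsf N(Uf_n)$, with center $b$; the equality case of the Hardy--Littlewood inequality (available because $\bb^{2^*-1}$ has no flat parts) shows that $\mathsf N(f_{n+1})=\mathsf N(Uf_n)$ can occur only if $Uf_n$ is symmetric decreasing about $b$, which is precisely the incompatibility of the rotation $U$ with symmetric rearrangement at the core of~\cite{CarlenLoss} and forces $f_n\in\mathcal M$; since then $f_m=h_f$ for all $m>n$, we conclude that $\mathsf N(f_n)$ increases strictly as long as $f_n\neq h_f$, which is all that is needed.

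\emph{The limits.} The sequence $\Vert\nabla f_n\Vert_2^2$ is non-increasing and, by the Sobolev inequality applied to $f_n$ and $\Vert f_n\Vert_{2^*}=\Vert f\Vert_{2^*}$, bounded below by $S_d\Vert f\Vert_{2^*}^2$; hence it converges. Moreover $\mathsf N(f_n)\le\Vert f_n\Vert_{2^*}^2=\Vert f\Vert_{2^*}^2$ by H\"older and $\Vert\bb^{2^*-1}\Vert_{(2^*)'}=1$, while testing $\mathsf N$ against the fixed element $\bb_*\in\mathcal M_1$ from~\eqref{gstar} and using $\Vert f_n-h_f\Vert_{2^*}\to0$ (Theorem~\ref{thm:competingsymmetries}) together with $\bb_*^{2^*-1}\in\mathrm L^{(2^*)'}(\R^d)$ gives
\[
\bigl(f_n,\bb_*^{2^*-1}\bigr)\longrightarrow\bigl(h_f,\bb_*^{2^*-1}\bigr)=\Vert f\Vert_{2^*}\int_{\R^d}\bb_*^{2^*}\,dx=\Vert f\Vert_{2^*},
\]
so $\mathsf N(f_n)\to\Vert f\Vert_{2^*}^2=\Vert h_f\Vert_{2^*}^2$. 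Inserting this into the identity from Lemma~\ref{innerproduct} and using $\Vert h_f\Vert_{2^*}=\Vert f\Vert_{2^*}$ yields the two displayed equalities for $\lim_n\inf_{\bb\in\mathcal M}\Vert\nabla f_n-\nabla\bb\Vert_2^2$.

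The step I expect to be the main obstacle is the strictness in the monotonicity statements: unlike everything else here, it is not a formal consequence of Lemma~\ref{innerproduct} and Theorem~\ref{thm:competingsymmetries}, but relies on the equality case of a rearrangement inequality and on the structural fact from~\cite{CarlenLoss} that $U$ and symmetric decreasing rearrangement have no common invariant functions outside $\mathcal M$. The two limit identities and the non-strict monotonicity, by contrast, are essentially bookkeeping once the overlap is read off from Lemma~\ref{innerproduct}.
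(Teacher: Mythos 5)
Your proof is correct and follows essentially the same route as the paper's: the paper uses Lemma~\ref{innerproduct} to rewrite the distance as $\|\nabla f_n\|_2^2 - S_d\,\mathsf N(f_n)$, derives the limit of the overlap from Hölder's upper bound plus testing against $\bb_*$ and the $\mathrm L^{2^*}$ convergence from Theorem~\ref{thm:competingsymmetries}, and relies on the rearrangement argument (attainment of the sup at a strictly symmetric decreasing Aubin--Talenti function, Hardy--Littlewood) stated in the prose immediately preceding the lemma for the monotonicity. You reproduce all of this and actually supply more detail than the paper does on the monotonicity side, in particular the $U$-invariance of the overlap and the equality case of Hardy--Littlewood.

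Two remarks on the strictness part, which is the only place where your proposal and the paper are both thin. First, your conclusion is really ``strictly increasing as long as $f_n\neq h_f$,'' which is the correct interpretation (if $f\in\mathcal M$ the sequence eventually stabilizes and strict monotonicity cannot literally hold), but be aware that the step ``$f_n\in\mathcal M$ implies $f_m=h_f$ for all $m>n$'' is not immediate from the definitions: if $f_n=g_{a,b,c}$ with $a\neq 1$, then $(Uf_n)^*$ is another Aubin--Talenti function centered at the origin, not necessarily $h_f$, so you would need an extra argument (or drop this claim, which is not needed for the lemma). Second, your inference ``$Uf_n$ symmetric decreasing about $b$ forces $f_n\in\mathcal M$'' is the genuinely non-formal input; it is exactly the Carlen--Loss rigidity that a nonnegative function that is simultaneously radially decreasing and whose conformal rotation is again radially decreasing must be an optimizer. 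You cite this correctly, and the paper does the same (only implicitly, in the prose), so this is consistent with the paper's level of rigor rather than a gap specific to your write-up.

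One small stylistic point: your derivation of $\mathsf N(Uf_n)=\mathsf N(f_n)$ via the invariance of $\mathcal E$ is correct but roundabout; the paper's Section~\ref{sec:competing} already records that $\bb\mapsto\bb_I$ maps $\mathcal M$ bijectively to $\mathcal M$, so $U$ permutes $\mathcal M_1$ and the invariance of $\mathsf N$ is immediate. Either way the argument goes through.
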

\begin{proof}
{}From
$$
\inf_{\bb\in\mathcal M} \Vert \nabla f_n - \nabla\bb\Vert_2^2 = \Vert \nabla f_n \Vert^2_2 -
S_d\,\sup_{\bb\in\mathcal M_1} \left(f_n, \bb^{2^*-1}\right)^2
$$
we see that the first term converges since $(\Vert \nabla f_n \Vert^2_2)_{n\in\N}$ is a nonincreasing sequence.
For the second term, which is strictly increasing, we have by H\"older's inequality
$$
\sup_{\bb\in\mathcal M_1} \left(f_n, \bb^{2^*-1}\right)^2 \le \Vert f_n \Vert_{2^*}^2 = \Vert f \Vert_{2^*}^2
$$
and since $\bb_*$ as defined in~\eqref{gstar} is in $\mathcal M_1$ we have
$$
\liminf_{n\to \infty} \sup_{\bb\in\mathcal M_1} \left(f_n, \bb^{2^*-1}\right)^2 \ge \liminf_{n\to \infty} \left(f_n,\bb_*^{2^*-1}\right)^2 = \Vert f \Vert_{2^*}^2
$$
by Theorem~\ref{thm:competingsymmetries}.
\end{proof}

\begin{lemma} \label{alternatives}
Assume that $0\leq f\in\dot{\mathrm H}^1(\R^d)\setminus\mathcal M$ satisfies
$$
\inf_{\bb\in\mathcal M} \Vert \nabla f - \nabla\bb\Vert_2^2 \ge \delta\,\Vert \nabla f \Vert_2^2
$$
and let $(f_n)_{n\in\N}$ be the sequence defined by~\eqref{fn}.
Then one of the following alternatives holds:
\begin{enumerate}
\item[(a)] for all $n=0,1,2 \dots$ we have
$$
\inf_{\bb\in\mathcal M} \Vert \nabla f_n - \nabla\bb\Vert_2^2 \ge \delta\,\Vert \nabla f_n \Vert_2^2\,,
$$
\item[(b)] there is a natural number $n_0$ such that
$$
\inf_{\bb\in\mathcal M} \Vert \nabla f_{n_0} - \nabla\bb\Vert_2^2 \ge \delta\,\Vert \nabla f_{n_0} \Vert_2^2
$$
and
$$
\inf_{\bb\in\mathcal M} \Vert \nabla f_{n_0+1} - \nabla\bb\Vert_2^2 < \delta\,\Vert \nabla f_{n_0+1} \Vert_2^2\,.
$$
\end{enumerate}
\end{lemma}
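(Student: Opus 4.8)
The statement to prove is Lemma~\ref{alternatives}, which is a simple dichotomy about the sequence $(f_n)$ produced by the competing-symmetries iteration. The plan is to argue by a trivial case-split on whether the inequality $\inf_{\bb\in\mathcal M}\Vert\nabla f_n-\nabla\bb\Vert_2^2\ge\delta\,\Vert\nabla f_n\Vert_2^2$ persists for all $n$ or fails at some point. If it holds for every $n$, we are in alternative~(a) and there is nothing to prove. Otherwise, let $n_0+1$ be the \emph{first} index at which it fails; such an index exists by the well-ordering principle, and it is at least $1$ because the hypothesis on $f=f_0$ says the inequality holds at $n=0$. By minimality of $n_0+1$, the inequality still holds at $n_0$, which is precisely alternative~(b).

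The only points that require a word of care are, first, that the defining hypothesis guarantees the inequality at $n=0$, so the failure set (if nonempty) is a nonempty subset of $\{1,2,\dots\}$ and hence has a least element $n_0+1\ge1$; and second, that the sequence $(f_n)$ is well defined and lies in $\dot{\mathrm H}^1(\R^d)$ for every $n$, so that all the quantities $\Vert\nabla f_n\Vert_2^2$ and $\inf_{\bb\in\mathcal M}\Vert\nabla f_n-\nabla\bb\Vert_2^2$ make sense. Both are immediate: $U$ is a linear isometry on $\mathrm L^{2^*}$ that preserves $\dot{\mathrm H}^1$ (indeed it preserves $\Vert\nabla\cdot\Vert_2$ since it is a rotation on the sphere), and the decreasing rearrangement $\mathcal R$ maps $\dot{\mathrm H}^1$ into itself with $\Vert\nabla\mathcal Rf\Vert_2\le\Vert\nabla f\Vert_2$, as recalled in Theorem~\ref{thm:competingsymmetries}. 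So $f_n=(\mathcal RU)^nf\in\dot{\mathrm H}^1(\R^d)$ for all $n$, and moreover $f\notin\mathcal M$ together with $\Vert\nabla f_n\Vert_2\le\Vert\nabla f\Vert_2$ and Lemma~\ref{lm:monotone} ensures the denominators are never zero, so all the ratios in question are genuinely defined.

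There is essentially no obstacle here: this lemma is a bookkeeping statement that sets up the two regimes handled subsequently --- alternative~(b) is the one where one runs the continuous Steiner-symmetrization flow between $f_{n_0}$ and $f_{n_0+1}$ to land exactly on the threshold $\inf_{\bb\in\mathcal M}\Vert\nabla\cdot-\nabla\bb\Vert_2^2=\delta\,\Vert\nabla\cdot\Vert_2^2$, and alternative~(a) is the one where the discrete iteration alone drives the distance to $0$ (by Lemma~\ref{lm:monotone}) while keeping the ratio above $\delta$, so that one can pass to a limit. The proof of the lemma itself is a two-line application of well-ordering, and the write-up will consist of stating the case split, invoking the hypothesis to pin down that the threshold inequality holds at $n=0$, and invoking minimality of the first failure index.
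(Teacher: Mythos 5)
Your proof is correct and follows the same one-line dichotomy argument as the paper. If anything, your phrasing is slightly more careful: the paper takes $n_0$ to be the ``largest'' index at which the threshold inequality holds (which need not exist a priori if the set of such indices were unbounded), while you correctly take $n_0+1$ to be the smallest index of failure, which exists by well-ordering once the hypothesis pins down that $n=0$ is not a failure.
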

\begin{proof}
Assume that alternative (a) does not hold. Then there is a largest value $n_0\ge0$ such that
$\inf_{\bb\in\mathcal M} \Vert \nabla f_{n_0} - \nabla\bb\Vert_2^2 \ge \delta\,\Vert \nabla f_{n_0} \Vert_2^2$.
\end{proof}
\begin{lemma}\label{alternativea}
Assume that $0\leq f\in\dot{\mathrm H}^1(\R^d)\setminus \mathcal M$ satisfies
$$
\inf_{\bb\in\mathcal M} \Vert \nabla f - \nabla\bb\Vert^2_2 \ge \delta\,\Vert \nabla f \Vert_2^2
$$
and suppose that in Lemma~\ref{alternatives} alternative {\rm (a)} holds for the sequence $(f_n)_{n\in\N}$ defined by~\eqref{fn}. Then
$$
\mathcal E(f) \ge \delta\,.
$$
\end{lemma}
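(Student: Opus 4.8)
The plan is to combine the trivial lower bound $\mathcal E(f)\ge 1-S_d\,\|f\|_{2^*}^2/\|\nabla f\|_2^2$ with the monotonicity built into the competing--symmetries sequence $(f_n)$, and then to invoke alternative (a) in the limit $n\to\infty$. First I would observe that, since the Sobolev inequality gives $\|\nabla f\|_2^2-S_d\,\|f\|_{2^*}^2\ge 0$ and since Lemma~\ref{innerproduct} yields $\inf_{\bb\in\mathcal M}\|\nabla f-\nabla \bb\|_2^2=\|\nabla f\|_2^2-S_d\sup_{\bb\in\mathcal M_1}(f,\bb^{2^*-1})^2\le\|\nabla f\|_2^2$, one has
\[
\mathcal E(f)=\frac{\|\nabla f\|_2^2-S_d\,\|f\|_{2^*}^2}{\inf_{\bb\in\mathcal M}\|\nabla f-\nabla \bb\|_2^2}\ \ge\ \frac{\|\nabla f\|_2^2-S_d\,\|f\|_{2^*}^2}{\|\nabla f\|_2^2}\ =\ 1-S_d\,\frac{\|f\|_{2^*}^2}{\|\nabla f\|_2^2}\,.
\]

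Next I would exploit the sequence $(f_n)_{n\in\N}$ from~\eqref{fn}. Because $U$ and $\mathcal R$ are isometries of $\mathrm L^{2^*}(\R^d)$, we have $\|f_n\|_{2^*}=\|f\|_{2^*}$ for every $n$, and by Theorem~\ref{thm:competingsymmetries} the sequence $(\|\nabla f_n\|_2^2)_{n\in\N}$ is nonincreasing; set $L:=\lim_{n\to\infty}\|\nabla f_n\|_2^2$. Since $f\notin\mathcal M$ we have $f\not\equiv 0$ (recall $0\in\mathcal M$), hence $\|f\|_{2^*}>0$, and the Sobolev inequality applied to each $f_n$ gives $\|\nabla f_n\|_2^2\ge S_d\|f_n\|_{2^*}^2=S_d\|f\|_{2^*}^2$, so that $L\ge S_d\|f\|_{2^*}^2>0$. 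In particular $L\le\|\nabla f\|_2^2$, and the previous display improves to
\[
\mathcal E(f)\ \ge\ 1-S_d\,\frac{\|f\|_{2^*}^2}{\|\nabla f\|_2^2}\ \ge\ 1-S_d\,\frac{\|f\|_{2^*}^2}{L}\,.
\]

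Finally I would pass to the limit in alternative (a). For every $n$ it gives $\inf_{\bb\in\mathcal M}\|\nabla f_n-\nabla \bb\|_2^2\ge\delta\,\|\nabla f_n\|_2^2\ge\delta\,L$, while Lemma~\ref{lm:monotone} identifies $\lim_{n\to\infty}\inf_{\bb\in\mathcal M}\|\nabla f_n-\nabla \bb\|_2^2=L-S_d\|f\|_{2^*}^2$; hence $L-S_d\|f\|_{2^*}^2\ge\delta\,L$, that is, $S_d\,\|f\|_{2^*}^2/L\le 1-\delta$. Inserting this into the last display yields $\mathcal E(f)\ge 1-(1-\delta)=\delta$, which is the claim. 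This argument is essentially soft: the only points requiring a little care are that $L$ is strictly positive (so that division by $L$ is legitimate) and that $L$ lies below every $\|\nabla f_n\|_2^2$, in particular below $\|\nabla f\|_2^2$, so that the two elementary estimates can be chained; both facts come directly from Theorem~\ref{thm:competingsymmetries} and Lemma~\ref{lm:monotone}, so I do not anticipate a genuine obstacle here.
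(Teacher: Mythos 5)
Your proof is correct and follows essentially the same route as the paper: bound $\mathcal E(f)$ below by $1-S_d\,\|f\|_{2^*}^2/\|\nabla f_n\|_2^2$, pass to the limit using Theorem~\ref{thm:competingsymmetries} and Lemma~\ref{lm:monotone}, and then use alternative (a) to deduce $L-S_d\|f\|_{2^*}^2\ge\delta L$. The minor differences (you take the limit $L$ first rather than carrying $\|\nabla f_n\|_2^2$ through the quotient, and you spell out the positivity of $L$ a bit more explicitly) are cosmetic.
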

\begin{proof}
We have
\begin{multline}\label{Efn}
\mathcal E(f) = \frac{\Vert \nabla f \Vert^2_2 - S_d\,\Vert f \Vert_{2^*}^2} {\inf_{\bb\in\mathcal M} \Vert \nabla f - \nabla\bb\Vert^2_2 } \ge \frac{\Vert \nabla f \Vert^2_2 - S_d\,\Vert f \Vert_{2^*}^2} {\Vert \nabla f \Vert^2_2 } \\ \geq \frac{\Vert \nabla f_n \Vert^2_2 - S_d\,\Vert f \Vert_{2^*}^2} {\Vert \nabla f _n\Vert^2_2}\,,
\end{multline}
where the second inequality is a consequence of $\Vert \nabla f _n\Vert^2_2\leq\Vert \nabla f\Vert^2_2$ for all $n=0$, $1$, $2$,\dots proved in Theorem~\ref{thm:competingsymmetries}. By the assumption that alternative {\rm (a)} holds and by Lemma~\ref{lm:monotone}, we learn that
\begin{multline*}
\lim_{n \to \infty}\Vert \nabla f _n\Vert^2_2\leq\frac1\delta\,\lim_{n \to \infty}\,\inf_{\bb\in\mathcal M} \Vert \nabla f_n - \nabla\bb\Vert_2^2 \\ = \frac1\delta\left(\lim_{n \to \infty} \Vert \nabla f_n \Vert^2_2 - S_d\,\Vert f \Vert_{2^*} ^2\right)\,.
\end{multline*}
Since
\begin{multline*}
\lim_{n \to \infty} \Vert \nabla f_n \Vert_2^2 - S_d\,\Vert f \Vert_{2^*}^2 \ge \delta\,\lim_{n \to \infty} \Vert \nabla f_n \Vert_2^2 \\ \ge \delta\,S_d\,\lim_{n \to \infty} \Vert f_n\Vert_{2^*}^2= \delta\,S_d\,\Vert f \Vert_{2^*}^2 >0\,,
\end{multline*}
we can take the limit as $n\to\infty$ on the right side of~\eqref{Efn} and compute the limit of the quotient as the quotient of the limits. This proves the lemma.
\end{proof}

\subsubsection{Continuous rearrangement}

Next, we analyze the case where the alternative (b) in Lemma~\ref{alternatives} holds. We recall that $\Imu(\delta)$ was defined in~\eqref{eq:mudelta2}.
\begin{lemma}\label{mu1}
For any $\delta\in (0,1]$, we have $\Imu(\delta)\leq 1$.
\end{lemma}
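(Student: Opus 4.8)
The plan is to show the stronger fact that $\mathcal E(f)\le1$ for \emph{every} $f\in\dot{\mathrm H}^1(\R^d)\setminus\mathcal M$, so that the only real content of the lemma is that the infimum in~\eqref{eq:mudelta2} is taken over a non-empty set. For the first point I would invoke Lemma~\ref{innerproduct}, which identifies the denominator of $\mathcal E(f)$ with $\Vert\nabla f\Vert_2^2-S_d\sup_{\bb\in\mathcal M_1}\big(f,\bb^{2^*-1}\big)^2$. Since $(2^*-1)\,(2^*)'=2^*$, for any $\bb\in\mathcal M_1$ one has $\Vert\bb^{2^*-1}\Vert_{\mathrm L^{(2^*)'}(\R^d)}=\Vert\bb\Vert_{2^*}^{2^*-1}=1$, so H\"older's inequality gives $\big(f,\bb^{2^*-1}\big)^2\le\Vert f\Vert_{2^*}^2$, and hence $\sup_{\bb\in\mathcal M_1}\big(f,\bb^{2^*-1}\big)^2\le\Vert f\Vert_{2^*}^2$. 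Therefore the denominator of $\mathcal E(f)$ is bounded below by $\Vert\nabla f\Vert_2^2-S_d\Vert f\Vert_{2^*}^2$, which is exactly its numerator. The numerator is nonnegative by the Sobolev inequality; and if the denominator vanished the numerator would be $\le0$, hence $=0$, so that $f$ would be a Sobolev optimizer, i.e.\ $f\in\mathcal M$, contrary to assumption. Thus for $f\notin\mathcal M$ the denominator is strictly positive and at least as large as the numerator, whence $\mathcal E(f)\le1$.

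It remains to exhibit, for each $\delta\in(0,1]$, a nonnegative $f\in\dot{\mathrm H}^1(\R^d)\setminus\mathcal M$ with $\inf_{\bb\in\mathcal M}\Vert\nabla f-\nabla\bb\Vert_2^2\le\delta\,\Vert\nabla f\Vert_2^2$. I would take $f_t:=\bb_*+t\,\psi$, with $\bb_*$ as in~\eqref{gstar} and $\psi\in\dot{\mathrm H}^1(\R^d)$ nonnegative and not lying in the tangent space of $\mathcal M$ at $\bb_*$; such a $\psi$ exists because that tangent space is $(d+2)$-dimensional while the nonnegative functions span $\dot{\mathrm H}^1(\R^d)$. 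Then $f_t\ge0$ since $\bb_*>0$, and, $\mathcal M$ being a smooth finite-dimensional submanifold of $\dot{\mathrm H}^1(\R^d)$ and the curve $t\mapsto f_t$ leaving the point $\bb_*\in\mathcal M$ with velocity transverse to $\mathcal M$, we get $f_t\notin\mathcal M$ for all sufficiently small $t>0$. Finally $\inf_{\bb\in\mathcal M}\Vert\nabla f_t-\nabla\bb\Vert_2^2\le\Vert\nabla(f_t-\bb_*)\Vert_2^2=t^2\,\Vert\nabla\psi\Vert_2^2$, while $\Vert\nabla f_t\Vert_2^2\to\Vert\nabla\bb_*\Vert_2^2=S_d>0$ as $t\to0$, so the constraint holds once $t$ is small enough, giving $\Imu(\delta)\le\mathcal E(f_t)\le1$.

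The argument is soft and I do not anticipate a serious obstacle; the only step deserving a careful word is the claim $f_t\notin\mathcal M$ for small $t>0$, which uses that $\mathcal M$ is an embedded finite-dimensional submanifold, so that a curve through a point of $\mathcal M$ with non-tangential initial velocity exits $\mathcal M$ immediately. Alternatively one may take $\psi$ with compact support, in which case the far-field decay of $f_t$ coincides with that of $\bb_*$, and check directly that this is incompatible with $f_t=g_{a,b,c}$ for $t>0$.
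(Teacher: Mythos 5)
Your proof of $\mathcal E(f)\le 1$ is exactly the paper's argument: apply Lemma~\ref{innerproduct} together with the H\"older bound $\sup_{\bb\in\mathcal M_1}(f,\bb^{2^*-1})^2\le\Vert f\Vert_{2^*}^2$ to see that the denominator of $\mathcal E(f)$ dominates the numerator. Your second paragraph, verifying that the constraint set in~\eqref{eq:mudelta2} is nonempty (so that the infimum is not $+\infty$) and that the denominator of $\mathcal E(f)$ is strictly positive for $f\notin\mathcal M$, is correct extra bookkeeping that the paper leaves implicit; both points are routine but worth a word, and your perturbation $f_t=\bb_*+t\,\psi$ with a nonnegative, non-tangential $\psi$ handles them cleanly.
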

\begin{proof}
By Lemma~\ref{innerproduct}, we have
$$
\inf_{\bb\in\mathcal M} \|\nabla f-\nabla\bb\|_2^2 = \Vert \nabla f \Vert^2_2 - S_d\,\sup_{\bb\in\mathcal M_1} \left(f,\bb^{2^*-1}\right)^2
$$
and it follows from H\"older's inequality that
$$
\sup_{\bb\in\mathcal M_1} \left(f,\bb^{2^*-1}\right)^2 \leq \|f\|_{2^*}^2\,.
$$
Thus, the denominator in $\mathcal E(f)$ that enters the definition of $\Imu(\delta)$ is at least as large as the numerator, so the quotient is at most 1.
\end{proof}

Our goal in this subsection is to prove the following lower bound on $\mathcal E(f)$.
\begin{lemma}\label{alternativeb}
Assume that $0\leq f\in\dot{\mathrm H}^1(\R^d)\setminus \mathcal M$ satisfies
$$
\inf_{\bb\in\mathcal M} \Vert \nabla f - \nabla\bb\Vert_2^2 \ge \delta\,\Vert \nabla f \Vert_2^2
$$
for some $\delta\in(0,1/2)$ and suppose that in Lemma~\ref{alternatives} alternative {\rm (b)} holds for the sequence $(f_n)_{n\in\N}$ of Theorem~\ref{thm:competingsymmetries} defined by~\eqref{fn}. Then, with $\Imu(\delta)$ defined by~\eqref{eq:mudelta2}, we have
$$
\mathcal E(f)\ge \delta\,\Imu(\delta)\,.
$$
\end{lemma}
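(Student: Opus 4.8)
The plan is to run a continuous rearrangement flow interpolating between a conformal image of $f_{n_0}$ and its symmetric decreasing rearrangement, to stop it at the moment the relative distance to $\mathcal M$ reaches the threshold $\delta$, and to transport the bound back to $f$ using that the Dirichlet energy does not increase along the flow; the competitor produced at the stopping time is what activates $\Imu(\delta)$. To begin, I would record, exactly as in~\eqref{Efn}, that $\mathcal E(f)\ge\big(\Vert\nabla f_n\Vert_2^2-S_d\Vert f\Vert_{2^*}^2\big)\big/\Vert\nabla f_n\Vert_2^2$ for every $n$. Applying this with $n=n_0$ and using that $U$ is induced by a rotation of the sphere (so $\Vert\nabla(Uf_{n_0})\Vert_2=\Vert\nabla f_{n_0}\Vert_2$, $\Vert Uf_{n_0}\Vert_{2^*}=\Vert f_{n_0}\Vert_{2^*}=\Vert f\Vert_{2^*}$, and $U$ maps $\mathcal M$ onto itself), the function $h_0:=Uf_{n_0}$ satisfies $\Vert h_0\Vert_{2^*}=\Vert f\Vert_{2^*}$, $\inf_{\bb\in\mathcal M}\Vert\nabla h_0-\nabla\bb\Vert_2^2\ge\delta\,\Vert\nabla h_0\Vert_2^2$, and $\mathcal E(f)\ge\big(\Vert\nabla h_0\Vert_2^2-S_d\Vert h_0\Vert_{2^*}^2\big)\big/\Vert\nabla h_0\Vert_2^2$; its symmetric decreasing rearrangement $h_0^{*}=\mathcal R Uf_{n_0}=f_{n_0+1}$ obeys $\inf_{\bb\in\mathcal M}\Vert\nabla h_0^{*}-\nabla\bb\Vert_2^2<\delta\,\Vert\nabla h_0^{*}\Vert_2^2$ because alternative~(b) of Lemma~\ref{alternatives} holds.

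Next I would invoke the continuous rearrangement flow of Appendix~\ref{contrearr}: a family $(h_\tau)_{0\le\tau<\infty}$ with $h_0=Uf_{n_0}$, $\Vert h_\tau\Vert_{2^*}=\Vert h_0\Vert_{2^*}$ for all $\tau$, $\tau\mapsto\Vert\nabla h_\tau\Vert_2$ non-increasing, and $\Vert h_\tau-h_0^{*}\Vert_{2^*}\to0$ as $\tau\to\infty$. Set $\rho(h):=\inf_{\bb\in\mathcal M}\Vert\nabla h-\nabla\bb\Vert_2^2\big/\Vert\nabla h\Vert_2^2$; by Lemma~\ref{innerproduct}, $\rho(h)=1-S_d\sup_{\bb\in\mathcal M_1}(h,\bb^{2^*-1})^2\big/\Vert\nabla h\Vert_2^2$. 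Using the inequality from the first step together with $\Vert h_\tau\Vert_{2^*}=\Vert h_0\Vert_{2^*}$ and $\Vert\nabla h_\tau\Vert_2\le\Vert\nabla h_0\Vert_2$, one obtains the master inequality
\[
\mathcal E(f)\ \ge\ 1-S_d\,\frac{\Vert h_\tau\Vert_{2^*}^2}{\Vert\nabla h_\tau\Vert_2^2}\ =\ \rho(h_\tau)\,\mathcal E(h_\tau)\qquad\text{for every }\tau\ge0
\]
(with the convention that the middle quantity is $0$, and the estimate void, when $h_\tau\in\mathcal M$). Thus, \emph{whenever $\rho(h_\tau)\le\delta$}, the function $h_\tau$ is admissible in the infimum~\eqref{eq:mudelta2} defining $\Imu(\delta)$, so $\mathcal E(h_\tau)\ge\Imu(\delta)$ and hence $\mathcal E(f)\ge\rho(h_\tau)\,\Imu(\delta)$. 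Since $\rho(h_0)\ge\delta$ while $\rho(h_\tau)\to\rho(h_0^{*})<\delta$, one would run the flow up to the first time $\tau_0$ at which $\rho$ drops to $\delta$; if $\rho(h_{\tau_0})=\delta$, then $h_{\tau_0}$ is admissible and $\mathcal E(f)\ge\delta\,\mathcal E(h_{\tau_0})\ge\delta\,\Imu(\delta)$, which is the desired bound. (The auxiliary facts that $\tau\mapsto\Vert\nabla h_\tau\Vert_2$ is non-increasing and that $\tau\mapsto\sup_{\bb\in\mathcal M_1}(h_\tau,\bb^{2^*-1})^2$ is $\Vert\cdot\Vert_{2^*}$-continuous along the flow — which it is, since $\Vert\bb^{2^*-1}\Vert_{(2^*)'}=1$ on $\mathcal M_1$ — make $\tau_0$ meaningful.)

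The main obstacle is that $\tau\mapsto\Vert\nabla h_\tau\Vert_2$ need not be continuous, so $\tau\mapsto\rho(h_\tau)$ may jump \emph{downward} across the level $\delta$, and a time $\tau_0$ with $\rho(h_{\tau_0})=\delta$ exactly may fail to exist. I would handle this by setting $\tau_0:=\sup\{\tau\ge0:\rho(h_\tau)>\delta\}$ — finite, as $\rho(h_\tau)<\delta$ for $\tau$ large — and passing to the limit: along any sequence $\tau_k\uparrow\tau_0$, continuity of the flow gives $h_{\tau_k}\to h_{\tau_0}$ in $\mathrm L^{2^*}$ (hence $\sup_{\bb\in\mathcal M_1}(h_{\tau_k},\bb^{2^*-1})^2\to\sup_{\bb\in\mathcal M_1}(h_{\tau_0},\bb^{2^*-1})^2$), while $\Vert\nabla h_{\tau_k}\Vert_2^2$ converges monotonically to some $\ell\ge\Vert\nabla h_{\tau_0}\Vert_2^2$. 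Feeding these limits into the master inequality and into the relation $\rho(h_\tau)>\delta$ for $\tau<\tau_0$, comparing with the functions $h_\tau$ for $\tau$ just past $\tau_0$ (which are admissible for $\Imu(\delta)$ and give $\mathcal E(f)\ge\rho(h_\tau)\,\Imu(\delta)$), and using the weak lower semicontinuity of the Dirichlet integral, recovers $\mathcal E(f)\ge\delta\,\Imu(\delta)$. Carrying out this limiting argument at the jump — which is the technical heart of the lemma — is precisely where the refined properties of the continuous rearrangement flow established in Appendix~\ref{contrearr} come in.
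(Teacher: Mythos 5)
Your overall plan — symmetrize via $U$, run the continuous rearrangement flow $\mathsf f_\tau$ starting from $U f_{n_0}$, let $\tau_0$ be the threshold time, and bound $\mathcal E(f)$ by $\delta\,\mathcal E(\mathsf f_{\tau_0})\ge\delta\,\Imu(\delta)$ once $\mathsf f_{\tau_0}$ is admissible for~\eqref{eq:mudelta2} — is exactly the route the paper takes. But the part you defer to "the refined properties of the flow" is where the real proof lives, and your sketch of the limiting step at $\tau_0$ does not actually close the argument. Two remarks first: you define $\tau_0$ as a supremum of $\{\tau:\rho(h_\tau)>\delta\}$, but you immediately use "$\rho(h_\tau)>\delta$ for $\tau<\tau_0$," which is only guaranteed if $\tau_0$ is taken as $\inf\{\tau:\rho(h_\tau)<\delta\}$ (the paper's choice); with your sup-definition $\rho$ could already have dipped below $\delta$ before $\tau_0$. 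And you never address the degenerate case $h_{\tau_0}\in\mathcal M$, which the paper treats separately (the quotient equals $1\ge\Imu(\delta)$ by Lemma~\ref{mu1}).

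The substantive gap is in how you handle the downward jump of $\|\nabla h_\tau\|_2^2$ at $\tau_0$. Writing $\ell:=\lim_{\tau\to\tau_0^-}\|\nabla h_\tau\|_2^2$, $\alpha:=S_d\|h_{\tau_0}\|_{2^*}^2$, $\beta:=S_d\sup_{\bb\in\mathcal M_1}(h_{\tau_0},\bb^{2^*-1})^2$, what the pre-$\tau_0$ inequalities plus $\mathrm L^{2^*}$-continuity of the inner-product supremum give you is
\[
\mathcal E(f)\ \ge\ \delta\,\frac{\ell-\alpha}{\ell-\beta}\,.
\]
To identify the right side with $\delta\,\mathcal E(h_{\tau_0})$ you must replace $\ell$ by $\|\nabla h_{\tau_0}\|_2^2\le\ell$ in numerator \emph{and} denominator. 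This does not follow from "weak lower semicontinuity of the Dirichlet integral" alone: you need the separate algebraic observation that $x\mapsto(x-\alpha)/(x-\beta)$ is increasing on $(\beta,\infty)$ when $\alpha>\beta$ (which is where $h_{\tau_0}\notin\mathcal M$, i.e.\ strict H\"older, is used), so that decreasing $x$ from $\ell$ to $\|\nabla h_{\tau_0}\|_2^2$ only decreases the quotient. Without this observation you only obtain bounds of the form $\mathcal E(f)\ge\rho(h_\tau)\Imu(\delta)$ for $\tau$ near $\tau_0^+$, where $\rho(h_\tau)$ can be arbitrarily small, so the threshold factor $\delta$ is lost. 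Similarly, the admissibility $\rho(h_{\tau_0})\le\delta$ is not automatic: it requires the right-continuity~\eqref{eq:flowprop3} of $\tau\mapsto\|\nabla h_\tau\|_2$ and a short contradiction argument using a sequence $\sigma_k\downarrow\tau_0$ with $\rho(h_{\sigma_k})<\delta$; your "comparing with functions just past $\tau_0$" gestures at this but doesn't carry it out. These two steps — the monotone-quotient trick and the right-continuity argument for admissibility — are precisely what the paper's proof supplies, and they are not reducible to the flow properties you cite.
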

For the proof of this lemma we introduce a continuous rearrangement flow that interpolates between a function and its symmetric decreasing rearrangement. The basic ingredient for this flow is similar to a flow that Brock introduced~\cite{Brock,Brock2} and that interpolates between a function and its Steiner symmetrization with respect to a given hyperplane. Brock's construction, in turn, is based on ideas of Rogers~\cite{Rogers} and Brascamp--Lieb--Luttinger~\cite{BrascampLiebLuttinger}. Our flow is obtained by glueing together infinitely many copies of Brock's flows with respect to a sequence of judiciously chosen hyperplanes. A similar construction was performed by Bucur and Henrot~\cite{BucurHenrot}; see also~\cite{Christ}.

More specifically, for a given hyperplane $H$, Brock's flow interpolates between a given function $f$ and $f^{*H}$, the Steiner symmetrized function with respect
to $H$. The family that interpolates between $f$ and $f^{*H}$ is denoted by $f^H_\tau, \tau \in [0,\infty]$, and we have
$$
f_0=f\,,\quad f^H_\infty = f^{*H}\,.
$$
Further, for any $\tau$, $f^H_\tau$ and $f$ are equimeasurable, {\it i.e.},
$$
\left|\left\{x \in \R^d: f^H_\tau(x) >t \right\}\right| = \left|\left\{x \in \R^d: f(x) >t \right\}\right|
\quad\forall\,t>0\,.
$$
Moreover, if $f\in \mathrm L^p(\R^d)$ for some $1\leq p<\infty$, then $\tau\mapsto f^H_\tau$ is continuous in $\mathrm L^p(\R^d)$.

By choosing a sequence of hyperplanes we construct another flow $\tau \mapsto f_\tau$ that has the same properties but interpolates between $f$ and $f^*$, the symmetric decreasing rearrangement. In Appendix~\ref{contrearr} we explain this in more detail and prove the following properties that are important for our proof, assuming $f\in\dot{\mathrm H}^1(\R^d)$. From the $\mathrm L^{2^*}(\R^d)$ continuity of the flow we will deduce that
\begin{equation}
\label{eq:flowprop2}
\lim_{\tau \to \tau_0} \sup_{\bb\in\mathcal M_1}\left(f_\tau,\bb\right)^2 = \sup_{\bb\in\mathcal M_1}\left(f_{\tau_0},\bb\right)^2\,.
\end{equation}
Concerning the gradient we prove the monotonicity
\begin{equation*}
\label{eq:flowprop1}
\Vert \nabla f_{\tau_2} \Vert_2 \le \Vert \nabla f_{\tau_1} \Vert_2\,,\quad 0\leq\tau_1\leq\tau_2\leq\infty\,,
\end{equation*}
and the right continuity
\begin{equation}
\label{eq:flowprop3}
\lim_{\tau_2\to\tau_1^+} \Vert \nabla f_{\tau_2} \Vert_2 = \Vert \nabla f_{\tau_1} \Vert_2\,,\quad 0\leq\tau_1<\infty\,.
\end{equation}

\begin{proof}[Proof of Lemma~\ref{alternativeb}]
We begin by motivating and explaining the strategy of the proof. As before, we bound
\begin{multline}
\label{eq:flowargument}
\mathcal E(f) = \frac{\Vert \nabla f \Vert^2_2 - S_d\,\Vert f \Vert_{2^*}^2} {\inf_{\bb\in\mathcal M} \Vert \nabla f - \nabla\bb\Vert^2_2 } \ge \frac{\Vert \nabla f \Vert^2_2 - S_d\,\Vert f \Vert_{2^*}^2} {\Vert \nabla f \Vert^2_2 }\\  \ge \frac{\Vert \nabla f_{n_0} \Vert^2_2 - S_d\,\Vert f_{n_0} \Vert_{2^*}^2} {\Vert \nabla f_{n_0} \Vert^2_2 }\,.
\end{multline}
We could bound the right side further from below by replacing $f_{n_0}$ by $f_{n_0+1}$. This bound, however, might be too crude for our purposes and we proceed differently. The move from $f_{n_0}$ to $f_{n_0+1}$ consists of two steps, namely first applying a conformal rotation and second applying symmetric decreasing rearrangement. The first step leaves all terms on the right side invariant and we do carry out this step. The second step leaves the $2^*$-norm invariant, while the gradient term does not go up. In fact, the gradient term might go down too far. Therefore, we replace the application of the rearrangement by a continuous rearrangement flow. We denote by $\mathsf f_\tau$, $n_0\leq\tau<n_0+1$, the continuous rearrangement starting at $\mathsf f_{n_0}:=Uf_{n_0}$, where $U$ denotes the conformal rotation~\eqref{eq:U}. The `time' variable $\tau$ has been reparametrized so that at $\tau=n_0+1$ we have arrived at the symmetric decreasing rearrangement of $\mathsf f_{n_0}$, that is,
\begin{equation}
\label{eq:finfty}
\mathsf f_{n_0+1}=(\mathsf f_{n_0})^* = f_{n_0+1}\,.
\end{equation}
Ideally, we would like to find $\tau_0\in[n_0,n_0+1)$ such that
$$
\inf_{\bb\in\mathcal M} \Vert \nabla \mathsf f_{\tau_0} - \nabla\bb\Vert^2_2 = \delta\,\Vert \nabla \mathsf f_{\tau_0} \Vert^2_2\,.
$$
Then the right side of~\eqref{eq:flowargument} is equal to
$$
1 - S_d\,\frac{\Vert \mathsf f_{n_0} \Vert_{2^*}^2} {\Vert \nabla \mathsf f_{n_0} \Vert^2_2 }
\ge 1 - S_d\,\frac{\Vert \mathsf f_{\tau_0} \Vert_{2^*}^2} {\Vert \nabla \mathsf f_{\tau_0} \Vert^2_2 } = \delta\,\frac{\Vert \nabla \mathsf f_{\tau_0} \Vert^2_2 - S_d\,\Vert \mathsf f_{\tau_0} \Vert_{2^*}^2}{\inf_{\bb\in\mathcal M} \Vert \nabla \mathsf f_{\tau_0} - \nabla\bb\Vert^2_2}\,,
$$
which can be bounded from below by $\delta\,\Imu(\delta)$, since $\mathsf f_{\tau_0}$ is admissible in the infimum~\eqref{eq:mudelta2}. This would prove the desired bound.

The problem with this argument is that the existence of such a $\tau_0\in[n_0,n_0+1)$ is in general not clear, since neither of the terms $\inf_{\bb\in\mathcal M} \Vert \nabla \mathsf f_{\tau} - \nabla\bb\Vert^2_2$ and $\Vert \nabla \mathsf f_{\tau} \Vert^2_2$ needs to be continuous in~$\tau$. Nevertheless, we will be able to adapt the above argument to yield the same conclusion.

We now turn to the details of the argument. Recalling that
$$
\inf_{\bb\in\mathcal M} \|\nabla \mathsf f_0 - \nabla\bb\|_2^2 \geq \delta\,\|\nabla \mathsf f_0\|_2^2\,,
$$
we define
$$
\tau_0 := \inf\left\{ \tau\in(n_0,n_0+1)\,:\,\inf_{\bb\in\mathcal M} \|\nabla \mathsf f_\tau - \nabla\bb\|_2^2 < \delta\,\|\nabla \mathsf f_\tau \|_2^2 \right\}
$$
with the convention that $\inf\emptyset = n_0+1$. If $\tau<\tau_0\in(n_0,n_0+1]$, similarly as before, the right side of~\eqref{eq:flowargument} is equal to
\begin{multline*}
\frac{\Vert \nabla \mathsf f_{n_0} \Vert^2_2-S_d\,\Vert \mathsf f_{n_0} \Vert_{2^*}^2} {\Vert \nabla \mathsf f_{n_0} \Vert^2_2 } =
1 - S_d\,\frac{\Vert \mathsf f_{n_0} \Vert_{2^*}^2} {\Vert \nabla \mathsf f_{n_0} \Vert^2_2 } \ge \frac{\Vert \nabla \mathsf f_{\tau} \Vert^2_2 - S_d\,\Vert \mathsf f_{\tau_0} \Vert_{2^*}^2}{\Vert \nabla \mathsf f_{\tau} \Vert^2_2}\\ \ge\delta\,\frac{\Vert \nabla \mathsf f_{\tau} \Vert^2_2 - S_d\,\Vert \mathsf f_{\tau_0} \Vert_{2^*}^2}{\inf_{\bb\in\mathcal M} \|\nabla \mathsf f_\tau - \nabla\bb\|_2^2}\,,
\end{multline*}
where the last inequality arises from $\inf_{\bb\in\mathcal M} \|\nabla \mathsf f_\tau - \nabla\bb\|_2^2 \geq \delta\,\|\nabla \mathsf f_\tau \|_2^2$ for any $\tau\in[n_0,\tau_0)$. Taking the limit inferior as $\tau\to\tau_0^-$, we obtain
\begin{equation}
\label{limtau0}
\frac{\Vert \nabla \mathsf f_{n_0} \Vert^2_2-S_d\,\Vert \mathsf f_{n_0} \Vert_{2^*}^2} {\Vert \nabla \mathsf f_{n_0} \Vert^2_2 }\geq \delta\,\frac{\lim_{\tau\to\tau_0^-} \Vert \nabla \mathsf f_{\tau} \Vert^2_2 - S_d\,\Vert \mathsf f_{\tau_0} \Vert_{2^*}^2}{\liminf_{\tau\to\tau_0^-} \inf_{\bb\in\mathcal M} \Vert \nabla \mathsf f_{\tau} - \nabla\bb \Vert^2_2}\,.
\end{equation}
Note that the denominator appearing here does not vanish. Indeed, we have
$$
\inf_{\bb\in\mathcal M} \|\nabla \mathsf f_\tau - \nabla\bb\|_2^2 \geq \delta\,\|\nabla \mathsf f_\tau \|_2^2\geq \delta\,S_d\,\| \mathsf f_\tau \|_{2^*}^2= \delta\,S_d\,\|f\|_{2^*}^2>0\quad\forall\,\tau\in[n_0,\tau_0)
$$
and, as a consequence,
\begin{equation*} \label{eq:biggerequal}
\liminf_{\tau\to\tau_0^-} \inf_{\bb\in\mathcal M} \Vert \nabla \mathsf f_{\tau} - \nabla\bb \Vert^2_2\geq \delta\,S_d\,\|f\|_{2^*}^2>0\,.
\end{equation*}
The same inequality~\eqref{limtau0} remains valid if $\tau_0=n_0$ and if we interpret $\lim_{\tau \to \tau_0^-}$ and $\liminf_{\tau\to\tau_0^-}$ as evaluating at $\tau_0=n_0$.

At this point we find it convenient to apply Lemma~\ref{innerproduct} and use the representation
$$
\inf_{\bb\in\mathcal M} \Vert \nabla \mathsf f_{\tau} - \nabla\bb\Vert^2_2 = \Vert \nabla \mathsf f_{\tau} \Vert_2^2 - S_d\,\sup_{\bb\in\mathcal M_1} \left(\mathsf f_{\tau}, \bb^{2^*-1}\right)^2\,.
$$
Using~\eqref{eq:flowprop2}, that is, the continuity of $\tau\mapsto\sup_{\bb\in\mathcal M_1} \left(\mathsf f_\tau, \bb^{2^*-1}\right)^2$, we see that
$$
\liminf_{\tau\to\tau_0^-} \inf_{\bb\in\mathcal M} \Vert \nabla \mathsf f_{\tau} - \nabla\bb \Vert^2_2
= \lim_{\tau\to\tau_0^-} \Vert \nabla \mathsf f_{\tau} \Vert^2_2 - S_d\,\sup_{\bb\in\mathcal M_1} \left(\mathsf f_{\tau_0}, \bb^{2^*-1}\right)^2\,.
$$
Thus, the relevant quotient is equal to
\begin{equation}
\label{eq:quotientproof}
\frac{\lim_{\tau\to\tau_0^-} \Vert \nabla \mathsf f_{\tau} \Vert^2_2 - S_d\,\Vert \mathsf f_{\tau_0} \Vert_{2^*}^2}{\lim_{\tau\to\tau_0^-} \Vert \nabla \mathsf f_{\tau} \Vert^2_2 - S_d\,\sup_{\bb\in\mathcal M_1} \left(\mathsf f_{\tau_0}, \bb^{2^*-1}\right)^2}\,.
\end{equation}
Our goal in the remainder of this proof is to show that this quotient is larger or equal than $\Imu(\delta)$. We will use the fact that
\begin{equation}
\label{eq:holder}
\sup_{\bb\in\mathcal M_1} \left(\mathsf f_{\tau_0}, \bb^{2^*-1}\right)^2 \le \Vert f _{\tau_0} \Vert_{2^*}^2\,,
\end{equation}
which follows from H\"older's inequality. We also note that equality holds here if and only if $\mathsf f_{\tau_0}\in\mathcal M$.

Let us first handle the case where $\mathsf f_{\tau_0}\in\mathcal M$. Then by~\eqref{eq:biggerequal} and because of equality in~\eqref{eq:holder}, the quotient~\eqref{eq:quotientproof} is equal to 1, which by Lemma~\ref{mu1} can be further bounded from below by~$\Imu(\delta)$, leading to the claimed bound. This completes the proof in the case $\mathsf f_{\tau_0}\in\mathcal M$ and in what follows we assume
\begin{equation*}
\mathsf f_{\tau_0}\not\in\mathcal M\,.
\end{equation*}
As a consequence of this assumption and~\eqref{eq:holder}, we have
\begin{equation}
\label{eq:proofuseass2}
\|\nabla \mathsf f_{\tau_0}\|_2^2 > S_d\,\| \mathsf f_{\tau_0}\|_{2^*}^2 \geq S_d\,\sup_{\bb\in\mathcal M_1} \left(\mathsf f_{\tau_0}, \bb^{2^*-1}\right)^2\,.
\end{equation}
Next, we observe that for $\alpha > \beta$ the function $x\mapsto(x-\alpha)/(x-\beta)$ is monotone increasing on the interval $(\beta,\infty)$. This, together with the strict inequality in~\eqref{eq:proofuseass2}, implies that the quotient~\eqref{eq:quotientproof} can be bounded from below by
\begin{multline}
\label{eq:lowerboundproof}
\frac{\lim_{\tau\to\tau_0^-} \Vert \nabla \mathsf f_{\tau} \Vert^2_2 - S_d\,\Vert \mathsf f_{\tau_0} \Vert_{2^*}^2}{\lim_{\tau\to\tau_0^-} \Vert \nabla \mathsf f_{\tau} \Vert^2_2 - S_d\,\sup_{\bb\in\mathcal M_1} \left(\mathsf f_{\tau_0}, \bb^{2^*-1}\right)^2}
\\ \geq \frac{\Vert \nabla \mathsf f_{\tau_0} \Vert^2_2 - S_d\,\Vert \mathsf f_{\tau_0} \Vert_{2^*}^2}{\Vert \nabla \mathsf f_{\tau_0} \Vert^2_2 - S_d\,\sup_{\bb\in\mathcal M_1} \left(\mathsf f_{\tau_0}, \bb^{2^*-1}\right)^2}\,.
\end{multline}
We now claim that
\begin{equation}
\label{eq:goodineq}
\inf_{\bb\in\mathcal M} \|\nabla \mathsf f_{\tau_0} - \nabla\bb\|_2^2 \leq \delta\,\|\nabla \mathsf f_{\tau_0}\|_2^2\,.
\end{equation}
Once this is proved, we can bound the right side of~\eqref{eq:lowerboundproof} from below by $\Imu(\delta)$. This inequality is the claimed inequality after taking into account~\eqref{limtau0}.

To prove~\eqref{eq:goodineq}, we first note that it is verified if $\tau_0=n_0+1$. Indeed, $\mathsf f_{n_0+1}=f_{n_0+1}$ by~\eqref{eq:finfty} and therefore, by assumption of alternative (b), $\inf_{\bb\in\mathcal M} \|\nabla \mathsf f_{n_0+1} - \nabla\bb\|_2^2 <\delta\,\|\nabla \mathsf f_{n_0+1}\|_2^2$.

Now let $\tau_0<n_0+1$. We argue by contradiction and assume that
\begin{equation}
\label{eq:case2}
\inf_{\bb\in\mathcal M} \|\nabla \mathsf f_{\tau_0} - \nabla\bb\|_2^2 > \delta\,\|\nabla \mathsf f_{\tau_0}\|_2^2\,.
\end{equation}
Because of this strict inequality and the definition of $\tau_0$, for any $k\in\N$ there are $\sigma_k\in(\tau_0,n_0+1)$ with $\lim_{k\to\infty}\sigma_k=\tau_0$ such that $\inf_{\bb\in\mathcal M} \|\nabla \mathsf f_{\sigma_k} - \nabla\bb\|_2^2 < \delta\,\|\nabla \mathsf f_{\sigma_k}\|_2^2$, that is,
$$
\|\nabla \mathsf f_{\sigma_k}\|_2^2 - S_d\,\sup_{\bb\in\mathcal M_1} \left(\mathsf f_{\sigma_k},\bb^{2^*-1}\right)^2 < \delta\,\|\nabla \mathsf f_{\sigma_k} \|_2^2\quad\forall\,k\in\N\,.
$$
Letting $k\to\infty$ and using~\eqref{eq:flowprop2} as well as the right continuity of $\|\nabla \mathsf f_\tau\|_2^2$, see~\eqref{eq:flowprop3}, we deduce that
$$
\|\nabla \mathsf f_{\tau_0}\|_2^2 - S_d\,\sup_{\bb\in\mathcal M_1} \left(\mathsf f_{\tau_0},\bb^{2^*-1}\right)^2 \leq \delta\,\|\nabla \mathsf f_{\tau_0} \|_2^2\,.
$$
This is the same as $\inf_{\bb\in\mathcal M} \|\nabla \mathsf f_{\tau_0} - \nabla\bb\|_2^2 \leq \delta\,\|\nabla \mathsf f_{\tau_0}\|_2^2$ and contradicts~\eqref{eq:case2}. This proves~\eqref{eq:goodineq} and completes the proof of the lemma.
\end{proof}
\begin{remark}
The above argument would be simpler if $\tau\mapsto \Vert \nabla \mathsf f_\tau \Vert_2^2$ were continuous for an appropriate choice of hyperplanes (see Appendix~\ref{contrearr}) in the definition of the flow. Since the flow is weakly continuous in $\dot{\mathrm H}^1(\R^d)$, continuity of the norm is equivalent to (strong) continuity of the flow in $\dot{\mathrm H}^1(\R^d)$. Thus, for continuity of the norm for an appropriate choice of hyperplanes, it is necessary that there is such a choice for which the Steiner symmetrizations approximate $f^*$ in $\dot{\mathrm H}^1(\R^d)$. According to a theorem of Burchard~\cite{Burchard} this holds if and only if $f$ is co-area regular, i.e, if and only if the distribution function
$$
h\mapsto |\{x \in \R^d: f(x)>h,\,\nabla f(x) = 0\}|
$$
has no absolutely continuous component. As shown by Almgren and Lieb~\cite{AlmgrenLieb}, both co-area regular and co-area irregular functions are dense for $d\geq 2$.
\end{remark}

\subsubsection{Proof of Theorem~\ref{Cor:Summary}}

It is now easy to prove the main result of this section, Theorem~\ref{Cor:Summary}. Let $\delta\in(0,1/2)$ and assume that $0\leq f\in\dot{\mathrm H}^1(\R^d)\setminus\mathcal M$ satisfies
$$
\inf_{\bb\in\mathcal M} \|\nabla f-\nabla\bb\|_2^2 \geq \delta\,\|\nabla f\|_2^2\,.
$$
By Lemma~\ref{alternatives} either alternative (a) or (b) holds. In the first case, we apply Lemmas~\ref{alternativea} and~\ref{mu1}, and in the second case, we apply Lemma~\ref{alternativeb}. This completes the proof.\qed

\subsection{From nonnegative functions to arbitrary functions}\label{sec:posneg}

We recall that $\mathbf{\mathscr C}_{d,\rm BE}$ denotes the optimal constant in~\eqref{eq:bianchi-egnell0}. Similarly, we denote by $\mathbf{\mathscr C}_{d,\rm BE}^{\rm pos}$ the optimal constant in~\eqref{eq:bianchi-egnell0} when restricted to nonnegative functions $f$. Thus, $\mathbf{\mathscr C}_{d,\rm BE}^{\rm pos}\geq \mathbf{\mathscr C}_{d,\rm BE}$.
We do not know whether these two constants coincide or not. The main result in this section will be to prove the following lower bound on $\mathbf{\mathscr C}_{d,\rm BE}$ in terms of $\mathbf{\mathscr C}_{d,\rm BE}^{\rm pos}$.

\begin{proposition}\label{Prop:BE} For any $d\ge3$,
\[\label{eq:cBE1}
\mathbf{\mathscr C}_{d,\rm BE}\ge \min\left\{ \textstyle \frac12\,\mathbf{\mathscr C}_{d,\rm BE}^{\rm pos}, 1-2^{-\frac 2d} \right\} .
\]
\end{proposition}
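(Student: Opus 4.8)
The plan is to reduce the sign–changing case to the nonnegative case treated in Section~\ref{sec:flow}, by splitting $f=f_+-f_-$ into its positive and negative parts $f_\pm:=\max\{\pm f,0\}$ and comparing $f$ with an Aubin–Talenti function \emph{tailored to the larger of the two parts}. First, some bookkeeping. By scaling I may take $\|f\|_{2^*}=1$; since $\mathcal E(-f)=\mathcal E(f)$ and $\mathcal M$ is invariant under $g\mapsto -g$, I may also assume $\|f_+\|_{2^*}\ge\|f_-\|_{2^*}$. Put $a:=\|f_+\|_{2^*}^{2^*}$, $b:=\|f_-\|_{2^*}^{2^*}$, so $a+b=1$ and $0\le b\le\tfrac12\le a\le1$; if $b=0$ then $f\ge0$ and $\mathcal E(f)\ge\mathbf{\mathscr C}_{d,\rm BE}^{\rm pos}$ by definition (more than is claimed), so I assume $b>0$. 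Set $p:=\tfrac2{2^*}=1-\tfrac2d\in(0,1)$. Since $\nabla f_+$ and $\nabla f_-$ have a.e.\ disjoint supports, $\|\nabla f\|_2^2=\|\nabla f_+\|_2^2+\|\nabla f_-\|_2^2$, and using $\|\nabla f_+\|_2^2=\|f_+\|_{2^*}^2\big(\Deficit(f_+)+S_d\big)=a^p\big(\Deficit(f_+)+S_d\big)$ this yields the identity
\[
\Deficit(f)=a^p\,\Deficit(f_+)+\|\nabla f_-\|_2^2-S_d\,(1-a^p)\,.
\]

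\emph{Two main estimates.} Write $d_+:=\inf_{g\in\mathcal M}\|\nabla f_+-\nabla g\|_2^2$. For $\varepsilon>0$ pick $g\in\mathcal M$ with $\|\nabla f_+-\nabla g\|_2^2\le d_++\varepsilon$; since $f-f_+=-f_-$, the triangle inequality in $\dot{\mathrm H}^1(\R^d)$ gives $\inf_{g'\in\mathcal M}\|\nabla f-\nabla g'\|_2\le\|\nabla f_-\|_2+(d_++\varepsilon)^{1/2}$, hence, letting $\varepsilon\to0$,
\[
\inf_{g\in\mathcal M}\|\nabla f-\nabla g\|_2^2\le\big(d_+^{1/2}+\|\nabla f_-\|_2\big)^2\,.
\]
Then three elementary facts: (i) Sobolev for $f_-$ gives $\|\nabla f_-\|_2^2\ge S_d\,b^p$; (ii) the function $b\mapsto(2^p-1)\,b^p+(1-b)^p$ is concave on $[0,\tfrac12]$ and equals $1$ at $b=0$ and at $b=\tfrac12$, so $1-a^p=1-(1-b)^p\le(2^p-1)\,b^p$ on $[0,\tfrac12]$, which together with (i) gives $S_d\,(1-a^p)\le(2^p-1)\,\|\nabla f_-\|_2^2$; (iii) applying the definition of $\mathbf{\mathscr C}_{d,\rm BE}^{\rm pos}$ to $f_+/\|f_+\|_{2^*}$ and rescaling gives $a^p\,\Deficit(f_+)\ge\mathbf{\mathscr C}_{d,\rm BE}^{\rm pos}\,d_+$. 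Inserting (ii)–(iii) into the identity above,
\[
\Deficit(f)\ge\mathbf{\mathscr C}_{d,\rm BE}^{\rm pos}\,d_++(2-2^p)\,\|\nabla f_-\|_2^2\,.
\]

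\emph{Conclusion.} With $C^+:=\mathbf{\mathscr C}_{d,\rm BE}^{\rm pos}>0$ and $2-2^p>0$, the last two displays give
\[
\mathcal E(f)\ge\frac{C^+\,d_++(2-2^p)\,\|\nabla f_-\|_2^2}{\big(d_+^{1/2}+\|\nabla f_-\|_2\big)^2}\ge\inf_{\substack{u,s\ge0\\u+s>0}}\frac{C^+u^2+(2-2^p)s^2}{(u+s)^2}=\frac{C^+(2-2^p)}{C^++(2-2^p)}\,,
\]
using the elementary identity $\inf_{u,s\ge0,\,u+s>0}(Au^2+Bs^2)/(u+s)^2=AB/(A+B)$ for $A,B>0$ (attained at $u/s=B/A$). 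Finally $2-2^p=2-2^{1-2/d}=2(1-2^{-2/d})$, so with $c:=1-2^{-2/d}$ the bound is $\mathcal E(f)\ge\frac{2C^+c}{C^++2c}$, and a one–line check gives $\frac{2C^+c}{C^++2c}\ge\min\{\tfrac12C^+,c\}$ (if $C^+\le2c$ then $C^++2c\le4c$, so the fraction is $\ge C^+/2$; if $C^+\ge2c$ then $C^++2c\le2C^+$, so it is $\ge c$). As $f$ was arbitrary and $\mathbf{\mathscr C}_{d,\rm BE}=\inf_f\mathcal E(f)$, this is the claim.

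\emph{Main obstacle.} The delicate point is the distance bound in the second step: it is essential to compare $f$ with an optimizer adapted to the \emph{dominant} part $f_+$, so that the triangle–inequality error enters as $\|\nabla f_-\|_2$ — measured in $\dot{\mathrm H}^1(\R^d)$ and attached to the \emph{smaller} part — rather than through a cruder quantity like $\|f_-\|_{2^*}\sim b^{1/2^*}$, which would dominate the superadditivity gain $S_d(a^p+b^p-1)\sim S_d\,b^p$ when $b$ is small and would wreck the estimate. Everything else (the disjoint–support splitting, Sobolev for $f_-$, the concavity in (ii), and the two–variable minimization) is routine.
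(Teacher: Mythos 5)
Your proof is correct and follows essentially the same route as the paper's: decompose $f=f_+-f_-$, exploit the a.e.\ disjointness of the supports of $\nabla f_\pm$ to write the deficit additively, absorb the superadditivity term $S_d\big(\|f_+\|_{2^*}^2+\|f_-\|_{2^*}^2-1\big)$ into $\|\nabla f_-\|_2^2$ via concavity of $t\mapsto t^{2/2^*}$ plus Sobolev for $f_-$, apply the nonnegative-function stability bound to $f_+$, and control $\inf_g\|\nabla f-\nabla g\|_2$ by comparing against the optimizer $g_+$ for $f_+$. The one genuine (and welcome) difference is in the last step: the paper uses the crude bound $\|\nabla f-\nabla g_+\|_2^2\le 2\big(d_++\|\nabla f_-\|_2^2\big)$ and then factors out $\tfrac12\min\{C^+,2c\}$, whereas you keep the honest triangle inequality $\|\nabla f-\nabla g_+\|_2\le d_+^{1/2}+\|\nabla f_-\|_2$ and solve the two-variable minimization $\inf_{u,s\ge0}(Au^2+Bs^2)/(u+s)^2=AB/(A+B)$, giving the harmonic-mean bound $2C^+c/(C^++2c)$, which is never worse than $\min\{\tfrac12 C^+,c\}$. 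This is marginally sharper at an intermediate stage but relaxes to exactly the constant in the proposition; all your scaling and algebraic checks are sound.
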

\begin{proof}
To simplify the notation, given a function $v\in\dot{\mathrm H}^1(\R^d)$, we introduce the deficit
\[
\deficit(v):=\nrm{\nabla v}2^2-S_d\,\nrm v{2^*}^2=\nrm v{2^*}^2\,\Deficit(v) \,,
\]
where $\Deficit$ is defined in Section~\ref{sec:intro} (see for instance~\eqref{eq:bianchi-egnell0}). Also, we set $\alpha_d:=\frac2{2^*}=1-\frac2d<1$,
\[
h(p):=p^{\alpha_d}+(1-p)^{\alpha_d}-1\,,\quad\mbox{and}\quad h_d:=h(\tfrac12)=2^{1-\alpha_d}-1=2^\frac2d-1\,.
\]

Let us consider a function $u\in\dot{\mathrm H}^1(\R^d)$. By homogeneity we can assume that $\nrm u{2^*}=1$.
Let $u_\pm$ denote the positive and negative parts of $u$, set
\[
m :=\nrm{u_-}{2^*}^{2^*},
\]
and assume (without loss of generality) that
\begin{equation}\label{m:onehalf}
m\in[0,1/2]\,.
\end{equation}
Note that $\nrm{u_+}{2^*}^{2^*}=1-m$ and $\nrm{\nabla u}2^2=\nrm{\nabla u_-}2^2+\nrm{\nabla u_+}2^2$. Hence, we have
\begin{equation}\label{eq:deficit1}
\deficit(u)=\nrm{\nabla u}2^2-S_d= \deficit(u_+)+\deficit(u_-)+ S_d\,h(m).
\end{equation}
Since the function $p\mapsto h(p)$ is monotone increasing and concave on $[0,1/2]$, we have
\begin{equation}\label{eq:h1}
2\,h_d\,p\le h(p)\,.
\end{equation}
Also, if we set $\xi_d:=2\,(1-2^{-\alpha_d})$, the function $f(p):=(1-p)^{\alpha_d}-1+\xi_d\,p$ satisfies $f(0)=f(1/2)=0$ and $f''(p)\le0$, so that $f(p)\ge0$ for all $p\in[0,1/2]$. Hence, by~\eqref{m:onehalf}, we have
\[\label{Claim4}
(1-p)^{\alpha_d}\ge1-\xi_d\,p\,,
\]
which, by the definition of $h(p)$, yields
$$
h(p)\geq p^{\alpha_d} - \xi_d\,p\,.
$$
Combining this bound with~\eqref{eq:h1}, this gives
$$
\Big(1+\frac{\xi_d}{2\,h_d}\Big)\,h(p) \geq p^{\alpha_d}\,.
$$
Therefore, recalling~\eqref{eq:deficit1} and noticing that $\deficit(u_-)+S_d\,m^{\alpha_d}=\nrm{\nabla u_-}2^2$, we get
\begin{multline*}
\deficit(u)\geq \deficit(u_+)+\deficit(u_-)+ S_d\,\frac{2\,h_d}{2\,h_d+\xi_d}\,m^{\alpha_d} \\ \geq \deficit(u_+)+\frac{2\,h_d}{2\,h_d+\xi_d} \nrm{\nabla u_-}2^2.
\end{multline*}
By definition, we have
\[
\deficit(u_+)\ge \mathbf{\mathscr C}_{d,\rm BE}^{\rm pos}\,\inf_{\bb\in\mathcal M}\nrm{\nabla u_+-\nabla\bb}2^2.
\]
As a consequence, if $g_+\in\mathcal M$ is optimal for $u_+$, we obtain
\begin{multline*}
\deficit(u)\geq \mathbf{\mathscr C}_{d,\rm BE}^{\rm pos} \nrm{\nabla u_+\!-\!\nabla g_+}2^2 \!+\!\frac{2\,h_d}{2\,h_d+\xi_d} \nrm{\nabla u_-}2^2\\
\geq \min\Big\{\mathbf{\mathscr C}_{d,\rm BE}^{\rm pos},\frac{2\,h_d}{2\,h_d+\xi_d}\Big\}\nrm{\nabla u_+-\nabla g_+}2^2\\ +\min\Big\{\mathbf{\mathscr C}_{d,\rm BE}^{\rm pos},\frac{2\,h_d}{2\,h_d+\xi_d}\Big\}\nrm{\nabla u_-}2^2
\\ \geq \frac12 \min\Big\{\mathbf{\mathscr C}_{d,\rm BE}^{\rm pos},\frac{2\,h_d}{2\,h_d+\xi_d}\Big\}\nrm{\nabla u-\nabla g_+}2^2\,.
\end{multline*}
Since $2\,h_d+\xi_d=2\cdot 2^\frac2{d} - 2+2-2^{1-\alpha_d}=2^\frac2{d}$
we get
$$
\frac{h_d}{2\,h_d+\xi_d}=2^{-\frac2{d}}\left(2^\frac2d-1\right)= 1-2^{-\frac2{d}}\,,
$$
which concludes the proof.
\end{proof}

\subsection{Stability of the Sobolev inequality: Proof of Theorem~\ref{main}}\label{sec:proof}

We now combine the results from the previous three sections and deduce in this way the main result of this paper.

\begin{proof}
We recall that the constant $\mathbf{\mathscr C}_{d,\rm BE}^{\rm pos}$ was defined in the previous subsection and that $\Imu(\delta)$ was defined in~\eqref{eq:mudelta2}. Then, as a consequence of Theorem~\ref{Cor:Summary}, we have
$$
\mathbf{\mathscr C}_{d,\rm BE}^{\rm pos} \geq \sup_{0<\delta\leq 1} \delta\,\Imu(\delta)\,.
$$
(Indeed, for any $\delta\in(0,1/2)$, if $f$ satisfies $\|\nabla f - \nabla g\|_2^2 \geq \delta\,\|\nabla f\|^2$, then $\mathcal E(f)\geq \delta\,\Imu(\delta)$, while if $\|\nabla f - \nabla g\|_2^2 \leq \delta\,\|\nabla f\|^2$, then $\mathcal E(f)\geq \Imu(\delta)\geq \delta\,\Imu(\delta)$.)
Thus, it remains to bound $\Imu(\delta)$ for a suitable $\delta\in(0,1/2)$.

We let $\epsilon_0$, $\tilde\delta\in(0,1/2)$ be as in Theorem~\ref{unifboundclose}. We will bound $\Imu(\delta)$ with $\delta = \frac{\tilde\delta}{1+\tilde\delta}\in(0,\tfrac12)$. Thus, let $0\leq f\in \dot{\mathrm H}^1(\R^d)$ with
$$
\inf_{\bb\in\mathcal M} \|\nabla f-\nabla\bb\|_2^2 \leq \tfrac{\tilde\delta}{1+\tilde\delta}\,\|\nabla f\|_2^2\,.
$$
The infimum on the left side is attained for the reasons given in Section~\ref{sec:flow}. After a translation, a dilation and multiplication by a constant, we may assume that it is attained at $g= (2/(1+|x|^2))^{(d-2)/2}$. We now pass to the sphere using the stereographic projection as in Section~\ref{sec:preliminary}. Let $0\leq u\in \mathrm H^1(\Sph^d)$ be the function associated to $f$. The function $1$ is associated to $g$ and we set $r:= u-1$. The fact that the distance is attained at $1$ implies that $r$ satisfies the orthogonality conditions~\eqref{eq:orthosphere}. Moreover, with $\mathsf A$ given by~\eqref{A}, we have
\begin{multline*}
\isd{ \(|\nabla r|^2+\mathsf A\,r^2\)} \leq \tfrac{\tilde\delta}{1+\tilde\delta} \isd{ \(|\nabla u|^2+\mathsf A\,u^2\)} \\ = \tfrac{\tilde\delta}{1+\tilde\delta} \left(\mathsf A+ \isd{ \(|\nabla r|^2+\mathsf A\,r^2\)} \right),
\end{multline*}
so
$$
\isd{ \(|\nabla r|^2+\mathsf A\,r^2\)} \leq \tilde \delta\,\mathsf A\,.
$$
By the Sobolev inequality, this implies
$$
\left( \isd{ r^q} \right)^{2/q} \leq \tilde\delta\,,
$$
and therefore we are in the situation of Theorem~\ref{unifboundclose}. We deduce that
$$
\isd{\(|\nabla u|^2+ \mathsf A\,u^2\)} - \mathsf A\left( \isd{ u^q} \right)^{2/q} \geq \theta\,\epsilon_0 \isd{ \(|\nabla r|^2+\mathsf A\,r^2\)}\,.
$$
Translating this result back to $\R^d$, we have shown that
$$
\Imu\Big(\tfrac{\tilde\delta}{1+\tilde\delta}\Big) \geq \theta\,\epsilon_0 = \tfrac{4\,\epsilon_0}{d-2}\,,
$$
and therefore
$$
\mathbf{\mathscr C}_{d,\rm BE}^{\rm pos} \geq \tfrac{\tilde\delta}{1+\tilde\delta} \,\tfrac{4\,\epsilon_0}{d-2}\,,
$$
where we recall that $0<\epsilon_0<\frac13$ is fixed and $\tilde\delta$ depends on $\epsilon_0$, but not on $d$. This constant has the claimed $d^{-1}$ behavior.

We turn now to the case of general, not necessarily nonnegative functions. By Proposition~\ref{Prop:BE}
$$
\mathbf{\mathscr C}_{d,\rm BE} \geq \min\left\{ \tfrac12\,\mathbf{\mathscr C}_{d,\rm BE}^{\rm pos}, 1- 2^{-\frac2d}\right\}\,.
$$
Using $1- 2^{-\frac2d} \geq (2\,\ln2)/d$ together with the result for $\mathbf{\mathscr C}_{d,\rm BE}^{\rm pos}$ we obtain also in the general case the claimed $d^{-1}$ behavior. As constant in Theorem~\ref{unifboundclose} we get
\be{beta}
\beta = \min\left\{ \tfrac{2\,\epsilon_0\,\tilde\delta}{1+\tilde\delta}, 2\,\ln 2\right\}\,,
\ee
which is computable, since $\tilde\delta$ depends in a complicated, yet explicit way on $\epsilon_0$.
\end{proof}

\begin{remark}\label{firstboundrem2} The constant given by~\eqref{beta} is a lower estimate of $d\,\mathbf{\mathscr C}_{d,\rm BE}$, which for large $d$ is of the same order as the strict upper estimate obtained from~\eqref{eq:upperbound}. If we apply Proposition~\ref{firstbound} instead of Theorem~\ref{unifboundclose} in the above argument, we obtain
$$
\mathbf{\mathscr C}_{d,\rm BE}^{\rm pos} \geq \sup_{0<\delta<1/2} \delta \Imu(\delta) \geq \sup_{0<\tilde\delta<1} \tfrac{\tilde\delta}{1+\tilde\delta}\,\mathsf m(\tilde\delta^{1/2}) = \sup_{0<\delta<1/2} \delta\,\mathsf m\Big(\sqrt{\tfrac{\delta}{1-\delta}}\,\Big)
$$
with $\mathsf m$ given by~\eqref{eq:mudelta}. As explained in Remark~\ref{firstboundrem}, this lower bound is not very good for large dimensions. In the above expression, it corresponds to a right-hand side~of the order of $2^{-d}\,d^{-(d+2)/2}$ as $d\to+\infty$, but for $d=3$, $4$, $5$, $6$ it gives decent numerical lower bounds on~$\mathbf{\mathscr C}_{d,\rm BE}^{\rm pos}$.
\end{remark}

\section{The large-dimensional limit: Proof of Corollary~\ref{logsob}}\label{sec:logsob}

Assume that $d\ge3$ and consider the stability estimate for Sobolev's inequality
\be{Bianchi-Egnell}
\nrm{\nabla f}2^2-\mathsf S_d\,\nrm f{2^*}^2\ge\frac{\beta(d)}d\inf_{\bb\in\mathcal M}\nrm{\nabla f-\nabla\bb}2^2\,,
\ee
for all $\, 0\le f\in\dot{\mathrm H}^1(\R^d)$, 
where $\beta(d)=d\,\mathbf{\mathscr C}_{d,\rm BE}^{\rm pos}>0$ denotes the optimal stability constant for {\it nonnegative functions.\/} Theorem~\ref{main} (also see Theorem~\ref{Cor:Summary}) provides us with an explicit lower estimate of $\beta(d)$ and shows that
\be{bstar}
\beta_\star=\liminf_{d\to+\infty}\beta(d)>0\,.
\ee

As noted for instance in~\cite{MR1164616}, to obtain the logarithmic Sobolev inequality as a limit of the Sobolev inequality when $d\to+\infty$, an important step is to perform a rescaling depending on $d$. In order to do this, let $u$ be a nonnegative Lipschitz function of compact support in~$\R^N$ and consider the ansatz
\begin{equation} \label{eq:ansatz}
f(x) := u(x_1, \dots, x_N) \,f_*(x) \,,
\end{equation}
where $f_*$ is a Sobolev optimizer in dimension $d\geq N$. Specifically, we choose
\[
f_*(x)=Z_d^{\frac{2-d}{2\,d}}\,\(1+\tfrac1{r_d^2}\,|x|^2\)^{1-\frac d2}\quad \forall\,x\in\R^d\,,
\]
with
$$
r_d =\scriptstyle \sqrt{\frac{d}{2\,\pi}} \,.
$$
The normalization constant $Z_d$ is chosen to render $\nrm{f_*}{2^*} =1$.
Note that $f_*(x)=r_d^{1-d/2}\,g_*(x/r_d)$, with $g_*$ given by~\eqref{gstar}, solves the Sobolev equation~\eqref{Sobolev:equation} with sharp Sobolev constant $S_d=d\,(d-2)\,r_d^{-2}\,Z_d^{2/d}$ and
\be{Zd}
Z_d=\(\tfrac d2\)^\frac d2\,\frac{\Gamma\(\tfrac d2\)}{\Gamma(d)} =\(\tfrac{d}{8\,\pi}\)^\frac{d}2\,|\mathbb S^d|=\frac{r_d^d}{2^d}\,|\mathbb S^d|\,.
\ee
It is also easy to see that
\be{limitzd}
\lim_{d\to+\infty} Z_d^{\frac2d} = \frac{e}4\,.
\ee
By integration by parts, using the fact that $f_*$ is a Sobolev optimizer, we find
\begin{multline}\label{gradientlimit}
\nrm{\nabla f}2^2 = \int_{\R^d} |\nabla u|^2 \,f_*^2 \,dx - \int u^2 \,f_* \,\Delta f_* \,dx \\ =\ird{|\nabla u|^2\,f_*^2}+\tfrac{d\,(d-2)}{r_d^2}\,Z_d^{\frac2{d}} \,\ird{u^2\,f_*^{2^*}}\,.
\end{multline}
It follows that the left-hand side~of the stability inequality~\eqref{Bianchi-Egnell}, written for $f=u\,f_*$, is
\[
\ird{|\nabla u|^2\,f_*^2}+\tfrac{d\,(d-2)}{r_d^2}\,Z_d^{\frac2{d}} \,\ird{u^2\,f_*^{2^*}}-\tfrac{d\,(d-2)}{r_d^2}\,Z_d^{\frac2{d}}\, \(\ird{u^{2^*}\,f_*^{2^*}}\)^{2/2^*},
\]
which can be written as
\begin{multline*}
Z_d^{\frac2{d}} \irdmu{|\nabla u|^2\(1+\tfrac1{r_d^2}\,|x|^2\)^2}\\ -2\,\pi\,(d-2)\,Z_d^{\frac2{d}}\left(\(\irdmu{u^{2^*}}\)^{2/2^*}-\irdmu{u^2}\right),
\end{multline*}
where $d\mu_d=\,f_*^{2^*}(x)\,dx$ is a probability measure.

Let us write $x=(y,z)\in\R^N\times\R^{d-N}\approx\R^d$, for some integer $N$ such that $1\le N<d$. With $|x|^2=|y|^2+|z|^2$~and
\[
1+\tfrac1{r_d^2}\,|x|^2=1+\tfrac1{r_d^2}\,\big(|y|^2+|z|^2\big)=\(1+\tfrac1{r_d^2}\,|y|^2\)\(1+\tfrac{|z|^2}{r_d^2+|y|^2}\)\,,
\]
we can integrate over the $z$ variable to obtain
\begin{multline}\label{dz}
\int_{\R^{d-N}}\frac{dz}{\(1+\tfrac1{r_d^2}\big(|y|^2+|z|^2\big)\)^d}\\=\frac{r_d^{d-N}}{\(1+\tfrac1{r_d^2}\,|y|^2\)^\frac{N+d}2}\int_{\R^{d-N}}\frac{d\zeta}{\(1+|\zeta|^2\)^d}=\frac{\Gamma\(\tfrac{d+N}2\)\(\tfrac d2\)^\frac{d-N}2}{\Gamma(d)\(1+\tfrac1{r_d^2}\,|y|^2\)^\frac{N+d}2}\,.
\end{multline}
By taking into account the limits
\begin{multline}\label{dzlim}
\lim_{d\to+\infty}\(1+\tfrac1{r_d^2}\,|y|^2\)^{-\frac{N+d}2}=e^{-\pi\,|y|^2}\;\mbox{and} \\ \lim_{d\to+\infty}\frac{r_d^{d-N}}{Z_d}\int_{\R^{d-N}}\kern-5pt\frac{d\zeta}{\(1+|\zeta|^2\)^d} =\lim_{d\to+\infty}\frac{\Gamma\(\tfrac{d+N}2\)}{Z_d\,\Gamma(d)}\(\frac d2\)^\frac{d-N}2\kern-6pt=1\,,
\end{multline}
we obtain
\be{udgamma}
\lim_{d\to+\infty}\irdmu{|u(y)|^2}=\irNg{u^2}
\ee
where $d\gamma(y):=e^{-\pi\,|y|^2}\,dy$ is a Gaussian probability measure. A similar computation shows that
\[
\lim_{d\to+\infty}\irdmu{|\nabla u|^2\(1+\tfrac1{r_d^2}\,|x|^2\)^2}=4\irNg{|\nabla u|^2} \,,
\]
because
\[
\lim_{d\to+\infty}\frac1{Z_d}\int_{\R^{d-N}}\(1+\tfrac1{r_d^2}\(|y|^2+|z|^2\)\)^{2-d}dz=4\,.
\]
On the other hand, let $\varepsilon:=1/(d-2)$ and write
\begin{multline*}
(d-2)\left[\(\irNg{u^{2^*}}\)^{2/2^*}-\irNg{u^2}\right]\\ =\frac1\varepsilon\left[\(\irNg{u^{2+4\,\varepsilon}}\)^\frac1{1+2\,\varepsilon}-\irNg{u^2}\right]\,.
\end{multline*}
As a consequence, we obtain
\begin{multline*}
\lim_{d\to+\infty}(d-2)\left[\(\irNg{u^{2^*}}\)^{2/2^*}-\irNg{u^2}\right]\\
=\frac d{d\varepsilon} \Big|_{\varepsilon=0} \(\irNg{u^{2\,(1+2\,\varepsilon)}}\)^\frac1{1+2\,\varepsilon} =2\irNg{u^2\,\ln\(\frac{u^2}{\irNg{u^2}}\)}\,.
\end{multline*}
Altogether, we find that
\begin{multline*}
\frac14\,\lim_{d\to+\infty}\left[\irdmu{|\nabla u|^2\(1+\tfrac1{r_d^2}\,|x|^2\)^2}\right.\\ \left.-2\,\pi\,(d-2)\(\(\irdmu{u^{2^*}}\)^{2/2^*}-\irdmu{u^2}\)\right]\\
=\irNg{|\nabla u|^2}-\pi\irNg{u^2\,\ln\(\frac{u^2}{\irNg{u^2}}\)}\,.
\end{multline*}
Using~\eqref{limitzd}, we have proved
\begin{lemma}\label{lem:limit--lhs}
Let $f$ be given by~\eqref{eq:ansatz} where $u$ is a nonnegative Lipschitz function of compact support in $\R^N$. Then the limit of the left-hand side~of the stability inequality~\eqref{Bianchi-Egnell} as $d\rightarrow +\infty$ is
\begin{multline*}
\lim_{d\to +\infty} \nrm{\nabla f}2^2-\mathsf S_d\,\nrm f{2^*}^2 \\ = e\,\left[\irNg{|\nabla u|^2}-\pi\irNg{u^2\,\ln\(\frac{u^2}{\irNg{u^2}}\)}\right]\,.
\end{multline*}
\end{lemma}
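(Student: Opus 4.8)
The plan is to organize the chain of computations carried out in the paragraphs preceding the statement into a rigorous argument; the main work is to justify the passages to the limit, all of which rest on the fact that $u$ is bounded with compact support.

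\textbf{Step 1.} Since $f_*$ solves the Sobolev equation~\eqref{Sobolev:equation} and every integrand below is bounded with compact support, the integration by parts in~\eqref{gradientlimit} is legitimate. Combined with $\mathsf S_d = d\,(d-2)\,r_d^{-2}\,Z_d^{2/d}$ and $\nrm f{2^*}^{2^*}=\irdmu{|u|^{2^*}}$, it yields
\[
\nrm{\nabla f}2^2-\mathsf S_d\,\nrm f{2^*}^2
= Z_d^{\frac2d}\left[\irdmu{|\nabla u|^2\(1+\tfrac1{r_d^2}\,|x|^2\)^2}-2\,\pi\,(d-2)\left(\(\irdmu{|u|^{2^*}}\)^{2/2^*}-\irdmu{|u|^2}\right)\right],
\]
where $d\mu_d = f_*^{2^*}\,dx$ is a probability measure on $\R^d$. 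By~\eqref{limitzd}, $Z_d^{2/d}\to e/4$, so it suffices to show that the bracket tends to $4\bigl[\irNg{|\nabla u|^2}-\pi\irNg{|u|^2\ln(|u|^2/\irNg{|u|^2})}\bigr]$.

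\textbf{Step 2.} Writing $x=(y,z)\in\R^N\times\R^{d-N}$ and integrating out $z$ via the Beta-type identity~\eqref{dz}, each integral against $d\mu_d$ of a function of $y$ alone reduces to an integral over $\R^N$ against a radial probability density proportional to $\(1+\tfrac1{r_d^2}\,|y|^2\)^{-(N+d)/2}$, which converges pointwise to $e^{-\pi|y|^2}$; see~\eqref{dzlim}. Since $u$ has compact support these densities are uniformly bounded on $\supp u$, so dominated convergence yields $\irdmu{|u|^2}\to\irNg{|u|^2}$, i.e.~\eqref{udgamma}, and likewise $\irdmu{|\nabla u|^2\(1+\tfrac1{r_d^2}|x|^2\)^2}\to 4\irNg{|\nabla u|^2}$, the factor $4$ arising from $Z_d^{-1}\int_{\R^{d-N}}\(1+\tfrac1{r_d^2}(|y|^2+|z|^2)\)^{2-d}dz\to 4$.

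\textbf{Step 3 (the delicate point).} For the subcritical term set $\varepsilon:=1/(d-2)$, so $2^*=2\,(1+2\,\varepsilon)$, and let $\Phi_d(\varepsilon):=\bigl(\irdmu{|u|^{2(1+2\varepsilon)}}\bigr)^{1/(1+2\varepsilon)}$, so that the subcritical difference equals $\varepsilon^{-1}\bigl(\Phi_d(\varepsilon)-\Phi_d(0)\bigr)$. One checks that $\Phi_d$ is $C^2$ on a fixed neighbourhood of $\varepsilon=0$ with second derivative bounded \emph{uniformly in $d$} (differentiating under the integral, using boundedness and compact support of $u$ and that $\irdmu{|u|^2}$ stays bounded away from $0$), whence Taylor's theorem gives $\varepsilon^{-1}\bigl(\Phi_d(\varepsilon)-\Phi_d(0)\bigr)=\Phi_d'(0)+O(1/d)$ with $\Phi_d'(0)=2\irdmu{|u|^2\ln(|u|^2/\irdmu{|u|^2})}$; a further dominated-convergence argument then gives $\Phi_d'(0)\to 2\irNg{|u|^2\ln(|u|^2/\irNg{|u|^2})}$. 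Collecting Steps~2 and~3, the bracket tends to $4\irNg{|\nabla u|^2}-4\pi\irNg{|u|^2\ln(|u|^2/\irNg{|u|^2})}$, and multiplying by $e/4$ proves the lemma. The genuinely delicate point is precisely the factor $d-2\sim\varepsilon^{-1}$ in front of the subcritical difference: one must \emph{not} replace $\mu_d$ by $\gamma$ before dividing by $\varepsilon$, since $\irdmu{|u|^p}-\irNg{|u|^p}$ is only $O(1/d)$; expanding in $\varepsilon$ first and passing to the limit only at the level of $\Phi_d'(0)$ circumvents this, and every domination needed is uniform in $d$ thanks to the explicit densities in~\eqref{dz}–\eqref{dzlim} and the compact support of $u$.
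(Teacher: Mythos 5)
Your proposal is correct and follows the same overall route as the paper (integration by parts, reduction to an integral over $\R^N$ via~\eqref{dz}--\eqref{dzlim}, and a derivative-in-$\varepsilon$ computation for the entropy term), but it is more careful than the paper's own exposition at exactly the step you flag as "the delicate point". The paper computes
\[
\lim_{d\to+\infty}(d-2)\left[\Big(\textstyle\int_{\R^N}|u|^{2^*}\,d\gamma\Big)^{2/2^*}-\textstyle\int_{\R^N}|u|^2\,d\gamma\right]=\frac{d}{d\varepsilon}\Big|_{\varepsilon=0}\Big(\textstyle\int_{\R^N}|u|^{2(1+2\varepsilon)}\,d\gamma\Big)^{\frac{1}{1+2\varepsilon}}
\]
— a purely $\gamma$-based calculus limit — and then uses this value in the "Altogether" display where the quantity actually in play has $\mu_d$ in place of $\gamma$. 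This passage tacitly interchanges the $\mu_d\to\gamma$ substitution with multiplication by $d-2$; since $\irdmu{|u|^p}-\irNg{|u|^p}=O(1/d)$ and the prefactor is of order $d$, each term separately contributes $O(1)$ and only the cancellation between the two terms makes the exchange legitimate. Your fix — holding $\mu_d$ fixed, defining $\Phi_d(s)=\bigl(\irdmu{|u|^{2(1+2s)}}\bigr)^{1/(1+2s)}$, using a Taylor estimate $\varepsilon^{-1}(\Phi_d(\varepsilon)-\Phi_d(0))=\Phi_d'(0)+O(\varepsilon)$ with second derivative bounded uniformly in $d$ (which holds because $u$ is bounded with compact support, $\mu_d$ is a probability measure, and $\irdmu{|u|^2}$ stays bounded below), and only then letting $d\to+\infty$ in $\Phi_d'(0)=2\irdmu{|u|^2\ln(|u|^2/\irdmu{|u|^2})}$ — makes this cancellation explicit and closes the gap cleanly. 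Steps~1 and~2 match the paper's argument essentially verbatim.
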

Next we deal with the large $d$ limit of the right side of~\eqref{Bianchi-Egnell}.
\begin{lemma}\label{lem:limit-rhs}
Let $f$ be given by~\eqref{eq:ansatz} where $u$ is a nonnegative Lipschitz function of compact support in $\R^N$. Then
\begin{multline*}
\lim_{d\to+\infty} \frac 1d \inf_{\begin{array}{c}\scriptstyle a>0,\,b\in \R^d\\[-4pt] \scriptstyle c\in\R\end{array}}\nrm{\nabla f - c\,\nabla h_{a,b}(x)}2^2 \\ = \frac{\pi\,e}2 \inf_{c \in \R, \,b'\in \R^N} \irNg{\big|u(y)-c\,e^{\pi\,b'\cdot y}\big|^2} \,,
\end{multline*}
where $h_{a,b}(x) := |\Sp^d|^{-\frac{d-2}{2\,d}} \left( \frac{2\,a}{a^2+ |x-b|^2}\right)^{\frac{d-2}2}$ is, up to a multiplicative constant, any Sobolev optimizer.
\end{lemma}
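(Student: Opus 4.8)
The plan is to reduce everything, via Lemma~\ref{innerproduct} and~\eqref{gradientlimit}, to the large‑$d$ behaviour of a single scalar quantity. Since $\{c\,h_{a,b}:a>0,\,b\in\R^d,\,c\in\R\}=\mathcal M$ and $\{h_{a,b}\}=\mathcal M_1$, Lemma~\ref{innerproduct} gives
\[
\inf_{a,b,c}\nrm{\nabla f-c\,\nabla h_{a,b}}2^2=\nrm{\nabla f}2^2-S_d\,\sup_{a>0,\,b\in\R^d}\big(f,h_{a,b}^{2^*-1}\big)^2 ,
\]
and~\eqref{gradientlimit} gives $\nrm{\nabla f}2^2=\ird{|\nabla u|^2 f_*^2}+S_d\,\irdmu{|u|^2}$, with $S_d=\tfrac{d(d-2)}{r_d^2}Z_d^{2/d}$ and $d\mu_d=f_*^{2^*}\,dx$. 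Dividing by $d$, using that $\ird{|\nabla u|^2 f_*^2}$ is bounded (by the computation preceding Lemma~\ref{lem:limit--lhs} it even converges), $S_d/d\to\tfrac{\pi e}2$ by~\eqref{limitzd}, and $\irdmu{|u|^2}\to\irNg{|u|^2}$ by~\eqref{udgamma}, the lemma will follow once we prove
\[
\lim_{d\to+\infty}\ \sup_{a>0,\,b\in\R^d}\big(f,h_{a,b}^{2^*-1}\big)^2=\Lambda:=\sup_{b'\in\R^N}\frac{\big(\irNg{u\,e^{\pi b'\cdot y}}\big)^2}{\irNg{e^{2\pi b'\cdot y}}} ,
\]
since $\irNg{e^{2\pi b'\cdot y}}=e^{\pi|b'|^2}$ and minimizing the right‑hand side of the lemma over $c$ for each $b'$ shows $\irNg{|u|^2}-\Lambda=\inf_{c,b'}\irNg{|u-c\,e^{\pi b'\cdot y}|^2}$.

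To handle $\big(f,h_{a,b}^{2^*-1}\big)$ I use that $f_*=h_{r_d,0}$, so that $f_*^{2^*}$ is the density of $\mu_d$ and, with $x=(y,z)\in\R^N\times\R^{d-N}$,
\[
\big(f,h_{a,b}^{2^*-1}\big)=\irdmu{u(y)\Big(\tfrac a{r_d}\,\tfrac{r_d^2+|x|^2}{a^2+|x-b|^2}\Big)^{\frac{d+2}2}} .
\]
\textbf{Lower bound.} Restricting the supremum to $a=r_d$ and $b=(b',0)$ with fixed $b'\in\R^N$, the integrand equals $u(y)\big(\tfrac{r_d^2+|x|^2}{r_d^2+|y-b'|^2+|z|^2}\big)^{(d+2)/2}$, and a dominated‑convergence argument of the same Laplace type as in~\eqref{dz}--\eqref{dzlim} shows this converges to $e^{-\pi|b'|^2/2}\irNg{u\,e^{\pi b'\cdot y}}$. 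Hence $\liminf_d\sup_{a,b}(f,h_{a,b}^{2^*-1})^2\ge e^{-\pi|b'|^2}\big(\irNg{u\,e^{\pi b'\cdot y}}\big)^2$ for every $b'$, and taking the supremum over $b'$ yields $\liminf_d\sup\ge\Lambda$.

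\textbf{Upper bound.} This is the hard direction, which I would do by a concentration--compactness argument. Writing $(f,h_{a,b}^{2^*-1})=\int u\,d\nu_{a,b}$ with $\nu_{a,b}:=f_*\,h_{a,b}^{2^*-1}\,dx$ of total mass $(f_*,h_{a,b}^{2^*-1})\le1$, one gets $(f,h_{a,b}^{2^*-1})^2\le\| u\|_{\mathrm L^\infty}^2\,(f_*,h_{a,b}^{2^*-1})$, so (unless $u\equiv0$, in which case the lemma is trivial) any near‑maximizing sequence $h_{a_k,b_k}$ along $d_k\to\infty$ has $(f_*,h_{a_k,b_k}^{2^*-1})$ bounded away from $0$. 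Now $(f_*,h_{a,b}^{2^*-1})$ is an explicit decreasing function of $\tfrac{|b|^2+(a-r_d)^2}{a\,r_d}$, so this lower bound forces $|b_k|^2+(a_k-r_d)^2=O(r_d^2/d_k)=O(1)$; in particular $a_k/r_d\to1$ and $(b_k)$ is bounded. After a rotation in the last $d_k-N$ coordinates, which leaves $u$ (and hence everything) unchanged, I may assume the $z$‑component of $b_k$ points along $e_{N+1}$, and pass to a further subsequence so that the $\R^N$‑component of $b_k$ tends to some $b'$, $|b_k^z|\to s\ge0$ and $\sqrt{d_k}\,(a_k/r_d-1)\to\tau$. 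A careful pointwise expansion of the integrand above — justified by dominated convergence and using the concentration of $|z|^2/r_d^2$ around $1$ under $\mu_d$ — then gives $(f,h_{a_k,b_k}^{2^*-1})\to\kappa(\tau)\,e^{-\pi s^2/4}\,e^{-\pi|b'|^2/2}\irNg{u\,e^{\pi b'\cdot y}}$ with a scale factor $\kappa(\tau)\le1$ (in fact the scaling direction turns out to be asymptotically flat). Since $u\ge0$, this is $\le e^{-\pi|b'|^2/2}\irNg{u\,e^{\pi b'\cdot y}}\le\sqrt\Lambda$, so $\limsup_d\sup\le\Lambda$, and together with the lower bound this proves the claimed limit and hence the lemma.

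\textbf{Main obstacle.} The real work is the upper bound: turning the lower bound on $(f_*,h_{a_k,b_k}^{2^*-1})$ into the quantitative localization $|b_k|^2+(a_k-r_d)^2=O(1)$, and then carrying out the asymptotic expansion of the integrand uniformly in the localized but still $d$‑dependent parameters — controlling in particular the fluctuations of $|z|^2/r_d^2$ under $\mu_d$ so that the auxiliary parameters $\tau$ and $s$ contribute only factors $\le1$. Everything else is Laplace‑asymptotics bookkeeping parallel to~\eqref{dz}--\eqref{dzlim} and~\eqref{udgamma}.
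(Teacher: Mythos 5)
Your proposal is correct and parallels the paper's proof in the main reduction (Lemma~\ref{innerproduct} plus~\eqref{gradientlimit}, reducing to the limit of $\sup_{a,b}(f,h_{a,b}^{2^*-1})$) and in the lower bound, but it takes a more laborious route for the upper bound. The paper first eliminates the $z$-component of $b$ by a Riesz-type rearrangement argument: since $u\ge0$ depends only on $y$ and both $z\mapsto f_*(y,z)$ and $z\mapsto h_{a,0}(y-b',z)^{2^*-1}$ are symmetric decreasing, the $z$-integral is maximized at $b''=0$, so one may assume $b=(b',0)$. It then proves $a_d/r_d\to1$ (via the $\cosh$-Laplace estimate) and $|b_d'|$ bounded (via H\"older over the support of $u$), extracts a convergent subsequence $b_d'\to b_\infty'$, and closes with a H\"older inequality on the inner $z$-integral that turns the mismatched exponents $(d\mp2)/2$ into $d$, making the dominated-convergence passage to the limit immediate without ever having to track the scaling fluctuation. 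You instead extract subsequential limits of the rescaled parameters, $\sqrt{d}(a_k/r_d-1)\to\tau$ and $|b_k^z|\to s$, and propose to compute the full asymptotic overlap including factors $\kappa(\tau)\le1$ and $e^{-\pi s^2/4}\le1$; this is sound, and you correctly identify the technical work it entails (the concentration of $|z|^2/r_d^2$ under $\mu_d$ and uniformity in the localized parameters), but it is exactly what the paper's rearrangement-plus-H\"older combination is designed to avoid. Also note that your parenthetical claim that the scaling direction is "asymptotically flat" is not quite right: the radial overlap decays like $e^{-\tau^2/8}$, so $\kappa(\tau)<1$ for $\tau\ne0$; fortunately your argument only needs $\kappa(\tau)\le1$, which is true.
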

\begin{proof}
In the main part of this proof, using $(\cdot,\cdot)$ as in Lemma~\ref{innerproduct}, we shall show that
\begin{equation}
\label{eq:limitdinner}
\lim_{d\to+\infty} \sup_{a>0, \, b\in \R^d} \Big(f, h_{a,b}^{\frac{d+2}{d-2}}\Big)_{}
= \sup_{b'\in\R^N} \int_{\R^N}u(y)\,e^{-\frac\pi2\,|y|^2}\,e^{-\frac\pi2\,|y-b'|^2}\,dy \,.
\end{equation}

Before proving~\eqref{eq:limitdinner}, let us show that it implies the assertion of the lemma. As in Lemma~\ref{innerproduct} we can optimize the right side of~\eqref{Bianchi-Egnell} over $c$ and find
\begin{multline}\label{supc}
\inf_{a>0, \, b\in \R^d}\, \inf_{c \in \R}\nrm{\nabla f - c\,\nabla h_{a,b}}2^2\\ = \nrm{\nabla f}2^2 - S_d\, \sup_{a>0, \, b\in \R^d}\Big(f, h_{a,b}^{\frac{d+2}{d-2}}\Big)^2_{\mathrm L^2(\R^d)} \,,
\end{multline}
where $h_{a,b}$ satisfies $$\int_{\R^d} h_{a,b}(x)^{\frac{2\,d}{d-2}}\,dx = 1\,.$$ Similarly, from \begin{multline*}
\irNg{\big|u(y)-c\,e^{\pi\,b'\cdot y}\big|^2}\\ =\irNg{|u(y)|^2}+c^2\,e^{\pi\,|b'|^2}-2\,c\irNg{u(y)\,e^{\pi\,b'\cdot y}}
\end{multline*}
we deduce that
\begin{multline*}
\sup_{c\in\R}\irNg{\big|u(y)-c\,e^{\pi\,b'\cdot y}\big|^2}\\ =\irNg{|u(y)|^2}-e^{-\pi\,|b'|^2}\(\irNg{u(y)\,e^{\pi\,b'\cdot y}}\)^2\\
=\irNg{|u(y)|^2}-\(\int_{\R^N}u(y)\,e^{-\frac\pi2\,|y|^2}\,e^{-\frac\pi2\,|y-b'|^2}\,dy\)^2
\end{multline*}
and, consequently,
\begin{multline*}
\inf_{c \in \R, \,b'\in \R^N} \irNg{\big|u(y)-c\,e^{\pi\,b'\cdot y}\big|^2}
\\ = \int_{\R^N} u^2\,d\gamma - \sup_{b'\in\R^N} \(\int_{\R^N}u(y)\,e^{-\frac\pi2\,|y|^2}\,e^{-\frac\pi2\,|y-b'|^2}\,dy\)^2.
\end{multline*}
Now as before, using~\eqref{gradientlimit}, we get
\[
\lim_{d\to+\infty}\, \frac 1d\, \nrm{\nabla f}2^2= \frac{\pi\,e}2 \,\int e^{-\pi\,|y|^2} |u(y)|^2 \,dy \,.
\]
Inserting this together with the fact that $\lim_{d\to+\infty} S_d/d = \pi\,e/2$ into~\eqref{supc}, shows that~\eqref{eq:limitdinner} implies the assertion of the lemma.

Thus, from now on we concentrate on proving~\eqref{eq:limitdinner}. Clearly, we may assume $u\not\equiv 0$. It is easy to see that for every $d$, there are $a_d>0$ and $b_d\in\R^d$ such that
\[
\sup_{a>0, \, b\in \R^d}\Big(f, h_{a,b}^{\frac{d+2}{d-2}}\Big)_{}=\Big(f, h_{a_d,b_d}^{\frac{d+2}{d-2}}\Big)_{}\,.
\]
To pass to the limit in~\eqref{eq:limitdinner} as $d\to+\infty$, we have to study the asymptotic behavior of $a_d$ and $b_d$.

\medskip\noindent$\bullet$ {\it The limit of $a_d$.\/}
We will derive a lower and an upper bound on $\Big(f, h_{a_d,b_d}^{\frac{d+2}{d-2}}\Big)_{}$. For the lower bound we test the supremum defining this quantity with $a=r_d$ and $b=0$, in which case $h_{r_d,0}=f_*$. Arguing similarly as in~\eqref{udgamma} and recalling $u\not\equiv 0$, we obtain
\begin{equation}
\label{eq:limitlower}
\liminf_{d\to+\infty}\Big(f, h_{a_d,b_d}^{\frac{d+2}{d-2}}\Big)_{}\ge\lim_{d\to+\infty}\Big(f, h_{r_d,0}^{\frac{d+2}{d-2}}\Big)_{}=\irNg u>0\,.
\end{equation}
To derive an upper bound we use the fact that $f_*$ and $h_{a_d,0}$ are symmetric decreasing functions, which implies that
\begin{multline*}
0\le\Big(f, h_{a_d,b_d}^{\frac{d+2}{d-2}}\Big)_{}\le\nrm u\infty\int_{\R^d}f_*(x) \,h_{a_d,0}(x-b_d)^{\frac{d+2}{d-2}}\,dx\\ \le\nrm u\infty\int_{\R^d}f_*\,h_{a_d,0}^{\frac{d+2}{d-2}}\,dx\,.
\end{multline*}
By inserting the expression~\eqref{Zd} of $Z_d$ and setting $\alpha_d=a_d/r_d$, we obtain
\begin{align*}
\int_{\R^d}f_*\,h_{a_d,0}^{\frac{d+2}{d-2}}\,dx=&\,\frac{2^d}{r_d^d\,|\Sp^d|}\ird{\(1+\frac{|x|^2}{r_d^2}\)^{-\frac{d-2}2}\(\alpha_d+\frac{|x|^2}{\alpha_d\,r_d^2}\)^{-\frac{d+2}2}}\\
=&\,\frac1{|\Sp^d|}\ird{\(\frac2{1+|x|^2}\)^{\frac{d-2}2}\,\Bigg(\frac{2\,\alpha_d}{\alpha_d^2+|x|^2}\Bigg)^\frac{d+2}2}\\
=&\,\frac{|\Sp^{d-1}|}{|\Sp^d|}\int_0^{+\infty}\(\frac{2\,r}{1+r^2}\)^{\frac{d-2}2}\(\frac{2\,\alpha_d\,r}{\alpha_d^2+r^2}\)^\frac{d+2}2\,\frac{dr}r \,,
\end{align*}
where we scaled $x\mapsto r_d\,x$ and introduced radial coordinates. If we now set $\alpha_d=e^{s_d}$ and change variables to $r=e^t$, and then rescale according to $t\mapsto t/\sqrt d$, we find
\begin{multline*}
\int_{\R^d}f_*\,h_{a_d,0}^{\frac{d+2}{d-2}}\,dx=\frac{|\Sp^{d-1}|}{|\Sp^d|}\int_{-\infty}^\infty\big(\cosh t\big)^{-\frac{d-2}2}\big(\cosh(t-s_d)\big)^{-\frac{d+2}2}\,dt\\
=\frac{|\Sp^{d-1}|}{\sqrt d\,|\Sp^d|}\int_{-\infty}^\infty\big(\cosh\tfrac t{\sqrt d}\big)^{-\frac{d-2}2}\big(\cosh\tfrac{t-\sigma_d}{\sqrt d}\big)^{-\frac{d+2}2}\,dt
\end{multline*}
with $s_d=\sigma_d/\sqrt d$. By the elementary inequality $\cosh t\ge1+t^2/2$, we find the following bound for the integral on the right side for large $d$:
\[
\int_{-\infty}^\infty \big( 1+\tfrac{t^2}{2\,d}\big)^{-\frac{d-2}2}\,\big( 1+\tfrac{(t-\sigma_d)^2}{2\,d}\big)^{-\frac{d+2}2}dt\!\approx\!\int_{-\infty}^\infty \!\!e^{-\frac{t^2}{4}}\, e^{-\frac{(t-\sigma_d)^2}{4}}dt = \sqrt{2\,\pi}\,e^{-\frac{\sigma_d^2}{8}}.
\]
Using $\lim_{d\to+\infty}\frac{|\Sp^{d-1}|}{\sqrt d\,|\Sp^d|}=\sqrt{2\,\pi}$, we finally conclude by combining the upper and the lower bound that
\[
2\,\pi\,\liminf_{d\to+\infty}e^{-\frac{\sigma_d^2}{8}}\ge\liminf_{d\to+\infty}\Big(f, h_{a_d,b_d}^{\frac{d+2}{d-2}}\Big)_{}\ge\irNg u>0\,.
\]
As a consequence, $|\sigma_d|$ is bounded and we deduce that
\be{alphad}
\lim_{d\to+\infty}\frac{a_d}{r_d}=\lim_{d\to+\infty}\alpha_d=\lim_{d\to+\infty}e^{s_d}=\lim_{d\to+\infty}e^\frac{\sigma_d}{\sqrt d}=1\,.
\ee

\medskip\noindent$\bullet$ {\it A uniform bound on $b_d$.\/} We begin by noting that
\begin{multline*}
\Big(f,h_{a_d,b}^{\frac{d+2}{d-2}}\Big)_{}=\iint_{\R^N\times\R^{d-N}}u(y)\,f_*(y,z)\,h_{a_d,0}\big(y-b',z-b''\big)^{\frac{d+2}{d-2}}\,dy\,dz\\
\le\int_{\R^N}u(y)\(\int_{\R^{d-N}}f_*(y,z) \,h_{a_d,0}\big(y-b',z\big)^{\frac{d+2}{d-2}}\,dz\)dy
\end{multline*}
with $b=(b',b'')\in\R^N\times\R^{d-N}$, because $u$ is nonnegative, and $z\mapsto f_*(y,z)$ and $z\mapsto h_{a_d,0}(y,z)^{\frac{d+2}{d-2}}$ are symmetric decreasing functions. As a consequence, we can assume without loss of generality $b_d=(b_d',0)\in\R^N\times\R^{d-N}$.

Our task is to obtain a bound on $|b_d'|$. As before, we obtain this by deriving a lower and upper bound on $\Big(f,h_{a_d,b_d}^{\frac{d+2}{d-2}}\Big)_{}$. As lower bound we use again~\eqref{eq:limitlower}. For the upper bound we write
\begin{multline}
\label{eq:overlap}
\(f,h_{a_d,(b_d',0)}^{\frac{d+2}{d-2}}\)_{} \\
=\frac1{Z_d\,\alpha_d^\frac{d+2}2}\iint_{\R^N\times\R^{d-N}}u(y)\(1+\tfrac1{r_d^2}\(|y|^2+|z|^2\)\)^{-\frac{d-2}2}\\ \times \(1+\tfrac1{\alpha_d^2\,r_d^2}\(|y-b_d'|^2+|z|^2\)\)^{-\frac{d+2}2}\,dy\,dz \,,
\end{multline}
where $Z_d$ is given by~\eqref{Zd}. From H\"older's inequality we learn that
\begin{multline*}
\(f,h_{a_d,(b_d',0)}^{\frac{d+2}{d-2}}\)_{} \!\le\!\(\frac1{Z_d}\iint_{_{\R^N\times\R^{d-N}}}\!\!\!\!\!\!\!\!\!\!u(y)\(1+\tfrac1{r_d^2}\(|y|^2+|z|^2\)\)^{-d}\!\!dy\,dz\)^{\frac{d-2}{2\,d}}\\[-8pt]
\times\(\frac1{Z_d\,\alpha_d^d}\iint_{\R^N\times\R^{d-N}}u(y)\(1+\tfrac1{\alpha_d^2\,r_d^2}\(|y-b_d'|^2+|z|^2\)\)^{-d}\,dy\,dz\)^{\frac{d+2}{2\,d}}.
\end{multline*}
Let $R>0$ be such that $u$ is supported in the centered ball $\overline{B_R}$ of radius $R>0$ and assume that $|b_d'|>R$ (otherwise $|b_d'|\leq R$ and we have the claimed bound). It follows that $|y-b_d'|^2\ge(|b_d'|-R)^2$ in the support of $u$. Using the identity
\be{zeta}
\int_{\R^{d-N}}\(A^2+\tfrac1{\lambda^2}\,|z|^2\)^{-d}\,dz=\frac{\lambda^{d-N}}{A^{d+N}}\int_{\R^{d-N}}(1+|\zeta|^2)^{-d}\,d\zeta
\ee
based on the change of variables $z=A\,\lambda\,\zeta$, and applying it with $A=\frac1{\alpha_d\,r_d}\,\scriptstyle\sqrt{\alpha_d^2\,r_d^2+(|b_d'|-R)^2}$ and $\lambda=\alpha_d\,r_d$, we obtain
\begin{multline*}
\frac1{Z_d\,\alpha_d^d}\iint_{\R^N\times\R^{d-N}}u(y)\(1+\tfrac1{\alpha_d^2\,r_d^2}\(|y-b_d'|^2+|z|^2\)\)^{-d}\,dy\,dz\\
\le|B_R|\,\nrm u\infty\,\frac1{\alpha_d^N}\(1+\tfrac{(|b_d'|-R)^2}{\alpha_d^2\,r_d^2}\)^{-\frac{d+N}2}\,\frac{r_d^{d-N}}{Z_d}\int_{\R^{d-N}}(1+|\zeta|^2)^{-d}\,d\zeta\\
\le|B_R|\,\nrm u\infty\,\frac{d\,\alpha_d^{2-N}}{(d+N)\,\pi\,(|b_d'|-R)^2}\,\frac{r_d^{d-N}}{Z_d}\int_{\R^{d-N}}(1+|\zeta|^2)^{-d}\,d\zeta
\end{multline*}
using the inequality $\(1+t/k\)^{-k}\le1/t$ for all $t>0$ with $k=(d+N)/2$. As in~\eqref{udgamma}, using~\eqref{dzlim} and~\eqref{alphad}, this yields
\[
\liminf_{d\to+\infty}\(f,h_{a_d,(b_d',0)}^{\frac{d+2}{d-2}}\)_{}
\le\sqrt{\frac{|B_R|\,\nrm u\infty\irNg u}{\pi\,\limsup_{d\to\infty} (|b_d'|-R)^2}}\,.
\]
Taking the lower bound in~\eqref{eq:limitlower} into account, we obtain
\[
\limsup_{d\to\infty} |b_d'|\le R+\sqrt{\frac{|B_R|\,\nrm u\infty}{\pi\irNg u}}\,.
\]
This proves that $b'_d$ is uniformly bounded w.r.t.~$d$.

\medskip\noindent$\bullet$ {\it The large dimensional limit.\/} We are finally in position to prove~\eqref{eq:limitdinner}. We first show that
\begin{equation}
\label{eq:limitdinner1}
\limsup_{d\to+\infty} \sup_{a>0, \, b\in \R^d} \Big(f, h_{a,b}^{\frac{d+2}{d-2}}\Big)_{}
\leq \sup_{b'\in\R^N} \int_{\R^N}u(y)\,e^{-\frac\pi2\,|y|^2}\,e^{-\frac\pi2\,|y-b'|^2}\,dy \,.
\end{equation}
To do so, we consider a sequence of $d$'s along which the limsup is attained. Because of the uniform bound on $b_d'$ we may pass to a subsequence along which $b_d'$ converges to some $b_\infty'\in\R^N$. It then suffices to prove~\eqref{eq:limitdinner1} where the limsup is taken along the chosen subsequence. In the following, we will always consider this subsequence, without displaying it in our notation.

It remains to identify a bound on $\limsup_{d\to+\infty} \Big(f, h_{a_d,(b_d',0)}^{\frac{d+2}{d-2}}\Big)_{}$. Our starting point is~\eqref{eq:overlap}. By H\"older's inequality, we obtain
\begin{multline*}
\int_{\R^{d-N}}\(1+\tfrac1{r_d^2}\(|y|^2+|z|^2\)\)^{-\frac{d-2}2}\(1+\tfrac1{\alpha_d^2\,r_d^2}\(|y-b_d'|^2+|z|^2\)\)^{-\frac{d+2}2}\!dz\\
\le\(\int_{\R^{d-N}}\(1+\tfrac1{r_d^2}\(|y|^2+|z|^2\)\)^{-d}\,dz\)^{-\frac{d-2}{2\,d}}\\ \times \(\int_{\R^{d-N}}\(1+\tfrac1{\alpha_d^2\,r_d^2}\(|y-b_d'|^2+|z|^2\)\)^{-d}\,dz\)^{-\frac{d+2}{2\,d}}\\
=\alpha_d^{d+2}\,r_d^{2\,d}\left(r_d^2+|y|^2\right)^{-\frac{(d-2)\,(d+N)}{4\,d}}\left(\alpha_d^2\,r_d^2+|y-b_d'|^2\right)^{-\frac{(d+2)\,(d+N)}{4\,d}}\\ \times \int_{\R^{d-N}}(1+|\zeta|^2)^{-d}\,d\zeta\\
=\alpha_d^\frac{(d+2)\,(d-N)}{2\,d}\,r_d^{d-N}\left(1+\tfrac1{r_d^2}\,|y|^2\right)^{-\frac{(d-2)\,(d+N)}{4\,d}}\\ \times \left(1+\tfrac1{\alpha_d^2\,r_d^2}\,|y-b_d'|^2\right)^{-\frac{(d+2)\,(d+N)}{4\,d}}\int_{\R^{d-N}}(1+|\zeta|^2)^{-d}\,d\zeta \,.
\end{multline*}
Here we used the change of variables identity~\eqref{zeta}, with $A=\frac1{r_d}\,\scriptstyle\sqrt{r_d^2+|y|^2}$ and $\lambda=r_d$ for the first integral in the above right-hand side, and $A=\frac1{\alpha_d\,r_d}\,\scriptstyle\sqrt{\alpha_d^2\,r_d^2+|y|^2}$ and $\lambda=\alpha_d\,r_d$ for the second integral. We learn from~\eqref{dzlim} and~\eqref{alphad} that
\begin{multline*}
\limsup_{d\to+\infty} \Big(f, h_{a_d,(b_d',0)}^{\frac{d+2}{d-2}}\Big)_{}
\le\int_{\R^N}u(y)\,e^{-\frac{\pi}2|y|^2}\,e^{-\frac{\pi}2|y-b_\infty'|^2}\,dy\\ \le\sup_{b'\in\R^N}\int_{\R^N}u(y)\,e^{-\frac{\pi}2|y|^2}\,e^{-\frac{\pi}2|y-b'|^2}\,dy\,.
\end{multline*}
This proves~\eqref{eq:limitdinner1}.

The converse asymptotic inequality, namely
\begin{equation}
\label{eq:limitdinner2}
\liminf_{d\to+\infty} \sup_{a>0, \, b\in \R^d} \Big(f, h_{a,b}^{\frac{d+2}{d-2}}\Big)_{}
\geq \sup_{b'\in\R^N} \int_{\R^N}u(y)\,e^{-\frac\pi2\,|y|^2}\,e^{-\frac\pi2\,|y-b'|^2}\,dy \,,
\end{equation}
follows in a similar, but simpler fashion. Indeed, it is easy to see that the supremum on the right side is attained at some $b_*'\in\R^N$, which we can use to bound the supremum on the left side from below by $\Big(f, h_{0,(b_*',0)}^{\frac{d+2}{d-2}}\Big)_{}$. Starting from~\eqref{eq:overlap} and using similar arguments as above it is easy to see that
\begin{multline*}
\lim_{d\to+\infty} \Big(f, h_{0,(b_*',0)}^{\frac{d+2}{d-2}}\Big)_{}
= \int_{\R^N}u(y)\,e^{-\frac\pi2\,|y|^2}\,e^{-\frac\pi2\,|y-b_*'|^2}\,dy \\ = \sup_{b'\in\R^N} \int_{\R^N}u(y)\,e^{-\frac\pi2\,|y|^2}\,e^{-\frac\pi2\,|y-b'|^2}\,dy \,.
\end{multline*}
This proves~\eqref{eq:limitdinner2} and consequently also~\eqref{eq:limitdinner}.
\end{proof}

Using Lemmata~\ref{lem:limit--lhs} and~\ref{lem:limit-rhs}, with $b=\pi\,\tilde b$, for nonnegative Lipschitz functions $u$ with compact support, we have proved the following result.
\begin{proposition}\label{Prop:logsob} With $\beta_\star$ given by~\eqref{bstar}, for all nonnegative $u\in\mathrm H^1(\gamma)$,
\begin{multline*}
\irNg{|\nabla u|^2}-\pi\irNg{u^2\,\ln\(\frac{u^2}{\|u\|_{\mathrm L^2(\gamma)}^2}\)}\\ \ge\frac{\beta_\star\,\pi}2\,\inf_{b\in\R^N\!,\,c\in\R}\int_{\R^N} \big(u - c\,e^{b\cdot x}\big)^2\,d\gamma\,.
\end{multline*}
\end{proposition}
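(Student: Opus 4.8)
The plan is to insert the ansatz $f=u\,f_*$ of~\eqref{eq:ansatz}, with $u$ a nonnegative Lipschitz function of compact support on $\R^N$, into the Sobolev stability inequality~\eqref{Bianchi-Egnell} and to let $d\to+\infty$. Since $\beta(d)$ is not known to converge, I would first use~\eqref{bstar} to fix a sequence $d_k\to+\infty$ along which $\beta(d_k)\to\beta_\star$. Along this sequence, recalling that $\mathcal M=\{c\,h_{a,b}:a>0,\,b\in\R^d,\,c\in\R\}$, inequality~\eqref{Bianchi-Egnell} applied to the nonnegative function $f=u\,f_*$ reads
\[
\nrm{\nabla f}2^2-\mathsf S_{d_k}\,\nrm f{2^*}^2\ge\beta(d_k)\,\frac1{d_k}\inf_{a>0,\,b\in\R^d,\,c\in\R}\nrm{\nabla f-c\,\nabla h_{a,b}}2^2\,,
\]
which is exactly the form analysed in Lemmas~\ref{lem:limit--lhs} and~\ref{lem:limit-rhs}.

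Next I would pass to the limit $k\to+\infty$. By Lemma~\ref{lem:limit--lhs} the left-hand side converges to $e\,\big[\irNg{|\nabla u|^2}-\pi\irNg{u^2\,\ln(|u|^2/\|u\|_{\mathrm L^2(\gamma)}^2)}\big]$, while by Lemma~\ref{lem:limit-rhs} the quantity $\frac1{d_k}\inf_{a,b,c}\nrm{\nabla f-c\,\nabla h_{a,b}}2^2$ converges to $\tfrac{\pi\,e}2\inf_{c\in\R,\,b'\in\R^N}\irNg{|u-c\,e^{\pi\,b'\cdot y}|^2}$; multiplying the latter limit by $\beta(d_k)\to\beta_\star$, the inequality survives. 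Dividing through by $e>0$ and renaming $b:=\pi\,b'$ (a bijection of $\R^N$) then gives the asserted inequality for every nonnegative Lipschitz $u$ of compact support. Since the two limits are already established, the only genuinely new ingredients here are the subsequence along which $\beta(d_k)\to\beta_\star$ and the cancellation of the common factor $e$.

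It remains to remove the Lipschitz/compact-support restriction. Given an arbitrary $0\le u\in\mathrm H^1(\gamma)$ (the case $u\equiv0$ being trivial), I would approximate it in $\mathrm H^1(\gamma)$ by nonnegative Lipschitz functions $u_n$ of compact support, obtained by truncating, mollifying and cutting off. Then $\irNg{|\nabla u_n|^2}\to\irNg{|\nabla u|^2}$ and $\|u_n\|_{\mathrm L^2(\gamma)}\to\|u\|_{\mathrm L^2(\gamma)}$; and since $e^{b\cdot x}\in\mathrm L^2(\gamma)$ for every $b\in\R^N$, the functional $v\mapsto\inf_{c\in\R,\,b\in\R^N}\irNg{|v-c\,e^{b\cdot x}|^2}$ is $1$-Lipschitz for the $\mathrm L^2(\gamma)$-norm, so the right-hand side passes to the limit. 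The delicate point is the convergence of the entropy term $\irNg{u_n^2\,\ln(u_n^2/\|u_n\|_{\mathrm L^2(\gamma)}^2)}$: here I would use the classical (qualitative) logarithmic Sobolev inequality as an a priori bound — it dominates the positive part of $t\mapsto t\ln t$ by the Dirichlet form, while the negative part is pointwise bounded by $1/e$ — and combine it with a.e.\ convergence along a subsequence of $u_n\to u$ to pass to the limit by Fatou's lemma and a uniform-integrability argument.

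The main obstacle is precisely this last step: unlike the Sobolev functional, the logarithmic Sobolev functional is not manifestly continuous on $\mathrm H^1(\gamma)$, because $t\mapsto t\ln t$ is not Lipschitz near the origin and is superlinear at infinity, so one cannot simply invoke continuity and must feed the inequality itself (in its already-known form) back into the density argument to ensure no entropy is lost in the limit. Everything else — the two asymptotic lemmas, the $d$-dependent rescaling built into~\eqref{eq:ansatz}, the identification $\mathcal M=\{c\,h_{a,b}\}$, and the extraction of a subsequence realising $\beta_\star$ — is routine bookkeeping.
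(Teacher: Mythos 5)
Your proof follows essentially the same route as the paper: insert the ansatz $f=u\,f_*$ of \eqref{eq:ansatz} into \eqref{Bianchi-Egnell}, pass to the limit along a subsequence $d_k\to+\infty$ realising $\beta_\star$, invoke Lemmas~\ref{lem:limit--lhs} and~\ref{lem:limit-rhs}, cancel the common factor $e$, and conclude by density. The subsequence extraction is correct and fills in a point the paper leaves implicit. The one place where your argument diverges is the density step, which you label "the main obstacle" and handle with a priori log-Sobolev bounds and a uniform-integrability argument.

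That step is considerably more than is needed, and the uniform-integrability claim as stated (bounding the positive part by the Dirichlet form via the classical LSI) is not obviously sufficient: a bound on $\int u_n^2\ln u_n^2\,d\gamma$ controls neither the uniform integrability of the positive part nor the $\mathrm L^1$-convergence of the family. The reason the paper can dismiss the step as "a simple density argument" is that full convergence of the entropy is not required. Writing the inequality for $u_n$ as
\[
\int u_n^2\,\ln\!\Big(\tfrac{u_n^2}{\|u_n\|_{\mathrm L^2(\gamma)}^2}\Big)\,d\gamma
\le\frac1\pi\Big(\int|\nabla u_n|^2\,d\gamma-\tfrac{\beta_\star\pi}2\,\inf_{b,c}\|u_n-c\,e^{b\cdot x}\|_{\mathrm L^2(\gamma)}^2\Big),
\]
the right-hand side converges (the gradient term by strong $\mathrm H^1(\gamma)$ convergence, the infimum because the squared distance to a fixed set is locally Lipschitz in $\mathrm L^2(\gamma)$). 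For the left-hand side one only needs the \emph{lower} semicontinuity $\liminf_n\int u_n^2\ln u_n^2\,d\gamma\ge\int u^2\ln u^2\,d\gamma$, which follows from Fatou applied to the nonnegative integrand $u_n^2\ln u_n^2+e^{-1}$ after passing to an a.e.-convergent subsequence. Since the entropy appears with a minus sign on the left of the target inequality, this one-sided bound goes in exactly the right direction; combined with the convergence of the other two terms it yields the inequality for $u$ with no hypercontractivity and no uniform integrability. Your overall strategy is therefore sound, but the step you flag as delicate is in fact routine once the logical structure (only a Fatou bound, in the favourable direction) is identified.
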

The extension to any nonnegative function $u\in\mathrm H^1(\gamma)$ follows by a simple density argument, as the constants in Proposition~\ref{Prop:logsob} depend neither on the support nor on the bound on $|\nabla u|$. A crucial feature of Proposition~\ref{Prop:logsob} is that the stability constant $\beta_\star\,\pi/2$ does not depend on $N$. It is worth pointing out the constant $\beta_\star$ in this bound comes from the (unknown) best stability constant for Sobolev's inequality for nonnegative functions. Any lower bound on this stability constant gives a lower bound on the constant $\beta_*$. In particular, we have $\beta_\star\ge\beta$ with $\beta$ as in Theorem~\ref{main}.

\begin{proof}[Proof of Corollary~\ref{logsob}] We have to extend the result of Proposition~\ref{Prop:logsob} to the case of sign-changing functions. This part of the proof is a variation of the argument used in the proof of Proposition~\ref{Prop:BE}. We shall use the notation
$$
\deficit(u) := \int_{\R^N} |\nabla u|^2\,d\gamma - \pi \int_{\R^N} u^2 \ln\(\frac{u^2}{\|u\|_{\mathrm L^2(\gamma)}^2}\)\,d\gamma
\quad\text{for}\,u\in \mathrm H^1(\gamma)\,.
$$
By homogeneity we can assume $\|u\|_{\mathrm L^2(\gamma)}=1$. Replacing $u$ by $-\,u$ if necessary, we can also assume that
$$
m:= \|u_-\|_{\mathrm L^2(\gamma)}^2 \in [0,\tfrac12]\,.
$$
Then
$$
\deficit(u) = \deficit(u_+) + \deficit(u_-) + \pi\,h(m)\quad\mbox{with}\quad h(p) := - \big( p\ln p + (1-p) \ln(1-p) \big)\,.
$$
Since the function $p\mapsto h(p)$ is monotone increasing and concave on $[0,\frac12]$,
$$
h(p) \geq (2\ln 2)\,p \quad \text{for all}\quad p\in[0,\tfrac12]\,.
$$
Thus, with $\beta_\star$ denoting the constant in~\eqref{bstar},
\begin{multline*}
\deficit(u) \geq \deficit(u_+) + (2\,\pi\, \ln 2)\, m \\  \geq \frac{\beta_\star\, \pi}2\, \inf_{b\in\R^N\!,\,c\in\R} \|u_+ - c\,e^{b\cdot x} \|_{\mathrm L^2(\gamma)}^2 + (2\,\pi\, \ln 2)\, \|u_-\|_{\mathrm L^2(\gamma)}^2\\
 \geq \tfrac12\, \min\left\{\frac{\beta_\star\, \pi}2,\, 2\,\pi\, \ln 2 \right\}\,\inf_{b\in\R^N\!,\,c\in\R} \| u - c\,e^{b\cdot x} \|_{\mathrm L^2(\gamma)}^2\,.
\end{multline*}
This proves the inequality for the general case with
\be{bLSI}
\beta = \tfrac12\, \min\big\{\beta_\star,\, 4\, \ln 2 \big\}
\ee
and $\beta_\star$ given by~\eqref{bstar}. \end{proof}

Up to this point, we have stated the logarithmic Sobolev inequality in its version with respect to the normalized Gaussian measure. It has an equivalent version with respect to the Euclidean measure. We set $u=e^{\pi\,|x|^2/2}\,v$ and obtain from Corollary~\ref{logsob} and Proposition~\ref{Prop:logsob}
\begin{multline*}
\int_{\R^N} |\nabla v|^2\,dx - \pi \int_{\R^N} v^2 \ln\Bigg(\frac{v^2}{\|v\|_{\mathrm L^2(\R^N)}^2}\Bigg)\,dx - N\,\pi\,\|v\|_{\mathrm L^2(\R^N)}^2 \\ \geq \frac{\beta\,\pi}2 \inf_{b\in\R^N\!,\,c\in\R} \int_{\R^N} \Big|v - c\,e^{-\,\frac\pi2|x-b|^2} \Big|^2dx
\end{multline*}
by a simple integration by parts. Writing $v(x) = \lambda^{N/2}\,w(\lambda\, x)$ with a parameter $\lambda>0$, we obtain equivalently
\begin{multline*}
\lambda^2 \int_{\R^N} |\nabla w|^2\,dy - \pi \int_{\R^N} \!\!\!\!w^2 \ln\(\frac{w^2}{\| w\|_{\mathrm L^2(\R^N)}^2}\)dy - N\,\pi\,(1+\ln\lambda) \| w\|_{\mathrm L^2(\R^N)}^2\\
\geq \frac{\beta\,\pi}2 \inf_{b\in\R^N\!,\,c\in\R} \int_{\R^N} \Big|w - c\,e^{-\,\frac\pi{2\lambda^2}\,|y-b|^2} \Big|^2\,dy\,.
\end{multline*}
We bound the right side from below by extending the infimum over all $\lambda>0$ and then we optimize the left side with respect to $\lambda>0$. In this way we obtain the following stability version of the Euclidean logarithmic Sobolev inequality.
\begin{corollary}
With $\beta>0$ given by~\eqref{bLSI} we have for all $N\in\N$ and all $w\in \mathrm H^1(\R^N)$,
\begin{multline*}
\| w\|_{\mathrm L^2(\R^N)}^2 \ln \left( \frac2{N\,\pi\,e}\,\frac{\int_{\R^N} |\nabla w|^2\,dx}{\| w\|_{\mathrm L^2(\R^N)}^2} \right) - \frac2N \int_{\R^N} w^2 \ln\(\frac{w^2}{\| w\|_{\mathrm L^2(\R^N)}^2}\)dx\\
\geq \frac{\beta}N \inf_{\lambda>0,\,b\in\R^N\!,\,c\in\R} \int_{\R^N} \Big|w - c\,e^{-\,\frac\pi{2\lambda^2}\,|y-b|^2} \Big|^2\,dy\,.
\end{multline*}
\end{corollary}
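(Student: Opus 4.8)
The plan is to deduce this Euclidean stability inequality from the Gaussian version (Corollary~\ref{logsob}) by two elementary changes of variables, precisely as sketched in the paragraph preceding the statement; no new analytic ingredient is needed and the argument is essentially bookkeeping.

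First I would set $u=e^{\pi\,|x|^2/2}\,v$ in Corollary~\ref{logsob}. Then $\|u\|_{\mathrm L^2(\gamma)}^2=\int_{\R^N}v^2\,dx$, and an integration by parts in the Dirichlet term gives $\int_{\R^N}|\nabla u|^2\,d\gamma=\int_{\R^N}|\nabla v|^2\,dx-N\,\pi\int_{\R^N}v^2\,dx+\pi^2\int_{\R^N}|x|^2\,v^2\,dx$, while the entropy term becomes $-\pi\int_{\R^N}u^2\ln(u^2/\|u\|_{\mathrm L^2(\gamma)}^2)\,d\gamma=-\pi\int_{\R^N}v^2\ln(v^2/\|v\|_{\mathrm L^2(\R^N)}^2)\,dx-\pi^2\int_{\R^N}|x|^2\,v^2\,dx$. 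The crucial observation is that the two terms $\pi^2\int_{\R^N}|x|^2\,v^2\,dx$ cancel, so the left side turns into the Euclidean log-Sobolev functional minus $N\,\pi\,\|v\|_{\mathrm L^2(\R^N)}^2$. On the right side, completing the square in the exponent $b\cdot x-\tfrac\pi2|x|^2$ converts each trial function $c\,e^{b\cdot x}$, after absorbing the factor $e^{|b|^2/(2\pi)}$ into $c$ and translating $b$, into a Gaussian $c\,e^{-\frac\pi2|x-b|^2}$ of fixed width.

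Next I would rescale by writing $v(x)=\lambda^{N/2}\,w(\lambda\,x)$ with $\lambda>0$; this preserves the $\mathrm L^2(\R^N)$ norm. The Dirichlet energy acquires a factor $\lambda^2$, the entropy acquires an additive term $N\,\pi\,\ln\lambda\,\|w\|_{\mathrm L^2(\R^N)}^2$, and the Gaussian width in the infimum becomes $\lambda^2$ (after rescaling $b\mapsto\lambda\,b$ and $c\mapsto\lambda^{-N/2}c$). For each fixed $\lambda$ this is a valid inequality; enlarging the infimum on the right to run also over $\lambda>0$ only decreases it, making the right side independent of $\lambda$. It then suffices to minimize the left side over $\lambda>0$. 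Assuming, as we may, that $w$ is neither zero nor a constant, the map $\lambda\mapsto\lambda^2\int_{\R^N}|\nabla w|^2\,dx-N\,\pi\,(1+\ln\lambda)\,\|w\|_{\mathrm L^2(\R^N)}^2-\pi\int_{\R^N}w^2\ln(w^2/\|w\|_{\mathrm L^2(\R^N)}^2)\,dx$ tends to $+\infty$ both as $\lambda\to0^+$ and as $\lambda\to+\infty$, and its unique critical point is at $\lambda_\star^2=\frac{N\,\pi\,\|w\|_{\mathrm L^2(\R^N)}^2}{2\int_{\R^N}|\nabla w|^2\,dx}$. Substituting $\lambda_\star$ and collecting the logarithms produces exactly $\tfrac{N\pi}2$ times the left side of the claimed inequality, whereas the (enlarged) right side equals $\tfrac{N\pi}2$ times $\tfrac\beta N\inf_{\lambda,b,c}\int_{\R^N}|w-c\,e^{-\frac\pi{2\lambda^2}|y-b|^2}|^2\,dy$; dividing by $\tfrac{N\pi}2$ yields the corollary.

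There is no genuine conceptual difficulty here. The only step requiring a modicum of care is keeping track, through both substitutions, of how the parameters $b$ and $c$ in the infimum transform — the additive constant $|b|^2/(2\pi)$ absorbed into $c$ in the first change of variables, and the scaling of $b$ and $c$ by powers of $\lambda$ in the second — so that the infimum in the target inequality is indeed over the stated three-parameter family, together with disposing of the trivial degenerate cases $w\equiv 0$ and $w$ constant.
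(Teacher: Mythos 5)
Your proposal is correct and follows exactly the same route as the paper: substitute $u=e^{\pi|x|^2/2}v$ into Corollary~\ref{logsob} and integrate by parts to obtain the Euclidean form, then rescale $v(x)=\lambda^{N/2}w(\lambda x)$, enlarge the infimum on the right to run over $\lambda>0$ as well, and optimize the left side in $\lambda$ at $\lambda_\star^2=N\pi\|w\|_{\mathrm L^2(\R^N)}^2/(2\int|\nabla w|^2\,dx)$. You have simply filled in the integration-by-parts and completing-the-square details that the paper leaves implicit.
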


\appendix\section{Some properties of continuous rearrangement}\label{contrearr}

In this subsection we discuss several aspects of the continuous rearrangement and prove some of its properties.

Brock's continuous Steiner rearrangement is based on the following operation for functions of one real variable that are finite unions of disjoint characteristic functions $\sum_{k=1}^N \chi_{(-a_k,a_k)}(x-b_k)$. Replace this function by $\sum_{k=1}^N \chi_{(-a_k,a_k)}\big(x-e^{-\,t}\,b_k\big)$ where $t$ varies from $0$ to $\infty$. As $t$ increases, the intervals start moving closer and as soon as any two intervals touch
one stops the process and redefines the set of intervals by joining the two that touched. Then one restarts the process and
keeps repeating it until all of them are joined into one. The movement stops once this interval is centered at the origin.
By the outer regularity of Lebesgue measure the level sets of a measurable function can be approximated by open sets
and, since in one dimension this is a countable union of open intervals, one can further approximate the level set by a
finite number of open disjoint intervals for which one uses the sliding argument explained above.

As mentioned before, this procedure can be generalized to higher dimensions by considering Steiner symmetrization with respect to a hyperplane.
One considers any hyperplane $H$ through the origin and then rearranges the function symmetrically about the hyperplane along each
line perpendicular to $H$, resulting in a function denoted by $f^{*H}$. For more information see~\cite{LiebLoss}.
In this fashion one obtains a continuous rearrangement $f \to \mathsf f^H_\tau, \tau \in [0,\infty]$, which was studied in detail by Brock~\cite{Brock,Brock2}. We shall refer
to the statements in those papers.

To pass from Steiner symmetrization to the symmetric decreasing rearrangement we consider a sequence of continuous Steiner symmetrizations and chain them with a new continous parameter {\it à la\/} Bucur--Henrot. Inspired by~\cite{BucurHenrot,Christ}, we proceed as follows. Given a function $f\in \mathrm L^p(\R^d)$ for some $1\leq p<\infty$ there is a sequence $(H_n)_{n\in\N}$ of hyperplanes such that, defining recursively with $f_0=f$,
$$
f_n := f^{*H_n}_{n-1}\,,\quad n=1,2, \dots\,,
$$
we have
$$
f_n \to f^*
\quad\text{in}\,\mathrm L^p(\R^d)\quad\text{as}\quad n\to\infty\,.
$$
In fact, it is shown in~\cite[Theorem~4.3]{Volcic} that this holds for `almost every' (in an appropriate sense) choice of hyperplanes. It is also of interest that this sequence can actually be chosen in a universal fashion (that is, independent of $f$ and $p$); see~\cite[Theorem~5.2]{vanSchaftingen}.

Given $f$ and the sequence $(f_n)_{n\in\N}$ as above, we set for any $n=0,1,2,\dots$
$$
\phi_n(\tau) := e^{\frac{\tau-n}{n+1-\tau}} - 1\,,
\quad \tau\in [n,n+1]\,,
$$
and define
\begin{equation}
\label{eq:ftau}
\mathsf f_\tau := (f_n)^{H_{n+1}}_{\phi_n(\tau)}\,,
\end{equation}
where the right side denotes Brock's continuous Steiner symmetrization with respect to the hyperplane $H_{n+1}$ with parameter $\phi_n(\tau)$ applied to $f_n$. As $\tau$ runs from $n$ to $n+1$, $\phi_n(\tau)$ runs from $0$ to~$\infty$ and there is no ambiguity at $\tau \in\N$ since $f_n = f_{n-1}^{* H_n}$ by definition. Thus, $\mathsf f_\tau$ is well defined for $\tau\in [0,\infty]$.

{}From the properties of Brock's flow, see, in particular,~\cite[Lemma~4.1]{Brock2}, we obtain the following properties for our flow.
\begin{proposition}\label{symmlpcont}
Let $d\ge1$, $1\leq p<\infty$ and let $0 \le f \in \mathrm L^p(\R^d)$. Then, for any $\tau \in [0,\infty]$, the function $\mathsf f_\tau$ defined by~\eqref{eq:ftau} is in $\mathrm L^p( \R^d)$ and
$\Vert \mathsf f_\tau \Vert_p= \Vert \mathsf f \Vert_p$. Moreover, for any $\tau\in[0,\infty]$ and any sequence $(\tau_n)_{n\in\N}$ with $\lim_{n\to\infty}\tau_n=\tau$,
$$
\lim_{n\to\infty} \Vert \mathsf f_{\tau_n} - \mathsf f_\tau\Vert_p = 0\,.
$$
\end{proposition}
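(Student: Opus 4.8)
The plan is to separate the two assertions. The equality $\|\mathsf f_\tau\|_p=\|f\|_p$ is the easy one: each operation used to build $\mathsf f_\tau$ out of $f$ --- finitely many Steiner symmetrizations, a single Brock-type continuous Steiner symmetrization, and, for $\tau=\infty$, the symmetric decreasing rearrangement (so that $\mathsf f_\infty:=f^*=\lim_{n\to\infty}f_n$ in $L^p$) --- is equimeasurable. Hence $\mathsf f_\tau$ is equimeasurable with $f$ for every $\tau\in[0,\infty]$, and in particular lies in $L^p(\R^d)$ with $\|\mathsf f_\tau\|_p=\|f\|_p$. So the real content is the $L^p$-continuity of $\tau\mapsto\mathsf f_\tau$ on $[0,\infty]$, and I would prove it in two stages: continuity on $[0,\infty)$, then continuity at $\tau=\infty$.

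For continuity on $[0,\infty)$: on each interval $[n,n+1)$ the map $\tau\mapsto\mathsf f_\tau$ is, by~\eqref{eq:ftau}, the composition of the continuous strictly increasing bijection $\phi_n\colon[n,n+1)\to[0,\infty)$ with Brock's continuous Steiner symmetrization flow applied to $f_n$, and the latter is $L^p$-continuous in its parameter (a recalled property of Brock's flow). Thus $\tau\mapsto\mathsf f_\tau$ is $L^p$-continuous on each $[n,n+1)$, and it remains to check matching at the integers $n\ge 1$. Right-continuity is built into the construction, since $\phi_n(n)=0$ forces $\mathsf f_n=f_n$. Left-continuity (and simultaneously the well-definedness of $\mathsf f_\tau$ at integer $\tau$) follows because as $\tau\uparrow n$ the relevant Brock parameter tends to $+\infty$, so $\mathsf f_\tau$ converges in $L^p$ to the corresponding Steiner symmetrization, which is exactly $f_n$. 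Hence $\tau\mapsto\mathsf f_\tau$ is $L^p$-continuous on $[0,\infty)$, and for any $\tau_k\to\tau\in[0,\infty)$ (eventually inside some bounded $[0,N]$) one gets $\mathsf f_{\tau_k}\to\mathsf f_\tau$.

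The remaining point, continuity at $\tau=\infty$, is the main obstacle, and here is how I would handle it. Given $\tau_k\to\infty$, write $\tau_k\in[n_k,n_k+1)$ with $n_k\to\infty$. Then $\mathsf f_{\tau_k}$ lies on the Brock trajectory on that interval joining $f_{n_k}$ to $f_{n_k+1}$, the latter being the Steiner symmetrization of $f_{n_k}$ with respect to the hyperplane $H$ used there. By the triangle inequality,
\[
\|\mathsf f_{\tau_k}-f^*\|_p\le\|\mathsf f_{\tau_k}-f_{n_k+1}\|_p+\|f_{n_k+1}-f^*\|_p,
\]
and the second term tends to $0$ since $f_n\to f^*$ in $L^p$. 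For the first term I would invoke the contraction property of Brock's flow, namely that it is nonexpansive in $L^p$: applying it to the pair $g=f_{n_k}$ and $g^{*H}=f_{n_k+1}$, and using that $g^{*H}$ is a fixed point of its own flow, one obtains that the distance from $g^H_t$ to $g^{*H}$ never exceeds $\|g-g^{*H}\|_p$, so
\[
\|\mathsf f_{\tau_k}-f_{n_k+1}\|_p\le\|f_{n_k}-f_{n_k+1}\|_p\le\|f_{n_k}-f^*\|_p+\|f^*-f_{n_k+1}\|_p\longrightarrow 0 .
\]
Combining the two estimates gives $\|\mathsf f_{\tau_k}-f^*\|_p\to 0$, completing the argument.

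The expected difficulty is exactly this last step: near $\tau=\infty$ the flow is a concatenation of infinitely many Brock flows, and each reparametrization $\phi_n$ stretches one unit interval of $\tau$ onto all of Brock-time $[0,\infty)$, so $\mathsf f_{\tau_k}$ can sit arbitrarily ``deep'' inside the $n_k$-th Brock flow; mere $L^p$-continuity of Brock's flow in its parameter does not suffice, and one genuinely needs a contraction (or monotone-distance-decrease) property of Brock's flow to bound the error accumulated within that flow by $\|f_{n_k}-f_{n_k+1}\|_p$, which is the quantity controlled by $f_n\to f^*$. Locating and citing this property of Brock's construction (rather than re-deriving it) is the one external ingredient I would rely on.
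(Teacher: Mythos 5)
Your proof is correct. The paper itself gives no argument for this proposition: it simply asserts that it follows ``from the properties of Brock's flow, see, in particular, [Brock2, Lemma~4.1].'' Your write-up supplies the details that the paper leaves to the reader, and it does so correctly. In particular, you rightly isolate the genuine difficulty, continuity at $\tau=\infty$, where a bare appeal to $\mathrm L^p$-continuity of each individual Brock flow is insufficient because $\mathsf f_{\tau_k}$ may sit arbitrarily deep inside the $n_k$-th flow. Your fix --- using the $\mathrm L^p$-nonexpansivity of Brock's flow together with the fact that the Steiner symmetral $f_{n_k+1}=f_{n_k}^{*H_{n_k}}$ is a fixed point of its own flow, so that $\|\mathsf f_{\tau_k}-f_{n_k+1}\|_p\le\|f_{n_k}-f_{n_k+1}\|_p\to 0$ --- is exactly the right ingredient. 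It is also consistent with the paper's toolkit: the nonexpansivity of the rearrangement (cited as [Brock, Lemma~3]) is invoked later in the appendix in the proof of the monotonicity of $\tau\mapsto\|\nabla\mathsf f_\tau\|_2$. Your treatment of continuity on $[0,\infty)$, including the matching of one-sided limits at integer $\tau$ via $\phi_n(n)=0$ and $\phi_{n-1}(\tau)\to\infty$ as $\tau\uparrow n$, is also accurate, as is the equimeasurability observation giving $\|\mathsf f_\tau\|_p=\|f\|_p$. So this is a correct proof that fills in the argument the paper delegates entirely to a citation, with the nonexpansivity step being the one piece that does not follow from $\mathrm L^p$-continuity of a single flow alone.
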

The following fact is important for us.
\begin{lemma}\label{continuous}
Let $d\ge3$ and $0\leq f\in \mathrm L^{2^*}(\R^d)$. The function
$$
\tau \mapsto \sup_{u \in \mathcal M_1}\left(\mathsf f_\tau,u^{2^*-1}\right)^2
$$
with $\mathsf f_\tau$ defined by~\eqref{eq:ftau} is continuous.
\end{lemma}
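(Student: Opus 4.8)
The plan is to deduce the continuity of $\tau\mapsto\sup_{u\in\mathcal M_1}(\mathsf f_\tau,u^{2^*-1})^2$ from the $\mathrm L^{2^*}(\R^d)$-continuity of the flow, which is exactly the content of Proposition~\ref{symmlpcont} applied with $p=2^*$. The key point is that the map $g\mapsto\sup_{u\in\mathcal M_1}(g,u^{2^*-1})^2$ is continuous on $\mathrm L^{2^*}(\R^d)$, since it is the square of the supremum of a family of continuous linear functionals whose norms are uniformly bounded. Indeed, for $u\in\mathcal M_1$ we have $u^{2^*-1}\in\mathrm L^{(2^*)'}(\R^d)$ with $\Vert u^{2^*-1}\Vert_{(2^*)'}=\Vert u\Vert_{2^*}^{2^*-1}=1$, so each functional $g\mapsto(g,u^{2^*-1})$ is $1$-Lipschitz on $\mathrm L^{2^*}(\R^d)$ by H\"older's inequality.

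First I would record the elementary fact that, for any two functions $g,h\in\mathrm L^{2^*}(\R^d)$ and any $u\in\mathcal M_1$,
\[
\big|(g,u^{2^*-1})\big| \le \big|(h,u^{2^*-1})\big| + \Vert g-h\Vert_{2^*}\,,
\]
again by H\"older's inequality and the normalization $\Vert u^{2^*-1}\Vert_{(2^*)'}=1$. Taking the supremum over $u\in\mathcal M_1$ and then interchanging the roles of $g$ and $h$ gives
\[
\Big|\sup_{u\in\mathcal M_1}\big|(g,u^{2^*-1})\big| - \sup_{u\in\mathcal M_1}\big|(h,u^{2^*-1})\big|\Big| \le \Vert g-h\Vert_{2^*}\,,
\]
so the map $g\mapsto\sup_{u\in\mathcal M_1}|(g,u^{2^*-1})|$ is $1$-Lipschitz on $\mathrm L^{2^*}(\R^d)$; squaring a function that is bounded on bounded sets and locally Lipschitz preserves continuity, so $g\mapsto\sup_{u\in\mathcal M_1}(g,u^{2^*-1})^2$ is continuous on $\mathrm L^{2^*}(\R^d)$. (Here I use that $\sup_{u\in\mathcal M_1}(g,u^{2^*-1})^2=\big(\sup_{u\in\mathcal M_1}|(g,u^{2^*-1})|\big)^2$, which holds because $\mathcal M_1$ is invariant under $u\mapsto -u$ — or, if one prefers to take $\mathcal M_1$ to consist of nonnegative optimizers, because $\mathsf f_\tau\ge0$ and so the inner products are nonnegative anyway.)

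Then I would conclude: given $\tau\in[0,\infty]$ and a sequence $\tau_n\to\tau$, Proposition~\ref{symmlpcont} with $p=2^*$ yields $\Vert\mathsf f_{\tau_n}-\mathsf f_\tau\Vert_{2^*}\to0$, and composing with the Lipschitz map above gives $\sup_{u\in\mathcal M_1}(\mathsf f_{\tau_n},u^{2^*-1})^2\to\sup_{u\in\mathcal M_1}(\mathsf f_\tau,u^{2^*-1})^2$. Since the sequence $\tau_n\to\tau$ was arbitrary, the function $\tau\mapsto\sup_{u\in\mathcal M_1}(\mathsf f_\tau,u^{2^*-1})^2$ is continuous on $[0,\infty]$. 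There is no real obstacle here: the only thing to be careful about is the uniform bound $\Vert u^{2^*-1}\Vert_{(2^*)'}=1$ over $u\in\mathcal M_1$, which is immediate from the definition of $\mathcal M_1$ (the $2^*$-norm being normalized to $1$), so that the supremum of linear functionals is genuinely Lipschitz rather than merely lower semicontinuous. This is precisely the continuity statement used (as~\eqref{eq:flowprop2}) in the proof of Lemma~\ref{alternativeb}.
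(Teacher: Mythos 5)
Your proof is correct and uses the same essential ingredients as the paper's: Hölder's inequality together with the normalization $\Vert u^{2^*-1}\Vert_{(2^*)'}=1$ for $u\in\mathcal M_1$, and the $\mathrm L^{2^*}$-continuity of the flow from Proposition~\ref{symmlpcont}. The paper phrases the Hölder step as an $\varepsilon$-approximation of the supremum, whereas you package it as a clean $1$-Lipschitz bound for $g\mapsto\sup_{u\in\mathcal M_1}|(g,u^{2^*-1})|$, but these are the same argument.
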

\begin{proof}
We use the fact, shown in Proposition~\ref{symmlpcont}, that
$$
\lim_{\tau_1 \to \tau_2} \Vert \mathsf f_{\tau_1} -\mathsf f_{\tau_2}\Vert_{2^*} = 0\,.
$$
Fix $\varepsilon>0$. There exists $u_1\in \mathcal M_1$ such that $\sup_{u \in \mathcal M_1}\left|\left(\mathsf f_{\tau_1},u^{2^*-1}\right)\right| \le \left|\left(\mathsf f_{\tau_1},u^{2^*-1}_1\right)\right|+\varepsilon$ and hence
\begin{multline*}
\sup_{u \in \mathcal M_1}\left|\left(\mathsf f_{\tau_1},u^{2^*-1}\right)\right| - \sup_{u \in \mathcal M_1}\left|\left(\mathsf f_{\tau_2},u^{2^*-1}\right)\right| \\ \le \left|\left(\mathsf f_{\tau_1},u^{2^*-1}_1\right)\right|+\varepsilon - \left|\left(\mathsf f_{\tau_2},u^{2^*-1}_1\right)\right|\\
\le \left|\left(\mathsf f_{\tau_1},u^{2^*-1}_1\right) -\left(\mathsf f_{\tau_2},u^{2^*-1}_1\right)\right|+ \varepsilon\,,
\end{multline*}
which by H\"older's inequality is bounded above by
$$
\Vert \mathsf f_{\tau_1} -\mathsf f_{\tau_2}\Vert_{2^*}\,\Vert u^{2^*-1}_1 \Vert_{q} + \varepsilon = \Vert \mathsf f_{\tau_1} -\mathsf f_{\tau_2}\Vert_{2^*} + \varepsilon
$$
with $q= \frac{2^*}{2^*-1}\,$. Hence
$$
\limsup_{\tau_2 \to \tau_1}\left( \sup_{u \in \mathcal M_1}\left|\left(\mathsf f_{\tau_1},u^{2^*-1}\right)\right| - \sup_{u \in \mathcal M_1}\left|\left(\mathsf f_{\tau_2},u^{2^*-1}\right)\right|\right) \le \varepsilon\,.
$$
There exists $u_2 \in \mathcal M_1$ such that $\sup_{u \in \mathcal M_1}\left|\left(\mathsf f_{\tau_2},u^{2^*-1}\right)\right| \le \left|\left(\mathsf f_{\tau_2},u^{2^*-1}_2\right)\right|+\varepsilon$ and hence
$$
\sup_{u \in \mathcal M_1}\left|\left(\mathsf f_{\tau_1},u^{2^*-1}\right)\right| - \sup_{u \in \mathcal M_1}\left|\left(\mathsf f_{\tau_2},u^{2^*-1}\right)\right| \ge \left|\left(\mathsf f_{\tau_1},u_2^{2^*-1}\right)\right| - \left|\left(\mathsf f_{\tau_2},u_2^{2^*-1}\right)\right| -\varepsilon\,,
$$
which is greater or equal to
$$
-\left|\left(\mathsf f_{\tau_1},u_2^{2^*-1}\right) - \left(\mathsf f_{\tau_2},u_2^{2^*-1}\right)\right| -\varepsilon \ge - \Vert \mathsf f_{\tau_1}- \mathsf f_{\tau_2}\Vert_{2^*} - \varepsilon\,.
$$
Hence
$$
\liminf_{\tau_2 \to \tau_1}\left( \sup_{u \in \mathcal M_1}\left|\left(\mathsf f_{\tau_1},u^{2^*-1}\right)\right| - \sup_{u \in \mathcal M_1}\left|\left(\mathsf f_{\tau_2},u^{2^*-1}\right)\right|\right) \ge -\,\varepsilon\,.
$$
This proves the claimed continuity.
\end{proof}

We now consider the behavior of the gradient under the rearrangement flow. The following proposition is closely related to~\cite[Theorems~3.2 and~4.1]{Brock2}, but there inhomogeneous Sobolev spaces are considered, which leads to some minor changes. For the sake of simplicity we provide the details.
\begin{proposition}
Let $0\leq f \in \dot{\mathrm H}^1(\R^d)$. Then $\mathsf f_\tau$ defined by~\eqref{eq:ftau} is in $\dot{\mathrm H}^1(\R^d)$ and
$\tau\mapsto \Vert \nabla \mathsf f_\tau \Vert_2$ is a nonincreasing, right-continuous function.
\end{proposition}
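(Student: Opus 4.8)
The plan is to deduce everything from the corresponding one-hyperplane statements for Brock's continuous Steiner symmetrization, recorded in \cite[Theorems~3.2,~4.1 and Lemma~4.1]{Brock2}. Those are stated there for inhomogeneous Sobolev spaces, but the proofs adapt without change to $\dot{\mathrm H}^1(\R^d)$: the symmetrization acts line by line, and the one-dimensional ``sliding'' estimate that controls the Dirichlet energy involves only the gradient, never the function itself. Concretely I would first record the following facts about Brock's flow $t\mapsto g^H_t$, $t\in[0,\infty]$, attached to a nonnegative $g\in\dot{\mathrm H}^1(\R^d)$ and a hyperplane $H$ through the origin: $g^H_0=g$; $g^H_t\in\dot{\mathrm H}^1(\R^d)$ for every $t$; $t\mapsto\|\nabla g^H_t\|_2$ is nonincreasing and right-continuous; and $g^H_t\to g^{*H}$ in $\mathrm L^p(\R^d)$ as $t\to\infty$, with $\|\nabla g^{*H}\|_2\le\|\nabla g\|_2$.

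Next I would show by induction on $n$ that $f_n\in\dot{\mathrm H}^1(\R^d)$ with $\|\nabla f_n\|_2\le\|\nabla f\|_2$; this is immediate since $f_0=f$ and $f_n=f_{n-1}^{*H_n}$ is a Steiner symmetrization of $f_{n-1}$. On the interval $[n,n+1]$ the flow $\tau\mapsto\mathsf f_\tau$ of~\eqref{eq:ftau} is, by definition, Brock's flow for $f_n$ and $H_n$ precomposed with the strictly increasing homeomorphism $\phi_n\colon[n,n+1]\to[0,\infty]$. Since $\phi_n$ is continuous and increasing, membership of $\mathsf f_\tau$ in $\dot{\mathrm H}^1(\R^d)$, as well as monotonicity and right-continuity of $\tau\mapsto\|\nabla\mathsf f_\tau\|_2$ on $[n,n+1)$, follow at once from the one-hyperplane statements applied to $f_n$; moreover on $[n,n+1)$ the Dirichlet norm never exceeds $\|\nabla\mathsf f_n\|_2=\|\nabla f_n\|_2\le\|\nabla f\|_2$, so the whole flow is uniformly bounded in $\dot{\mathrm H}^1(\R^d)$.

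The one delicate point is the behaviour at the integer gluing times. Right-continuity at $\tau=n$ is the right-continuity of Brock's flow for $f_n$ at parameter $t=0$ (note $\mathsf f_n=\mathsf f_{n,\phi_n(n)}=\mathsf f_{n,0}=f_n$). For the left limit, Proposition~\ref{symmlpcont} gives $\mathsf f_\tau\to f_n$ in $\mathrm L^{2^*}(\R^d)$ as $\tau\uparrow n$; since the family $\{\|\nabla\mathsf f_\tau\|_2:\tau<n\}$ is bounded, a subsequence converges weakly in $\dot{\mathrm H}^1(\R^d)$, its limit is necessarily $f_n$, and weak lower semicontinuity of the Dirichlet energy yields $\|\nabla f_n\|_2\le\liminf_{\tau\uparrow n}\|\nabla\mathsf f_\tau\|_2$. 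Together with the monotonicity already known on $[n-1,n)$ and with $\|\nabla\mathsf f_\tau\|_2\le\|\nabla f_n\|_2$ for $\tau\ge n$, this shows that $\tau\mapsto\|\nabla\mathsf f_\tau\|_2$ is nonincreasing across $\tau=n$ (it may genuinely jump \emph{down} there, which is compatible with right-continuity, but it never jumps up). Iterating over $n\in\N$ gives monotonicity and right-continuity on $[0,\infty)$, and the endpoint $\tau=\infty$ is covered by $\mathsf f_\infty=f^*$ together with $\|\nabla f^*\|_2\le\|\nabla f\|_2$, again consistent with monotonicity via the $\mathrm L^{2^*}$ convergence and weak lower semicontinuity.

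I expect the main obstacle to be precisely this junction analysis: Brock's continuous flow approximates the Steiner symmetrization only in $\mathrm L^p$, not in $\dot{\mathrm H}^1(\R^d)$, so a priori $\tau\mapsto\|\nabla\mathsf f_\tau\|_2$ could drop discontinuously at each integer $\tau$, and one must check that any such drop is downward and does not destroy right-continuity — which is exactly what the uniform $\dot{\mathrm H}^1$-bound plus weak lower semicontinuity provide. A secondary, bookkeeping point is verifying that Brock's statements, proved in \cite{Brock2} for inhomogeneous Sobolev spaces, transfer to $\dot{\mathrm H}^1(\R^d)$, which is routine given the line-by-line nature of the symmetrization.
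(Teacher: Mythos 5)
Your approach matches the paper's at a high level — reduce to Brock's one‑hyperplane flow on each parameter interval, then handle the integer gluing times via $\mathrm L^{2^*}$‑continuity of the flow together with weak lower semicontinuity of the Dirichlet energy — and your analysis of the junction points, including the observation that a downward jump there is compatible with both monotonicity and right‑continuity, is correct. However, there is a genuine gap in the very step you dismiss as ``routine bookkeeping'': passing from Brock's statements, proved for $W^{1,p}(\R^d)$, to $\dot{\mathrm H}^1(\R^d)$. This is not automatic, because $\dot{\mathrm H}^1(\R^d)\cap\mathrm L^{2^*}(\R^d)$ is not contained in $\mathrm L^2(\R^d)$ (the Aubin--Talenti function $\bar g$ already fails to lie in $\mathrm L^2$ for $d\le 4$), so one cannot simply quote the $W^{1,2}$ statement for the function $f$ at hand, nor assert that Brock's proof goes through ``without change'' without actually reworking it; the paper's own phrasing (``some minor changes'') and the fact that it supplies a full page of details both signal that this needs an explicit argument.

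The paper closes the gap with a concrete device that your proposal omits: for $0\le f\in\dot{\mathrm H}^1(\R^d)$, the truncations $(f-\epsilon)_+$ lie in $\mathrm H^1(\R^d)$ (their superlevel sets have finite measure since $f\in\mathrm L^{2^*}(\R^d)$ vanishes at infinity), so Brock's result — or the alternative heat‑kernel argument the paper gives for the $\mathrm L^2$ case — applies and yields $\|\nabla((f-\epsilon)_+)_\tau\|_2\le\|\nabla f\|_2$. One then lets $\epsilon\to 0^+$: nonexpansivity of the rearrangement gives $((f-\epsilon)_+)_\tau\to\mathsf f_\tau$ in $\mathrm L^{2^*}$, which identifies the weak $\mathrm L^2$ limit of the gradients, and weak lower semicontinuity yields both $\mathsf f_\tau\in\dot{\mathrm H}^1(\R^d)$ and the monotonicity bound. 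You should supply this truncation (or an equivalent argument) rather than assert the transfer is automatic; once that is in place, the remainder of your proof coincides with the paper's.
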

\begin{proof}
By construction, it suffices to prove these properties for Brock's flow. Since the latter has the semigroup property $(\mathsf f_\sigma)_\tau = \mathsf f_{\sigma+\tau}$ for all $\sigma$, $\tau \ge 0$, it suffices to prove monotonicity and right-continuity at $\tau=0$.

We begin with the proof of monotonicity, which we first prove under the additional assumption that $f\in \mathrm L^2(\R^d)$. This is shown in~\cite[Theorem~3.2]{Brock2}, but we give an alternative proof. We proceed as in the proof of~\cite[Lemma~1.17]{LiebLoss}.
Extending~\cite[Corollary~2]{Brock} to the sequence of Steiner symmetrizations we find for three nonnegative functions
$f$, $g$, $h$ that
$$
\iint_{\R^d\times\R^d} \mathsf f_\tau(x)\,g_\tau(x-y)\,h_\tau(y)\,dx\,dy \ge \iint_{\R^d\times\R^d} f(x)\,g(x-y)\,h(y)\,dx\,dy\,.
$$
If we choose $g(x-y)$ to be the standard heat kernel, {\it i.e.}, $g(x-y) = e^{\Delta t}(x-y)$, then $g_\tau(x-y) = g(x-y)$ and hence
$$
\iint_{\R^d\times\R^d} \mathsf f_\tau(x)\,e^{\Delta t}(x-y)\,\mathsf f_\tau(y)\,dx\,dy \ge \iint_{\R^d\times\R^d} f(x)\,e^{\Delta t}(x-y)\,f(y)\,dx\,dy\,.
$$
Since $\Vert \mathsf f_\tau \Vert_2 = \Vert f \Vert_2$ by the equimeasurability of rearrangement,
$$
\frac1t \left(\Vert \mathsf f_\tau \Vert_2^2 - \left(\mathsf f_\tau, e^{\Delta t}\,\mathsf f_\tau\right) \right) \le \frac 1t \left(\Vert f \Vert_2^2 - \left(f, e^{\Delta t}f\right)\right)
$$
and letting $t \to 0$ yields the first claim under the additional assumption $f\in \mathrm L^2(\R^d)$.

For general $0\leq f\in\dot{\mathrm H}^1(\R^d)$ we apply the above argument to the functions $(f-\epsilon)_+$, $\epsilon>0$. They belong to $\mathrm L^2(\R^d)$ since $f$ vanishes at infinity and belongs to $\mathrm L^{2^*}(\R^d)$. We obtain
\begin{equation}
\label{eq:rearrangegrad}
\left\|\nabla\big((f-\epsilon)_+\big)_\tau\right\|_2 \leq \|\nabla (f-\epsilon)_+\|_2 \leq \|\nabla f\|_2\,.
\end{equation}
We claim that $\mathsf f_\tau\in\dot{\mathrm H}^1(\R^d)$ and $\nabla \big((f-\epsilon)_+\big)_\tau\rightharpoonup \nabla \mathsf f_\tau$ in $\mathrm L^2(\R^d)$ as $\epsilon\to 0^+$. Once this is shown, the claimed inequality follows from~\eqref{eq:rearrangegrad} by the weak lower semicontinuity of the $\mathrm L^2$ norm.

To prove the claimed weak convergence, note that by~\eqref{eq:rearrangegrad}, $\nabla\big((f-\epsilon)_+\big)_\tau$ is bounded in $\mathrm L^2(\R^d)$ as $\epsilon\to 0^+$ and therefore has a weak limit point. Let $F\in \mathrm L^2(\R^d)$ be any such limit point. Since $(f-\epsilon)_+\to f$ in $\mathrm L^{2^*}(\R^d)$, the nonexpansivity of the rearrangement~\cite[Lemma~3]{Brock} implies that $\big((f-\epsilon)_+\big)_\tau\to \mathsf f_\tau$ in $\mathrm L^{2^*}(\R^d)$. Thus, for any $\Phi\in C^1_c(\R^d)$,
\begin{multline*}
\int_{\R^d} (\nabla\cdot\Phi)\,\mathsf f_\tau\,dx \leftarrow
\int_{\R^d} (\nabla\cdot\Phi)\,\big((f-\epsilon)_+\big)_\tau\,dx \\ = - \int_{\R^d} \Phi\cdot \nabla \big((f-\epsilon)_+\big)_\tau\,dx \to - \int_{\R^d} \Phi\cdot F\,dx
\end{multline*}
as $\epsilon\to0^+$. This proves that $\mathsf f_\tau$ is weakly differentiable with $\nabla \mathsf f_\tau = F$. In particular, $\mathsf f_\tau\in\dot{\mathrm H}^1(\R^d)$ (note that $\mathsf f_\tau$ vanishes at infinity since $f$ does and since these functions are equimeasurable) and the limit point $F$ is unique. This concludes the proof of the first part of the proposition.

Let us now show the right-continuity at $\tau=0$. It follows from Proposition~\ref{symmlpcont} that $\mathsf f_\tau \to f$ in $\mathrm L^{2^*}(\R^d)$ as $\tau\to 0^+$. This implies that $\nabla \mathsf f_\tau \rightharpoonup \nabla f$ in $\mathrm L^2(\R^d)$ as $\tau\to 0^+$. (Indeed, the argument is similar to the one used in the first part of the proof. The family $\nabla \mathsf f_\tau$ is bounded in $\mathrm L^2(\R^d)$ as $\tau\to 0^+$ and, if $F$ denotes any weak limit point in $\mathrm L^2(\R^d)$, then the convergence in $\mathrm L^{2^*}(\R^d)$ and the definition of weak derivatives implies that $F=\nabla f$.) By weak lower semicontinuity, we deduce that
$$
\|\nabla f \|_2 \leq \liminf_{\tau\to 0^+} \|\nabla \mathsf f_\tau \|_2\,.
$$
This, together with the reverse inequality, which was established in the first part of the proof, proves the claimed right continuity.
\end{proof}

We note that the proposition remains valid for $0\leq f\in \dot{\mathrm W}^{1,p}(\R^d)$ with $1\leq p<d$. If $p\neq 2$, the monotonicity for the gradient for $f\in \mathrm W^{1,p}(\R^d)$ is proved in~\cite[Theorem~3.2]{Brock2}. The remaining arguments above carry over to $p\neq 2$.

\section*{Acknowledgements}


The authors thank two referees for their careful reading and useful suggestions. \copyright~2024 by the authors. Reproduction of this article by any means permitted for noncommercial purposes. \hbox{\href{https://creativecommons.org/licenses/by/4.0/legalcode}{CC-BY 4.0}}

\raggedbottom

\end{document}